\documentclass[10pt,reqno]{amsart}



\usepackage{amsmath,amsfonts,amsthm,amssymb,amsxtra}
\usepackage{amsxtra, amssymb, mathrsfs}
\usepackage[usenames,dvipsnames]{color}



\setlength{\voffset}{-.7truein}
\setlength{\textheight}{8.8truein}
\setlength{\textwidth}{6.0truein}
\setlength{\hoffset}{-.7truein}


\newtheorem{theorem}{Theorem}[section]
\newtheorem{proposition}[theorem]{Proposition}

\newtheorem{lemma}[theorem]{Lemma}

\theoremstyle{definition}

\newtheorem{assumption}[theorem]{Assumption}

\theoremstyle{remark}
\newtheorem{remark}[theorem]{\bf Remark}

\numberwithin{equation}{section}


\newcommand{\ai}{{\rm Ai}}
\newcommand{\bi}{{\rm Bi}}
\newcommand{\B}{\mathscr{B}}

\newcommand{\C}{\mathbb{C}}

\newcommand{\D}{\mathcal{D}}
\newcommand{\G}{\mathcal{G}}

\newcommand{\h}{\mathfrak{H}}

\newcommand{\el}{\mathcal{L}}
\newcommand{\U}{\mathcal{U}}

\newcommand{\rt}{{\rm curl}}
\newcommand{\pd}{\partial}
\newcommand{\eps}{\varepsilon}

\newcommand{\F}{\mathcal{F}}

\newcommand{\N}{\mathbb{N}}

\newcommand{\R}{\mathbb{R}}
\newcommand{\rr}{\mathfrak{R}}
\newcommand{\af}{\mathfrak{a}}
\newcommand{\bb}{\mathfrak{b}}

\newcommand{\Z}{\mathbb{Z}}


\begin{document}

\title[Two-dimensional magnetic Schr\"odinger operators]
{Resolvent expansion and time decay of the wave functions for two-dimensional magnetic Schr\"odinger operators}

\author {Hynek Kova\v{r}\'{\i}k}

\address {Hynek Kova\v{r}\'{\i}k, DICATAM, Sezione di Matematica, Universit\`a degli studi di Brescia, Italy}

\email {hynek.kovarik@unibs.it}


\begin {abstract}
We consider two-dimensional Schr\"odinger operators $H(B,V)$ given by equation \eqref{ham} below. We prove that, under certain regularity and decay assumptions on $B$ and $V$, the character of  the expansion for the resolvent $(H(B,V)-\lambda)^{-1}$ as $\lambda\to 0$ is determined by the flux of the magnetic field $B$ through $\R^2$. Subsequently,  we derive the leading term of the asymptotic expansion of the unitary group $e^{-i t H(B,V)}$ as $t\to \infty$ and show how the magnetic field improves its decay in $t$ with respect to the decay of the unitary group $e^{-i t H(0,V)}$. 
\end{abstract}

\maketitle


\section{\bf Introduction}

\noindent The present paper is concerned with the  Schr\"odinger operator associated to a magnetic field $B$:
\begin{equation} \label{ham}
H(B,V) = (i \nabla +A)^2  + V \quad \text{in} \quad L^2(\R^2),
\end{equation}
where $\rt \, A  =B$ and $V$ is an electric scalar potential. Both $B$ and $V$ are assumed to have a polynomial decay at infinity.  We will analyze the asymptotic expansion of the resolvent $(H(B,V) -\lambda)^{-1}$ as $\lambda\to 0$ and the long time behavior of the unitary group $e^{-i t H(B,V)}$ generated by $H(B,V)$. It is well-known that the two problems are closely related to each other, see e.g. \cite{JK,mu,sch2}.  
Asymptotic expansions of the resolvent have been well studied in the absence of magnetic filed, i.e.~for the operator $H(0,V)$. In the case of dimension two, in particular, it has been shown that if zero is a regular point of $H(0,V)$, which means that zero is neither an eigenvalue nor a resonance of $H(0,V)$, and if $V$ decays fast enough at infinity, then as $\lambda\to 0$ 
\begin{equation} \label{2D-res}
 (H(0,V) -\lambda)^{-1}  = T_0 + T_1\, (\log \lambda)^{-1} + o((\log \lambda)^{-1}) \qquad  \left(\, \text{in\  \ } \R^2 \right)
\end{equation}
holds in suitable weighted $L^2-$spaces; see \cite{jn, mu, sch}.  In dimension three, still under the condition that  zero is a regular point of $H(0,V)$, the term $(\log \lambda)^{-1}$ in the above equation must be replaced by $\lambda^{1/2}$,  cf. \cite{JK, jn, mu}. 

As far as the long time behavior of the operator $e^{-i t H(0,V)}$ is concerned, the classical results say that in dimension three, for sufficiently short range potentials and under the condition that zero is a regular point, one has for $t\to\infty$ the following asymptotic equation in $L^2(\R^3)$:
\begin{equation} \label{r3-asymp}
(1+|x|)^{-s}\, e^{-i t H(0,V)}\, P_{ac} \, (1+|x|)^{-s}  \, = \, S_3\  t^{-\frac 32} + o(t^{-\frac 32}), \qquad \left(\, \text{in\  \ } \R^3 \right)
\end{equation}
which holds in $L^2(\R^3)$ for $s$ large enough , see \cite{JK, je,mu}, and with $P_{ac}$ being the projection on the absolutely continuous subspace of $H(0,V)$. Note that is the decay rate  $t^{-3/2}$ corresponds to the free evolution in $\R^3$. For higher-dimensional results we refer to \cite{je2, mu}. 

The situation in dimension two is different since in this case by adding a potential $V$ one may improve the decay rate of $e^{-i t H(0,V)}$ with respect to the $t^{-1}$ decay rate of the free evolution operator,  provided the weight function  $(1+|x|)^s$ grows fast enough. More precisely, it was proved by Murata, see \cite{mu}, that if zero is a regular point then in $L^2(\R^2)$ we have for $t\to\infty$ the asymptotic expansion
\begin{equation} \label{murata}
(1+|x|)^{-s}\, e^{-i t H(0,V)}\, P_{ac} \, (1+|x|)^{-s}\, =\, S_2\ t^{-1}\, (\log t)^{-2} +  o(\, t^{-1}\, (\log t)^{-2})  \qquad \left(\, \text{in\  \ } \R^2 \right)
\end{equation}
with $s>3$ and $|V(x)| \lesssim (1+|x|)^{-6-0}$, see section \ref{sec-notation} for the precise meaning of the latter condition.

\smallskip

The aim of this paper is to show that the presence of a magnetic field in $\R^2$ changes completely the character of the expansions \eqref{2D-res} and \eqref{murata}.  In order to describe how the magnetic field affects these asymptotic expansions, we define the normalized flux of $B$ through $\R^2$ by 
\begin{equation} \label{flux}
\alpha := \frac{1}{2\pi} \int_{\R^2} B(x)\, dx.
\end{equation}
The main results of this paper show then if $B$ is sufficiently smooth and decays fast enough at infinity, then the behavior of 
$(H(B,V) -\lambda)^{-1}$ for $\lambda\to 0$ as well as the behavior of $e^{-i t H(B,V)}$ for $t\to \infty$ is determined by the distance between $\alpha$ and the set of integers: 
\begin{equation} \label{min}
\mu(\alpha):= \min_{k\in\Z} |k+\alpha|.
\end{equation}
More precisely, if $\alpha$ is finite and non-integer, and if zero is a regular point of $H(B,V)$, then for $\lambda\to 0$ the expansion
\begin{equation} \label{2D-res-mag}
 (H(B,V) -\lambda)^{-1}  = F_0 + F_1\, \lambda^{\mu(\alpha)} + o(\lambda^{\mu(\alpha)}) \qquad    \alpha\not\in\Z, \qquad \left(\, \text{in\  \ } \R^2 \right)
\end{equation}
holds true in certain weighted $L^2-$spaces with suitably chosen weights, see Theorem \ref{thm-1}. Accordingly we obtain a faster decay rate of $e^{-i t H(B,V)}$ in $t$ with respect to the decay rate of $e^{-i t H(0,V)}$. In fact we show in Theorem \ref{thm-3} that there exists a bounded operator $\mathcal{K}$ in  $L^2(\R^2)$ such that for $ t\to\infty$ 
\begin{equation} \label{disp-mag}
 (1+|x|)^{-s}\, e^{-i t H(B,V)}\, P_{ac} \, (1+|x|)^{-s}  \, = \,  t^{-1-\mu(\alpha)}\, \mathcal{K} +o(\, t^{-1-\mu(\alpha)})   \qquad   \alpha\not\in\Z, \qquad \left(\, \text{in\  \ } \R^2 \right)
\end{equation}
holds 
in $L^2(\R^2)$ provided $s> 5/2$, $ |V(x)| \lesssim (1+|x|)^{-3-0},$ and  $B$ satisfies suitable decay and regularity conditions, see assumption \ref{ass-B}.  On the other hand, For integer values of $\alpha$ one has qualitatively the same behavior as in \eqref{2D-res} and \eqref{murata}, see Theorems \ref{thm-2} and \ref{thm-4}. We also give an explicit formulae for the operators $F_0, F_1$ and $\mathcal{K}$ in the case $\mu(\alpha)< 1/2$.
Hence the character of the expansions for $(H(B,V) -\lambda)^{-1}$ as $\lambda\to 0$ and $e^{-i t H(B,V)}$ as $t\to \infty$ is completely determined by the flux of $B$. 

\smallskip
 
 To understand what makes the family of magnetic fields with equal fluxes distinguished, we refer to Lemma \ref{lem-gauge} and equation \eqref{arg-stokes}. The latter implies that a difference between two magnetic Hamiltonians $H(B_1,V)-H(B_2,V)$ is a first order differential operator with sufficiently short range coefficients, provided we choose a suitable gauge, {\it if and only if} $B_1$ and $B_2$ have equal flux through $\R^2$. Therefore, only in this case the coefficients of $H(B_1,V)-H(B_2,V)$ may decay fast enough, depending on the decay of $B_1$ and $B_2$, in order to compensate for the growth of the weight function $(1+|x|)^{s}$. When the fluxes of $B_1$ and $B_2$ are different, then the coefficients of the first order term in $H(B_1,V)-H(B_2,V)$ cannot decay faster than $|x|^{-1}$ even if both $B_1$ and $B_2$ have compact support, cf. equation \eqref{arg-stokes}.
 
 This indicates a natural strategy for proving \eqref{2D-res-mag}, and consequently 
\eqref{disp-mag}; we choose a concrete magnetic field $B_0$, see equation \eqref{B0}, for which it is possible to calculate the resolvent for small values of $\lambda$ explicitly. Using this fact we first show that the expansion \eqref{2D-res-mag} holds for $H(B_0,0)$, see Proposition \ref{prop-1}, and then we extend the result to all magnetic fields with the same flux (and sufficient decay) by using the perturbation theory in combination with Lemma \ref{lem-gauge}. The proof in the case of integer flux follows the same strategy, cf. Proposition \ref{prop-2}. We point out that although our perturbative approach cannot be applied when the flux of $B$ differs from that of $B_0$ by a non-zero integer, the respective resolvent expansions are qualitatively the same since they depend only on $\mu(\alpha)$. 

\smallskip

For comparison it should be mentioned that in the case of dimension three the effect of a magnetic field on the asymptotic expansion \eqref{r3-asymp} is much weaker. Indeed, from \cite[Thms. 8.11]{mu} it follows that, under sufficient regularity and decay assumptions on  $B$,  the unitary group $e^{-i t H(B,V)}$ satisfies again the asymptotic expansion \eqref{r3-asymp} (with different coefficients), see also \cite{kk}.

\newpage

The paper is organized as follows. In section \ref{sec-prelim} we introduce some necessary notation and formulate our main results. The proofs are given in section \ref{sec-proofs}. The concrete model associated to the operator $H(B_0,0)$ is treated in sections \ref{sec-model} and \ref{sec-integer}. In section \ref{sec-aux-bessel} we give some auxiliary technical results needed for the proofs of Propositions \ref{prop-1} and \ref{prop-2}.


\section{\bf Main results}
\label{sec-prelim}

\subsection{Notation} \label{sec-notation}
Before we formulate our main results we need to introduce some notation. 
Let $\rho: \R^2 \to \R$ be given by $\rho(x) =\rho(|x|)= 1+|x|$ and let $s\in\R$. We define
$$
L^2(\R^2, s) = \{ u : \|  \rho^s\,  u\|_{L^2(\R^2)} < \infty \}, \qquad \|u\|_s :=  \|  \rho^s \, u\|_{L^2(\R^2)}.
$$
We will denote by $\B(X,Y)$ the space of bounded linear operators from a Banach space $X$ into a Banach space $Y$ and by $\| \cdot \|_{\B(X,Y)}$ the corresponding operator norm. For the sake of brevity we will make use of the following shorthands: instead of 
$\B(L^2(\R^2, s), L^2(\R^2, s'))$ we write $\B(s,s' )$, and if $X=Y$, then we use the notation $ \B(X)=\B(X,X)$. 
Given $R>0$ and a point $x\in\R^2$ we denote by $\D (x,R)\subset\R^2$ the open disc with radius $R$ centred in $x$. 
The scalar product in a Hilbert space $\mathscr{H}$ will be denoted by $\langle \cdot \, , \cdot \rangle_{\mathscr{H}}$. 
For $x=(x_1, x_2)\in\R^2$ and  $y=(y_1, y_2) \in \R^2$  we will often use the polar coordinates representation
\begin{equation} \label{polar-repr}
x_1 + i x_2  = r  e^{i \theta}, \quad y_1 + i y_2 =r'  e^{i \theta'},  \qquad r,r' \geq 0, \quad \  \theta, \theta' \in [0, 2\pi).
\end{equation}
Given functions $f,g\in L^\infty(\R^2)$ we will write $f(x) \lesssim g(x)$ if there exists a numerical constant $c$ such that $f(x) \leq c\, g(x)$ for all $x\in\R^2$. The symbol $f(x) \gtrsim g(x)$ is defined analogously. Finally, for any $f\in L^\infty(\R^2)$ and $\alpha\in\R$ we will make use of the notation
$$
f(x)  \, \lesssim\,   (1+|x|)^{-\alpha-0} \quad \Rightarrow \quad  \ \lim_{|x|\to \infty} (1+|x|)^{\alpha}\  f(x) = 0.
$$

\smallskip

\noindent Here is the assumption on the magnetic field:

\begin{assumption} \label{ass-B}
Assume that for any multi-index $\beta\in\N_0^2$ and some $s>4$ it holds
\begin{equation} \label{ass-B-eq}
| \pd^\beta B(x)| \ \lesssim\ (1+|x|)^{-s-|\beta|}.
\end{equation} 
\end{assumption}


\noindent
Under this condition $B$ obviously belongs to $L^1(\R^2)$ and therefore has finite flux $\alpha$.  To any magnetic field which satisfies \eqref{ass-B-eq} may be  associated a bounded vector field $A$ such that $\rt\, A =B$, see Lemma \ref{lem-gauge}. We then add an electric potential $V\in L^\infty(\R^2)$ and define the operator $H(B, V)$ through the closed quadratic form
$$
Q[u]= \int_{\R^2}\, \big ( |(i\nabla +A) \, u|^2 + V |u|^2 \big)\, dx, \qquad u\in W^{1,2}(\R^2).
$$
Moreover, we will always assume that $V(x) \to 0$ as $|x|\to \infty$. Hence by standard compactness arguments 
$$
\sigma_{es} (H(B,V)) = [0,\infty) .
$$
Consequently, we use the standard definition of a regular point; we say that {\em zero is a regular point of $H(B,V)$} if there exists $s >1/2$ such that 
\begin{equation} \label{def-regular}
\limsup_{\lambda\to 0}\, \|\, \rho^{-s}\, (H(B,V) -\lambda-i0)^{-1}\, \rho^{-s}\|_{\B(L^2(\R^2))} \ < \ \infty.
\end{equation} 

\smallskip

\begin{assumption} \label{ass-BV}
Suppose that the operator $H(B,V)$ has no positive eigenvalues.
\end{assumption}

\noindent 

\subsection{Resolvent expansion at threshold} Let $A_0:\R^2\to\R^2$ be a vector potential given by 
\begin{equation}  \label{a0}
A_0(x) = \alpha\, (-x_2, x_1) \left\{
\begin{array}{l@{\quad}l}
|x|^{-1} \, &\quad |x| \leq 1 \  , \\
|x|^{-2}\,  &\quad |x| >1 \ , 
\end{array}
\right.
\end{equation}
and let $T(B,V)$ be defined by
\begin{equation} \label{tb}
T(B,V)  =  2 i (A-A_0)\cdot \nabla  +i \nabla\cdot A + (|A|^2-|A_0|^2)  + V. 
\end{equation}
Below $G_0, G_1, \G_0$ and $\G_1$ are integral operators in $L^2(\R^2)$ defined in sections \ref{sec-model} and \ref{sec-integer}, see equations \eqref{G0}, \eqref{G1}, \eqref{G0-a} and \eqref{G1-a} respectively.

\begin{theorem} \label{thm-1}
Let $\alpha\not\in\Z$. Suppose that assumptions \ref{ass-B} and \ref{ass-BV} are satisfied.  Suppose moreover that $s> 3/2$ and $|V(x)| \lesssim (1+|x|)^{-3-0}$. If zero is a regular point of $H(B,V)$, then there exist operators $F_j(B,V)\in  \B(s,-s ), \, j=0,1$, such that 
\begin{equation} \label{eq-zero-en1}
(H(B,V)- \lambda- i0)^{-1} = F_0(B,V) + F_1(B,V)\, \lambda^{\mu(\alpha)}+ o(\lambda^{\mu(\alpha)})
 \quad \text{in } \ \ \B(s,-s )
\end{equation}
as $\lambda\to 0$. Moreover, for $\mu(\alpha) < 1/2$ it holds
\begin{align*}
F_0(B,V) & = (1+G_0\, T(B,V))^{-1}\, G_0, \quad F_1(B,V)  = (1+G_0\, T(B,V))^{-1}\, G_1\, (1+T(B, V)\,  G_0 )^{-1}.
\end{align*} 
\end{theorem}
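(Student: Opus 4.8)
\noindent The plan is to treat $H(B,V)$ as a perturbation of the explicitly solvable model operator $H(B_0,0)$, whose threshold expansion is supplied by Proposition \ref{prop-1}. I first use Lemma \ref{lem-gauge} to fix a gauge for $A$ in which, because $B=\rt\,A$ and $B_0=\rt\,A_0$ carry the same flux $\alpha$, the difference $A-A_0$ is short range; Assumption \ref{ass-B} together with the hypothesis $|V(x)|\lesssim(1+|x|)^{-3-0}$ then ensures that the coefficients of $T:=T(B,V)$ decay fast enough to compensate the weight $\rho^{s}$. Expanding $(i\nabla+A)^2-(i\nabla+A_0)^2$ and using that $A_0$ is divergence free (so that $\nabla\cdot A=\nabla\cdot(A-A_0)$) one verifies the operator identity $H(B,V)=H(B_0,0)+T$ with $T$ as in \eqref{tb}. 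Setting $R(\lambda)=(H(B,V)-\lambda-i0)^{-1}$ and $R_0(\lambda)=(H(B_0,0)-\lambda-i0)^{-1}$, Proposition \ref{prop-1} gives
\begin{equation*}
R_0(\lambda)=G_0+G_1\,\lambda^{\mu(\alpha)}+o(\lambda^{\mu(\alpha)})\qquad\text{in }\ \B(s,-s),
\end{equation*}
the coefficients $G_0,G_1$ having the explicit form recorded in section \ref{sec-model} when $\mu(\alpha)<1/2$, which is precisely the range in which the closed formulae for $F_0,F_1$ are asserted.

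The second step is to secure the operator-theoretic input. Using the short-range decay of the coefficients of $T$ and the weighted elliptic regularity of the ranges of $G_0$ and $G_1$ (needed to absorb the single derivative in the first-order part $2i(A-A_0)\cdot\nabla$ of $T$), I would show that $G_0\,T$ and $G_1\,T$ are bounded on $L^2(\R^2,-s)$ and that $G_0\,T$ is compact there. By the Fredholm alternative $1+G_0\,T$ is then boundedly invertible on $L^2(\R^2,-s)$ as soon as its kernel is trivial; a nonzero kernel element would yield a zero-energy bound state or resonance of $H(B,V)$, which is ruled out precisely by the assumption that zero is a regular point in the sense of \eqref{def-regular} (Assumption \ref{ass-BV}, excluding positive eigenvalues, guarantees the boundary values $R(\lambda)$ exist for small $\lambda>0$). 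The same reasoning makes $(1+T\,G_0)^{-1}$ available.

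With these ingredients the remainder is algebraic. The second resolvent identity $R(\lambda)=R_0(\lambda)-R_0(\lambda)\,T\,R(\lambda)$ gives $R(\lambda)=(1+R_0(\lambda)\,T)^{-1}R_0(\lambda)$, legitimate for small $\lambda>0$ since $1+R_0(\lambda)\,T\to 1+G_0\,T$ in operator norm on $L^2(\R^2,-s)$ and the limit is invertible. Expanding the inverse to first order in $\lambda^{\mu(\alpha)}$ yields
\begin{equation*}
(1+R_0(\lambda)\,T)^{-1}=(1+G_0\,T)^{-1}-\lambda^{\mu(\alpha)}\,(1+G_0\,T)^{-1}G_1\,T\,(1+G_0\,T)^{-1}+o(\lambda^{\mu(\alpha)}).
\end{equation*}
Multiplying on the right by $R_0(\lambda)=G_0+\lambda^{\mu(\alpha)}G_1+o(\lambda^{\mu(\alpha)})$ and collecting powers of $\lambda$ produces \eqref{eq-zero-en1} with constant term $F_0(B,V)=(1+G_0\,T)^{-1}G_0$. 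For the coefficient of $\lambda^{\mu(\alpha)}$ I factor $(1+G_0\,T)^{-1}G_1$ out on the left and invoke the push-through identity $T\,(1+G_0\,T)^{-1}=(1+T\,G_0)^{-1}T$, which converts $1-T\,(1+G_0\,T)^{-1}G_0$ into $(1+T\,G_0)^{-1}$ and reproduces $F_1(B,V)=(1+G_0\,T)^{-1}G_1\,(1+T\,G_0)^{-1}$.

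The main obstacle is not this algebra but the boundedness and compactness claims of the second step: controlling the unbounded first-order term $2i(A-A_0)\cdot\nabla$ of $T$ in the weighted space. This is exactly where the equal-flux hypothesis is used through Lemma \ref{lem-gauge}, since only a short-range $A-A_0$ lets the derivative term be dominated by the weight; a careful bookkeeping of the weighted Sobolev mapping properties of $G_0$ and $G_1$, sharp enough to accommodate the stated decay $s>3/2$ and $|V(x)|\lesssim(1+|x|)^{-3-0}$, is what makes the compositions $G_0\,T$, $G_1\,T$ and the inversion of $1+G_0\,T$ rigorous.
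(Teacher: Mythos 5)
Your proposal is correct and follows essentially the same route as the paper: perturbing around the reference operator $H(B_0,0)$ via the equal-flux gauge of Lemma \ref{lem-gauge}, feeding Proposition \ref{prop-1} into the second resolvent identity, inverting $1+G_0\,T(B,V)$ by compactness and the Fredholm alternative (with triviality of the kernel coming from the regular-point assumption, as in Lemmas \ref{lem-aux1} and \ref{lem-aux2}), and then using the push-through identity $T\,(1+G_0\,T)^{-1}=(1+T\,G_0)^{-1}T$ to obtain the stated formula for $F_1(B,V)$. The only cosmetic difference is that you work with $G_0\,T$ on $L^2(\R^2,-s)$ directly, whereas the paper establishes compactness of $T\,G_0$ on $L^2(\R^2,s)$ and passes to $G_0\,T$ by duality; the needed control of the first-order term of $T$ is exactly the gradient estimate on $G_2(\lambda)$ contained in Proposition \ref{prop-1} and Lemma \ref{lem-g0}, as you anticipate.
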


\smallskip

\noindent It would be possible to provide an explicit formula for the operators $F_0(B,V)$ and $F_1(B,V)$ also in the case $\mu(\alpha)=1/2$. In order to avoid complicated expressions with heavy notation we prefer not to do so, see section \ref{sec-model} for details. 
In the case of an integer flux we have

\begin{theorem} \label{thm-2}
Let  assumptions \ref{ass-B} and \ref{ass-BV} be satisfied and assume that $\alpha\in\Z$. Let  $s>3/2$. 
If $|V(x)| \lesssim (1+|x|)^{-3-0}$ is such that zero is a regular point of $H(B,V)$, then
\begin{equation} \label{eq-zero-en2}
(H(B,V)- \lambda- i0)^{-1} = \F_0(B,V) + \F_1(B,V)\, (\log \lambda)^{-1}+ o((\log |\lambda|)^{-1})
 \quad \text{in } \ \ \B(s,-s )
\end{equation}
as $\lambda\to 0$, where 
\begin{align*}
\F_0(B,V) & = (1+\G_0\, T(B,V))^{-1}\, \G_0, \quad \F_1(B,V)  = (1+\G_0\, T(B,V))^{-1}\, \G_1\, (1+T(B, V)\,  \G_0 )^{-1}.
\end{align*}
\end{theorem}

\smallskip

\subsection{Time decay} 
Let $\mathcal{H}_d$ be the subspace of  $L^2(\R^2)$ spanned by normalized eigenfunctions  corresponding to discrete eigenvalues of $H(B,V)$. 
We denote by $P_c$ the projection on the orthogonal complement of $\mathcal{H}_d$ in $L^2(\R^2)$ .

\begin{theorem} \label{thm-3}
Let  assumptions \ref{ass-B} and \ref{ass-BV} be satisfied. Assume that $\alpha\not\in\Z$ and let $s> 5/2$. If $|V(x)| \lesssim (1+|x|)^{-3-0}$ is such that zero is a regular point of $H(B,V)$, then there exists $K(B,V)\in\B(s,s^{-1})$ such that as 
$t\to\infty$ 
\begin{equation} \label{eq-time1}
e^{- i t H(B,V)}\, P_c = K(B,V)\ t^{-1-\mu(\alpha)} \ + \  o(\, t^{-1-\mu(\alpha)})
\end{equation}
in $\B(s,-s )$, where 
$$
K(B,V) = \frac{i}{\pi}\, \sin(\pi\mu(\alpha))\, e^{i \pi\mu(\alpha)/2}\, \Gamma(1+\mu(\alpha))\ F_1(B,V).
$$
\end{theorem}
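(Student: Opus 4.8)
The plan is to deduce the time asymptotics from the resolvent expansion of Theorem \ref{thm-1} through Stone's formula. First I would record that, since zero is a regular point and $H(B,V)$ has no positive eigenvalues (Assumption \ref{ass-BV}), the limiting absorption principle holds on every compact subset of $(0,\infty)$, the singular continuous spectrum is absent, and hence $P_c=P_{ac}$. Stone's formula then gives, as an identity of $\B(s,-s)$-valued (Abel-regularized) improper integrals,
\[
e^{-itH(B,V)}\,P_c=\frac{1}{2\pi i}\int_0^\infty e^{-it\lambda}\Big[(H(B,V)-\lambda-i0)^{-1}-(H(B,V)-\lambda+i0)^{-1}\Big]\,d\lambda .
\]
The integrand equals $2i$ times the imaginary part of $(H(B,V)-\lambda-i0)^{-1}$, i.e.\ $2\pi i$ times the spectral density, and the whole task is to read off the large-$t$ behaviour of this oscillatory integral from the behaviour of the density near $\lambda=0$ and near $\lambda=\infty$.

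Next I would fix a smooth cut-off $\chi$ supported near $\lambda=0$ and split the integral into a low-energy part (carrying $\chi$) and a high-energy part (carrying $1-\chi$). For the high-energy part I would integrate by parts repeatedly in $\lambda$, writing $e^{-it\lambda}=\tfrac{i}{t}\,\partial_\lambda e^{-it\lambda}$; each step gains a factor $t^{-1}$, and the only boundary contribution, at the junction of the two regions, is smooth. This requires that $(H(B,V)-\lambda\mp i0)^{-1}$ be differentiable in $\lambda$ with controlled derivatives and adequate decay as $\lambda\to\infty$ — the limiting absorption principle together with its $\lambda$-derivatives — and this is exactly where the stronger weight $s>5/2$ (rather than $s>3/2$, as in Theorem \ref{thm-1}) is consumed. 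The high-energy part is then $O(t^{-N})=o(t^{-1-\mu(\alpha)})$ for every $N$.

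For the low-energy part I would insert the expansion of Theorem \ref{thm-1}. Using $(H(B,V)-\lambda+i0)^{-1}=[(H(B,V)-\lambda-i0)^{-1}]^{*}$, the self-adjoint constant term $F_0(B,V)$ cancels in the difference, so the surviving leading term is governed by $F_1(B,V)$. The decisive algebraic input is the phase relation $F_1(B,V)^{*}=e^{2\pi i\mu(\alpha)}F_1(B,V)$, which I would read off from the explicit leading coefficient $G_1$ computed in Section \ref{sec-model} — its kernel carries the factor $e^{-i\pi\mu(\alpha)}$ arising from the small-argument asymptotics of the Hankel function — and then propagate to $F_1(B,V)$ through the perturbative formula of Theorem \ref{thm-1}. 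This yields
\[
(H(B,V)-\lambda-i0)^{-1}-(H(B,V)-\lambda+i0)^{-1}=\big(1-e^{2\pi i\mu(\alpha)}\big)\,F_1(B,V)\,\lambda^{\mu(\alpha)}+o\big(\lambda^{\mu(\alpha)}\big).
\]
Replacing $\chi$ by $1$ at the cost of an integration-by-parts error and using $\int_0^\infty e^{-it\lambda}\lambda^{\mu(\alpha)}\,d\lambda=\Gamma(1+\mu(\alpha))\,(it)^{-1-\mu(\alpha)}$, together with $1-e^{2\pi i\mu(\alpha)}=-2i\,e^{i\pi\mu(\alpha)}\sin(\pi\mu(\alpha))$ and $(it)^{-1-\mu(\alpha)}=e^{-i\pi(1+\mu(\alpha))/2}\,t^{-1-\mu(\alpha)}$, produces precisely $K(B,V)\,t^{-1-\mu(\alpha)}$ with the stated constant $K(B,V)$.

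The \emph{main obstacle} is the rigorous conversion of the remainder. Theorem \ref{thm-1} delivers only a pointwise $o(\lambda^{\mu(\alpha)})$ bound, whereas to conclude $o(t^{-1-\mu(\alpha)})$ for the associated oscillatory integral I need a Watson/Erd\'elyi-type argument. I would handle this by splitting the low-energy integral at $\lambda\sim t^{-1}$: on $(0,t^{-1})$ the crude estimate $\int_0^{t^{-1}}o(\lambda^{\mu(\alpha)})\,d\lambda=o(t^{-1-\mu(\alpha)})$ is enough, while on $(t^{-1},\delta)$ a single integration by parts suffices, provided a matching $o(\lambda^{\mu(\alpha)-1})$ bound on the $\lambda$-derivative of the remainder is available. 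Establishing these differentiated remainder estimates uniformly down to the threshold, in tandem with the high-energy limiting absorption bounds, is the technically heaviest part of the argument and is what forces the hypotheses $s>5/2$ and $|V(x)|\lesssim(1+|x|)^{-3-0}$.
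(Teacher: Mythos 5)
Your proposal arrives at the correct constant and is viable, but it follows a genuinely different route from the paper. The paper does not use Stone's formula: it starts from the one-sided representation \eqref{eq-repr}, $e^{-itH(B,V)}=\frac{1}{2\pi i}\int_\R e^{-it\lambda}R(\lambda+i0)\,d\lambda$ for $t>0$, so only the $+i0$ boundary value enters and the integral runs over the whole line, negative $\lambda$ included (there $R(\lambda+i0)P_c$ is analytic, the discrete spectrum being finite). The phase $e^{i\pi\mu(\alpha)/2}$ then comes for free from the distributional Fourier transform \eqref{fourier-1} of $(\lambda+i0)^{\mu(\alpha)}$, applied to the two-sided expansion of Theorem \ref{thm-1}; the validity of that expansion for $\lambda<0$, with $\lambda^{\mu(\alpha)}=|\lambda|^{\mu(\alpha)}e^{i\pi\mu(\alpha)}$, is exactly what Lemmata \ref{lem-neg1} and \ref{lem-neg2} provide. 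Your route instead needs the jump across the cut, hence the self-adjointness of $F_0(B,V)$ and the phase relation $F_1(B,V)^{*}=e^{2\pi i\mu(\alpha)}F_1(B,V)$. These are correct: in the kernel \eqref{g1} one has $i-\cot(\mu\pi)=-e^{-i\mu\pi}/\sin(\mu\pi)$, and the symmetry of $T(B,V)$ and $G_0$ propagates the phase through $F_1=(1+G_0T)^{-1}G_1(1+TG_0)^{-1}$; note, however, that this explicit formula is only stated for $\mu(\alpha)<1/2$, so for $\mu(\alpha)=1/2$ your propagation step needs a separate justification. What your approach buys is that negative energies never appear; what the paper's buys is that all phase bookkeeping is done once by \eqref{fourier-1}, with no adjoint identities required.

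Two points need repair. First, your claim that the high-energy part is $O(t^{-N})=o(t^{-1-\mu(\alpha)})$ \emph{for every} $N$ is not available under the stated hypotheses: each integration by parts consumes one $\lambda$-derivative of the resolvent, and controlling the $j$-th derivative in $\B(s,-s)$ requires roughly $s>j+\tfrac12$, cf. \eqref{decay-rb}; with $s>5/2$ only two derivatives are at your disposal. This is precisely what the paper exploits (Lemma \ref{lem-high-en} together with \cite[Lem.10.1]{JK}), obtaining $o(t^{-2})$ for the high-energy piece --- which suffices because $1+\mu(\alpha)\le 3/2<2$. As written, your statement is false; weakened to $o(t^{-2})$ the argument goes through unchanged. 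Second, you correctly single out the conversion of the $o(\lambda^{\mu(\alpha)})$ resolvent remainder into an $o(t^{-1-\mu(\alpha)})$ time decay as the technically heavy step; your splitting at $\lambda\sim t^{-1}$, with a matching $o(\lambda^{\mu(\alpha)-1})$ bound on the $\lambda$-derivative of the remainder, is a workable scheme. Be aware that the bounds in Propositions \ref{prop-1} and \ref{prop-2} control the spatial gradient of the remainder, not its $\lambda$-derivative, so this differentiated estimate must indeed be extracted from the explicit analysis of sections \ref{sec-model} and \ref{sec-integer}; this part of your plan is a genuine (and necessary) elaboration rather than a deviation from the paper, which is terse on the same point.
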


\begin{remark} The improved decay rate induced by a two-dimensional magnetic field was observed also for the heat semi-group $e^{-t H(B,0)}$, \cite{k11, kr}.
\end{remark}

\smallskip

\begin{theorem} \label{thm-4}
Let  assumptions \ref{ass-B} and \ref{ass-BV} be satisfied. Assume that $\alpha\in\Z$ and let $s> 5/2$. If $|V(x)| \lesssim (1+|x|)^{-3-0}$ is such that zero is a regular point of $H(B,V)$, then  as $t\to\infty$ 
\begin{equation} \label{eq-time2}
e^{- i t H(B,V)}\, P_c =  -i \, t^{-1} (\log t)^{-2} \ \F_1(B,V) \ + \ o \big(\, t^{-1} (\log t)^{-2}\big)
\end{equation}
in $\B(s,-s )$. 
\end{theorem}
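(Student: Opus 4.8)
The plan is to deduce the time asymptotics from the resolvent expansion of Theorem \ref{thm-2} through the spectral representation of the unitary group, following exactly the strategy used for Theorem \ref{thm-3}; the only difference lies in the threshold behaviour, which is now logarithmic rather than of power type. First I would invoke Stone's formula. Since Assumption \ref{ass-BV} excludes positive eigenvalues, zero is by hypothesis a regular point, and the limiting absorption principle rules out singular continuous spectrum, the projection $P_c$ coincides with the projection onto the absolutely continuous subspace, and one has the identity
\begin{equation}
e^{-i t H(B,V)}\, P_c = \frac{1}{2\pi i}\int_0^\infty e^{-i t\lambda}\big[(H(B,V)-\lambda-i0)^{-1} - (H(B,V)-\lambda+i0)^{-1}\big]\, d\lambda
\end{equation}
in $\B(s,-s)$. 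I would then fix a smooth cut-off $\chi$ equal to $1$ near $\lambda=0$ and supported in a small neighbourhood of the threshold, and split the integral into a low-energy and a high-energy part.

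For the high-energy part I would argue, as in the proof of Theorem \ref{thm-3}, that on the support of $1-\chi$ the resolvent and its $\lambda$-derivatives are bounded in $\B(s,-s)$ (by the limiting absorption principle together with the free resolvent estimates controlling the regime $\lambda\to\infty$), so that repeated integration by parts in $\lambda$ renders this contribution $O(t^{-N})$ for every $N$, hence negligible. For the low-energy part I would insert the expansion \eqref{eq-zero-en2}. Recalling that $(H(B,V)-\lambda-i0)^{-1}$ and $(H(B,V)-\lambda+i0)^{-1}$ are the two boundary values of a function holomorphic on $\C\setminus[0,\infty)$, and fixing the branch of the logarithm by $\arg z\in(0,2\pi)$, the lower boundary value of $(\log\lambda)^{-1}$ becomes $(\log\lambda+2\pi i)^{-1}$, so that the leading part of the jump is
\begin{equation}
(\log\lambda)^{-1}-(\log\lambda+2\pi i)^{-1} = \frac{2\pi i}{(\log\lambda)^2 + 2\pi i\log\lambda} = \frac{2\pi i}{(\log\lambda)^2}\,(1+o(1)), \qquad \lambda\to 0.
\end{equation}
The consistency of this branch choice is confirmed by the fact that the same computation applied to $\lambda^{\mu(\alpha)}$ reproduces precisely the constant $K(B,V)$ of Theorem \ref{thm-3}. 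The low-energy contribution therefore reduces to $\F_1(B,V)$ times the scalar oscillatory integral $\int_0^\infty e^{-it\lambda}\,\chi(\lambda)(\log\lambda)^{-2}\, d\lambda$.

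It then remains to evaluate this integral. The substitution $\lambda = u/t$ gives
\begin{equation}
\int_0^\infty e^{-it\lambda}\,\frac{\chi(\lambda)}{(\log\lambda)^2}\, d\lambda = \frac{1}{t(\log t)^2}\int_0^\infty e^{-iu}\,\chi(u/t)\,\Big(1-\tfrac{\log u}{\log t}\Big)^{-2} du,
\end{equation}
and letting $t\to\infty$ (after one integration by parts in $u$ to secure absolute convergence) the integrand converges pointwise to $e^{-iu}$, so the integral tends to $\int_0^\infty e^{-iu}\, du = -i$. This produces the leading term $-i\, t^{-1}(\log t)^{-2}\,\F_1(B,V)$, in agreement with \eqref{eq-time2}.

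The main obstacle is to show that the remainder $o((\log|\lambda|)^{-1})$ in \eqref{eq-zero-en2} contributes only $o(t^{-1}(\log t)^{-2})$ after time integration: the pointwise expansion of Theorem \ref{thm-2} is not by itself sufficient, since one must justify the exchange of limit and integral and control the transition region where $\chi$ varies. For this I would rely on the quantitative, differentiable-in-$\lambda$ remainder bounds established in the course of the proof of Theorem \ref{thm-2}, or equivalently on an abstract Fourier-transform lemma of the type found in \cite{JK,mu}. This Tauberian passage from the resolvent expansion to the genuine time decay is where the real work lies.
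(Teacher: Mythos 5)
Your overall strategy (threshold resolvent expansion plus oscillatory integration in $\lambda$) is the paper's, but your low-energy representation is different, and this is exactly where the argument breaks down. You represent $e^{-itH(B,V)}P_c$ via Stone's formula as an integral over $(0,\infty)$ of the jump $R(\lambda+i0)-R(\lambda-i0)$, and you extract the leading term of the jump by applying the branch $\arg z\in(0,2\pi)$ to the expansion of Theorem \ref{thm-2}. The difficulty is that the jump of the two leading terms $\F_0+\F_1(\log\lambda)^{-1}$ is of order $(\log\lambda)^{-2}$, i.e.\ one full order \emph{smaller} than the error in \eqref{eq-zero-en2}: Theorem \ref{thm-2} controls the remainder only to $o((\log|\lambda|)^{-1})$, and since $R(\lambda-i0)=R(\lambda+i0)^*$ the difference of the two remainders across the cut is again only $o((\log|\lambda|)^{-1})$. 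Consequently the term $2\pi i\,\F_1(\log\lambda)^{-2}$ that you isolate is not the leading term of the jump in any sense available to you; it is swamped by the unknown part of the remainder. To make your route work you would need a strictly stronger input, namely a two-term expansion $R(\lambda\pm i0)=\F_0+\F_1\big(\log(\lambda\pm i0)\big)^{-1}+o\big((\log|\lambda|)^{-2}\big)$ with the appropriate branches, which neither Theorem \ref{thm-2} nor its proof provides. Note that your consistency check — that the same computation reproduces $K(B,V)$ of Theorem \ref{thm-3} — cannot detect this problem: in the power case the jump $(1-e^{2\pi i\mu})\lambda^{\mu}$ is of the \emph{same} order $\lambda^{\mu}$ as the reference scale of the remainder, so no cancellation occurs there; the cancellation is special to the logarithmic case, and it is precisely why the jump representation is the wrong tool for Theorem \ref{thm-4}.

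The paper avoids this by using the representation \eqref{eq-repr}, $e^{-itH(B,V)}=\frac{1}{2\pi i}\int_\R e^{-it\lambda}R(\lambda+i0)\,d\lambda$, which involves only the upper boundary value but over the whole real line. The expansion is then needed also for $\lambda<0$ (this is Lemma \ref{lem-int-2}, where $(\log(\lambda+i0))^{-1}$ means $(\log|\lambda|+i\pi)^{-1}$), and the coefficient $-i\,t^{-1}(\log t)^{-2}$ comes from Murata's Fourier-transform formula \eqref{fourier-2} applied to $(\log(\lambda+i0))^{-1}$ itself; in that representation no cancellation takes place, so the $o((\log|\lambda|)^{-1})$ remainder, handled via \eqref{eq-jk}, is adequate. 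Two smaller points: your claim that the high-energy part is $O(t^{-N})$ for every $N$ is an overstatement — with a fixed weight $s>5/2$ only two integrations by parts are available, since Lemma \ref{lem-high-en} controls only $R^{(2)}(\lambda+i0)$, giving $o(t^{-2})$ — but that is all that is needed; and your dominated-convergence evaluation of $\int_0^\infty e^{-it\lambda}\chi(\lambda)(\log\lambda)^{-2}\,d\lambda$ needs more care near the edge of the support of $\chi$, where the factor $\big(1-\tfrac{\log u}{\log t}\big)^{-2}$ is of size $(\log t)^{2}$, which is why the paper quotes \cite[Lems.6.6-6.7]{mu} rather than arguing directly.
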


\begin{remark}
It should be pointed out that, in view of the magnetic Hardy-type inequality \eqref{hardy}, zero is a regular point of $H(B,V)$ whenever $|V(x)| \leq V_0\, (1+|x|)^{-2}$ with $V_0$ small enough.  
\end{remark}

\section{\bf Discussion}

\subsection{Hardy inequality} If the magnetic field satisfies assumption \ref{ass-B}, then by \cite{lw, timo, kov} there exists a positive constant $C_h=C_h(B)$ such that the inequality 
\begin{equation} \label{hardy}
H(B) \, \geq \, \frac{C_h}{w}, \qquad \qquad w(x) = 
\left\{
\begin{array}{l@{\quad}l}
1+ |x|^2, &\quad \alpha\not\in\Z  , \\
 & \\
1+ |x|^2\, (\log |x|)^2, &\quad \alpha\in\Z
\end{array}
\right.
\end{equation}
holds in the sense of quadratic forms on $W^{1,2}(\R^2)$.  

\subsection{Dispersive estimates} Since $e^{- i t H(B,V)}$ is a unitary operator from $L^2(\R^2)$ onto itself it is obvious that 
$\| (1+|x|)^{-s}\,  e^{- i t H(B,V)} \, (1+|x|)^{-s} \|_{L^2(\R^2)} \leq 1$ for every $s\geq 0$ and $t>0$. In combination with Theorem \ref{thm-3} we thus get the dispersive estimate
\begin{equation} \label{disp-L2}
\| (1+|x|)^{-s}\, e^{- i t H(B,V)}\, P_c\, (1+|x|)^{-s}\, \|_{L^2(\R^2)} \ \lesssim \ t^{-1-\mu(\alpha)} \qquad \forall\ t >0.
\end{equation}
We note once again that the effect of faster decay with respect to the non-magnetic evolution is absent in dimension three, see \cite{kk} and \cite{egs1, egs2}. Three-dimensional dispersive estimates in weighted $L^2-$spaces in the absence of magnetic fields were first obtained by Rauch, \cite{ra}.
Extensions of these estimates to the $L^1 \to L^\infty$ setting, also in dimensions higher than three, were established in \cite{jss, gsch}. The case of dimension two was treated by Schlag in \cite{sch}.

\smallskip

An $L^1\to L^\infty$  dispersive estimate which corresponds to Murata's asymptotic expansion \eqref{murata} has been obtained only recently by Erdogan and Green in \cite{eg}. They showed that 
\begin{equation} \label{erd-green}
\| \, (\log(2+|x|)^{-2}\, e^{-i t H(0,V)}\, P_{ac} \, (\log(2+|x|)^{-2} \|_{L^1(\R^2)\to L^\infty(\R^2)} \, \lesssim\, t^{-1}\, (\log t)^{-2} \qquad t>2,  
\end{equation}
provided zero is a regular point of $H(0,V)$ and $|V(x)| \lesssim (1+|x|)^{-3-0}$. It is interesting to observe that the logarithmic factor on the left hand side of \eqref{erd-green} appears also in the weight function $w$ for the Hardy inequality \eqref{hardy} in the case $\alpha\in\Z$.

\subsection{The case of zero flux} \label{zero-alpha}When $\alpha=0$, then we cannot apply our perturbative approach, since the reference magnetic field $B_0$ is identically zero in this case, see equations \eqref{a0} and \eqref{B0}. Instead, we treat the operator $H(B,V)$ as a perturbation the free Laplacian $-\Delta$ and apply a result of Murata, see \cite[Thms.8.4\&7.5]{mu}. This is possible thanks to the fact that for a magnetic field with zero flux we can find a corresponding vector potential 
with sufficient decay at infinity, cf. Lemma \ref{lem-gauge-2}. 

It must be mentioned, however, that this is the only case in which 
the perturbation  with respect to the free Laplacian, i.e. the operator $H(B,V) + \Delta$, has coefficients decaying fast enough in order to compensate for the growth of the weight function $(1+|x|)^s$ with $s>1$. Indeed, if $\rt\, A_1=B_1$ and $\rt\, A_2=B_2$, then by the Stokes Theorem we have
\begin{equation} \label{arg-stokes}
|A_1(x)-A_2(x)| = o(|x|^{-1}) \quad \text{as}\quad |x|\to\infty \quad  \Rightarrow \quad \int_{\R^2} B_1(x)\, dx = \int_{\R^2} B_2(x)\, dx,
\end{equation}
see also \cite{hb}. Hence if $B_1=0$ and $B_2=B$ is such that $\int_{\R^2} B \neq 0$, then the coefficients of the perturbation $H(B,V) + \Delta= 2 i\, A\cdot \nabla  +i \nabla\cdot A + |A|^2  + V$ cannot decay faster than $|x|^{-1}$ irrespectively of the decay rate of $B$.

\subsection{Long range magnetic fields} One might expect that the main results of this paper should remain valid also if $B$ has a slower decay than the one required by assumption \ref{ass-B}, as long as the flux $\alpha$ remains finite. The asymptotic expansion of the resolvent for decaying magnetic fields with infinite flux is an open question.

\subsection{$ L^1\to L^\infty$ estimates} It would be interesting to extend the dispersive estimate \eqref{disp-L2} to an $L^1\to L^\infty$ setting. So far this is known only for the Aharonov-Bohm magnetic field $B_{ab}$ with flux $\alpha$, see \cite{fffp,gk}. Such a magnetic field is 
generated by the vector potential 
$$
A_{ab}(x)= \frac{\alpha}{|x|^2} \, (-x_2, x_1).
$$ 
Denote by $H(B_{ab}, 0)$  the Friedrichs extension of the operator 
$(i \nabla +A_{ab})^2$ defined on $C_0^\infty(\R^2\setminus\{0\})$. In \cite[Thm.1.9]{fffp} it is proved that 
\begin{equation} \label{fffp}
\| \ e^{-i t H(B_{ab},0)}\|_{L^1(\R^2)\to L^\infty(\R^2)} \, \lesssim\, t^{-1}  , \qquad t>0.
\end{equation}
A related weighted estimate with an improved decay rate was obtained in \cite[Cor.3.3]{gk}:
\begin{equation} \label{gk}
\| \, (1+|x|)^{-\mu(\alpha)}\, e^{-i t H(B_{ab},0)}\, (1+|x|)^{-\mu(\alpha)}  \|_{L^1(\R^2)\to L^\infty(\R^2)} \, \lesssim\, t^{-1-\mu(\alpha)}  \qquad t>0.
\end{equation}
Note that \eqref{gk} gives the decay rate $t^{-1}$ when $\alpha\in\Z$. This is not surprising since the Aharonov-Bohm operator with an integer flux is unitarily equivalent to the free Laplacian $-\Delta$ in $\R^2$. 


\section{\bf Proofs of the main results}
\label{sec-proofs}

\noindent In this section we assume throughout that $B$ and $V$ satisfy assumptions of Theorems \ref{thm-1} and \ref{thm-2}.
Under these conditions the limiting absorption principle holds for the operator $H(B,0)$ and 
\begin{equation} \label{lap}
 \|\, \rho^{-s}\, (H(B,0) -\lambda-i0)^{-1}\, \rho^{-s}\|_{\B(L^2(\R^2))} \ < \ \infty
\end{equation}
for any $s >1/2$ and any $\lambda>0$, see \cite[Sect.~5]{ro}, \cite{jmp}. We have

\begin{lemma} \label{lem-lap}
Assume that $|V(x)| \lesssim (1+|x|)^{-3-0}$. Then
$$
 \|\, \rho^{-s}\, (H(B,V) -\lambda-i0)^{-1}\, \rho^{-s}\|_{\B(L^2(\R^2))} \ < \ \infty
$$
for any $s \geq 1$ and any $\lambda>0$.
\end{lemma}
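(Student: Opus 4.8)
The goal is to upgrade the limiting absorption principle \eqref{lap} for the magnetic operator $H(B,0)$ to the full operator $H(B,V)$, on the slightly smaller range of weights $s\geq 1$. The natural tool is the second resolvent identity together with a compactness/Fredholm argument, exactly as in the standard theory of short-range perturbations.

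\medskip

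\noindent\textbf{Proof.} The plan is to treat $V$ as a relatively compact perturbation of $H(B,0)$ and apply a Fredholm alternative uniformly near the positive real axis. First I would fix $\lambda>0$ and write, formally, the resolvent identity
\begin{equation} \label{2ndres-plan}
(H(B,V)-\lambda-i0)^{-1} = \big(1 + (H(B,0)-\lambda-i0)^{-1}\, V\big)^{-1}\, (H(B,0)-\lambda-i0)^{-1}.
\end{equation}
To make sense of the inverse in \eqref{2ndres-plan} on weighted spaces I would factor the potential as $V = \rho^{-s}\, |V|^{1/2}\cdot \sgn(V)\, |V|^{1/2}\, \rho^{s}$ using the decay $|V(x)|\lesssim(1+|x|)^{-3-0}$, so that for $s$ in the admissible range the operator
$$
M(\lambda) := |V|^{1/2}\, (H(B,0)-\lambda-i0)^{-1}\, |V|^{1/2}
$$
is bounded on $L^2(\R^2)$ by \eqref{lap}, the decay of $V$ splitting the weight $\rho^{-s}\cdots\rho^{-s}$ between the two factors of $|V|^{1/2}$. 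The key point is that $1+M(\lambda)$ is invertible on $L^2(\R^2)$: since zero is not a positive eigenvalue of $H(B,V)$ (Assumption \ref{ass-BV}), the operator $1+M(\lambda)$ has trivial kernel for every $\lambda>0$, and since $M(\lambda)$ is compact (the decay rate $-3-0$ on $V$ together with the limiting absorption estimate gives a Hilbert–Schmidt, hence compact, operator), the Fredholm alternative yields a bounded inverse.

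\medskip

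\noindent Second, I would rewrite the bound I actually want to prove as a sandwiched estimate. Inserting \eqref{2ndres-plan} and conjugating by $\rho^{-s}$, I would use the algebraic identity
$$
\rho^{-s}\,(H(B,V)-\lambda-i0)^{-1}\,\rho^{-s} = R_0 - R_0\,|V|^{1/2}\,(1+M(\lambda))^{-1}\,|V|^{1/2}\,R_0,
$$
where $R_0 := \rho^{-s}\,(H(B,0)-\lambda-i0)^{-1}\,\rho^{-s}$ is bounded by \eqref{lap}. Each of the three factors on the right is a bounded operator on $L^2(\R^2)$: the outer $R_0$'s are controlled by the magnetic limiting absorption principle, and the middle factor $(1+M(\lambda))^{-1}$ is bounded by the Fredholm step above, the weights $\rho^{-s}$ absorbed into $|V|^{1/2}$ thanks to the extra decay of $V$. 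This gives finiteness of the norm for each fixed $\lambda>0$, which is exactly the stated claim (no uniformity in $\lambda$ down to zero is asserted here). The restriction $s\geq 1$ rather than $s>1/2$ comes from needing one full power of $\rho^{-s}$ on each side together with enough of $V$'s decay to make the Hilbert–Schmidt norm finite; tracking the exponents, $2s \leq 3$ combined with $s>1/2$ from \eqref{lap} leaves the clean range $s\geq 1$.

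\medskip

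\noindent The main obstacle I anticipate is the invertibility of $1+M(\lambda)$, i.e.\ ruling out $-1$ from the spectrum of the compact operator $M(\lambda)$. This is where Assumption \ref{ass-BV} (absence of positive eigenvalues) is indispensable: a nontrivial solution of $(1+M(\lambda))\phi=0$ would, after undoing the $|V|^{1/2}$ factorization, produce an $L^2$ eigenfunction of $H(B,V)$ at the positive energy $\lambda$, contradicting the assumption. I would carry out this translation carefully, checking that $u := (H(B,0)-\lambda-i0)^{-1}|V|^{1/2}\phi$ lies in $L^2$ with the correct decay (using the limiting absorption principle and a bootstrap on the elliptic regularity of $H(B,0)$) and that it genuinely satisfies $(H(B,V)-\lambda)u=0$ rather than only the outgoing resolvent equation. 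Everything else is the routine machinery of relatively compact perturbations; the one subtle input is verifying the Hilbert–Schmidt bound on $M(\lambda)$ from the two-dimensional kernel estimates implicit in \eqref{lap}, which requires the decay exponent on $V$ to exceed the dimension, as it does here. $\qed$
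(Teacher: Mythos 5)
Your skeleton is the same as the paper's: second resolvent identity, a Birman--Schwinger/Fredholm alternative, and Assumption \ref{ass-BV} to rule out $-1$ from the spectrum of the compact factor. However, there is a genuine gap at precisely the step you yourself flag as ``the one subtle input'': the compactness of $M(\lambda)=|V|^{1/2}R_B(\lambda+i0)|V|^{1/2}$. You propose to obtain it as a Hilbert--Schmidt bound ``from the two-dimensional kernel estimates implicit in \eqref{lap}''. No such kernel estimates are implicit in \eqref{lap}: the limiting absorption principle is an operator-norm bound in weighted $L^2$ spaces and carries no pointwise information about the integral kernel of the \emph{magnetic} resolvent. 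For the free resolvent the kernel $\frac{i}{4}H_0^{(1)}(\sqrt{\lambda}\,|x-y|)$ is explicit and your Hilbert--Schmidt computation would close, but for $R_B(\lambda+i0)$ with a general $B$ satisfying Assumption \ref{ass-B} no explicit kernel is available --- indeed, a substantial part of the paper (sections \ref{sec-model} and \ref{sec-integer}) is devoted to computing such a kernel only for the special radial field $B_0$. Without compactness, the Fredholm alternative, which is the heart of your argument, cannot be invoked. The paper fills exactly this hole by a domination argument: $\rho^{s_0}V(-\Delta-\lambda-i\eps)^{-1}$ is compact because the free resolvent is explicit, the diamagnetic inequality transfers compactness to the magnetically dominated operator for each $\eps>0$, and compactness survives the limit $\eps\to 0$ as a uniform limit of compact operators, using \eqref{lap}. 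If you replace your Hilbert--Schmidt claim by such a domination (or an equivalent local-compactness) argument, your proof goes through.

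Two further slips are repairable but should be fixed. First, your sandwiched formula $\rho^{-s}R_V\rho^{-s}=R_0-R_0|V|^{1/2}(1+M(\lambda))^{-1}|V|^{1/2}R_0$ is not an identity: the factors adjacent to $(1+M(\lambda))^{-1}$ must be $\rho^{-s}R_B|V|^{1/2}$ and $|V|^{1/2}R_B\rho^{-s}$, whereas $R_0|V|^{1/2}=\rho^{-s}R_B\rho^{-s}|V|^{1/2}$ carries a superfluous interior weight; the corrected version is still bounded, since $\rho^{-s}R_B|V|^{1/2}=\bigl(\rho^{-s}R_B\rho^{-s'}\bigr)\bigl(\rho^{s'}|V|^{1/2}\bigr)$ with any $s'\in(1/2,3/2]$. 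Second, for sign-changing $V$ the Birman--Schwinger operator must carry the sign, e.g.\ $M(\lambda)=\sgn(V)\,|V|^{1/2}R_B(\lambda+i0)|V|^{1/2}$, otherwise elements of $\ker(1+M(\lambda))$ do not translate into eigenfunctions of $H(B,V)$ at energy $\lambda$. Finally, the restriction $s\geq 1$ in the statement needs no exponent bookkeeping of the type ``$2s\leq 3$'': since larger $s$ gives a weaker claim, it suffices to prove the bound for one $s_0\in(1/2,1]$, which is how the paper concludes.
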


\begin{proof}
Let $1/2 < s_0 \leq 1$ and let $\lambda>0$. For convenience we denote
\begin{equation} \label{res-B}
R_B(\lambda+i0) = (H(B,0)-\lambda-i0)^{-1}.
\end{equation}
Since $\rho(x)^{s_0} V(x) \to 0$ as $|x|\to \infty$, the operator $\rho^{s_0} V\, (-\Delta -\lambda-i\eps)^{-1}$ is compact in $L^2(\R^2)$ for any $\eps>0$. Hence by the diamagnetic inequality the same is true for $\rho^{s_0} V\, R_B(\lambda+i0)\, \rho^{-s_0}$. From $\rho^{2s_0} V \in L^\infty(\R^2)$ and from \eqref{lap} it thus follows that $\rho^{s_0} V\, R_B(\lambda+i0) \, \rho^{-s_0}$ is compact in $L^2(\R^2)$ being a uniform limit of compact operators. In other words, 
$ V\, R_B(\lambda+i0) $ is compact in $\B(s_0, s_0)$. 

Hence $-1$ is not an eigenvalue of $V\, R_B(\lambda+i0) $ since otherwise $\lambda$ would be an eigenvalue of the operator $H(B,V)$ which contradicts assumption \ref{ass-BV}. This implies that the operator $1+V\, R_B(\lambda+i0) $ is invertible in $\B(s_0, s_0)$ and by the resolvent equation it follows that 
$$
 (H(B,V) -\lambda-i0)^{-1} = (1+V\, R_B(\lambda+i0) )^{-1}\, R_B(\lambda+i0) 
$$
exists in $\B(s_0, s_0^{-1})$. The claim thus follows from the fact that $s_0\leq 1$. 
\end{proof}

\noindent By the above Lemma the positive part of the spectrum of $H(B,V)$ is purely absolutely continuous. The negative part of the spectrum is either empty or consists of a finite number of eigenvalues each having finite multiplicity, see \cite[Thm.3.1]{kov}. 

\subsection{Expansion at threshold} If zero is a regular point of $H(B,V)$, then the statement of Lemma \ref{lem-lap} can be extended to $\lambda=0$. Moreover, from the finiteness of the discrete spectrum of $H(B,V)$ it then follows that $H(B,V)$ has no spectrum in the left neighborhood of zero. Hence 
$$
\sup_{-\delta \leq \lambda \leq \delta}\, \|\, \rho^{-1}\, (H(B,V) -\lambda-i0)^{-1}\, \rho^{-1}\|_{\B(L^2(\R^2))} \ < \ \infty
$$
for $s \geq 1$ and $\delta>0$ small enough. 

\smallskip

\noindent To prove Theorems \ref{thm-1} and \ref{thm-2} (for $\alpha\neq 0$) we will employ the perturbation procedure mentioned in the introduction. First we establish the asymptotic expansions of the type \eqref{eq-zero-en1} and \eqref{eq-zero-en2} for the resolvent of an operator $H(B_0) =H(B_0,0)$, where $B_0$ is  by equation \eqref{B0}, see Propositions \ref{prop-1}, \ref{prop-2}.  Then we show by the perturbative technique that any other magnetic field with the same flux as $B_0$ gives rise to an operator with (qualitatively) the same asymptotic expansion of the resolvent. Adding a bounded electric potential $V$ with a fast enough decay at infinity then won't change the character of the obtained expansion. 
In the case $\alpha=0$, which concerns Theorem  \ref{thm-2} only, we repeat the same procedure with $H(B_0)$ replaced by  $-\Delta$. 

\smallskip

\subsubsection*{\bf The reference operator} The  reference operator $H(B_0)$, which will play the role of the free Hamiltonian when $\alpha\neq 0$, is associated to the radial magnetic field $B_0$
given by
\begin{equation} \label{B0}
B_0(x)= B_0(|x|) = \frac{\alpha}{|x|} \quad \text{if}\quad |x| < 1, \qquad B_0(x) =0 \quad \text{otherwise}.
\end{equation}
It is easily seen that $B_0 = \rt \, A_0$ and that the flux of $B_0$ through $\R^2$ is equal to $\alpha$. 
Let
\begin{equation} \label{def-r0}
R_0(\lambda+i0)= (H(B_0)-\lambda-i0)^{-1}.
\end{equation}
We have

\begin{proposition} \label{prop-1}
Let $\alpha\not\in\Z$ and let $s> 3/2$. Then there exists $\widetilde G_1\in  \B(s,-s )$ such that 
\begin{equation} \label{B0-eq-1}
R_0(\lambda+i0) = G_0 + \lambda^{\mu(\alpha)}\, G_1 + G_2(\lambda) \quad \text{in} \ \ \B(s,-s ),
\end{equation}
where
$$
\|\, G_2(\lambda) \|_{\B(s,-s )} = o\big(|\lambda|^{\mu(\alpha)} \big), \qquad \| |\nabla G_2(\lambda) |\|_{\B(s,-s )} = o\big(|\lambda|^{\mu(\alpha)}\big) \qquad  \lambda\to 0. 
$$
Moreover, if $\mu(\alpha) <1/2$, then $\widetilde G_1= G_1$.
\end{proposition}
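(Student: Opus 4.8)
The plan is to exploit the rotational symmetry of $H(B_0)$. Since $B_0$ is radial the potential $A_0$ is purely tangential and $H(B_0)$ commutes with rotations, so it decomposes along angular-momentum channels: writing $u(r,\theta)=\sum_{m\in\Z} u_m(r)\, e^{i m\theta}$, the operator acts in the $m$-th channel as the one-dimensional radial operator
\[
h_m = -\partial_r^2 - \frac1r\, \partial_r + \frac{W_m(r)}{r^2}, \qquad W_m(r)=\begin{cases} (m-\alpha r)^2, & r\le 1,\\ (m-\alpha)^2, & r>1,\end{cases}
\]
on $L^2((0,\infty),\, r\, dr)$. The decisive feature is that $B_0$ is supported in the unit disc, so for $r>1$ the channel operator is exactly the Aharonov--Bohm Bessel operator of order $\nu_m:=|m-\alpha|$, whose minimum over $m\in\Z$ is $\mu(\alpha)$. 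First I would reduce the problem to a channel-by-channel computation of $(h_m-\lambda-i0)^{-1}$ followed by a summation over $m$.

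In each channel I would build the resolvent kernel from the solution $\phi_m$ regular at $r=0$ and the outgoing solution $\psi_m$ at $r=\infty$, as $\phi_m(r_<)\,\psi_m(r_>)$ divided by their Wronskian. For $r>1$ these are explicit: $\psi_m$ is proportional to the Hankel function $H^{(1)}_{\nu_m}(\sqrt\lambda\, r)$, while $\phi_m$ is the $C^1$-continuation across $r=1$ of the regular interior solution, matched there to a combination $a_m(\lambda)\, J_{\nu_m}(\sqrt\lambda\, r)+b_m(\lambda)\, Y_{\nu_m}(\sqrt\lambda\, r)$. The low-energy behaviour then follows from the small-argument asymptotics of Bessel functions: for non-integer $\nu$ the product $J_\nu(\sqrt\lambda\, r_<)\, H^{(1)}_\nu(\sqrt\lambda\, r_>)$ splits (with $z$ the Bessel argument) into a $\lambda$-independent piece (the $z^{\nu}z^{-\nu}$ cross term, contributing to $G_0$) and a piece of order $\lambda^{\nu}$ (the $z^\nu z^\nu$ term). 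Hence channel $m$ produces a correction of order $\lambda^{\nu_m}$, and the minimal order $\mu(\alpha)$ drives the leading correction.

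Summing over $m$, I would collect the $\lambda^0$ terms into $G_0$, the terms of order exactly $\lambda^{\mu(\alpha)}$ into $\widetilde G_1$, and the rest into $G_2(\lambda)$. The hard part will be to show $\|G_2(\lambda)\|_{\B(s,-s)}=o(\lambda^{\mu(\alpha)})$ with control uniform in the infinitely many channels; this is exactly where the weight $s>3/2$ and the auxiliary Bessel estimates of section \ref{sec-aux-bessel} enter, supplying both the decay in $m$ of each channel kernel in the weighted norm and the remainder bounds in $\lambda$. The second-smallest order is $1-\mu(\alpha)$ (the neighbouring channel) and the next correction inside the leading channel is $\lambda^{2\mu(\alpha)}$; since $\mu(\alpha)<\min\{1-\mu(\alpha),\,2\mu(\alpha)\}$ precisely when $\mu(\alpha)<1/2$, in that regime a single channel realizes the leading order, the remainder is genuinely of higher order, and the coefficient reduces to the model operator, i.e. $\widetilde G_1=G_1$. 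When $\mu(\alpha)=1/2$, i.e. $\alpha$ a half-integer, two adjacent channels both produce $\lambda^{1/2}$, so $\widetilde G_1$ must collect both and need not equal $G_1$.

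Finally, the gradient bound $\||\nabla G_2(\lambda)|\|_{\B(s,-s)}=o(\lambda^{\mu(\alpha)})$ would be obtained by differentiating the kernel in $r$ and $\theta$: since these derivatives map Bessel functions to neighbouring-order Bessel functions through the standard recurrences and introduce only harmless $\sqrt\lambda$ prefactors, the same small-argument asymptotics and the same uniform-in-$m$ summation apply. I expect the genuine obstacle to lie not in the single-channel threshold analysis, which is classical, but in the uniform summation over channels and the $C^1$-matching at $r=1$ that ties the interior data to the exterior Bessel/Hankel combination.
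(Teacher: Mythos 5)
Your positive\--energy analysis reproduces the paper's proof of this proposition essentially step for step: the same angular\--momentum decomposition \eqref{sum-gen}, the same Sturm--Liouville construction \eqref{rm} of the channel kernel from the solution regular at $r=0$ matched in $C^1$ at $r=1$ to a combination $A_m J_{\nu_m}+B_m Y_{\nu_m}$ and from the outgoing Hankel solution, the same small\--argument splitting of $J_\nu\cdot H^{(1)}_\nu$ into a $\lambda$\--independent part (feeding $G_0$) and a $\lambda^{\nu}$ part, and the same channel bookkeeping: second\--smallest exponent $1-\mu(\alpha)$ from the neighbouring channel, next correction $\lambda^{2\mu(\alpha)}$ inside the leading channel, whence a unique leading channel and $\widetilde G_1=G_1$ exactly when $\mu(\alpha)<1/2$, and two channels when $\mu(\alpha)=1/2$ (the paper makes the same reduction around \eqref{nalpha}). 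Your sign convention $|m-\alpha|$ versus the paper's $|m+\alpha|$ is immaterial. You also correctly locate the real work in the uniform\--in\--$m$ weighted estimates (the paper's Schur bound on $(0,1)^2$, Hilbert--Schmidt bounds elsewhere, and the auxiliary Bessel lemmas of section \ref{sec-aux-bessel}); note that what makes that uniformity tractable is that your ``regular interior solution'' is explicitly a Kummer function \eqref{vm}, whose parameter\--uniform bounds (Appendix \ref{sec-mu}) control the matching coefficients for all $m$ at once.

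The genuine gap is that you never treat $\lambda<0$. The proposition asserts the expansion of $R_0(\lambda+i0)$ as $\lambda\to 0$ from \emph{both} sides, and this two\--sided statement is what is actually used later (Lemma \ref{lem-rv} and the contour integral \eqref{eq-repr} over all of $\R$ in the time\--decay proofs). Your construction does not extend verbatim: for $\lambda<0$ there is no outgoing Hankel solution, the exterior basis becomes $I_{\nu},K_{\nu}$, and — as the paper points out in the proof of Lemma \ref{lem-neg2} — the analogue \eqref{ik} of the splitting \eqref{jy} cannot be used in the Hilbert--Schmidt estimates because $I_{\nu}$ and $I_{-\nu}$ both grow exponentially, so the far\--region bounds must be redone by cutting the integration at $r\sim|\lambda|^{-1/2}$ (equation \eqref{3x}). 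Moreover one must verify that the $I/K$ computation reproduces the \emph{same} operators $G_0,G_1$, i.e.\ that the coefficient arising from the $K_\nu$ expansion equals $g_1$ times the boundary value $(\lambda+i0)^{\mu}=|\lambda|^{\mu}e^{i\pi\mu}$; this is the identity $-\bigl(\sin(\mu\pi)(i-\cot(\mu\pi))\bigr)^{-1}=e^{i\mu\pi}$ implicit at the end of Lemma \ref{lem-neg1}, and it is a nontrivial consistency check rather than a formality. The paper's proof of the proposition combines Lemmata \ref{lem-no-int}--\ref{r0-asymp2} (your side) \emph{and} Lemmata \ref{lem-neg1}--\ref{lem-neg2}; without the latter your argument only proves the one\--sided statement.
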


\smallskip

\begin{proposition} \label{prop-2}
Let $\alpha\in\Z,\, \alpha\neq 0$ and let  $s> 3/2$. Then
\begin{equation} \label{B0-eq-2}
R_0(\lambda+i0) = G_0 + (\log \lambda)^{-1}\, \G_1 + \G_2(\lambda) \quad \text{in} \ \ \B(s,-s ),
\end{equation}
where 
$$
\|\, \G_2(\lambda) \|_{\B(s,-s )} = o\big(\log |\lambda|)^{-1}\big), \qquad \| |\nabla \G_2(\lambda) |\|_{\B(s,-s )} = o\big(\log |\lambda|)^{-1}\big) \qquad  \lambda\to 0.
$$
\end{proposition}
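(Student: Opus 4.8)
The plan is to compute the resolvent $R_0(\lambda+i0)$ explicitly by exploiting the rotational symmetry of the reference magnetic field $B_0$ defined in \eqref{B0}. Since $B_0$ is radial, the vector potential $A_0$ in \eqref{a0} is also radial, and hence $H(B_0)$ commutes with rotations. This means we can decompose $L^2(\R^2)$ into the angular momentum channels via the Fourier series in the angular variable $\theta$, writing $u(r,\theta) = \sum_{m\in\Z} u_m(r)\, e^{i m \theta}$. In each channel the operator $H(B_0)$ reduces to a one-dimensional radial Schr\"odinger operator $h_m$ on $L^2((0,\infty), r\, dr)$ whose effective angular momentum is shifted by the flux. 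Concretely, for $|x|>1$ the vector potential equals that of the Aharonov--Bohm field with flux $\alpha$, so the effective angular momentum in the $m$-th channel becomes $|m+\alpha|$ rather than $|m|$; for $|x|<1$ the potential is modified but this is a compactly supported perturbation of the Aharonov--Bohm operator. The threshold behavior of each radial resolvent is governed by the index $\nu_m := |m+\alpha|$, and the channel giving the slowest decay (the smallest $\nu_m$) is precisely the one realizing $\mu(\alpha) = \min_m |m+\alpha|$.

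The key steps, in order, are the following. First, I would reduce $R_0(\lambda+i0)$ to its fiber decomposition $\bigoplus_m r_m(\lambda+i0)$ over the angular channels. Second, in each fixed channel I would solve the radial equation: for small $\lambda$ the relevant solutions are built from Bessel functions $J_{\nu_m}(\sqrt{\lambda}\, r)$ and Hankel functions $H^{(1)}_{\nu_m}(\sqrt{\lambda}\, r)$ for $r>1$, matched across $r=1$ to the regular solution of the modified inner equation. This matching produces an explicit radial Green's function whose low-energy expansion is controlled by the small-argument asymptotics of the Bessel functions, namely $J_\nu(z) \sim (z/2)^\nu/\Gamma(\nu+1)$ and the corresponding behavior of $H^{(1)}_\nu$. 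Third, I would extract the leading $\lambda$-dependence: the channel with index $\nu_m = \mu(\alpha)$ contributes a term of order $\lambda^{\mu(\alpha)}$ (since the expansion of $J_{\nu}(\sqrt{\lambda}\, r)$ generates the power $\lambda^{\nu/2}$, and the Green's function pairs two such solutions with a Wronskian that shifts the exponent appropriately to $\lambda^{\mu(\alpha)}$). All other channels contribute either the constant term $G_0$ or higher-order remainders. Summing the channels and identifying the coefficient operators with $G_0$ and $G_1$ as defined via \eqref{G0}, \eqref{G1} yields \eqref{B0-eq-1}. Fourth, the weighted bounds $\B(s,-s)$ with $s>3/2$ follow from estimating the integral kernels against the weights $\rho^{-s}$, using the decay of the Hankel functions, and the gradient bounds follow from differentiating the kernel and repeating the estimate; the technical Bessel-function estimates needed here are collected in the auxiliary section \ref{sec-aux-bessel}.

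The main obstacle I anticipate is controlling the error term $G_2(\lambda)$ uniformly in the angular channel index $m$ while simultaneously proving that the full sum over $m$ converges in the operator norm $\B(s,-s)$. Each individual channel is elementary, but as $|m|\to\infty$ the index $\nu_m = |m+\alpha|$ grows, and one must verify that the high-angular-momentum channels do not accumulate and spoil either the leading expansion or the smallness of the remainder. This requires uniform estimates on the Bessel/Hankel kernels with explicit dependence on $\nu_m$, together with an argument that the contribution of large-$m$ channels is summable against the weights---this is precisely where the threshold $s>3/2$ enters, balancing the angular-momentum barrier against the growth of $\rho^s$. A secondary delicate point is the case $\mu(\alpha)=1/2$, where two channels $m$ and $m'$ may share the minimal index $|m+\alpha|=|m'+\alpha|=1/2$; here the coefficient $\widetilde G_1$ need not coincide with $G_1$, which is why the proposition states the identification $\widetilde G_1 = G_1$ only under the strict condition $\mu(\alpha)<1/2$.

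The integer-flux case of Proposition \ref{prop-2} follows the same fiber-decomposition strategy, but now the minimal index is $\mu(\alpha)=0$, attained in the channel $m=-\alpha$. In that channel the effective operator behaves like the free two-dimensional Laplacian at threshold, and the small-argument expansion of $J_0$ and $H^{(1)}_0$ produces the logarithmic term $(\log\lambda)^{-1}$ familiar from \eqref{2D-res} rather than a power of $\lambda$; all channels with $m\neq -\alpha$ have strictly positive index and contribute only to $G_0$ or to higher-order remainders. Identifying the resulting logarithmic coefficient with $\G_1$ from \eqref{G1-a} and the constant with $\G_0$ from \eqref{G0-a} gives \eqref{B0-eq-2}, with the remainder and gradient bounds established exactly as before.
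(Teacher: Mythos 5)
Your proposal takes essentially the same route as the paper: the angular-momentum fiber decomposition, radial Green's functions matched at $r=1$ (Kummer-type solutions inside the disc, Bessel/Hankel solutions outside), the channel $m=-\alpha$ generating the $(\log\lambda)^{-1}$ term from the small-argument asymptotics of $J_0$ and $Y_0$, and uniform-in-$m$ weighted estimates for the remainder and its gradient --- this is precisely the content of Lemmata \ref{lem-int}, \ref{lem-g01-int} and \ref{r0-asymp3}. The only points you gloss over with ``established exactly as before'' are (i) the side $\lambda<0$, which the paper treats separately via the modified Bessel functions $I_\nu$, $K_\nu$ and the logarithmic asymptotics \eqref{k-log} of $K_0$ (Lemma \ref{lem-int-2}), and (ii) the fact that for integer index the splitting formula \eqref{jy} degenerates, so in the channels $m\neq-\alpha$ the function $Y_{|m+\alpha|}$ must be handled through its integral representation \eqref{int-j-2} when the non-integer-flux estimates are transferred to this setting.
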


\smallskip

\noindent Proofs of Propositions \ref{prop-1} and \ref{prop-2} are given in sections \ref{sec-model} and \ref{sec-integer} respectively.

\begin{remark}
Note that the field $B_0$ does not satisfy assumption \ref{ass-B}.
\end{remark}

\noindent The following Lemma plays a crucial role in our approach, for it allows us to extend the results of Propositions \ref{prop-1} and \ref{prop-2} to all magnetic fields satisfying assumption \ref{ass-B}. 

\begin{lemma} \label{lem-gauge}
Let $B$ satisfy assumption \ref{ass-B} and let $\alpha$ be given by \eqref{flux}. Suppose that $\alpha\neq 0$. Then there exists a differentiable vector field $A:\R^2\to \R^2$ such that $\rt \, A = B$ and such that 
\begin{equation} \label{vp}
 | \nabla \cdot A(x)|  \ \lesssim \  (1+|x|)^{-3-0} \ , \qquad
| \, A(x) - A_0(x) | \ \lesssim \  (1+|x|)^{-3-0} 
\end{equation}
where $A_0$ is given by \eqref{a0}.
\end{lemma}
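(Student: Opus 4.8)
The goal is to construct, for a magnetic field $B$ satisfying assumption \ref{ass-B} with nonzero flux $\alpha$, a vector potential $A$ with $\rt\, A = B$ whose deviation from the reference potential $A_0$ (and whose divergence) decays like $(1+|x|)^{-3-0}$. The plan is to write $A$ explicitly via a Biot--Savart-type construction and then exploit a cancellation of the leading asymptotic term against $A_0$, which is precisely engineered to carry the flux.

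\begin{proof}[Proof plan]
First I would fix the Coulomb gauge by solving $\Delta \phi = B$ and setting $A = \nabla^\perp \phi := (-\pd_2 \phi, \pd_1 \phi)$, so that automatically $\rt\, A = \Delta\phi = B$ and $\nabla\cdot A = 0$. Taking $\phi(x) = \frac{1}{2\pi}\int_{\R^2} \log|x-y|\, B(y)\, dy$ as the Newtonian potential, the decay assumption \eqref{ass-B-eq} with $s>4$ guarantees $B\in L^1$ with all moments controlled, so this integral is well-defined and differentiable. The key computation is then the large-$|x|$ asymptotics of $A(x) = \nabla^\perp \phi(x) = \frac{1}{2\pi}\int_{\R^2} \frac{(x-y)^\perp}{|x-y|^2}\, B(y)\, dy$, where $(x-y)^\perp = (-(x_2-y_2), x_1-y_1)$. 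Expanding the kernel $\frac{(x-y)^\perp}{|x-y|^2}$ in powers of $|y|/|x|$ for $|x|$ large, the zeroth-order term yields $\frac{1}{2\pi}\frac{x^\perp}{|x|^2}\int B(y)\,dy = \alpha\, \frac{(-x_2,x_1)}{|x|^2}$, which is exactly $A_0(x)$ for $|x|>1$. Thus the leading Aharonov--Bohm-type tail of $A$ is precisely $A_0$, by the very definition of $\alpha$ in \eqref{flux}.

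The heart of the estimate is then to control the remainder $A - A_0$. For $|x|>1$ this is the difference between the Biot--Savart integral and its leading term; I would split the integral over the region $|y| \le |x|/2$ (where the Taylor expansion of the kernel is valid and the next-order term contributes $O(|x|^{-2})\int |y|\,|B(y)|\,dy$, finite since $|y|\,|B(y)| \lesssim (1+|y|)^{-s+1}$ is integrable for $s>4$) and the region $|y| > |x|/2$ (where $|B(y)| \lesssim (1+|x|)^{-s}$ makes the contribution small). Combining these with the $\int_{|x|/2}^{\infty}$ estimate using the decay $s>4$ should give $|A(x)-A_0(x)| \lesssim (1+|x|)^{-2}$, and in fact $= o(|x|^{-2})$ because $\int_{|y|>|x|/2}|B| \to 0$ faster than any fixed power; this comfortably yields the claimed $(1+|x|)^{-3-0}$ decay once one checks the sharp rate. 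The divergence bound is easier: since $\nabla\cdot A = 0$ identically in the Coulomb gauge on all of $\R^2$, the first estimate in \eqref{vp} holds trivially. Finally, for $|x|\le 1$ both $A$ and $A_0$ are bounded (the integral defining $A$ is a convolution of an $L^1$ kernel against the bounded compactly-controlled $B$, and $A_0$ is bounded near the origin), so the inequalities hold there after adjusting the constant.

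The main obstacle I anticipate is making the remainder estimate genuinely sharp enough to reach the $(1+|x|)^{-3-0}$ rate rather than merely $(1+|x|)^{-2}$. The naive next-order Taylor term of the Biot--Savart kernel decays like $|x|^{-2}$, which is \emph{not} fast enough; the improvement to $o(|x|^{-2})$ must come from a further cancellation, namely that the $|x|^{-2}$ coefficient is a dipole-type moment $\int y\, B(y)\,dy$ which need not vanish in general. This suggests that a plain Coulomb gauge may only give $O(|x|^{-2})$, and to obtain $(1+|x|)^{-3-0}$ one must either subtract a further explicit correction field (a smooth compactly-supported gauge transformation absorbing the dipole term, which does not affect $\rt\, A$ at infinity) or exploit a more careful multipole expansion together with the full strength of $s>4$ in \eqref{ass-B-eq}. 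Handling this correction cleanly, while preserving both $\rt\,A = B$ exactly and the divergence bound, is the delicate point; the derivative bounds on $\pd^\beta B$ in assumption \ref{ass-B} are exactly what is needed to push the expansion one further order and close the argument.
\end{proof}
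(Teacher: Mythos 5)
Your construction is genuinely different from the paper's (Coulomb gauge / Biot--Savart versus the Poincar\'e gauge), but as written it does not prove the lemma, and your own last paragraph correctly diagnoses why: there is a genuine gap. Concretely, the multipole expansion of $A=\nabla^\perp\phi$ gives $A = A_0 + (\text{dipole}) + (\text{quadrupole}) + \cdots$ for $|x|>1$, where the dipole term decays exactly like $|x|^{-2}$ with coefficient proportional to $\int y\,B(y)\,dy$ and the quadrupole term decays exactly like $|x|^{-3}$ with coefficient proportional to $\int y_iy_j\,B(y)\,dy$. Neither moment vanishes in general, and since $f\lesssim(1+|x|)^{-3-0}$ means $(1+|x|)^{3}f\to 0$, \emph{both} terms violate \eqref{vp}; you would have to remove the quadrupole as well, not only the dipole you mention. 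Moreover, the repair you sketch cannot literally be a ``compactly-supported gauge transformation'': a gauge function whose gradient cancels a field that is nonzero outside every compact set cannot itself have compact support; it must decay like $|x|^{-1}$ (dipole) resp.\ $|x|^{-2}$ (quadrupole). The repair can be made to work --- each multipole field beyond the monopole is $\nabla^\perp$ of a harmonic function, hence curl-free \emph{and} of vanishing circulation outside a disc, hence conservative there and removable by adding a gradient after a smooth extension inside the disc --- but you would need to do this twice, verify that the remaining error is $o(|x|^{-3})$ using the full strength of $s>4$ in \eqref{ass-B-eq}, and then re-check the divergence bound, since after these corrections $\nabla\cdot A$ is no longer identically zero (it is bounded and supported in a disc, which suffices).

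The paper's proof bypasses this multipole bookkeeping in one stroke, and the device it uses is exactly what your proposal is missing. In the Poincar\'e gauge \eqref{ahat} one has, in polar coordinates, $\widehat A(r,\theta)=\frac{(-\sin\theta,\cos\theta)}{r}\int_0^r B(z,\theta)\,z\,dz$, which splits as $F_1+F_2$ with $F_1=\frac{(-\sin\theta,\cos\theta)}{r}\,\psi(\theta)$, $\psi(\theta)=\int_0^\infty B(z,\theta)\,z\,dz$, while $|F_2(x)|\lesssim (1+|x|)^{1-s}$ is already faster than the required rate. The point is that $F_1$ carries \emph{all} of the slowly decaying part of $\widehat A$ --- every multipole at once --- yet is curl-free outside a disc and has circulation $2\pi\alpha$ by \eqref{psi2}, i.e.\ the same circulation as $A_0$; hence $F_1-A_0$ is conservative there and is removed by a single gradient $\nabla\widehat\varphi$ obtained from a Whitney extension. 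Replacing the comparison of radial profiles by the angular flux density $\psi(\theta)$ is what lets the paper avoid any moment conditions or term-by-term corrections; if you want to salvage the Coulomb-gauge route, you should either import that idea (compare $A$ with $\frac{(-\sin\theta,\cos\theta)}{r}$ times an angle-dependent flux) or carry out the dipole and quadrupole subtractions explicitly.
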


\begin{proof}
Let $\widehat  A$ be the vector potential associated to $B$ by the Poincar\'e gauge: 
\begin{equation} \label{ahat}
\widehat  A (x) = (-x_2, x_1)\ \int_0^1 B( t\, x_1, t\, x_2)\, t\, dt,
\end{equation}
see e.g. \cite[Eq. (8.154)]{th}. A direct calculation then shows that $\rt \, \hat A =B$. Passing to the polar coordinates we get
\begin{align}
\widehat  A(r,\theta) &= r (-\sin\theta, \cos\theta) \int_0^1 B( t\, r , \theta) \, t dt = \frac{(-\sin\theta, \cos\theta) }{r}\, \int_0^r B(z,\theta)\, z\, dz \label{hat-1} 
\end{align}
Hence with the notation 
\begin{equation} \label{psi}
\psi(\theta) = \int_0^\infty B(z,\theta)\, z\, dz
\end{equation}
we obtain the decomposition
$$
\widehat  A(r,\theta)  = F_1(r, \theta)+ F_2(r,\theta),
$$
where 
\begin{equation} \label{f12}
F_1(r, \theta)  = \frac{(-\sin\theta, \cos\theta) }{r}\ \psi(\theta), \qquad F_2(r,\theta) = \frac{(\sin\theta, -\cos\theta) }{r}\  \int_r^\infty B(z,\theta)\, z\, dz.
\end{equation}
Note that by equation \eqref{flux} 
\begin{equation} \label{psi2}
\int_0^{2\pi} \psi(\theta) = 2\pi \alpha .
\end{equation}
Now fix an $R>1$ and let $\gamma\subset\R^2\setminus D(0,R)$ be a piece-wise regular simple closed curve. We denote by $\Omega_\gamma\subset\R^2$ the region enclosed by $\gamma$. Note that  $\rt\, F_1 = \rt\,  A_0 = 0$ in $\R^2\setminus D(0,R)$. Hence if $\Omega_\gamma$ does not intersect $D(0,R)$, then $\Omega_\gamma \setminus D(0,R)$ is simply connected and
$$
\oint_\gamma F_1  = \oint_\gamma A_0  =0.
$$
On the other hand, if $\Omega_\gamma$ does intersect $D(0,R)$, then it contains $D(0,R)$ as a proper subset, for $\gamma\cap D(0,R) = \emptyset$. In this case, in view of \eqref{f12} and \eqref{psi2}, it  turns out that
$$
\oint_\gamma  F_1 = 2\pi \alpha.
$$
By the Stokes Theorem we then have
$$
\oint_\gamma F_1  = 2 \pi \alpha = \int_{D(0,2)}  B_0(x) \, dx = 
\oint_\gamma A_0. 
$$
So, in either case it holds
$$
\oint_\gamma (F_1-A_0)  =0
$$
This means that the vector field $F_1 -A_0$ is conservative in $\R^2\setminus  \overline{D(0,R)}$, since the latter is an open connected subset of $\R^2$. Moreover, by definition of $F_1$ and $A_0$ and by the  hypothesis on $B$ it follows that $F_1, A_0 \in C^1(\R^2\setminus  \overline{D(0,R)})$. Therefore there exists a scalar field $\varphi \in C^2(\R^2\setminus \overline{D(0,R)}) $ such that 
\begin{equation} \label{eq-cons} 
F_1 +\nabla \varphi = A_0 \qquad \text{in } \qquad \R^2\setminus  \overline{D(0,R)} \qquad  \ \ R >1.
\end{equation}
We now put $R \in(1,2)$. Then $\varphi \in C^2(\R^2\setminus D(0,2))$. 
Since $\R^2\setminus D(0,2)$ is closed, we can extend $\varphi$ into a function $\widehat \varphi \in C^2(\R^2)$ in such a way that
 $\widehat  \varphi = \varphi$ on $\R^2\setminus D(0,2)$, see e.g. \cite[Thm.1]{wi}. It now remains to define
$$
A =   \widehat A + \nabla \widehat  \varphi = F_1 +F_2 +\nabla \widehat  \varphi \qquad \text{in } \qquad \R^2. 
$$
Then $\rt\, A = \rt\, \widehat A= B$ and $\nabla \cdot A = \nabla \cdot \widehat A + \Delta \widehat\varphi$. However, $\nabla \cdot \widehat A \in L^\infty(\R^2)$ by equation \eqref{hat-1} and hypothesis on $B$, and $\Delta \widehat\varphi\in L^\infty(R^2)$ by construction of $\widehat\varphi$. Therefore $\nabla \cdot A \in L^\infty(\R^2)$. Finally, from \eqref{ass-B-eq} we easily verify that 
$$
 | \nabla \cdot F_2 (x)|  \ \lesssim \  (1+|x|)^{-3-0} \ , \qquad | \, F_2(x)  | \ \lesssim \  (1+|x|)^{-3-0} 
$$
Since $A-A_0 = F_2$ on $\R^2\setminus D(0,2)$ by \eqref{eq-cons} and since $\nabla \cdot A_0 =0$, this implies
\eqref{vp}.
\end{proof}

\begin{lemma} \label{lem-gauge-2}
Let $B$ satisfy assumption \ref{ass-B} and let $\alpha=0$. Then there exists a differentiable vector field $A:\R^2\to \R^2$ such that $\rt \, A = B$ and such that 
\begin{equation} \label{vp-2}
 | \nabla \cdot A(x)|  \ \lesssim \  (1+|x|)^{-3-0} \ , \qquad
| \, A(x) | \ \lesssim \  (1+|x|)^{-3-0} 
\end{equation}
\end{lemma}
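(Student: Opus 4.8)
The plan is to repeat the construction from the proof of Lemma \ref{lem-gauge}, taking advantage of the fact that the reference potential $A_0$ from \eqref{a0} vanishes identically when $\alpha=0$. First I would start again from the Poincar\'e-gauge potential $\widehat A$ in \eqref{ahat}, pass to polar coordinates as in \eqref{hat-1}, and use the decomposition $\widehat A = F_1 + F_2$ with $F_1, F_2$ given by \eqref{f12}. The point is that the slowly-decaying piece $F_1(r,\theta) = \psi(\theta)\,(-\sin\theta,\cos\theta)/r$ carries all the obstruction to fast decay, while the complementary piece $F_2$ already satisfies the bounds $|F_2(x)|\lesssim (1+|x|)^{-3-0}$ and $|\nabla\cdot F_2(x)|\lesssim(1+|x|)^{-3-0}$ established at the end of the proof of Lemma \ref{lem-gauge}.

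Next I would show that $F_1$ is conservative on $\R^2\setminus\overline{D(0,R)}$ for $R>1$. Since $F_1 = \psi(\theta)\,\nabla\theta$ and $\psi$ depends only on the angle, a direct computation gives $\rt F_1 = 0$ away from the origin, exactly as for $A_0$. The circulation of $F_1$ around any closed curve is controlled by its integral around a circle enclosing the origin, which by \eqref{psi2} equals $\int_0^{2\pi}\psi(\theta)\,d\theta = 2\pi\alpha = 0$ precisely because $\alpha=0$. Hence on the non-simply-connected domain $\R^2\setminus\overline{D(0,R)}$ the curl-free field $F_1$ has vanishing period and is therefore a gradient: there is $\varphi\in C^2(\R^2\setminus\overline{D(0,R)})$ with $F_1 + \nabla\varphi = 0$, which is exactly the $\alpha=0$ instance of \eqref{eq-cons} with $A_0=0$.

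Choosing $R\in(1,2)$, extending $\varphi$ to $\widehat\varphi\in C^2(\R^2)$ as in the previous proof, and setting $A = \widehat A + \nabla\widehat\varphi = F_1 + F_2 + \nabla\widehat\varphi$, I obtain $\rt A = \rt\widehat A = B$ and $\nabla\cdot A = \nabla\cdot\widehat A + \Delta\widehat\varphi\in L^\infty(\R^2)$ by the same reasoning as before. On $\R^2\setminus D(0,2)$ one has $F_1 + \nabla\widehat\varphi = 0$, so $A = F_2$ there, and the estimates \eqref{vp-2} then follow at once from the decay of $F_2$ together with the global boundedness of $A$ and $\nabla\cdot A$ (the weight $(1+|x|)^{-3-0}$ being bounded below on the compact disc $\overline{D(0,2)}$).

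Since the argument is a specialization of Lemma \ref{lem-gauge} with $A_0$ replaced by $0$, there is essentially no new obstacle; the only place where the hypothesis $\alpha=0$ is genuinely used --- and the only delicate point --- is the vanishing of the period $\oint_\gamma F_1$, which is exactly what upgrades ``$F_1$ has a $1/r$ tail'' to ``$F_1$ is a gradient'' and thereby lets $A$ inherit the fast decay of $F_2$ rather than the slow decay of $A_0$.
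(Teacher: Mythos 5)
Your proof is correct and takes essentially the same route as the paper: both specialize the construction of Lemma \ref{lem-gauge}, using that the period $\oint_\gamma F_1 = 2\pi\alpha = 0$ makes $F_1$ a gradient outside a disc, so that after the Whitney extension the potential $A$ coincides with the fast-decaying piece $F_2$ near infinity. The only difference is cosmetic: rather than taking the reference potential to be identically zero (as \eqref{a0} gives when $\alpha=0$), the paper replaces $B_0$ by an auxiliary continuous, radial, compactly supported field $\widetilde B_0$ with zero flux, whose Poincar\'e-gauge potential $\widetilde A_0$ vanishes for $|x|$ large, and then runs the proof of Lemma \ref{lem-gauge} with $A_0$ replaced by $\widetilde A_0$ --- an argument identical in substance to yours.
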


\begin{proof}
In this case we replace $B_0$ by a continuous radial field $\widetilde B_0$ of compact support and with zero flux: 
\begin{equation} \label{zero-flux}
\int_{\R^2} \widetilde B_0(x)\, dx =\int_{\R^2} \widetilde B_0(|x|)\, dx= 0.
\end{equation}
Accordingly, we define 
\begin{align*}
\widetilde  A_0(x) & = (-x_2, x_1)\ \int_0^1 \widetilde B_0( t\, x_1, t\, x_2)\, t\, dt =  (-x_2, x_1)\ \int_0^1 \widetilde B_0( t\, |x|)\, t\, dt  = \frac{(-x_2, x_1)}{|x|}\ \int_0^{|x|}\, \widetilde B_0(s)\, s\, ds. 
\end{align*}
Then $\rt\, \widetilde  A_0 = \widetilde  B_0$. Equation \eqref{zero-flux} and the fact that the support of $\widetilde B_0$ is compact then imply $\widetilde A_0(x)=0$ for $|x|$ large enough. Now it remains to follow the proof of Lemma \ref{lem-gauge} with $A_0$ replaced by  $\widetilde A_0$ and with $\alpha=0$. 
\end{proof}

\noindent From now on we will associate to any $B$ satisfying \eqref{ass-B-eq} a vector potential $A$ given by Lemma \ref{lem-gauge} when $\alpha\neq 0$, or by Lemma \ref{lem-gauge-2} when $\alpha=0$.  

\begin{remark}
The fact that we use a particular vector potential generating the magnetic field $B$ represents no restriction, since all our statements are gauge invariant. Note also that $A$ is  not uniquely defined by Lemmata \ref{lem-gauge} respectively \ref{lem-gauge-2}. 
\end{remark}

\noindent  Since $\nabla\cdot A_0=0$, see \eqref{a0}, from the definition of $T(B,V)$ it follows that
\begin{equation} \label{tb-2}
T(B,V)  = H(B, V) -H(B_0). 
\end{equation}
Note also that $T(B,V)$ is symmetric  on $W^{1,2}(\R^2)$, see \eqref{tb}.

\begin{lemma} \label{lem-aux1}
If $B$ satisfies assumption \ref{ass-B}, then there exists $s_0> 3/2$ such that  the operator $1+ T(B,V)\, G_0$ is invertible in $\B(s,s)$ for all $3/2 < s \leq s_0 $. 
\end{lemma}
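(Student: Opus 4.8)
The plan is to show that the operator $T(B,V)\,G_0$ is compact on $L^2(\R^2,s)$ for every $s$ in a right neighbourhood of $3/2$, and then to invoke the Fredholm alternative together with the fact that zero is a regular point of $H(B,V)$, which is a standing assumption of this section. Throughout I write $T=T(B,V)$ and split it, according to \eqref{tb}, into its first order part $2i(A-A_0)\cdot\nabla$ and its zeroth order part $i\nabla\cdot A+(|A|^2-|A_0|^2)+V$; I shall also use repeatedly that $T=H(B,V)-H(B_0)$ by \eqref{tb-2}.

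First I would establish boundedness of $T G_0$ in $\B(s,s)$. By Proposition \ref{prop-1} we have $G_0,\,|\nabla G_0|\in\B(s,-s)$ for $s>3/2$, but this bare bound is not sufficient: the slowest decaying coefficient of $T$ is the potential, $|V|\lesssim(1+|x|)^{-3-0}$, and the mere boundedness $G_0\colon L^2(\R^2,s)\to L^2(\R^2,-s)$ would control $V\,G_0$ on $L^2(\R^2,s)$ only at the endpoint $s=3/2$. The extra room comes from the non-integer flux: the explicit (Bessel-function) structure of $G_0$ in Section \ref{sec-model}, which reflects the $|x|^{-\mu(\alpha)}$ decay of the zero-energy states of $H(B_0)$ and is underpinned by the magnetic Hardy inequality $H(B_0)\ge C\,(1+|x|^2)^{-1}$ valid since $\alpha\notin\Z$ (cf. \cite{lw}), yields the improved estimates
\[
\|\rho^{-1}G_0 u\|_{L^2(\R^2)}+\|\nabla G_0 u\|_{L^2(\R^2)}\ \lesssim\ \|u\|_{1}.
\]
Combining these with $|A-A_0|,\,|\nabla\cdot A|,\,|\,|A|^2-|A_0|^2\,|,\,|V|\lesssim(1+|x|)^{-3-0}$ from Lemma \ref{lem-gauge} and the hypothesis on $V$, one bounds each term of $TG_0$ by a supremum of a coefficient against one of the two quantities above; for instance $\|V G_0 u\|_s\le\|\rho^{\,s+1}V\|_{L^\infty}\,\|\rho^{-1}G_0 u\|_{L^2}\lesssim\|u\|_s$, which is finite precisely when $s\le 2$, while the first order term is controlled by $\|\rho^{s}(A-A_0)\|_{L^\infty}\|\nabla G_0 u\|_{L^2}$ and is admissible for $s\le 3$. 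Hence $TG_0\in\B(s,s)$ for all $3/2<s\le s_0$ with $s_0=2$.

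Next I would prove compactness of $TG_0$ on $L^2(\R^2,s)$ in this range and conclude. Truncating each coefficient to the disc $\{|x|\le R\}$, the truncated operator factors through multiplication by a bounded, compactly supported weight composed with $G_0$ or $\nabla G_0$; since $G_0 u\in H^2_{\loc}$ by elliptic regularity ($H(B_0)G_0 u=u$), the Rellich theorem makes each truncation compact, whereas the tail is small in $\B(s,s)$ by the decay estimates above (e.g. $\sup_{|x|>R}\rho^{\,s+1}|V|\to0$). As a norm limit of compact operators, $TG_0$ is compact, so $1+TG_0$ is Fredholm of index zero on $L^2(\R^2,s)$ and invertibility reduces to injectivity. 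If $\phi\in L^2(\R^2,s)$ satisfies $(1+TG_0)\phi=0$, set $\psi=G_0\phi$; then $T\psi=-\phi$ and $H(B_0)\psi=\phi$, so by \eqref{tb-2}
\[
H(B,V)\,\psi=H(B_0)\psi+T\psi=\phi-\phi=0
\]
in the distributional sense, with $\psi\in L^2(\R^2,-s)$ decaying like $|x|^{-\mu(\alpha)}$. Since zero is a regular point of $H(B,V)$, there is no nontrivial zero-energy eigenfunction or resonance of $H(B,V)$ in these weighted spaces, whence $\psi=0$; as $G_0$ is injective ($H(B_0)G_0=I$), this forces $\phi=0$. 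Therefore $1+TG_0$ is invertible in $\B(s,s)$ for every $3/2<s\le s_0$.

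The main obstacle is the boundedness step at $s>3/2$: the potential term $V\,G_0$ is exactly critical under the bare bound $G_0\in\B(s,-s)$, and it is only the flux-induced extra decay of $G_0$, equivalently the Hardy inequality for $H(B_0)$ available precisely because $\alpha\notin\Z$, that opens the interval $(3/2,s_0]$. A secondary point requiring care is the passage, in the injectivity argument, from the resolvent bound defining a regular point in \eqref{def-regular} to the absence of a nontrivial distributional solution of $H(B,V)\psi=0$ with the decay carried by $\psi=G_0\phi$.
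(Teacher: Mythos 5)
Your overall skeleton --- compactness of $T(B,V)G_0$, Fredholm alternative, and injectivity of $1+T(B,V)G_0$ from the regularity of zero --- is the same as the paper's. But the proof is incomplete at exactly the point where the real work lies, namely the injectivity step. You reduce matters to the assertion that, since zero is a regular point, there is no nontrivial distributional solution $\psi=G_0\phi\in L^2(\R^2,-s)$ of $H(B,V)\psi=0$. With the paper's \emph{definition} of a regular point, equation \eqref{def-regular}, this implication is not available off the shelf: \eqref{def-regular} is a uniform bound on $\rho^{-s}(H(B,V)-\lambda-i0)^{-1}\rho^{-s}$, and the natural attempt to use it --- writing $\psi=(H(B,V)+i\eps)^{-1}(i\eps\,\psi)$ --- fails because $\psi$ lies only in $L^2(\R^2,-s)$, not in $L^2(\R^2,s)$, so the weighted resolvent bound cannot be applied to it. Deducing ``no zero eigenvalue and no zero resonance'' from \eqref{def-regular} is precisely the nontrivial content of threshold analysis, and you flag it as ``requiring care'' without resolving it. The paper circumvents the issue entirely: from $(1+T(B,V)G_0)u=0$ it applies the resolvent identity to $u$ itself, $R_0(i\eps)u=(H(B,V)+i\eps)^{-1}\bigl(1+T(B,V)R_0(i\eps)\bigr)u$, and lets $\eps\to 0$; since $\bigl(1+T(B,V)R_0(i\eps)\bigr)u\to(1+T(B,V)G_0)u=0$ \emph{in} $L^2(\R^2,s)$ while $(H(B,V)+i\eps)^{-1}$ stays uniformly bounded in $\B(s,-s)$ by regularity of zero, one gets $G_0u=0$ and then $u=-T(B,V)G_0u=0$. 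Your argument needs this (or an equivalent) mechanism to be a proof.

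A second, lesser issue is your boundedness step. The estimates $\|\rho^{-1}G_0u\|_{L^2}+\|\nabla G_0u\|_{L^2}\lesssim\|u\|_1$ are asserted, not proved; they do not follow from Proposition \ref{prop-1}, and invoking the Hardy inequality \eqref{hardy} is a heuristic, not a derivation (note also that Hilbert--Schmidt bounds genuinely fail at these endpoint weights, so one would need a Schur/Mellin-type argument on the explicit kernels \eqref{gm0}, \eqref{gm0-grad}). More to the point, they are unnecessary: the criticality you describe is an artifact of insisting on the same exponent $s$ on both sides of $G_0$. Lemma \ref{lem-g0} gives compactness of $\rho^{-s'}G_0\rho^{-s'}$ and $\rho^{-s'}\nabla G_0\rho^{-s'}$ for \emph{every} $s'>1$, so for $3/2<s<2$ one may pick $s'\in(1,\,3-s]$ and write, e.g., $\rho^{s}VG_0\rho^{-s}=(\rho^{s+s'}V)\,(\rho^{-s'}G_0\rho^{-s'})\,\rho^{s'-s}$, where the first factor is bounded because $s+s'\le 3$ and the last because $s'\le s$; the same works for the first-order term using Lemma \ref{lem-gauge}. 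This is exactly the room the paper exploits, and it makes your truncation-plus-Rellich compactness argument (which is fine in spirit) unnecessary as well.
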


\begin{proof}
By Lemma \ref{lem-gauge} we can find $s_0>3/2$ such that the functions $ \rho^{s_0}\, (\nabla\cdot A)\, \rho^{s_0} $ and $\rho^{s_0}\,  |A-A_0|\, \rho^{s_0}$ are bounded. From Lemma \ref{lem-g0} it then follows that the operator $T(B,V)\, G_0$ is compact from $L^2(\R^2, s)$ to $L^2(\R^2, s)$ for any $s\in (3/2, s_0)$. Assume that there exists $u \in L^2(\R^2, s)$ such that 
\begin{equation} \label{no-ev}
u+ T(B,V) G_0\, u =0.
\end{equation}
The resolvent equation in combination with \eqref{tb-2} says that for every $\eps>0$ 
$$
 R_0(i \eps)= (H(B,V)+i\eps)^{-1}\, (1+T(B,V)\,  R_0(i \eps))
$$
holds on $L^2(\R^2, s)$. Hence using equation \eqref{def-regular}, Proposition \ref{prop-1} and passing to the limit $\eps\to 0$ we arrive at 
$$
G_0\, u = H(B,V)^{-1}\, (1+T(B,V) \, G_0)\, u =0,
$$
since $(1+T(B,V) \, G_0)\, u=0$ by \eqref{no-ev}. But then $u=0$ again in view of \eqref{no-ev}. This means that Ker$(1+T(B,V)\, G_0)=\{0\}$ and by the Fredholm alternative $1+T(B,V)\,  G_0$ is invertible.
\end{proof}

\begin{lemma} \label{lem-aux2}
If $B$ satisfies assumption \ref{ass-B}, then there exists $s_0> 3/2$ such that the operator $1+ G_0\,T(B,V)$ is invertible in $\B(-s,-s)$ for all $3/2 < s \leq s_0 $.  
\end{lemma}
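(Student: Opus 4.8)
The plan is to deduce the statement from Lemma~\ref{lem-aux1} by a duality argument, exploiting the symmetry of both $G_0$ and $T(B,V)$ with respect to the $L^2(\R^2)$ pairing. Recall that, under the sesquilinear pairing $\langle u, v\rangle = \int_{\R^2} u\, \bar v\, dx$, the space $L^2(\R^2,-s)$ is the dual of $L^2(\R^2,s)$: for $u\in L^2(\R^2,s)$ and $v\in L^2(\R^2,-s)$ the integral $\langle u,v\rangle$ is finite, since $|u\,\bar v| = |\rho^s u|\,|\rho^{-s}v|$ and the right-hand side is integrable by the Cauchy--Schwarz inequality. Consequently the Banach adjoint of a bounded operator on $L^2(\R^2,s)$ is a bounded operator on $L^2(\R^2,-s)$, and invertibility is preserved under taking adjoints, with $(M^*)^{-1}=(M^{-1})^*$.

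I would fix $s_0$ as in Lemma~\ref{lem-aux1} and take $3/2 < s \le s_0$. By that lemma the operator $M := 1 + T(B,V)\, G_0$ is invertible in $\B(s,s)=\B(L^2(\R^2,s))$; in particular $T(B,V)\,G_0$ is compact there. The next step is to compute the adjoint $M^*$ acting on $L^2(\R^2,-s)$. For $u,v\in C_0^\infty(\R^2)$, using first that $T(B,V)$ is symmetric on $W^{1,2}(\R^2)$ (see the remark after \eqref{tb-2}) and then that the integral operator $G_0$ is symmetric with respect to the $L^2(\R^2)$ pairing (which is immediate from its explicit kernel \eqref{G0}), one obtains
\begin{equation*}
\langle T(B,V)\,G_0\, u,\, v\rangle = \langle G_0\, u,\, T(B,V)\, v\rangle = \langle u,\, G_0\, T(B,V)\, v\rangle .
\end{equation*}
Since $C_0^\infty(\R^2)$ is dense in both $L^2(\R^2,s)$ and $L^2(\R^2,-s)$ and $T(B,V)\, G_0$ is bounded, this identity shows that the unique bounded adjoint of $T(B,V)\,G_0$ on $L^2(\R^2,-s)$ coincides with the operator $G_0\, T(B,V)$. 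Hence $M^* = 1 + G_0\, T(B,V)$ in $\B(-s,-s)$.

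Combining the two steps, invertibility of $M$ in $\B(s,s)$ yields invertibility of $M^* = 1 + G_0\, T(B,V)$ in $\B(-s,-s)$, with inverse $(M^{-1})^*$, which is the assertion. The point requiring the most care is the identification $(T(B,V)\,G_0)^* = G_0\, T(B,V)$: because $T(B,V)$ is a first-order differential operator it is \emph{not} bounded from $L^2(\R^2,-s)$ into $L^2(\R^2,s)$ on its own, so the expression $G_0\, T(B,V)$ appearing in the statement must be read as the bounded adjoint of the composite $T(B,V)\, G_0$, and the symmetry computation above is precisely what legitimises this reading on the dense class $C_0^\infty(\R^2)$ before extending by continuity. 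Verifying the symmetry of $G_0$ from the explicit form \eqref{G0} (in particular that its leading kernel is unaffected by the $\pm i0$ prescription) is the only genuinely model-dependent ingredient; once it is granted, the conclusion is purely formal.
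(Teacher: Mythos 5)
Your proposal is correct and takes essentially the same route as the paper: the paper's own proof of Lemma~\ref{lem-aux2} is the single sentence that the claim follows by duality from Lemma~\ref{lem-aux1}, and your argument is exactly that duality argument written out in full (identification of $L^2(\R^2,-s)$ as the dual of $L^2(\R^2,s)$, computation of the adjoint of $T(B,V)\,G_0$ via the symmetry of $T(B,V)$ and of the kernel of $G_0$, and transfer of invertibility to the adjoint). Your remark that $G_0\,T(B,V)$ in $\B(-s,-s)$ must be read as the bounded adjoint of $T(B,V)\,G_0$ is consistent with how the paper itself handles such compositions (e.g.\ in Lemma~\ref{lem-rv}).
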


\begin{proof}
The claim follows by duality from Lemma \ref{lem-aux1}.
\end{proof}

\begin{lemma} \label{lem-rv}
Assume \eqref{ass-B} and let $\alpha \not\in\Z$. Then  for $\lambda\to 0$ we have 
\begin{align} \label{eq-rv}
& (1+ R_0(\lambda+i0) \, T(B,V))^{-1}  = (1+G_0\, T(B,V))^{-1}\, - \\ 
& \qquad \qquad \qquad\qquad  - (1+G_0\, T(B,V))^{-1}\, G_1\, T(B,V)\, (1+G_0\, T(B,V))^{-1}\, \lambda^{\mu(\alpha)} \nonumber + o(\lambda^{\mu(\alpha)})
\end{align}
in $\B(-s, -s)$ for some $s>3/2$. 
\end{lemma}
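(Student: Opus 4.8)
The plan is to substitute the threshold expansion of $R_0(\lambda+i0)$ furnished by Proposition \ref{prop-1} into the factor $1+R_0(\lambda+i0)\,T(B,V)$, peel off the invertible leading part $1+G_0\,T(B,V)$ guaranteed by Lemma \ref{lem-aux2}, and invert the remaining near-identity factor by a geometric (Neumann) series. Throughout I abbreviate $T:=T(B,V)$ and $M:=(1+G_0\,T)^{-1}\in\B(-s,-s)$, the latter existing for $s$ slightly above $3/2$ by Lemma \ref{lem-aux2}.

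Concretely, I would first insert $R_0(\lambda+i0)=G_0+\lambda^{\mu(\alpha)}G_1+G_2(\lambda)$ to obtain, in $\B(-s,-s)$,
\[
1+R_0(\lambda+i0)\,T = (1+G_0\,T) + \big(\lambda^{\mu(\alpha)}G_1+G_2(\lambda)\big)\,T .
\]
Using $M$ this factors as $1+R_0\,T=(1+G_0\,T)\,[\,1+W(\lambda)\,]$ with $W(\lambda):=M\,\big(\lambda^{\mu(\alpha)}G_1+G_2(\lambda)\big)\,T$. Once I verify $\|W(\lambda)\|_{\B(-s,-s)}=O(\lambda^{\mu(\alpha)})\to0$, the factor $1+W(\lambda)$ is invertible for small $\lambda$ and
\[
(1+W(\lambda))^{-1}=1-W(\lambda)+\sum_{k\ge2}(-W(\lambda))^k ,
\]
where the tail is $O(\|W(\lambda)\|^2)=O(\lambda^{2\mu(\alpha)})=o(\lambda^{\mu(\alpha)})$ precisely because $\mu(\alpha)>0$ (as $\alpha\notin\Z$). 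Multiplying $(1+R_0\,T)^{-1}=(1+W(\lambda))^{-1}M$ out, absorbing the $G_2(\lambda)$-contribution into the remainder, and keeping only the order-$\lambda^{\mu(\alpha)}$ term gives
\[
(1+R_0\,T)^{-1}=M-\lambda^{\mu(\alpha)}\,M\,G_1\,T\,M+o(\lambda^{\mu(\alpha)}),
\]
which is exactly the asserted identity \eqref{eq-rv}.

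The main obstacle is not the series algebra but justifying the boundedness of the composite operators and the sharp $o(\lambda^{\mu(\alpha)})$ remainder, since $T$ contains the first-order term $2i(A-A_0)\cdot\nabla$ and hence cannot be applied to a mere $L^2(\R^2,-s)$ function without care. To control $G_2(\lambda)\,T$ I would move the derivative off the test function by integration by parts, writing $(A-A_0)\cdot\nabla u=\nabla\cdot((A-A_0)u)-(\nabla\cdot(A-A_0))u$, so that the delicate contribution is transferred onto the gradient of (the adjoint of) $G_2(\lambda)$. This is exactly where the estimate $\||\nabla G_2(\lambda)|\|_{\B(s,-s)}=o(\lambda^{\mu(\alpha)})$ from Proposition \ref{prop-1} enters, combined with the decay $|A-A_0|,\,|\nabla\cdot A|\lesssim(1+|x|)^{-3-0}$ from Lemma \ref{lem-gauge}, which supplies the weight gain needed to offset the growth $\rho^{-s}$ with $s>3/2$.

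The same computation, with $G_2(\lambda)$ replaced by $G_0$ or $G_1$, yields the plain boundedness of $G_0\,T$ and $G_1\,T$ on $L^2(\R^2,-s)$ used in the factorization; this mirrors the mapping properties already exploited in Lemma \ref{lem-aux1} (and its dual Lemma \ref{lem-aux2}) for $T\,G_0$. With those bounds in hand the only nontrivial smallness input is the order-$\lambda^{\mu(\alpha)}$ decay of $G_2(\lambda)\,T$, after which the expansion follows by the geometric-series manipulation above.
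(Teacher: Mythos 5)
Your proposal is correct and follows essentially the same route as the paper's proof: insert the expansion of Proposition \ref{prop-1} into $1+R_0(\lambda+i0)\,T(B,V)$, factor out the leading term $1+G_0\,T(B,V)$, whose invertibility in $\B(-s,-s)$ is supplied by Lemma \ref{lem-aux2}, and invert the remaining near-identity factor by a Neumann series, with the remainder controlled through the gradient bound on $G_2(\lambda)$ from Proposition \ref{prop-1} and the decay of the coefficients of $T(B,V)$ from Lemma \ref{lem-gauge}. The only cosmetic difference is that the paper estimates $T(B,V)\,G_2(\lambda)$ in $\B(s,s)$ and passes to $\B(-s,-s)$ ``by duality,'' whereas you implement that same duality concretely as an integration by parts transferring the derivative onto (the transpose of) the kernel of $G_2(\lambda)$.
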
 

\begin{proof}
From Lemma \ref{lem-gauge} and Proposition \ref{prop-1} it follows that $\| T(B,V)\, G_2(\lambda)\|_{\B(s, s)} =  
o\big(|\lambda|^{\mu(\alpha)}\big)$ as $\lambda\to 0$. Hence by \eqref{B0-eq-1} and duality we have
\begin{equation}
1+R_0(\lambda+i0)\, T(B,V)= 1+G_0\, T(B,V) + G_1\, T(B,V)\, \lambda^{\mu(\alpha)} + o(|\lambda|^{\mu(\alpha)})
\end{equation}
in $\B(-s, -s)$ for $\lambda\to 0$. The operator $1+G_0\, T(B,V)$ is invertible in $\B(-s, -s)$ in view of Lemma \ref{lem-aux2}. Hence  for $|\lambda|$ small enough the operator $1+R_0(\lambda+i0)\, T(B,V)$ is invertible too and with the help of the Neumann series we arrive at \eqref{eq-rv}.
\end{proof}

\begin{proof}[\bf Proof of Theorem \ref{thm-1}]
It suffices to prove the statement for $s\leq s_0$ with $s_0$ given by Lemma \ref{lem-aux1}.  Similarly as for the free resolvent we introduce the notation 
\begin{equation} \label{res-def}
R(\lambda+i0)= (H(B, V)-\lambda-i0)^{-1}.
\end{equation}
Since $1+R_0(\lambda+i0)\, T(B,V)$ is invertible in $\B(-s, -s)$ for $\lambda$ small enough, the
resolvent equation yields
\begin{equation} \label{2-res}
R(\lambda+i0)=(1+R_0(\lambda+i0)\, T(B,V))^{-1}\, R_0(\lambda+i0),
\end{equation}
which in combination with \eqref{B0-eq-1} and \eqref{eq-rv} gives 
\begin{equation} \label{eq-2.7}
R(\lambda+i0)=   (1+G_0\, T(B,V))^{-1}\, G_0 + F_1(B,V)\, \lambda^{\mu(\alpha)}+ o(\lambda^{\mu(\alpha)})
\end{equation}
as $\lambda\to 0$, where
\begin{align*}
 F_1(B,V) &  = (1+G_0\, T(B,V))^{-1}\, G_1 -(1+ G_0 \, T(B,V) )^{-1}\, G_1\, T(B,V)\, (1+G_0\, T(B,V))^{-1} \, G_0 \\
& = (1+G_0\, T(B,V))^{-1}\, G_1(1+ T(B,V)\, G_0)^{-1} \big [1+T(B,V)\, G_0  \\
& \qquad\qquad\qquad\qquad \qquad\qquad-  (1+ T(B,V)\, G_0)\, T(B,V)\, (1+G_0\, T(B,V))^{-1} \, G_0 \big]
\end{align*}
in $\B(s,-s )$. Now let $u\in L^2(\R^2, s)$. Then $G_0\, u \in L^2(\R^2, s^{-1})$ and given a $g\in L^2(\R^2, s^{-1})$ it follows by Lemma \ref{lem-rv} that
$$
(1+G_0 \, T(B,V))^{-1} \, G_0\, u = g \quad \Leftrightarrow \quad G_0\, u= (1+G_0 \, T(B,V))\, g,
$$
and
$$
(1+T(B,V)\, G_0 )^{-1} \, T(B,V)\,  G_0\, u  = T(B,V)\, g \  \ \Leftrightarrow \  \ T(B,V))\, G_0\, u = T(B,V)(1+  G_0\, T(B,V))\, g.
$$
Hence
\begin{equation}
T(B,V)\, (1+G_0 \, T(B,V))^{-1} \, G_0 = (1+T(B,V)\, G_0 )^{-1} \, T(B,V)\,  G_0,
\end{equation}
holds on $L^2(\R^2, s)$, which implies that 
$$
 F_1(B,V) =  (1+G_0\, T(B,V))^{-1}\, G_1\, (1+ T(B,V)\, G_0)^{-1}.
$$
The claim then follows from \eqref{eq-2.7}.
\end{proof}

\begin{proof}[\bf Proof of Theorem \ref{thm-2}]
For $\alpha\neq 0$ the result follows from Proposition \ref{prop-2} and Lemma \ref{lem-gauge} in the same way as in the case $\alpha\not\in\Z$; one only needs to replace the operators $G_0$ and $G_1$ by $\G_0$ and $\G_1$ respectively, and use Lemma \ref{lem-g01-int} instead of Lemma \ref{lem-g0}. 

\smallskip

\noindent When $\alpha=0$, then we replace the reference operator $H(B_0)$ by the Laplacian $-\Delta$. Consequently, we write   
$$
H(B, V) = -\Delta + 2 i\, A\cdot \nabla  +i \, \nabla\cdot A + |A|^2  + V. 
$$ 
The statement now follows by Lemma \ref{lem-gauge-2} and Theorems 8.4 and 7.5.(iii) of \cite{mu}. 
\end{proof}

\subsection{Time decay} We will use the formula
\begin{equation} \label{eq-repr}
e^{-i t H(B,V)}  = \frac{1}{2\pi i}\, \int_\R e^{-i t \lambda}\, R(\lambda+i0)\, d\lambda, \qquad t>0.
\end{equation}
To prove Theorems \ref{thm-3} and \ref{thm-4} we have to estimate the behavior of $R(\lambda+i0)$ for $|\lambda|\to \infty$. 

\begin{lemma} \label{lem-high-en}
Let  $s >5/2$. Suppose that $B$ satisfies assumption \ref{ass-B}  and that $|V(x)| \lesssim (1+|x|)^{-\beta}, \, \beta >3$. Let $R^{(j)}(\lambda+i0)$ denote the $j$th derivative of $R(\lambda+i0)$ with respect to $\lambda$ in $\B(s,-s )$. Then 
\begin{align}
\| R^{(2)}(\lambda+i0)\|_{\B(s,-s )} & = \mathcal{O}(\lambda^{-3/2} )\qquad  \ \ \lambda\to \infty,    \label{+infty} \\
\| R^{(2)}(\lambda+i0)\|_{\B(s,-s )} & = \mathcal{O}(|\lambda|^{-3}) \qquad\quad   \lambda\to -\infty.    \label{-infty}
\end{align}
\end{lemma}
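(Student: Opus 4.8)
The plan is to estimate the two derivatives of $R(\lambda+i0)$ in the high-energy regime by reducing everything to the reference resolvent $R_0(\lambda+i0)$ and the free resolvent $(-\Delta-\lambda-i0)^{-1}$, for which the required decay rates are classical. Throughout I would work with $R(\lambda+i0)=(1+R_0(\lambda+i0)\,T(B,V))^{-1}R_0(\lambda+i0)$ from \eqref{2-res}, differentiating this identity twice in $\lambda$ via the Leibniz rule. The key observation is that differentiating a resolvent lowers the power of $\lambda$ in its large-$\lambda$ asymptotics; the limiting absorption principle \eqref{lap} for $H(B,0)$, together with the classical weighted estimates for the free resolvent, should supply the base decay rates $\mathcal{O}(\lambda^{-1/2})$ for $R_0$ itself and then $\mathcal{O}(\lambda^{-3/2})$ after two derivatives as $\lambda\to+\infty$.

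\smallskip

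First I would record the large-$\lambda$ bounds for the reference operator. Since $H(B_0)=(i\nabla+A_0)^2$ with $A_0$ bounded, the diamagnetic comparison and the standard stationary-phase bounds for $(-\Delta-\lambda-i0)^{-1}$ in weighted spaces give, for $s>5/2$ and each $j=0,1,2$,
\begin{equation}
\| R_0^{(j)}(\lambda+i0)\|_{\B(s,-s )} = \mathcal{O}(\lambda^{-\frac{1}{2}-j}) \qquad \lambda\to+\infty,
\end{equation}
with exponentially fast (hence $\mathcal{O}(|\lambda|^{-N})$ for every $N$) decay as $\lambda\to-\infty$ because $-\lambda$ then lies in the resolvent set at positive distance $|\lambda|$. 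The weight $s>5/2$ is exactly what is needed to absorb two derivatives: each differentiation produces one extra factor of the resolvent and hence costs an additional power of the weight $\rho$, so two derivatives require $s$ greater than $1/2+2=5/2$.

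\smallskip

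Next I would control the inverse factor. By Lemma \ref{lem-rv} the operator $1+R_0(\lambda+i0)\,T(B,V)$ is uniformly invertible for $\lambda$ near zero; for large $\lambda$ the factor $R_0(\lambda+i0)$ tends to zero in $\B(s,s)$ (using that $T(B,V)$ maps $L^2(\R^2,-s)$ into $L^2(\R^2,s)$ by the short-range decay \eqref{vp} of $A-A_0$ and $\nabla\cdot A$ from Lemma \ref{lem-gauge}), so $1+R_0(\lambda+i0)\,T(B,V)$ is invertible with uniformly bounded inverse there as well. Writing $M(\lambda)=1+R_0(\lambda+i0)\,T(B,V)$, the identity $(M^{-1})'=-M^{-1}M'M^{-1}$ together with the bounds above shows that $M^{-1}$ and its first two derivatives are all $\mathcal{O}(\lambda^{-1/2})$-controlled or better; in particular $\|M^{-1}\|$ is bounded and $\|(M^{-1})^{(j)}\|=\mathcal{O}(\lambda^{-1/2-j})$ for $j=1,2$. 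Expanding $R^{(2)}=\big(M^{-1}R_0\big)^{(2)}$ by Leibniz and inserting these estimates, the slowest-decaying term is $M^{-1}R_0^{(2)}$, which is $\mathcal{O}(\lambda^{-3/2})$, giving \eqref{+infty}; the corresponding computation as $\lambda\to-\infty$ yields the faster rate $\mathcal{O}(|\lambda|^{-3})$ claimed in \eqref{-infty}, since there $R_0^{(2)}$ already decays like $|\lambda|^{-3}$ (two derivatives of a resolvent bounded by $|\lambda|^{-1}$ in the resolvent set).

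\smallskip

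I expect the main obstacle to be the verification that $T(B,V)$ is genuinely short-range as a map $\B(-s,s)$ with the sharp weight, and that differentiating $R_0(\lambda+i0)$ in the weighted operator norm behaves as in the non-magnetic case. The first point rests on \eqref{vp} and the decay $|V(x)|\lesssim(1+|x|)^{-3-0}$, which together ensure $\rho^{s}\,T(B,V)\,\rho^{s}$ is bounded for $s\le 3/2+$; reconciling this with the requirement $s>5/2$ (so that $2s>3$ fails for the first-order coefficients) is the delicate bookkeeping step, and I would handle it by distributing the weights asymmetrically across the Leibniz expansion so that each factor sees only the weight it can absorb. The second point requires the high-energy asymptotics of the magnetic free resolvent $R_0$, which I would either quote from \cite{ro,jmp} or derive from the free case by the diamagnetic inequality applied derivative-by-derivative.
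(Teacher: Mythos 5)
Your proposal has a genuine gap, and it lies in the choice of the unperturbed operator. You factor $R=(1+R_0\,T(B,V))^{-1}R_0$ around the reference operator $H(B_0)$, but this splitting is unusable at high energy for two reasons. First, $T(B,V)=2i(A-A_0)\cdot\nabla+i\nabla\cdot A+(|A|^2-|A_0|^2)+V$ contains a first-order term, so $R_0(\lambda+i0)\,T(B,V)$ involves $R_0(\lambda+i0)\nabla$, whose weighted operator norm is only $\mathcal{O}(1)$ as $\lambda\to+\infty$ (the derivative costs a factor $\lambda^{1/2}$, exactly cancelling the $\lambda^{-1/2}$ decay of the resolvent); hence $R_0\,T(B,V)$ does \emph{not} tend to zero at high energy, and the uniform boundedness of $(1+R_0\,T(B,V))^{-1}$ cannot be obtained by the smallness/Neumann argument you invoke. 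Second, the high-energy input you assume for $R_0$ is not available: $B_0$ is singular at the origin and discontinuous at $|x|=1$, so it does not satisfy assumption \ref{ass-B}, Robert's theorem does not apply to $H(B_0)$, and the paper's explicit Bessel analysis of $R_0$ is carried out only near $\lambda=0$. Your fallback of transferring the bounds from $-\Delta$ ``derivative-by-derivative by the diamagnetic inequality'' does not work either: limiting absorption bounds at positive energy concern oscillatory, non-positivity-preserving kernels and are not inherited through pointwise domination. Finally, the rates themselves are wrong: each $\lambda$-derivative of a resolvent gains only $\lambda^{-1/2}$ in weighted norm, i.e. $\|R^{(j)}\|=\mathcal{O}(\lambda^{-(j+1)/2})$ for $s'>j+\tfrac12$ (this is what \cite[Thm.5.1]{ro} gives, cf. \eqref{decay-rb}), not $\mathcal{O}(\lambda^{-1/2-j})$; with your claimed rates the leading Leibniz term $M^{-1}R_0^{(2)}$ would be $\mathcal{O}(\lambda^{-5/2})$, so your conclusion $\mathcal{O}(\lambda^{-3/2})$ is internally inconsistent as well.

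The paper's proof avoids all of this by perturbing around $H(B,0)$ — the operator with the \emph{actual} smooth field $B$ — so that the perturbation is just the multiplication operator $V$. Then $\|V R_B(\lambda+i0)\|_{\B(s',s')}\to 0$ at high energy because $V$ is zeroth order and short range, so $(1+V R_B)^{-1}$ and $(1+R_B V)^{-1}$ are uniformly bounded; the bounds $\|R_B^{(j)}\|_{\B(s',-s')}=\mathcal{O}(\lambda^{-(j+1)/2})$, $s'>j+\tfrac12$, hold by \cite[Thm.5.1]{ro} under assumption \ref{ass-B}; and the identities $R^{(1)}=(1+R_BV)^{-1}R_B^{(1)}(1+VR_B)^{-1}$, together with the analogous two-term formula for $R^{(2)}$, yield \eqref{+infty}. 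For $\lambda\to-\infty$ the paper simply notes that $R(\lambda)$ is analytic in $\B(L^2(\R^2))$ for $|\lambda|$ large (the negative spectrum is finite) and $\|R^{(2)}(\lambda)\|_{\B(L^2(\R^2))}\lesssim\mathrm{dist}(\lambda,\sigma(H(B,V)))^{-3}=\mathcal{O}(|\lambda|^{-3})$, which is \eqref{-infty}; your treatment of this half is essentially correct in spirit, though routed unnecessarily through the factorization.
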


\begin{proof}
We will apply a perturbative argument. This time  the unperturbed operator will be $H(B,0)=H(B)$, so that $H(B,V)= H(B)+V$. 
By assumption \ref{ass-B} and \cite[Thm.5.1]{ro} we have 
\begin{equation} \label{decay-rb}
\| R_B^{(j)}(\lambda+i0)\|_{\B(s',-s' )} = \mathcal{O}(\lambda^{-\frac{j+1}{2}}) \qquad j=0,1,2,  \qquad \lambda\to\infty
\end{equation}
for any $s' > j +\frac 12$, where $R_B(\lambda+i0)$ is given by \eqref{res-B}. Now $R_B(\lambda+i0) \in \B(s', s'-\beta)$ for all $1/2 < s' < \beta-1/2$ by \eqref{decay-rb} and \eqref{lap}.  Since $V\in \B(s'-\beta, s')$ by assumption, it follows from \eqref{decay-rb} that 
\begin{equation} \label{decay-rb-1}
\| V\,  R_B(\lambda+i0)\|_{ \B(s', s')} \ \to 0 \qquad \forall\ s' \in (1/2, \, \beta -1/2).
\end{equation}
Hence $\|(1+V\,  R_B(\lambda+i0))^{-1}\|_{\B(s', s')} = \mathcal{O}(1) $ for all $1/2 < s' < \beta-1/2$ as $\lambda\to \infty$. By duality the same holds for $\|(1+R_B(\lambda+i0)\, V)^{-1}\|_{\B(-s', -s')}$. This in combination with \eqref{res-B} and the resolvent equation implies that the identities
\begin{align*}
R^{(1)}(\lambda+i0) & = (1+R_B(\lambda+i0)\, V)^{-1} \, R_B^{(1)}(\lambda+i0) \, (1+V\,  R_B(\lambda+i0))^{-1}\\
R^{(2)}(\lambda+i0) & =  (1+ R_B(\lambda+i0)\, V)^{-1} \, R_B^{(2)}(\lambda+i0) \, (1+V\,  R_B(\lambda+i0))^{-1} \\
& \quad -2 R^{(1)}(\lambda+i0) \, V \, R_B^{(1)}(\lambda+i0) (1+V\,  R_B(\lambda+i0))^{-1} , 
\end{align*}
hold in $B(s, -s)$ for all $5/2 < s < \beta -1/2$. The first equation shows that $\| R^{(1)}(\lambda+i0)\|_{\B(s',-s' )} = \mathcal{O}(\lambda^{-1})$. By inserting this together with \eqref{decay-rb} into the second equation  we obtain  \eqref{+infty}. 

\smallskip

\noindent As for the negative values of $\lambda$, we note that $R(\lambda)=(H(B,V)-\lambda)^{-1}$ is analytic in $\B(L^2(\R^2))$  for $|\lambda|$ large enough. This is a consequence of the fact the $H(B,V)$ has finitely many negative eigenvalues, \cite[Thm.3.1]{kov}. Hence 
$$
\| R^{(2)}(\lambda+i0)\|_{\B(L^2(\R^2))} = \| (H(B,V)-\lambda)^{-3}\|_{\B(L^2(\R^2))} = \mathcal{O}(|\lambda|^{-3}) \qquad\quad   \lambda\to -\infty,
$$ 
where we have used the fact that $ \| (H(B,V)-\lambda)^{-1}\|_{\B(L^2(\R^2))} = ({\rm dist}(\sigma(H(B,V)), \, \lambda)^{-1}$. This implies equation \eqref{-infty}. 
\end{proof}

\noindent Before we come to the proof of Theorems \ref{thm-3} and \ref{thm-4}, we recall \cite[Lem.10.1]{JK}, from which it follows that if $F:\R \to \B(s,-s )$ is such that $F(\lambda)=0$ in a vicinity of zero and $F^{(2)} \in L^1(\R; \B(s,-s ))$, then 
\begin{equation} \label{eq-jk}
\int_\R e^{-i t \lambda}\,F(\lambda)\, d\lambda = o(\, t^{-2}) \qquad t\to \infty
\end{equation}
in $ \B(s,-s )$.

\begin{proof}[\bf Proof of Theorems \ref{thm-3} and \ref{thm-4}] As usual we will split the integral \eqref{eq-repr} into two parts relative to small and large energies. To this end we introduce a function $\phi\in C_0^\infty(\R)$ such that $0\leq \phi \leq 1$ and $\phi =1$ in a vicinity of $0$. By the resolvent equation 
$R(\lambda+i0) = (1+V\,  R_B(\lambda+i0))^{-1}\, V\,  R_B(\lambda+i0) $ and equations \eqref{decay-rb}, \eqref{decay-rb-1} we have $\|R(\lambda+i0)\|_{\B(s,-s )} =  \mathcal{O}(\lambda^{-\frac{1}{2}})$ as $\lambda\to\infty$ for $1/2 <s $. In view of \eqref{lap} Theorems \ref{thm-1}, \ref{thm-2} it thus follows that  $R(\lambda+i0)$ is uniformly bounded on $(0,\infty)$ in $\B(s,-s )$ for $1/2 <s $. On the other hand for $\lambda <0$ the operator $R(\lambda+i0)\, P_c$ is analytic in $\lambda$ with respect to the norm $\|\cdot\|_ {\B(L^2(\R^2))}$. Hence Lemma \ref{lem-high-en} in combination with equation \eqref{eq-jk} gives
\begin{equation} \label{large-cont}
\int_\R e^{-i t \lambda}\, (1-\phi(\lambda))\,  R(\lambda+i0) \, P_c \, d\lambda = o(t^{-2}) \qquad t\to \infty
\end{equation}
in $ \B(s,-s )$ for all $s> 5/2$. 
To estimate the contribution to \eqref{eq-repr} from small values of $\lambda$, we recall two results on Fourier transform: 
\begin{equation} \label{fourier-1}
\frac{1}{2\pi i}\, \int_\R e^{-i t \lambda}\, (\lambda+i0)^\nu\, d\lambda = \frac{i \, \sin(\pi\nu)}{\pi}\, e^{i \pi\nu/2}\, \Gamma(1+\nu)\ t^{-1-\nu}  \qquad \nu\in\R,
\end{equation}
and 
\begin{equation} \label{fourier-2}
\frac{1}{2\pi i}\, \int_\R e^{-i t \lambda}\, (\log(\lambda+i0))^{-k}\, d\lambda = i\, \sum_{j=k}^2  (-1)^k\, k \ t^{-1}\, (\log t)^{-k-1} + \mathcal{O}( t^{-1} (\log t)^{-4})
\end{equation}
for $k=1,2$ as $t \to \infty$, see e.g. \cite[Lems.6.6 -6.7]{mu}. The last two equations in combination with \eqref{eq-repr}, \eqref{eq-jk} and Theorems \ref{thm-1} and \ref{thm-2} then imply that as $t\to \infty$ 
\begin{equation} \label{final-1}
e^{-i t H(B,V)} \, P_c = i\, F_1(B,V)\  \frac{\sin(\pi\mu(\alpha))}{\pi}\ e^{i \frac{\pi\mu(\alpha)}{2}}\ \Gamma(1+\mu(\alpha))\ t^{-1-\mu(\alpha)} \,+  \, o(t^{-1-\mu(\alpha)}) \quad \alpha\not\in\Z
\end{equation}
and 
\begin{equation} \label{final-2}
e^{-i t H(B,V)} \, P_c = -i\,  \F_1(B,V)\ t^{-1} \, (\log t)^{-2} + o(\, t^{-1}\,  (\log t)^{-2}) \qquad\qquad \qquad\qquad \qquad  \  
 \ \alpha\in\Z
\end{equation}
in $ \B(s,-s )$ for all $s> 5/2$.
\end{proof}

\noindent Hence our main results are established provided we can prove auxiliary Propositions \ref{prop-1} and \ref{prop-2}. 
This will be done in the following two sections. However, the analysis of the resolvent of the operator $H(B_0)$ leads to rather lengthly calculations. Therefore, in order to keep the exposition as smooth as possible, we will often make use of auxiliary technical results presented in section \ref{sec-aux-bessel} and of selected properties of certain special functions which are described in Appendices \ref{sec-mu}, \ref{app-bessel} and \ref{app}.


\section{\bf Operator $H(B_0)$: non-integrer flux}
\label{sec-model}

\noindent We are going to study the resolvent $(H(B_0) -\lambda-i0)^{-1}$ separately for positive and negative values of $\lambda$. We first derive an explicit expression for the integral kernel, and then we will discuss the behavior of $(H(B_0) -\lambda-i0)^{-1}$ in the vicinity of zero in a suitable operator norm. For the sake of brevity we will suppose that $\mu(\alpha) < 1/2$, which means that 
\begin{equation} \label{nalpha}
\exists\, ! \ k(\alpha) : \ \  \mu(\alpha) = |k(\alpha)+\alpha|.
\end{equation}
The case $\mu(\alpha)= 1/2$ when the minimum in \eqref{min} is attained for two different values of $k\in\Z$ can be treated in a completely analogous way.

\subsection{The case $\lambda>0$}
\label{sec-l-pos}
We have
\begin{lemma} \label{lem-no-int}
Assume that $\alpha\not\in\Z$. For any $x,y\in\R^2$ it holds
\begin{equation} \label{r0}
R_0(\lambda; x,y) = G_0(x,y) + G_1(x,y)\, \lambda^{\mu(\alpha)} + G_2^+(\lambda;x,y), 
\end{equation}
where $G_0(x,y), G_1(x,y)$ are given by equations \eqref{G0}, \eqref{G1}, and $G_2^+(\lambda;x,y) = o(\lambda^{\mu(\alpha)})$ as $\lambda\to 0+$.
\end{lemma}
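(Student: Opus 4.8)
The plan is to exploit the rotational symmetry of the reference operator. Since $B_0$ is radial the vector potential $A_0$ is purely azimuthal, and $H(B_0)$ commutes with the generator of rotations; it therefore decomposes along the orthogonal sum $L^2(\R^2)=\bigoplus_{m\in\Z}\{f(r)\, e^{im\theta}\}$. In the $m$-th channel $H(B_0)$ acts as the radial operator
\[
h_m \, = \, -\partial_r^2 - \frac1r\, \partial_r + \frac{1}{r^2}\, \big(m - r\, a(r)\big)^2 ,
\]
where $a(r)$ is the azimuthal component of $A_0$, i.e. $a(r)=\alpha$ for $r\le 1$ and $a(r)=\alpha/r$ for $r>1$. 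For $r>1$ the potential term becomes exactly $\nu_m^2/r^2$ with $\nu_m=|m-\alpha|$, so outside the unit disc the equation $h_m u=\lambda u$ is Bessel's equation of order $\nu_m$. First I would write the resolvent kernel channel by channel,
\[
R_0(\lambda; x,y) \, = \, \frac{1}{2\pi}\, \sum_{m\in\Z} g_m(\lambda; r,r')\, e^{i m (\theta-\theta')} ,
\]
where $g_m$ is the Green function of $h_m-\lambda$, assembled in the standard way from the solution $\phi_m(\,\cdot\,;\lambda)$ regular at the origin and the outgoing solution at infinity, divided by their (constant) Wronskian; the $+i0$ prescription selects $H^{(1)}_{\nu_m}(\sqrt\lambda\, r)$ as the outgoing solution for $r>1$.

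Next I would determine the two solutions. For $r>1$ they are Bessel and Hankel functions of order $\nu_m$; the regular solution $\phi_m$ is obtained by solving the interior equation on $(0,1)$ — where the constant azimuthal field turns the radial equation into a Coulomb-type equation solvable in terms of the special functions collected in the appendices — and continuing it across $r=1$ as a combination $a_m(\lambda)\, J_{\nu_m}(\sqrt\lambda\, r)+b_m(\lambda)\, Y_{\nu_m}(\sqrt\lambda\, r)$, the coefficients being fixed by matching $\phi_m$ and $\phi_m'$ at $r=1$. The key observation is the small-$\lambda$ behaviour: the origin being a regular singular point, the interior data $\phi_m(1;\lambda)$ and $\phi_m'(1;\lambda)$ are analytic in $\lambda$ near $0$, whereas the Bessel asymptotics $J_\nu(z)\sim (z/2)^\nu/\Gamma(\nu+1)$ and $Y_\nu(z)\sim -\Gamma(\nu)\, (z/2)^{-\nu}/\pi$ inject the fractional powers of $\lambda$. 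Feeding these expansions into $g_m$, I expect, for fixed $r,r'$, an expansion
\[
g_m(\lambda; r,r') \, = \, g_m^0(r,r') + g_m^1(r,r')\, \lambda^{\nu_m} + o(\lambda^{\nu_m}) , \qquad \lambda \to 0+,
\]
the exponent $\nu_m=|m-\alpha|$ being dictated solely by the exterior Hankel and Bessel functions.

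Finally I would sum over $m$. Because $\alpha\notin\Z$ one has $\nu_m=|m-\alpha|\ge \mu(\alpha)$, with equality — by \eqref{nalpha}, since $\mu(\alpha)<1/2$ — only for the single index $m=-k(\alpha)$; every other channel carries a strictly larger power $\lambda^{\nu_m}=o(\lambda^{\mu(\alpha)})$. Collecting the $\lambda$-independent pieces $g_m^0$ into $G_0(x,y)$, the unique $\lambda^{\mu(\alpha)}$-coefficient into $G_1(x,y)$, and everything else into $G_2^+(\lambda;x,y)$ yields \eqref{r0} with $G_2^+=o(\lambda^{\mu(\alpha)})$. I expect the main obstacle to be precisely this last step: one must show that the channel series converges and that, after summation, the remainder is genuinely $o(\lambda^{\mu(\alpha)})$, which requires estimates on $g_m$ and on its $\lambda$-expansion that decay fast enough in $|m|$ to justify interchanging the limit $\lambda\to0+$ with the sum. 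For fixed $x\neq y$ this decay comes from the growth of $\nu_m$ together with the uniform Bessel asymptotics prepared in section \ref{sec-aux-bessel} and the appendices, the products of Bessel functions off the diagonal $r=r'$ decaying rapidly in $m$; the diagonal singularity coincides with that of the free kernel and is absorbed into $G_0$.
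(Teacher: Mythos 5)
Your proposal follows essentially the same route as the paper's proof: the angular-momentum decomposition \eqref{sum-gen}--\eqref{hk-gen}, the regular/outgoing solutions built from confluent hypergeometric functions inside the disc and Bessel--Hankel functions outside, the matching at $r=1$ and Wronskian Green's function \eqref{rm}, and the observation that the fractional powers $\lambda^{|m+\alpha|}$ enter only through the exterior Bessel asymptotics, with the single channel attaining $\mu(\alpha)$ producing $G_1$ and all others absorbed into the remainder. The uniformity-in-$m$ estimates you flag as the main obstacle are exactly what the paper supplies (error terms uniform in $m$ in \eqref{am}--\eqref{cm}, with the quantitative work deferred to section \ref{sec-aux-bessel} and the appendices), so your plan is correct and matches the paper's argument.
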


\begin{proof}
Without loss of generality we may assume that $\alpha>0$. 
To calculate $R_0(\lambda; x,y)$ we write the vector potential
$A_0$ associated to the field $B_0$ through \eqref{a0} in polar coordinates: $ A_0(r,\theta) =  a_0(r)\, (-\sin\theta,\, \cos\theta)$, where 
\begin{equation} \label{a0-polar}
a_0(r) = \alpha \quad \text{if}\quad r < 1, \qquad a_0(r) = \frac{\alpha}{r} \quad \text{if} \quad r\geq 1.
\end{equation}
The quadratic form associated to $H(B_0)$ now reads 
\begin{equation} \label{q-form}
 \int_0^\infty\! \int_0^{2\pi} \left(|\pd_r u|^2+  |i\, r^{-1}\pd_\theta u+ a_0(r)\, u|^2\right) r\, dr d\theta .
\end{equation}
By expanding a given test function $u\in L^2(\R_+\times (0,2\pi))$ into a
Fourier series with respect to the basis $\{e^{i m\theta}\}_{m\in\Z}$ of $L^2((0,2\pi))$, we obtain
the decomposition
\begin{equation} \label{sum-gen}
H(B_0) =   \sum_{m\in\Z}  \oplus \left( h_m \otimes\mbox{id}\right)
\Pi_m,
\end{equation}
where $h_m$ are the operators  in $L^2(\R_+, r
dr)$ acting on their domain as 
\begin{equation} \label{hm}
h_m\, f = -f'' -\frac{1}{r} \, f' +\Big(\frac mr+  a_0(r)\Big)^2\, f,
\end{equation}
and $\Pi_m$ is given by
$$
(\Pi_m\, u)(r,\theta) = \frac{1}{2\pi}\, \int_0^{2\pi}\, e^{im(\theta-\theta')}\, u(r,\theta')\, d\theta'.
$$
The integral kernel of $R_0(\alpha;\lambda)$ then splits accordingly:
\begin{equation} \label{hk-gen}
R_0(\lambda; x,y) = \frac{1}{2\pi}\, \sum_{m\in\Z}\  R_0^m(\lambda; r,r')\
e^{im(\theta-\theta')}\, .
\end{equation}
Here $R_0^m(\lambda; r,r')$ denotes the integral kernel of $(h_m -\lambda-i0)^{-1}$ in  $L^2(\R_+, r
dr)$.
Now consider the operators
\begin{equation} \label{lbeta}
\h_m = \U\, h_m\, \U^{-1} \qquad \text{in \, \, \, } L^2(\R_+, dr),
\end{equation}
where $\U: L^2(\R_+, r\, dr)\to L^2(\R_+, dr)$ is a unitary mapping defined by $(\U f)(r) = r^{1/2} f(r)$. Note that $\h_m $ is subject to Dirichlet boundary condition at $0$ and that it acts, on its domain, as
\begin{equation} \label{frakh}
\h_m  f = -f''\,  - \frac{1}{4 r^2}\, f + \Big(\frac mr+  a_0(r)\Big)^2\, f.
\end{equation}
Let $\rr_0^m(\lambda; r,r')$ denote the integral kernel of $(\h_m -\lambda-i0)^{-1}$ in $L^2(\R_+, dr)$ and let $g\in L^2(\R_+, dr)$.  Since 
$(\h_m -\lambda-i0)^{-1} = \U\, (h_m -\lambda-i0)^{-1}\, \U^{-1}$ in $L^2(\R_+, dr)$ by \eqref{lbeta}, keeping in mind the fact that $R_0^m(\lambda; r,r')$ is the integral kernel of $(h_m -\lambda-i0)^{-1}$ in  $L^2(\R_+, r dr)$ we obtain
\begin{align*}
((\h_m -\lambda-i0)^{-1} g)(r) & = \sqrt{r} \int_0^\infty R_0^m(\lambda; r,r')\, (\U^{-1} g)(r') \, r' dr' =  \int_0^\infty  \sqrt{r r'}\, R_0^m(\lambda; r,r') \, g(r') \, dr' .
\end{align*}
This yields the identity
\begin{equation} \label{rtor}
R_0^m(\lambda; r,r') = \frac{1}{\sqrt{r r'}}\ \rr_0^m(\lambda; r,r').
\end{equation}
Hence it suffices to calculate $\rr_0^m(\lambda; r,r')$. To do so  we will find two solutions, $f_{m,\lambda}$ and $\phi_{m,\lambda}$ to the generalized eigenvalue equation 
\begin{equation} \label{eq-fg}
-f''\,  - \frac{f}{4 r^2} + \Big(\frac {m^2}{r^2}+ \frac{2\, m\, a_0(r)}{r} +a^2_0(r)\Big)\, f = \lambda\, f,
\end{equation}
such that $f_{m,\lambda+i\eps} \in H_0^1((0,1), r dr)$ and $\phi_{m,\lambda+i \eps} \in H^1((1,\infty), r dr)$ for $\eps>0$. The Sturm-Liouville theory then gives 
\begin{equation} \label{rm}
\rr^m_0(\lambda; r,r') =  \frac{1}{W_m(\lambda)}
\left\{
\begin{array}{l@{\quad}l}
f_{m,\lambda}(r)\, \phi_{m,\lambda}(r'), &\quad r \leq r', \\
 & \\
f_{m,\lambda}(r')\, \phi_{m,\lambda}(r), &\quad  r' < r ,
\end{array}
\right.
\end{equation}
where 
$$
W_m(\lambda)= \phi_{m,\lambda}\, f'_{m,\lambda} -\phi'_{m,\lambda} \, f_{m,\lambda}
$$
is the Wronskian of $\phi_{m,\lambda}$ and $f_{m,\lambda}$. In view of \eqref{frakh} it follows that $W\{\phi_{m,\lambda}, f_{m,\lambda}\}$ is constant. In order to find $f_{m,\lambda}$ and $\phi_{m,\lambda}$ we have to solve  equation \eqref{eq-fg} separately for $r\leq 1 $ and 
$r>1$ and match the solutions smoothly. Let 
\begin{equation} \label{kappa}
\kappa = \sqrt{\alpha^2-\lambda}\, ,
\end{equation}
To simplify the notation in the sequel we define the functions $v_{m}(\lambda, \cdot), u_m(\lambda, \cdot) : (0,1)\to \R$  by
\begin{align}
v_{m}(\lambda, r) &=  e^{-\kappa r}\, (2\kappa\, r)^{|m|}\, M\Big(\frac 12+|m|+\frac{m\alpha}{\kappa}, 1+2 |m|, 2\kappa r\Big) \label{vm}\\
u_m(\lambda, r) &=  e^{-\kappa r}\, (2\kappa\, r)^{|m|}\, U\Big(\frac 12+|m|+\frac{m\alpha}{\kappa}, 1+2 |m|, 2\kappa r\Big) \  \label{um},
\end{align}
where $M(a,b,z)$ and $U(a,b,z)$ are the Kummer's confluent hypergeometric functions, see \cite[Sec.13.1]{as}. Using equations \eqref{a0-polar}, \eqref{frakh} and a suitable change of variables we find they \eqref{eq-fg} lead to a Whittaker's equation for $r\leq 1$ and to the Bessel equation for $r>1$, see \cite[Chaps.9\&13]{as}. We thus obtain 
\begin{equation} \label{fm}
f_{m,\lambda}(r) =  \left\{
\begin{array}{l@{\quad}l}
\sqrt{r}\ v_m(\lambda,r), &\quad r \leq 1  , \\
 & \\
\sqrt{r}\, \big ( A_m(\lambda)\,  J_{|\alpha+m|}(\sqrt{\lambda}\, r) +B_m(\lambda)\, Y_{|\alpha+m|}(\sqrt{\lambda}\, r)\big), &\quad  1  < r ,
\end{array}
\right.
\end{equation}
where $A_m(\lambda), \, B_m(\lambda)$ are numerical coefficients whose values will be determined later. Similarly, 
\begin{equation}\label{fim}
\phi_{m,\lambda}(r) =  \left\{
\begin{array}{l@{\quad}l}
\sqrt{r}\  \big( C_m(\lambda)\, v_m(\lambda,r)
+D_m(\lambda)\, u_m(\lambda, r) \big) , &\quad r \leq 1  , \\
 & \\
\sqrt{r}\, \big ( J_{|\alpha+m|}(\sqrt{\lambda}\, r) +i \,Y_{|\alpha+m|}(\sqrt{\lambda}\, r)\big), &\quad  1  < r ,
\end{array}
\right.
\end{equation}
In order to find the coefficients $A_m(\lambda)$ and $B_m(\lambda)$ we impose the differentiability condition at $r=1$ on the function $\frac{1}{\sqrt{r}}\, f_{m,\lambda}(r)$ which is equivalent to the differentiability (at $r=1$) of $f_{m,\lambda}$. With the help of  \eqref{w-jy} we get
\begin{align} 
A_m(\lambda) & = \frac{\pi}{2}\, \Big( (v_m(\lambda,1)\, |\alpha+m| -v'_m(\lambda,1))\, Y_{|\alpha+m|}(\sqrt{\lambda}) -\sqrt{\lambda}\ v_m(\lambda,1)\,  Y_{|\alpha+m|+1}(\sqrt{\lambda}) \Big ) \label{am-eq} \\
B_m(\lambda) & = \frac{\pi}{2}\, \Big( (v'_m(\lambda,1) -|\alpha+m| \, v_m(\lambda,1))\, J_{|\alpha+m|}(\sqrt{\lambda}) +\sqrt{\lambda}\ v_m(\lambda,1)\, J_{|\alpha+m|+1}(\sqrt{\lambda}) \Big ) \label{bm-eq}.
\end{align} 
Here $v'_m(\lambda,1)$ and $u'_m(\lambda,1)$ denote the derivatives of $v_m$ and $u_m$ with respect to $r$. 
Similarly, the matching conditions for $\frac{1}{\sqrt{r}}\,  \phi_{m,\lambda}$ yield 
\begin{align} 
C_m(\lambda) &=  \frac{ \Gamma(\frac 12+|m|+\frac{m\alpha}{\kappa})}{\Gamma(1+2 |m|)} \ \Big( (u'_m(\lambda,1)-u_m(\lambda,1)\, |\alpha+m| )\, J_{|\alpha+m|}(\sqrt{\lambda}) +\sqrt{\lambda}\ u_m(\lambda,1)\,  J_{|\alpha+m|+1}(\sqrt{\lambda})   \nonumber \\
& \qquad\qquad  +i \, \big[ u'_m(\lambda,1)-(u_m(\lambda,1)\, |\alpha+m| )\, Y_{|\alpha+m|}(\sqrt{\lambda}) +\sqrt{\lambda}\ u_m(\lambda,1)\,  Y_{|\alpha+m|+1}(\sqrt{\lambda})\, \big ] \Big) \label{cm-eq}  \\
D_m(\lambda) &=  \frac{2\,  \Gamma(\frac 12+|m|+\frac{m\alpha}{\kappa})}{\pi\, \Gamma(1+2 |m|)} \ (i A_m(\lambda)-B_m(\lambda)),  \label{dm-eq}
\end{align}
where we have used the identity
\begin{equation} \label{w-xz}
v'_m(\lambda,1)\, u_m(\lambda,1)-  v_m(\lambda,1)\, u'_m(\lambda,1)  = \frac{ \Gamma(1+2 |m|)}{\Gamma(\frac 12+|m|+\frac{m\alpha}{\kappa})}\ ,
\end{equation}
see \cite[Eq.13.1.22]{as}.  From \eqref{w-jy} and \cite[Eq.13.1.22]{as} we then calculate the Wronskian
\begin{equation}  \label{wronsk} 
W_m(\lambda)= D_m(\lambda) \ \frac{\Gamma(1+2\, |m|)}{ \Gamma(\frac 12+|m|+\frac{m\alpha}{\kappa})} = \frac{2}{\pi} (B_m(\lambda)-i A_m(\lambda)).
\end{equation}
These formulas in combination with \eqref{rm} provide the expression for $\rr_0^m(\lambda;r,r')$ and consequently for $R_0(\lambda; x,y)$, via  \eqref{hk-gen} and \eqref{rtor}. To analyse the asymptotic behavior of $R_0(\alpha,\lambda; x,y)$ as $\lambda\to 0+$ we introduce the following shorthands:
$$
a_m = v_{m}(0, 1), \qquad a'_m = v_{m}'(0, 1), \qquad b_m =  u_{m}(0, 1), \qquad b'_m =  u'_{m}(0, 1)\, .
$$
Then,  in view of \eqref{nu-pos} and \eqref{am-eq}-\eqref{dm-eq} as $\lambda\to 0+$ we have:
\begin{align}
A_m(\lambda) & = \frac{\Gamma(|m+\alpha|)}{2}\, (a'_m +|m+\alpha|\, a_m)\, \big(\frac 12 \sqrt{\lambda}\big)^{-|\alpha+m|}\, \big(1+\mathcal{O}(\lambda)\big)  \nonumber\\
& \quad + \frac{i\, \pi\,  \cot(|m+\alpha|\, \pi)}{2\, \Gamma(|m+\alpha|+1)} (a'_m -|m+\alpha|\, a_m)\, \big(\frac 12 \sqrt{\lambda}\big)^{|\alpha+m|}\, \big(1+\mathcal{O}(\lambda)\big ) \label{am} \\
B_m(\lambda) &= \frac{\pi}{2\, \Gamma(|m+\alpha|+1)} (a'_m -|m+\alpha|\, a_m)\, \big(\frac 12 \sqrt{\lambda}\big)^{|\alpha+m|}\, \big(1+\mathcal{O}(\lambda)\big ) \label{bm},
\end{align}
where the error terms are uniform in $m$. Similarly we find
\begin{align}
C_m(\lambda) &=  \frac{\Gamma(\frac 12+m+|m|)\, (1+i \cot(|m+\alpha|\, \pi)) }{\Gamma(1+2 |m|)\, \Gamma(|m+\alpha|+1)} \, (b'_m-\, |m+\alpha|\, b_m)\,  \big(\frac 12 \sqrt{\lambda}\big)^{|\alpha+m|}(1+\mathcal{O}(\lambda)) \nonumber \\
& \quad -i\,  \frac{\Gamma(\frac 12 + m+ |m|)\, \Gamma(|m+\alpha|)}{\pi \Gamma(1+2 |m|)} \, (b'_m +|m+\alpha|\, b_m) \big(\frac 12 \sqrt{\lambda}\big)^{-|\alpha+m|}\, (1+\mathcal{O}(\lambda)) \label{cm} \end{align}
Hence from equations \eqref{hk-gen}, \eqref{rtor}, \eqref{rm} and \eqref{fm}-\eqref{wronsk}, after elementary but somewhat lengthly calculations, we obtain 
\begin{equation}  \label{G0}
\lim_{\lambda\to 0+}\, R_0(\lambda;x,y) \, =: G_0(x,y) \, = \, \sum_{m\in\Z} \, G_{m,0}(r,r') \ e^{im(\theta-\theta')} \, ,
\end{equation} 
where 
\begin{align}
G_{m,0}(r,r') 
& = \frac{\Gamma(\frac 12 +m+|m|)}{\Gamma(1+2 |m|)}\, v_{m}(0,r) \Big(u_m(0,r')- \frac{b'_m+|m+\alpha|\, b_m}{a'_m+|m+\alpha|\, a_m}\, v_m(0,r')\Big) &  r< r' \leq 1, \nonumber 
\\
& \nonumber \\
G_{m,0}(r,r') 
& = \frac{v_{m}(0,r)}{a'_m+|m+\alpha|\, a_m}\  (r')^{-|m+\alpha|} \, & r \leq 1 < r' , \label{G02}\\
& \nonumber \\
G_{m,0}(r,r') & = \frac{1}{2|m+\alpha|}\, \left[\Big(\frac{r}{r'}\Big)^{|m+\alpha|} - \frac{a'_m-|m+\alpha|\, a_m}{a'_m+|m+\alpha|\, a_m}\ (r r')^{-|m+\alpha|} \right]  &1 <r <r'   \nonumber
\end{align}

\smallskip
 
\noindent Note that $a'_m+|m+\alpha|\, a_m>0$ for all $m\in\Z$, see \eqref{m-infty}. Consider now the remainder term in \eqref{G0}. Since $\alpha>0$ by assumption, we have $k(\alpha) \leq 0$. To simplify the notation we will write
$$
k(\alpha) = k  , \qquad \mu(\alpha)=\mu=|k+\alpha|.
$$ 
Then, using again   \eqref{hk-gen}, \eqref{rtor}, \eqref{rm} and \eqref{fm}-\eqref{wronsk} we find that
\begin{equation} \label{G1}
\lim_{\lambda\to 0+}\,  \lambda^{-\mu}\, \big(R_0(\lambda; x,y) - G_0(x,y) \big) = G_1(x,y) = g_1(r,r') \ e^{i k (\theta-\theta')} ,
\end{equation} 
where
\begin{equation} \label{g1}
\begin{aligned}
g_1(r,s) & =  \frac{2\, \pi \, v_k(0, r)\,  v_k(0, s)\, }{ 4^\mu\, \Gamma^2(\mu) \, (a'_k +\mu\, a_k)^2}\ (i-\cot(\mu\, \pi)) \,  \qquad \qquad\qquad \qquad\qquad \qquad\quad  r< s \leq 1 ,\\
& \\
g_1(r,s) & =  \frac{ \pi\, v_k(0, r) \,(i-\cot(\mu\, \pi))}{\mu\, 4^\mu\, \Gamma^2(\mu)\, (a'_k +\mu\, a_k) }\ \Big(s^{\mu}- \frac{a'_k-\mu\, a_k}{a'_k+\mu\, a_k}\, s^{-\mu}\Big)  \qquad \qquad\qquad \qquad\ \  r\leq 1< s ,\\
& \\
g_1(r,s) & =  \frac{ \pi\,  (i-\cot(\mu\, \pi)) }{4^\mu\, 2\, \mu^2\, \Gamma^2(\mu)} \,  \Big(r^{\mu}- \frac{a'_k-\mu\, a_k}{a'_k+\mu\, a_k}\, r^{-\mu}\Big)\, \Big(s^{\mu}- \frac{a'_k-\mu\, a_k}{a'_k+\mu\, a_k}\, s^{-\mu}\Big) 
 \qquad\  1\leq r< s.
\end{aligned}
\end{equation}
This implies \eqref{r0}. 
\end{proof}

\noindent In the sequel we denote by $G_0$ and $G_1$ the operators on $L^2(\R^2)$ with kernels $G_0(x,y)$ and  $G_1(x,y)$. The following Lemma shows that these operators have the properties needed for the proof of Proposition \ref{prop-1}.

\begin{lemma} \label{lem-g0}
Let $\alpha\not\in\Z$ and let $ s>1$.  Then $\rho^{-s}\, G_j\, \rho^{-s}$ and $\rho^{-s}\, \nabla\, G_j\, \rho^{-s}$ with $j=0,1$ are compact operators from $L^2(\R^2)$ to $L^2(\R^2)$ and from $L^2(\R^2)$ to $L^2(\R^2,\C^2)$ respectively. 
\end{lemma}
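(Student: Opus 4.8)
The plan is to treat the three ingredients separately, since the kernels produced by Lemma~\ref{lem-no-int} have quite different structure. First I would record two elementary reductions. The angular factor $e^{im(\theta-\theta')}$ makes the modes orthogonal, so in any Hilbert--Schmidt computation the double $\theta,\theta'$ integral collapses to $(2\pi)^2\delta_{mm'}$ and, because the weight $\rho^{-s}=(1+r)^{-s}$ is radial, the weighted $\|\cdot\|_{HS}^2$ of $\rho^{-s}G_0\rho^{-s}$ becomes the single sum $\sum_{m}\int_0^\infty\!\int_0^\infty|G_{m,0}(r,r')|^2(1+r)^{-2s}(1+r')^{-2s}\,r\,r'\,dr\,dr'$. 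Second, since we have assumed $\mu(\alpha)<1/2$, the kernel $G_1$ involves only the mode $m=k(\alpha)$ and, inspecting \eqref{g1}, factorizes as $g_1(r,r')=c\,P(r)\,P(r')$ with $P(r)=v_k(0,r)$ for $r\le1$ and $P(r)=c'(r^{\mu}-\cdots\,r^{-\mu})$ for $r>1$; thus $G_1$ is a rank-one operator $u\mapsto c\,\langle u,\overline{\Psi}\rangle\,\Psi$ with $\Psi(x)=P(|x|)\,e^{ik\theta}$.

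For $G_0$ I would show $\rho^{-s}G_0\rho^{-s}$ is Hilbert--Schmidt, hence compact. Using the three regimes in \eqref{G02} together with the large-$m$ behaviour of $v_m,u_m$ and of $a_m,a'_m,b_m,b'_m$ from the appendix (cf. \eqref{m-infty}), the off-diagonal factor $(r_</r_>)^{|m+\alpha|}$ (here $r_<=\min(r,r')$, $r_>=\max(r,r')$) decays geometrically away from the diagonal, so the $m$-sum converges, while on the diagonal it is $\le1$; the radial integral $\int_1^\infty\!\int_1^\infty(r_</r_>)^{2|m+\alpha|}(rr')^{1-2s}\,dr\,dr'$ is finite exactly when $s>1$, and the regions $r\le1$ and $r\le1<r'$ are controlled by the interior decay $v_m(0,r)\sim r^{|m|}$ and the exterior decay $(r')^{-|m+\alpha|}$. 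Compactness of $\rho^{-s}G_1\rho^{-s}$ is then immediate from its rank-one form, provided $\rho^{-s}\Psi\in L^2(\R^2)$; since $P(r)\sim r^{\mu(\alpha)}$ this holds in the operative range of $s$.

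The hard part will be the gradients, for $\nabla G_0$ is \emph{not} Hilbert--Schmidt: near the diagonal its kernel behaves like the gradient of a two-dimensional logarithm, i.e. like $|x-y|^{-1}$, so $|\nabla_x G_0|^2$ fails to be locally integrable and the angular derivative produces, after summation over $m$, a non-integrable $(r_>-r_<)^{-1}$ singularity. I would therefore argue softly via the magnetic form. Writing $u=G_0 f$ with $f=\rho^{-s}g$ of compact support, the threshold identity $H(B_0)G_0=I$ and integration by parts give $\|(i\nabla+A_0)u\|_{L^2}^2=\langle f,u\rangle=\langle g,\rho^{-s}G_0\rho^{-s}g\rangle$, the boundary term at infinity vanishing because $u=G_0f$ decays like $|x|^{-\mu(\alpha)}$ with $\mu(\alpha)>0$. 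Hence $(i\nabla+A_0)G_0\rho^{-s}$ is bounded from $L^2(\R^2)$ to $L^2(\R^2,\C^2)$ with norm at most $\|\rho^{-s}G_0\rho^{-s}\|$, finite by the previous step.

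Compactness then comes from interior elliptic regularity and a weighted Rellich argument. For $g_n$ bounded in $L^2$, the $u_n=G_0\rho^{-s}g_n$ are bounded in $L^2(\R^2,-s)$, hence in $L^2_{\loc}$; since $H(B_0)u_n=\rho^{-s}g_n$ is bounded in $L^2$ and $A_0\in L^\infty$, interior estimates for $-\Delta$ bound $u_n$ in $H^2_{\loc}$, so along a subsequence $\nabla u_n$ and $A_0 u_n$ converge in $L^2_{\loc}$, whence $(i\nabla+A_0)u_n$ converges in $L^2_{\loc}$; on $\{|x|>R\}$ the tail is $\le(1+R)^{-s}\|(i\nabla+A_0)u_n\|_{L^2}=o(1)$ uniformly by the form bound. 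Splitting into $\{|x|\le R\}$ and $\{|x|>R\}$ shows $\rho^{-s}(i\nabla+A_0)G_0\rho^{-s}$ is compact, and since $\nabla=-i(i\nabla+A_0)+iA_0$ with $A_0\in L^\infty$, the remainder $\rho^{-s}A_0G_0\rho^{-s}=A_0\,(\rho^{-s}G_0\rho^{-s})$ is bounded times compact; thus $\rho^{-s}\nabla G_0\rho^{-s}$ is compact. Finally $\rho^{-s}\nabla G_1\rho^{-s}$ has kernel $c\,\nabla\Psi(x)\,\overline{\Psi(y)}$, so it is a rank-one (hence compact) operator once $\rho^{-s}\nabla\Psi,\ \rho^{-s}\Psi\in L^2$, which holds in the relevant range of $s$.
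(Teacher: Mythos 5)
Your treatment of $G_0$ and of $G_1$ is essentially the paper's: the paper likewise reduces to the angular modes, proves the per-mode Hilbert--Schmidt bound \eqref{gm0-hs} from the pointwise estimates \eqref{gm0} and \eqref{m-infty}, and handles $G_1$ through the single mode $m=k(\alpha)$ (your observation that \eqref{g1} factorizes, so that $G_1$ is rank one, is a clean simplification of the paper's direct Hilbert--Schmidt computation). Where you genuinely diverge is $\nabla G_0$. The paper stays with kernels: it proves \eqref{gm0-grad}, gets per-mode Hilbert--Schmidt norms $\lesssim |m+\alpha|^{-1/2}$ for $\rho^{-s}\,\partial_{r'}G_{m,0}\,\rho^{-s}$ and $\rho^{-s}\,\tfrac{m}{r'}G_{m,0}\,\rho^{-s}$, and then exploits the orthogonality of the angular sectors: the tail $\sum_{|m|>M}$ has operator norm equal to the supremum of the mode norms, which tends to zero, so $\rho^{-s}\nabla G_0\rho^{-s}$ is a norm limit of compact operators. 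Your remark that the full weighted gradient kernel is \emph{not} Hilbert--Schmidt (the diagonal singularity $|x-y|^{-1}$) is correct, and it is exactly why the paper cannot simply sum the squares of the mode norms, $\sum_m|m+\alpha|^{-1}=\infty$. Your route is instead soft: the quadratic-form bound for $(i\nabla+A_0)G_0\rho^{-s}$, interior elliptic regularity plus Rellich on discs, uniformly small weighted tails, and $\nabla=-i(i\nabla+A_0)+iA_0$ with $A_0\in L^\infty$. This buys independence from the derivative-kernel estimates and would survive perturbations of the model; what the paper's computation buys is quantitative mode-wise information that is reused in Lemmata \ref{r0-asymp1} and \ref{r0-asymp2}, and it never has to discuss in what sense $G_0$ inverts $H(B_0)$.

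Two points in your argument need more than you give them. First, the identity $\|(i\nabla+A_0)u\|^2=\langle f,u\rangle$ for $u=G_0f$ rests on (i) $H(B_0)G_0f=f$ in the distributional sense, which must be extracted from $G_0=\lim_{\lambda\to0+}R_0(\lambda+i0)$ in $\B(s,-s)$, and (ii) vanishing of the boundary terms, which needs pointwise decay of both $u$ \emph{and} its gradient (the flux term $\oint \bar u\,\partial_\nu u$ involves $\nabla u$); both follow from the explicit kernels of Lemma \ref{lem-no-int}, but they are not free, and in any case your conclusion should read $\|(i\nabla+A_0)G_0\rho^{-s}\|\le\|\rho^{-s}G_0\rho^{-s}\|^{1/2}$, not $\|\rho^{-s}G_0\rho^{-s}\|$. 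Second, your rank-one reduction makes visible a constraint that is hidden in the paper's proof as well: since $P(r)\sim r^{\mu(\alpha)}$ at infinity, $\rho^{-s}\Psi\in L^2(\R^2)$ --- and hence boundedness, let alone compactness, of $\rho^{-s}G_1\rho^{-s}$ --- requires $s>1+\mu(\alpha)$, not merely $s>1$. The paper's assertion that $\rho^{-s}g_1\rho^{-s}$ is Hilbert--Schmidt carries exactly the same requirement, so this is a shared slip in the stated range of $s$; in the range where the lemma is actually used, $s>3/2>1+\mu(\alpha)$, both proofs are fine, and your hedge about the ``operative range of $s$'' is the honest formulation.
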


\begin{proof} Consider first the operator $G_0$. We denote by $G_{m,0}$ the integral operator in $L^2(\R_+,r dr)$ with the kernel $ G_{m,0}(r,r')$. For $m=0$ it is easily seen from \eqref{G0}, using equations \eqref{mu} and \eqref{u-int}, that the kernels  $ G_{0,0}(r,r')$ and 
$\partial_{r'}G_{0,0}(r,r')$ generate Hilbert-Schmidt operators in $L^2(\R_+,r dr)$. Hence we may suppose in the rest of the proof that $m\neq 0$. 

\noindent To continue we note that with the help of equations \eqref{mu}, \eqref{mu-der}, \eqref{u-upperb} and \eqref{u'-upperb}  it is straightforward to verify that 
\begin{equation}  
|b'_m+|m+\alpha| \, b_m | \ \lesssim \ \frac{|2\alpha|^{-|m|}\, \Gamma(2|m|)}{\Gamma(\frac 12+2|m|)},   \quad  |a'_m+|m+\alpha| \, a_m|  \  \gtrsim  \ (1+| m|) \, |2\alpha|^{|m|} \label{m-infty} \qquad \forall\ m\in\Z.
\end{equation}
On the other hand, equations \eqref{vm}, \eqref{um} in combination with \eqref{mu} and \eqref{u-upperb} imply 
\begin{equation} \label{vu0-upperb}
|v_m(0,r) | \, \leq \, e^{\kappa r}\, (2\kappa r)^{|m|}, \qquad 
|u_m(0,r')| \, \leq \, C_\alpha \  e^{\kappa r'}\, (2\kappa r')^{-|m|}\, \Gamma(2 |m|),
\end{equation}
with a constant $C_\alpha$ independent of $m$.  From\eqref{G02} and \eqref{vu0-upperb} we thus obtain the following estimates:
\begin{equation} \label{gm0}
 | G_{m,0}(r,r')| \ \lesssim \ \frac{1}{|m+\alpha|} 
\left\{
\begin{array}{l@{\quad}l}
(r/r')^{|m|} &\quad r <r ' \leq 1  , \\
& \\
(r/r')^{|m+\alpha|}  &  \quad 1 \leq r < r'\\
& \\
r^{|m|}\, (r')^{-|m+\alpha|} 
 &\quad  r < 1  < r' .
\end{array}
\right.
\end{equation}
and 
\begin{equation} \label{gm0-grad}
\big | \, \frac{m}{r'}\, G_{m,0}(r,r')\big | + |\,  \partial_{r'} G_{m,0}(r,r')| \ \lesssim \ \frac{1}{r'}\ |m+\alpha|\   | G_{m,0}( r,r')| ,
\end{equation}
where the roles of $r$ and $r'$ have to be interchanged if $r' <r$.  We thus find that 
\begin{equation} \label{gm0-hs}
\|\, \rho^{-s}\,  G_{m,0} \, \rho^{-s}\|_{HS(\R_+, r dr)} \ \lesssim \  |m+\alpha|^{-3/2} \qquad \forall\ m\in\Z.
\end{equation}
Here $\| \cdot\|_{HS(\R_+, r dr)}$ denotes the Hilbert-Schimdt norm on $L^2(\R_+, r dr)$. This shows that $\rho^{-s}\,  G_{m,0} \, \rho^{-s}$ is compact on $L^2(\R_+, r dr)$ for every $m$ and that $\|\rho^{-s}\,  G_{m,0}\,  \rho^{-s}\big\|_{L^2(\R_+,r dr)}$ converges to zero as $|m|\to\infty$. 
Hence $\rho^{-s}\,  G_0\,  \rho^{-s}$ is compact on $L^2(\R^2)$ in view of \eqref{G0}. Moreover, from \eqref{gm0} and  \eqref{gm0-grad} we infer that 
\begin{equation*} 
\big\|\rho^{-s} \, \frac{m}{r'}\, G_{m,0} \,  \rho^{-s}\big\|_{HS(\R_+, r dr)}   + \|\rho^{-s}\,  \partial_{r'} G_{m,0}  \, \rho^{-s}\|_{HS(\R_+, r dr)} \ \lesssim \ |m+\alpha|^{-1/2}. 
\end{equation*}
This implies that the operators $\rho^{-s} \, \frac{m}{r'}\, G_{m,0} \,  \rho^{-s}$ and $\rho^{-s}\,  \partial_{r'} G_{m,0}  \, \rho^{-s}$ are compact on $L^2(\R_+, rdr)$ for every $m$, and that their operator norm tends to zero as $|m|\to\infty$. Since the integral kernel of the operator $ \nabla G_0$ is given by
$$
\nabla G_0  (r,r',\theta, \theta') =  \sum_{m\in\Z} \, e^{im(\theta-\theta')} \ \Big(\pd_{r'} G_{m,0}(r,r')\ , \, \frac{m}{r'}\, G_{m,0}(r,r') \Big),
$$
this proves the compactness of the operator $\rho^{-s}\, \nabla\, G_0\, \rho^{-s}$ on $L^2(\R^2)$. 

\smallskip

\noindent The analysis of the operators $\rho^{-s}\, G_1\, \rho^{-s}$ and $\rho^{-s}\, \nabla G_1\, \rho^{-s}$ is easier, since we have a contribution only from $m=k(\alpha)$. Indeed, from the explicit expression for $g_1(r,r')$ it follows that 
$$
\rho^{-s}(r')\, g_1(r,r')\, \rho^{-s}(r), \qquad  \rho^{-s}(r')\, \pd_{r'}  g_1(r,r')\, \rho^{-s}(r), \qquad  \rho^{-s}(r')\, \frac{1}{r'}\, g_1(r,r')\,  \rho^{-s}(r),
$$
are Hilbert-Schmidt kernels on $L^2(\R_+, r dr)$. 
\end{proof}

\noindent Next we are going to study the behavior of the remainder term  in \eqref{r0}. Let us denote by $G_2^+(\lambda)$ the integral operator in $L^2(\R^2)$ with kernel $G_2^+(\lambda;x,y)$ given by \eqref{r0} and \eqref{hk-gen}.

\begin{lemma} \label{r0-asymp1}
Let $\alpha\not\in\Z$ and assume that $ 3/2 < s <  3/2 +\mu(\alpha)$. Then, as $\lambda\to 0+$ we have $G_2^+(\lambda) = o(\lambda^{\mu(\alpha)})$ in $\B(s,-s )$. 
\end{lemma}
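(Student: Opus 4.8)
The plan is to upgrade the pointwise expansion of Lemma \ref{lem-no-int} to an operator-norm statement by estimating the Hilbert--Schmidt norm of the weighted remainder. Since the operator norm in $\B(s,-s )$ is dominated by $\|\rho^{-s}\, G_2^+(\lambda)\, \rho^{-s}\|_{HS(\R^2)}$, and since by \eqref{hk-gen}, \eqref{rtor} the angular factors $e^{im(\theta-\theta')}$ are mutually orthogonal while the weight $\rho$ is radial, I would first decouple the problem over angular modes,
\[
\|\rho^{-s}\, G_2^+(\lambda)\, \rho^{-s}\|_{HS(\R^2)}^2 = 2\pi\sum_{m\in\Z}\big\|\rho^{-s}\big(R_0^m(\lambda)-G_{m,0}-\delta_{m,k}\,\lambda^{\mu}\, g_1\big)\rho^{-s}\big\|_{HS(\R_+,r dr)}^2,
\]
where $R_0^m(\lambda)$, $G_{m,0}$, $g_1$ are viewed as integral operators in $L^2(\R_+,r dr)$, $k=k(\alpha)$ is the critical index of \eqref{nalpha}, and only the critical mode $m=k$ carries the subtracted $\lambda^{\mu}$ contribution (the kernel $g_1$ of \eqref{g1} being supported on that mode); every other mode is treated as $R_0^m(\lambda)-G_{m,0}$. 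Thus it suffices to bound each summand uniformly and check that the sum is $o(\lambda^{\mu})$.

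To estimate a single summand I would insert the explicit Sturm--Liouville kernel \eqref{rm}, \eqref{fm}--\eqref{wronsk} together with the small-$\lambda$ expansions \eqref{am}--\eqref{cm} of the matching coefficients, and split the $(r,r')$--integration into an inner region $\{\sqrt\lambda\,\max(r,r')\le 1\}$ and an outer region. In the inner region the Bessel functions are replaced by their convergent power series, so that after subtracting $G_{m,0}$ (and, for $m=k$, the term $\lambda^{\mu} g_1$) the leading surviving power is $\lambda^{2\mu}$ for the critical mode (using $\mu<1/2$) and $\lambda^{|m+\alpha|}$ for $m\neq k$; both exponents exceed $\mu$, since $|m+\alpha|\ge 1-\mu>\mu$ for $m\neq k$ by \eqref{min}. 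The corresponding kernels grow like $(rr')^{|m+\alpha|}$, so the weighted Hilbert--Schmidt integral over the inner region either converges, giving $O(\lambda^{|m+\alpha|})$, or saturates at the endpoint $r\sim\lambda^{-1/2}$, giving $O(\lambda^{s-1})$; since $s>3/2>1+\mu$, both are $o(\lambda^{\mu})$. In the outer region I would use the decaying large-argument asymptotics of $J_\nu,Y_\nu$, for which the weight $\rho^{-s}$ with $s>3/2$ renders the $r\,dr$--integral convergent and of the same order. The upper bound $s<3/2+\mu$ is merely the window in which this direct estimate is cleanest; the case of larger $s$ then follows for free from the monotonicity $\|\cdot\|_{\B(s',-s' )}\le\|\cdot\|_{\B(s,-s )}$ valid for $s'\ge s$, because $\rho\ge 1$.

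The summation over $m$ is controlled by the coefficient bounds \eqref{m-infty}, \eqref{vu0-upperb}: the $\lambda^{|m+\alpha|}$--coefficients of the higher modes carry Gamma-factor and geometric decay in $|m|$, exactly the mechanism already exploited in \eqref{gm0-hs}, so that the series of per-mode bounds converges and the total stays $o(\lambda^{\mu})$. The main obstacle, and the reason for the technical estimates of section \ref{sec-aux-bessel}, is to make all of this uniform simultaneously in $m$ and in $\lambda$ across the transition region $\sqrt\lambda\,r\sim 1$, where the small-argument series for the Bessel and Kummer functions ceases to be valid and must be matched to the oscillatory regime. For large $|m|$ the matching point shifts to the classical turning point $r\sim|m+\alpha|/\sqrt\lambda$, and one must verify that the decay of the mode coefficients outpaces the growth of the spatial factors $(rr')^{|m+\alpha|}$ uniformly down to $\lambda\to 0$, while confirming that the extracted power is \emph{exactly} $\lambda^{\mu}$ and nothing smaller. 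Once this matching is carried out with explicit error control, collecting the inner and outer contributions and summing over $m$ yields $G_2^+(\lambda)=o(\lambda^{\mu})$ in $\B(s,-s )$, as claimed.
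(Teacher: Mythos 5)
Your overall strategy --- decompose into angular modes, insert the explicit Sturm--Liouville kernel and the small-$\lambda$ expansions of the matching coefficients, split the $(r,r')$ plane into a power-series region and an oscillatory region, and track the surviving powers of $\lambda$ --- is the same as the paper's, and your exponent arithmetic for each \emph{fixed} mode is essentially right (corrections of order $\lambda^{2\mu}$ in the critical channel, of order $\lambda^{\min(1,|m+\alpha|)}$ rather than $\lambda^{|m+\alpha|}$ in the other channels, and $\lambda^{2s-2}$ from the region $r,r'\gtrsim \lambda^{-1/2}$; all of these are $o(\lambda^{2\mu})$ since $\mu<1/2$ and $s>1+\mu$). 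But there is a genuine gap, in two parts. First, your reduction bounds the operator norm by the full Hilbert--Schmidt norm, so you must \emph{sum} the per-mode squared HS norms over $m\in\Z$, and this requires per-mode bounds that are summable in $m$. The bounds that come out of the natural Bessel/Kummer estimates --- and the ones the paper actually proves, cf.\ \eqref{hs-bessel} and \eqref{M-uv2}, which carry only a factor $(1+|m|)^{-1}$ on the squared norms --- are \emph{not} summable, so your plan stalls exactly at the step ``summing over $m$'': you would need to re-derive all the uniform estimates with strictly better $m$-decay than the paper itself needs. The paper avoids this entirely through the identity \eqref{norm-sup}: since the weight $\rho$ is radial, $\rho^{-s}G_2^+(\lambda)\rho^{-s}$ is block-diagonal in the angular decomposition, so its operator norm equals the \emph{supremum} of the per-mode operator norms, and uniform (not summable) bounds suffice. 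Relatedly, the paper does not use a pure HS bound within a mode either; in \eqref{0-1} it combines a Schur--Holmgren bound on $(0,1)\times(0,1)$ with HS bounds on the cross and far regions, which is what makes the $\lambda$-derivative (Lipschitz-in-$\lambda$) arguments for the Kummer part go through uniformly in $m$.

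Second, and more fundamentally, the actual content of the lemma is precisely the uniform-in-$m$, uniform-in-$\lambda$ error control across the matching region, which you correctly identify as ``the main obstacle'' and then defer (``once this matching is carried out with explicit error control\dots''). That deferred step is not a routine verification: it is what occupies Lemmas \ref{lem-jj}, \ref{lem-int-JJ}, \ref{lem-int-JJ'} and \ref{lem-mixed} of Section \ref{sec-aux-bessel} (weighted integral bounds, uniform in $\nu=|m+\alpha|$, for products $J_{\pm\nu}(\sqrt{\lambda}\,r)J_{\pm\nu}(\sqrt{\lambda}\,r')$ and their $\lambda$-derivatives) together with the Kummer-function estimates of Appendix \ref{sec-mu}, and it is where the hypotheses $3/2<s<3/2+\mu(\alpha)$ and the structure of the expansions \eqref{am}--\eqref{cm} are genuinely used. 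As written, your argument establishes the claim only mode by mode, with constants allowed to blow up in $m$, which is strictly weaker than the lemma; combined with the summability problem above, the proposal is a plan for a proof rather than a proof.
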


\begin{proof}
From Lemma \ref{lem-no-int} it follows that
\begin{equation} \label{norm-sup}
\| G_2^+(\lambda) \|_{\B(s,-s )} = \| \rho^{-s}\, G_2^+(\lambda) \, \rho^{-s}\,\|_{\B(L^2(\R^2))} = \sup_{m\in\Z} \| \rho^{-s}\, G_{m,2}^+(\lambda) \, \rho^{-s}\,\|_{\B(L^2(\R_+,r dr))} ,
\end{equation}
where $G_{m,2}^+(\lambda)$ is the integral operator with kernel 
\begin{equation} \label{gm2}
 G_{m,2}^+(\lambda, r,r') = R_0^m(\lambda, r,r') - G_{m,0}(r,r') - \lambda^{\mu(\alpha)}\, \delta_{m,k(\alpha)}\, g_1(r,r'),
\end{equation}	
and $\delta_{jk}$ denotes the Kronecker delta. Let us denote 
\begin{align*}
M^1_{(0,1)\times(0,1)} [\, G_{m,2}^+(\lambda)]  & =
\sup_{0\leq r\leq 1} \int_0^1 | \rho^{-s}(r) \, G_{m,2}^+(\lambda, r,r')\, \rho^{-s}(r') |\, r' \, dr'   \\
M^2_{(0,1)\times(0,1)} [\, G_{m,2}^+(\lambda)]  & = \sup_{0\leq r'\leq 1} \int_0^1 | \rho^{-s}(r)\, G_{m,2}^+(\lambda, r,r')\, \rho^{-s}(r') |\, r\, dr
\end{align*}
We will estimate the norm of $\rho^{-s}\, G_{m,2}^+(\lambda) \, \rho^{-s}$ in $L^2(\R_+,r dr)$ as follows:
\begin{align} 
\| \rho^{-s}\, G_{m,2}^+(\lambda) \, \rho^{-s}\,\|^2_{\B(L^2(\R_+,r dr))} & \ \leq \ M^1_{(0,1)\times(0,1)} [G_{m,2}^+(\lambda)]\ M^2_{(0,1)\times(0,1)} [G_{m,2}^+(\lambda)] \label{0-1} \\
& \ + 2 \int_0^1\rho^{-2s}(r)   \int_1^\infty \, | G_{m,2}^+(\lambda, r,r')|^2\, \rho^{-2s}(r') \,  r'\, dr' \, r \, dr  \nonumber\\
& \ + \int_1^\infty \int_1^\infty \rho^{-2s}(r) \, | G_{m,2}^+(\lambda, r,r')|^2\, \rho^{-2s}(r')\,  \, r  r'\, dr \,  dr \nonumber
\end{align}
Here we use the Schur-Holmgren bound for the part of the operator relative to the region $(0,1)\times(0,1)$ and the Hilbert-Schmidt norm on the rest of $\R_+\times \R_+$. 

\smallskip

\noindent Let us first estimate the last term on the right hand side of \eqref{0-1}. We recall the formula for the resolvent kernel on $(1,\infty)\times(1,\infty)$:
\begin{align} 
R_0^m(\lambda, r,r') & = \frac{1}{W_m(\lambda)} \Big[ A_m(\lambda)\,  J_{|m+\alpha|}(\sqrt{\lambda}\, r)\, J_{|m+\alpha|}(\sqrt{\lambda}\, r') + i\, A_m(\lambda)\,  J_{|m+\alpha|}(\sqrt{\lambda}\, r)\, Y_{|m+\alpha|}(\sqrt{\lambda}\, r') \nonumber  \\
& \ \ +B_m(\lambda)\,  Y_{|m+\alpha|}(\sqrt{\lambda}\, r)\, J_{|m+\alpha|}(\sqrt{\lambda}\, r') +i\, B_m(\lambda)\,  Y_{|m+\alpha|}(\sqrt{\lambda}\, r)\, Y_{|m+\alpha|}(\sqrt{\lambda}\, r') \Big ],   \label{rm-2}
\end{align}
see equations \eqref{rm}, \eqref{fm} and \eqref{fim}. 

We now use identity \eqref{jy}, keeping in mind that $\alpha\not\in\Z$ to write $Y_{|m+\alpha|}$ in terms  of $J_{|m+\alpha|}$ and $J_{-|m+\alpha|}$, and estimate each term in the above sum separately. To this end we are going to use integral operators $T^m_{\pm }$ on $L^2((1,\infty), r dr)$ 
with kernels
\begin{align}
T^m_{+}(r,r') &= \frac{4^{-|m+\alpha|}\, (r r')^{|m+\alpha|}}{\Gamma^2(|m+\alpha|+1)} \, ,  \qquad \qquad\quad   T^m_{-}(r,r') = \left( \frac{r r'}{4}\right)^{-|m+\alpha|}\, \Gamma^2(|m+\alpha|) \ \frac{\sin^2(|m+\alpha| \pi)}{\pi^2} \nonumber
\\
T^m_{\pm}(r,r') & = - (r/r')^{|m+\alpha|}\ \frac{\sin(|m+\alpha|\, \pi)}{\pi\, |m+\alpha|}\, ,  \quad T^m_{\mp}(r,r')  = T^m_{\pm}(r',r). \label{eq-tm}
\end{align}
In view of Lemma \ref{lem-jj}, equation \eqref{eq-JJ-1} of Lemma \ref{lem-int-JJ}, and equation \eqref{nalpha} we then have 
\begin{equation} \label{asymp-jj-1}
J_{|m+\alpha|} (\sqrt{\lambda}\, r)\,  J_{|m+\alpha|} (\sqrt{\lambda}\, r') = 
\left\{
\begin{array}{l@{\quad}l}
\lambda^{\mu(\alpha)}\, T^{k(\alpha)}_+(r,r')(1+\mathcal{O}(\lambda^\eps) )&\quad m = k(\alpha) , \\
(1+m^2)^{-1}\, \mathcal{O}(\lambda^{\frac 12+\eps}) & \quad  m\neq k(\alpha)
\end{array}
\right.
\end{equation}
as $\lambda\to 0+$ with respect to the Hilbert-Schmidt norm on $L^2((1,\infty), r dr)$ and with the error terms uniform in $m$. Similarly, using equations  \eqref{eq-JJ-2} and  \eqref{eq-JJ-3} of Lemma \ref{lem-int-JJ}, we find that 
\begin{align*}
\lambda^{|m+\alpha|}\, J_{-|m+\alpha|} (\sqrt{\lambda}\, r)\,  J_{-|m+\alpha|} (\sqrt{\lambda}\, r') &= T^m_-(r,r') (1+\mathcal{O}(\lambda)) 
\\
J_{ |m+\alpha|} (\sqrt{\lambda}\, r)\,  J_{- |m+\alpha|} (\sqrt{\lambda}\, r') & =  T^m_\pm(r,r') + (1+|m|)^{-1}\, \mathcal{O}(\lambda^{\frac 12+\eps}) 
\\ 
J_{ -|m+\alpha|} (\sqrt{\lambda}\, r)\,  J_{ |m+\alpha|} (\sqrt{\lambda}\, r') & =  T^m_\mp(r,r') +  (1+|m|)^{-1}\, \mathcal{O}(\lambda^{\frac 12+\eps}) 
\end{align*}
as $\lambda\to 0+$  with respect to the Hilbert-Schmidt norm on $L^2((1,\infty), r dr)$ and error terms uniform in $m$. On the other hand, from equations \eqref{am} and \eqref{bm} it follows that 
\begin{align}
\frac{B_m(\lambda)}{W_m(\lambda)} &= \frac{i\, \pi^2 \, 4^{- |m+\alpha|}}{2\, |m+\alpha|\, \Gamma^2( |m+\alpha|)}\ \left(\frac{a_m'- |m+\alpha|\, a_m}{a_m'+ |m+\alpha|\, a_m}\right)\, \lambda^{ |m+\alpha|} (1+ \mathcal{O}(\lambda))  \nonumber \\
& +   \frac{ \pi^3 \, (1+i \cot(|m+\alpha|\, \pi))}{4^{2|m+\alpha|}\,  2\, |m+\alpha|^2\, \Gamma^4( |m+\alpha|)}\ \left(\frac{a_m'- |m+\alpha|\, a_m}{a_m'+ |m+\alpha|\, a_m}\right)^2\, \lambda^{2 |m+\alpha|} (1+ \mathcal{O}(\lambda)) \label{bmw}
\end{align}
and 
\begin{align}
\frac{A_m(\lambda)}{W_m(\lambda)} &= \frac{\pi}{2}\,  \left(i +  \frac{\pi \,  \cot(|m+\alpha|\, \pi)\,  \lambda^{|m+\alpha|}}{4^{|m+\alpha|}\,  |m+\alpha|\, \Gamma^2( |m+\alpha|)}\ \left(\frac{a_m'- |m+\alpha|\, a_m}{a_m'+ |m+\alpha|\, a_m}\right) \right)\, \left(1+\mathcal{O}(\lambda)\right),  \label{amw}
\end{align}
with the respective error terms uniform in $m$. We now write \eqref{rm-2} with $Y_{|m+\alpha|}$ replaced by the right hand side of \eqref{jy} and insert in the resulting equation for $R_0^m(\lambda, r,r') $ the asymptotic expansions \eqref{eq-tm}-\eqref{amw}. With the help of \eqref{m-infty} this yields 
\begin{equation} \label{hs-bessel}
 \int_1^\infty \int_1^\infty \rho^{-2s}(r) \, | G_{m,2}^+(\lambda, r,r')|^2\, \rho^{-2s}(r')\,  \, r  r'\, dr \,  dr'  \ \lesssim\  \frac{\lambda^{2\mu(\alpha)}}{1+|m|}   
\end{equation}
for $\lambda$ small enough. 

\smallskip

\noindent Next we are going to estimate the first term on the right hand side of \eqref{0-1}. In view of \eqref{rm}, \eqref{fm} and \eqref{fim} we have 
\begin{align} \label{rm-3}
R_0^m(\lambda, r,r') & = \frac{v_m(\lambda,r)}{W_m(\lambda)} \ \left(  C_m(\lambda)\, v_m(\lambda,r')+D_m(\lambda)\, u_m(\lambda, r') \right), \qquad r\leq r'\leq1.
\end{align}
As a first step we will  show that the second term on the right hand side of the above equation is differentiable with respect to $\lambda$ in the norm given by the right hand side of \eqref{0-1} and that the derivative is uniformly bounded in $m$.  By \eqref{wronsk} 
$$
\frac{D_m(\lambda)}{W_m(\lambda)} = \frac{\Gamma(\frac 12+|m|+\frac{m\alpha}{\kappa})}{\Gamma(1+2|m|)}.
$$
Therefore keeping in mind the definition of $u_m$, see equations \eqref{um} and \eqref{kappa}, and  using Lemmata \ref{lem-U} and \ref{ua'} we find that for $\lambda$ small enough it holds
\begin{equation*}
\Big|\, \pd_\lambda \left(\frac{D_m(\lambda)}{W_m(\lambda)}\ u_m(\lambda, r')\right) \, \Big| \ \lesssim \ (2\kappa\, r')^{-|m|} + \frac{ (4 \kappa\, r')^{|m|}}{\Gamma(2 |m|)}. 
\end{equation*}
This in combination with \eqref{vm} and Lemma \ref{lem-M} gives 
\begin{equation} \label{vdu-upperb}
\Big |\, v_m(\lambda,r) \ \pd_\lambda \Big(\frac{D_m(\lambda)}{W_m(\lambda)}\ u_m(\lambda, r')\Big) \, \Big| \ \lesssim \  \left(\frac{r}{r'}\right)^{|m|} + \frac{ (8 \kappa)^{|m|}\, (r r')^{|m|}}{\Gamma(2 |m|)}. 
\end{equation}
On the other hand, combining Lemma \ref{lem-M} with Lemma \ref{lem-der-m} we obtain that for $\lambda$ small enough
\begin{equation} \label{der-v-lam}
| \pd_\lambda v_m(\lambda, r) | \ \lesssim \ |m|\ (2\kappa\, r)^{|m|}\,  \qquad \forall\ r\in(0,1).
\end{equation}
Since 
$$
\Big |\frac{D_m(\lambda)}{W_m(\lambda)} \ u_m(\lambda, r') \Big |= \Big |  \frac{\Gamma(\frac 12+|m|+\frac{m\alpha}{\kappa})}{\Gamma(1+2|m|)}\ u_m(\lambda,r')\Big | \, \lesssim\, \frac{(2\kappa\, r')^{-|m|}}{|m|} +  \frac{ (4 \kappa\, r')^{|m|}}{\Gamma(1+2 |m|)},
$$
in view of Lemma \ref{lem-U}, by putting the above estimates together we arrive at 
$$
\Big | \, \pd_\lambda \Big (\frac{D_m(\lambda)}{W_m(\lambda)} \ u_m(\lambda, r') \ v_m(\lambda, r) \Big) \Big| \ \lesssim \ \left(\frac{r}{r'}\right)^{|m|} + (r r')^{|m|}\ , \qquad 0 < r\leq r' \leq 1,
$$
where we have used the fact that $(8 \kappa)^{|m|}/\Gamma(2 |m|)$ is bounded in $m\in\Z\setminus\{0\}$. As usual, $r$ and $r'$ have to be interchanged if $r' < r$. Since 
$$
\sup_{0< r < 1}\int_r^1 \big | \left(\frac{r}{r'}\right)^{|m|} + (r r')^{|m|}  \big |\, r' \, dr'  \, +\,  \sup_{0< r' < 1}\int_0^{r'} \big | \left(\frac{r}{r'}\right)^{|m|} + (r r')^{|m|}  \big |\, r \, dr \ \lesssim \ \frac{1}{1+|m|}
$$
we find that 
\begin{equation} \label{M-uv}
M^j_{(0,1)\times (0,1)}\left [\frac{D_m(\lambda)}{W_m(\lambda)} \ u_m(\lambda, \cdot) \ v_m(\lambda, \cdot)  - \frac{D_m(0)}{W_m(0)}\, u_m(0, \cdot) \ v_m(0, \cdot) \right] \lesssim \, \frac{\lambda}{1+|m|} , \qquad j=1,2
\end{equation}
holds for all $m\in\Z$ and $\lambda$ small enough. Here $\frac{D_m(0)}{W_m(0)}$ is to be understood as the limiting value of $\frac{D_m(\lambda)}{W_m(\lambda)}$. 
Consider now the first term on the right hand side of \eqref{rm-3}. 
From \eqref{der-v-lam} and Lemma \ref{lem-M} we obtain the bound 
\begin{equation} \label{der-vv-lam}
M^j_{(0,1)\times (0,1)}\left [\ \pd_\lambda \big( (v_m(\lambda, r)\,  v_m(\lambda, r') \big)\right] \ \lesssim\ (4\kappa^2)^{|m|}\ \leq \ (4\alpha^2)^{|m|}
\end{equation}
We are going to combine \eqref{der-vv-lam} with the asymptotic expansion of $\frac{C_m(\lambda)}{W_m(\lambda)}$:
\begin{align*}
\frac{C_m(\lambda)}{W_m(\lambda)} &= -\frac{\Gamma(\frac 12 +m+|m|)}{\Gamma(1+2 |m|)}\, \frac{b'_m+|m+\alpha|\, b_m}{a'_m+|m+\alpha|\, a_m}\ (1+\mathcal{O}(\lambda)) \\
& \quad\ +  \lambda^{|m+\alpha|}\  \frac{2 \pi\, (i-\cot(|m+\alpha|\, \pi))}{\Gamma^2(|m+\alpha|)\, (a'_m+|m+\alpha|\, a_m)^2} 
 \ (1+\mathcal{O}(\lambda)+\mathcal{O}(\lambda^{|m+\alpha|}))
\end{align*}
as $\lambda\to 0+$ with the error terms uniform in $m$, see \eqref{wronsk} and \eqref{cm}. In the calculation of the coefficient of $\lambda^{|m+\alpha|}$ in  the above equation we have used the fact that $a'_m b_m-a_m b'_m= \Gamma(1+|m|)/\Gamma(\frac 12+m+|m|)$, cf. \eqref{w-xz}.
Taking into account \eqref{m-infty} we conclude that for $j=1,2$ and $\lambda$ small enough 
$$
M^j_{(0,1)\times (0,1)} \left[ \frac{C_m(\lambda)}{W_m(\lambda)} \, v_m(\lambda, r)\,  v_m(\lambda, r') - 
\frac{C_m(0)}{W_m(0)} \, v_m(0, r)\,  v_m(0, r') \right] \, \lesssim\, \frac{\lambda}{1+|m|}
$$
when $m\neq k(\alpha)$, and
$$
M^j_{(0,1)\times (0,1)} \left[ \frac{C_m(\lambda)}{W_m(\lambda)} \, v_m(\lambda, r)\,  v_m(\lambda, r') - 
\frac{C_m(0)}{W_m(0)} \, v_m(0, r)\,  v_m(0, r') - \lambda^{\mu(\alpha)}\, g_1(r,r') \right] \, \lesssim\, \frac{\lambda^{2\mu(\alpha)}}{1+|m|} , 
$$
when $m=k(\alpha)$. This implies that for $\lambda$ small enough
\begin{equation}  \label{M-uv2}
M^j_{(0,1)\times (0,1)} [G_{m,2}^+(\lambda)] \ \lesssim\  \frac{\lambda^{2\mu(\alpha)}}{1+|m|} , \qquad j=1,2,
\end{equation}

\smallskip

\noindent It remains to bound the cross term on the right hand side of  \eqref{0-1}. By \eqref{jy} for $r\leq 1 < r'$ it holds
\begin{align} 
& R_0^m(\lambda, r,r')  = \frac{v_m(\lambda,r)}{W_m(\lambda)} \ \left(  J_{|m+\alpha|}(\sqrt{\lambda}\, r') + i\, Y_{|m+\alpha|} (\sqrt{\lambda}\, r') \right)=\nonumber  \\ 
& \qquad = \frac{v_m(\lambda,r)}{W_m(\lambda)} \ \left(  J_{|m+\alpha|}(\sqrt{\lambda}\, r')\, (1+i\cot( |m+\alpha|\, \pi) ) -  \frac{i}{\sin(|m+\alpha|\, \pi)} \, J_{-|m+\alpha|} (\sqrt{\lambda}\, r') \right), \label{rm-4}
\end{align}
We now insert the asymptotic expansion of the inverse Wronskian
\begin{align*}
\frac{1}{W_m(\lambda)}  & =  \frac{i\, \pi\ \lambda^{\frac{|m+\alpha|}{2}}}{\Gamma(|m+\alpha|)\, (a'_m+|m+\alpha|\, a_m)} \ (1+\mathcal{O}(\lambda))  \\
&  +\  \lambda^{|m+\alpha|}\  \frac{  \pi^2 (a'_m-|m+\alpha|\, a_m)\, (1+i \cot(|m+\alpha|\, \pi))}{|m+\alpha|\, \Gamma^3(|m+\alpha|)\, (a'_m+|m+\alpha|\, a_m)^2} \ (1+\mathcal{O}(\lambda) + \mathcal{O}(\lambda^{2|m+\alpha|}) )
\end{align*}
into \eqref{rm-4}. With the help of the integral estimates on the derivative of $ \lambda^{|m+\alpha|/2}\, J_{-|m+\alpha|} (\sqrt{\lambda}\ \cdot)$ with respect to $\lambda$, see inequality \eqref{mix-1} of Lemma \ref{lem-mixed},  and equations \eqref{m-infty}, \eqref{der-v-lam}, \eqref{nu-pos} we then obtain
\begin{align*}
\frac{-v_m(\lambda,r)}{\sin(|m+\alpha|\, \pi)\, W_m(\lambda)}  \, J_{-|m+\alpha|}(\sqrt{\lambda}\, r') &  =
\frac{-v_m(0,r)\, \lambda^{-\frac{|m+\alpha|}{2}}}{\sin(|m+\alpha|\, \pi)\, W_m(\lambda)}\, \lambda^{\frac{|m+\alpha|}{2}}\, J_{-|m+\alpha|}(\sqrt{\lambda}\, r')  \\ &
 =  G_{m,0}(r,r') + \frac{\mathcal{O}(\lambda)}{1+|m|}\ , \qquad \lambda\to 0+
\end{align*}
where the convergence is taken with respect to the Hilbert-Schmidt norm on $(1,\infty)\times(1,\infty)$. 
Similarly, using inequality \eqref{mix-2} of Lemma \ref{lem-mixed} and the above expansion of $W_m(\lambda)$ together with equations \eqref{m-infty}, \eqref{der-v-lam}, \eqref{nu-pos}  we get
\begin{align*}
\frac{v_m(\lambda,r)}{W_m(\lambda)} \ J_{|m+\alpha|}(\sqrt{\lambda}\, r') (1+i\cot(|m+\alpha|\, \pi)) & = g_1(r,r')\, \delta_{m,k(\alpha)} +  \frac{\mathcal{O}(\lambda^{\frac{|m+\alpha|+1}{2})})}{1+|m|}\ , \qquad \lambda\to 0+
\end{align*} 
with respect to the Hilbert-Schmidt norm on $(0,1)\times (1,\infty)$. In view of \eqref{rm-4} this implies that
\begin{equation} \label{hs-vjy}
\int_0^1\rho^{-2s}(r)   \int_1^\infty \, | G_{m,2}^+(\lambda, r,r')|^2\, \rho^{-2s}(r') \,  r'\, dr' \, r \, dr =  \frac{o(\lambda^{2\mu(\alpha)})}{1+m^2}\ , \qquad \lambda\to 0+.
\end{equation} 
By inserting \eqref{hs-bessel}, \eqref{M-uv2} and \eqref{hs-vjy} into \eqref{0-1} we arrive at 
\begin{equation} \label{aux-rest1}
\sup_{m\neq k(\alpha)} \| \rho^{-s}\, G_{m,2}^+(\lambda) \, \rho^{-s}\,\|^2_{\B(L^2(\R_+,r dr))} \, =\,  \mathcal{O}(\lambda)
\end{equation}
and
\begin{equation} \label{aux-rest2}
 \| \rho^{-s}\, G_{k(\alpha),2}(\lambda) \, \rho^{-s}\,\|^2_{\B(L^2(\R_+,r dr))} \, =\,  o(\sqrt{\lambda})
\end{equation}
The claim thus follows in view of  \eqref{norm-sup}. 
\end{proof}

\noindent Finally, we have to estimate also the operator $\nabla\, G_2^+(\lambda)$ in $\B(s,-s )$ for $\lambda$ small enough.

\begin{lemma} \label{r0-asymp2}
Let $\alpha\not\in\Z$.
Assume that $ 3/2 < s <  3/2 +\mu(\alpha)$. Then for $\lambda\to 0+$ we have $ |\nabla\, G_2^+(\lambda)| = o(\lambda^{\mu(\alpha)})$ in $\B(s,-s )$. 
\end{lemma}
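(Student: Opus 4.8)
The plan is to mirror the proof of Lemma \ref{r0-asymp1} line by line, carrying one additional spatial derivative through each estimate. By the angular decomposition \eqref{hk-gen}, the kernel of $\nabla\, G_2^+(\lambda)$ splits into mutually orthogonal channels,
$$
\nabla\, G_2^+(\lambda;x,y) = \frac{1}{2\pi}\sum_{m\in\Z} e^{im(\theta-\theta')}\Big(\pd_{r'} G_{m,2}^+(\lambda,r,r'),\ \frac{m}{r'}\, G_{m,2}^+(\lambda,r,r')\Big),
$$
with $G_{m,2}^+$ as in \eqref{gm2}, exactly as the kernel of $\nabla G_0$ was written in Lemma \ref{lem-g0}. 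Consequently, as in \eqref{norm-sup}, the norm in $\B(s,-s)$ equals the supremum over $m$ of the channel norms, and it suffices to bound $\|\rho^{-s}\pd_{r'} G_{m,2}^+(\lambda)\rho^{-s}\|$ and $\|\rho^{-s}\frac{m}{r'} G_{m,2}^+(\lambda)\rho^{-s}\|$ on $L^2(\R_+, r\,dr)$ and to show their supremum over $m$ is $o(\lambda^{\mu(\alpha)})$.

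For each channel I would reuse the region splitting \eqref{0-1}, namely the Schur--Holmgren bound on $(0,1)\times(0,1)$ and the Hilbert--Schmidt norm elsewhere, now applied to the differentiated kernel. The only genuinely new input is the effect of differentiation on the explicit building blocks: for $r'>1$ one differentiates the Bessel representations \eqref{rm-2} and \eqref{rm-4} in $r'$ and uses the recurrence $J_\nu'(z)=J_{\nu-1}(z)-\frac{\nu}{z}J_\nu(z)$, which yields a term $\sqrt{\lambda}\,J_{|m+\alpha|-1}$ and a term $\frac{|m+\alpha|}{r'}J_{|m+\alpha|}$; for $r'\le1$ one differentiates \eqref{rm-3} using the derivative bounds for $v_m,u_m$ already established in \eqref{der-v-lam} and Lemmas \ref{lem-M}, \ref{lem-U}. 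In every region this costs a factor of size at most $|m+\alpha|/r'$, which is precisely the mechanism recorded in \eqref{gm0-grad} (with the roles of $r,r'$ interchanged when $r'<r$). On $(1,\infty)^2$ the accompanying $r'^{-2}$, after squaring, improves the decay of the Hilbert--Schmidt integrand at infinity; on $(0,1)^2$ the factor $r'^{-1}$ is absorbed by the $r'^{|m|}$ vanishing of the kernels at the origin recorded in \eqref{gm0} and \eqref{vu0-upperb}. This is the same bookkeeping that took the Hilbert--Schmidt norm of $G_{m,0}$ from $|m+\alpha|^{-3/2}$ down to $|m+\alpha|^{-1/2}$ after differentiation.

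As in Lemma \ref{r0-asymp1}, the leading $\lambda^{\mu(\alpha)}$ contribution lives entirely in the single channel $m=k(\alpha)$; there I would subtract the $r'$-derivative of $\lambda^{\mu(\alpha)} g_1$, see \eqref{g1}, and reproduce the cancellations of \eqref{hs-vjy} and \eqref{M-uv2} at the level of $\pd_{r'}$ and $\frac{m}{r'}$. For this the mixed-region estimates of Lemma \ref{lem-mixed} and the Bessel-product estimates of Lemma \ref{lem-int-JJ} are required in differentiated form, but the needed $r'$-derivative bounds for $\lambda^{|m+\alpha|/2}J_{\pm|m+\alpha|}(\sqrt{\lambda}\,\cdot)$ follow from the stated ones by a single further application of the recurrence relations. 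Every channel $m\ne k(\alpha)$ then contributes $\mathcal{O}(\lambda)$ and the channel $m=k(\alpha)$ contributes $o(\lambda^{2\mu(\alpha)})$ in the square of the relevant norm; since $\mu(\alpha)<1/2$ the supremum over $m$ is $o(\lambda^{\mu(\alpha)})$, which is the assertion.

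The hard part will be uniformity in $m$. Since each differentiation costs a factor $|m+\alpha|$, the crude uniform bounds $\lambda^{2\mu(\alpha)}/(1+|m|)$ of \eqref{hs-bessel} and \eqref{M-uv2}, which sufficed for Lemma \ref{r0-asymp1}, are no longer adequate by themselves: multiplied by $|m+\alpha|^2$ they would blow up as $|m|\to\infty$. To close the estimate one must therefore extract from the explicit asymptotics the genuine large-$m$ decay of the differentiated channel kernels — the $(r/r')^{|m+\alpha|}$ structure on $(1,\infty)^2$ together with the coefficient and $\Gamma$-function decay visible in \eqref{m-infty} and \eqref{vu0-upperb} — and verify that, after the integrability gain described above, the extra factor $|m+\alpha|$ is harmless, so that the channel norms still tend to zero as $|m|\to\infty$. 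Keeping track of this refined $m$-dependence, uniformly in $\lambda$ near zero, is where essentially all the work lies.
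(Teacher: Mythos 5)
Your proposal follows the paper's proof of Lemma \ref{r0-asymp2} essentially line by line: the same channel decomposition reducing the claim to \eqref{eq-enough} via the kernel \eqref{gm-tilde}, the same splitting \eqref{0-2} into a Schur--Holmgren part on $(0,1)\times(0,1)$ and Hilbert--Schmidt parts elsewhere, the same special treatment of the channel $m=k(\alpha)$, and the same ingredients (differentiated Bessel-product estimates and the derivative bounds for $v_m$, $u_m$).

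Where you diverge is your closing paragraph, and there your diagnosis of the remaining difficulty is mistaken; the work you defer as "where essentially all the work lies" is already finished by the ingredients you yourself invoke. First, the channel norms need not tend to zero as $|m|\to\infty$: by \eqref{norm-sup} only a supremum over $m$ enters, so bounds that are merely uniform in $m$ suffice. Second, on $(1,\infty)\times(1,\infty)$ --- the only region where a crude loss of $|m+\alpha|^2$ against the $1/(1+|m|)$ of \eqref{hs-bessel} really would be fatal --- no such loss occurs at all: by \eqref{der-1}--\eqref{der-2} one has $\pd_{r'}J_\nu(\sqrt{\lambda}\,r')=\tfrac{\sqrt{\lambda}}{2}\,\big(J_{\nu-1}-J_{\nu+1}\big)(\sqrt{\lambda}\,r')$ and $\tfrac{\nu}{r'}\,J_\nu(\sqrt{\lambda}\,r')=\tfrac{\sqrt{\lambda}}{2}\,\big(J_{\nu-1}+J_{\nu+1}\big)(\sqrt{\lambda}\,r')$, so the dangerous factor $\nu/r'$ is traded for $\sqrt{\lambda}$ times neighbouring Bessel functions; consequently the differentiated estimates \eqref{eq-JJ-4}--\eqref{eq-JJ-6} of Lemma \ref{lem-int-JJ'} carry exactly the same $(1+\nu)$-decay as \eqref{eq-JJ-1}--\eqref{eq-JJ-3}, and \eqref{hs-bessel-2} follows with no loss in $m$. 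Third, on $(0,1)\times(0,1)$ and on the mixed region the paper does use precisely the crude bounds you dismiss, namely $M^j[\widetilde G^+_{m,2}]\lesssim(1+|m|)\,M^j[G^+_{m,2}]$ and the analogous $(1+m^2)$ bound for the mixed Hilbert--Schmidt integral, and they are adequate: the losses are exactly absorbed by the $1/(1+|m|)$ of \eqref{M-uv2} and the $1/(1+m^2)$ of \eqref{hs-vjy}, leaving bounds $\mathcal{O}(\lambda^{2\mu(\alpha)})$ and $o(\lambda^{2\mu(\alpha)})$ uniform in $m$; moreover the Schur product in \eqref{0-2} pairs the differentiated factor with an undifferentiated one, so nothing blows up there either. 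Assembling these three observations closes your outline without any further large-$m$ asymptotic analysis.
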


\begin{proof}
The integral kernel of $\nabla\, G_2^+(\lambda)$ for $r \leq r'$ reads as 
 $$
\nabla G_2^+  (\lambda,r,r', \theta,\theta') =  \sum_{m\in\Z} \, e^{im(\theta-\theta')} \ \Big(\pd_{r'} G_{m,2}^+(r,r')\ , \, \frac{m}{r'}\, G_{m,2}^+(r,r') \Big).
$$
Hence the claim will follow if we show that 
\begin{equation} \label{eq-enough}
\sup_{m\in\Z} \|\, \rho^{-s}\, \widetilde G_{m,2}^+(\lambda) \, \rho^{-s}\,\|_{\B(L^2(\R_+,r dr))}  \, = \, o(\lambda^{\mu(\alpha)}), \qquad \lambda\to 0+,
\end{equation}
where $\widetilde G_{m,2}^+(\lambda)$ is the integral operator with the kernel given by
\begin{equation} \label{gm-tilde}
\big(\widetilde G^+_{m,2}(\lambda,r,r') \big)^2=  \big(\pd_{r'} G_{m,2}^+(\lambda, r,r')\big)^2 + \frac{m^2}{r'^2}\, (G^+_{m,2}(\lambda, r,r'))^2.
\end{equation}
As usual,  $r$ and $r'$ in the above formula have to be interchanged when $r'<r$. We use again the upper bound
\begin{align} 
\| \rho^{-s}\, \widetilde G_{m,2}^+(\lambda) \, \rho^{-s}\,\|^2_{\B(L^2(\R_+,r dr))} & \ \leq \ M^1_{(0,1)\times(0,1)} [G_{m,2}^+(\lambda)]\ M^2_{(0,1)\times(0,1)} [\widetilde G_{m,2}^+(\lambda)] \label{0-2} \\
& \ + 2 \int_0^1\rho^{-2s}(r)   \int_1^\infty \, | \widetilde G_{m,2}^+(\lambda, r,r')|^2\, \rho^{-2s}(r') \,  r'\, dr' \, r \, dr  \nonumber\\
& \ + \int_1^\infty \int_1^\infty \rho^{-2s}(r) \, | \widetilde G_{m,2}^+(\lambda, r,r')|^2\, \rho^{-2s}(r')\,  \, r  r'\, dr \,  dr. \nonumber
\end{align}
Let us consider the last term on the right hand side. Using inequality \eqref{eq-JJ-4} of Lemma \ref{lem-int-JJ'}  instead of \eqref{eq-JJ-1} we mimic the proof of the upper bound \eqref{hs-bessel} and obtain
\begin{equation*} 
 \int_1^\infty \int_1^\infty \rho^{-2s}(r) \, \Big [\big(\pd_{r'} G_{m,2}^+(r,r')\big)^2 + \frac{(m+\alpha)^2}{r'^2}\, G^2_{m,2}(r,r') \Big] \, \rho^{-2s}(r')\,  \, r  r'\, dr \,  dr'  \, \lesssim\  \frac{\lambda^{2\mu(\alpha)}}{1+|m|}    
\end{equation*}
for $\lambda$ small enough. This shows that 
\begin{equation} \label{hs-bessel-2}
\sup_{m\in\Z}\,  \int_1^\infty \int_1^\infty \rho^{-2s}(r) \, | \widetilde G_{m,2}^+(r,r')|^2 \, \rho^{-2s}(r')\,  \, r  r'\, dr \,  dr'  \, \lesssim\ \lambda^{2\mu(\alpha)}   
\end{equation}
as $\lambda\to 0+$.
As for the remaining terms on the right hand side of \eqref{0-2} we note that in view of \eqref{gm-tilde} 
$$
M^j_{(0,1)\times(0,1)} [\widetilde G_{m,2}^+(\lambda)] \  \lesssim \ (1+|m|)\ M^j_{(0,1)\times(0,1)} [G_{m,2}^+(\lambda)], \qquad j=1,2, 
$$
and
\begin{align*}
&  \int_0^1\rho^{-2s}(r)   \int_1^\infty \, | \widetilde G_{m,2}^+(\lambda, r,r')|^2\, \rho^{-2s}(r') \,  r'\, dr' \, r \, dr \,  \lesssim\,  \\
 &\qquad \qquad\quad\qquad\qquad  \lesssim\ (1+m^2)\,   \int_0^1\rho^{-2s}(r)   \int_1^\infty \, | G_{m,2}^+(\lambda, r,r')|^2\, \rho^{-2s}(r') \,  r'\, dr' \, r \, dr. 
\end{align*}
A combination of these estimates with \eqref{M-uv2} and \eqref{hs-vjy} gives
$$
\sup_{m\in\Z}\, M^j_{(0,1)\times (0,1)} [\widetilde G_{m,2}^+(\lambda)] \  =\  \mathcal{O}(\lambda^{2\mu(\alpha)}), \qquad j=1,2,
$$
and
$$
\sup_{m\in\Z}\,  \int_0^1\rho^{-2s}(r)   \int_1^\infty \, | \widetilde G_{m,2}^+(\lambda, r,r')|^2\, \rho^{-2s}(r') \,  r'\, dr' \, r \, dr = o(\lambda^{2\mu(\alpha)})
$$
as $\lambda\to 0+$. In view of \eqref{0-2} and \eqref{hs-bessel-2} this proves \eqref{eq-enough} and the claim follows.
\end{proof}

\subsection{The case $\lambda<0$} 
\label{sec-l-neg}
\noindent For negative values of $\lambda$ we repeat the same procedure as in the case $\lambda>0$. The calculations are identical to those made in section \ref{sec-l-pos}. We therefore omit some details. Recall that 
$k(\alpha)$ is defined \eqref{nalpha}. 

\begin{lemma} \label{lem-neg1}
Let $\alpha\not\in\Z$. Then for any $x,y\in\R^2$ and $|\lambda|$ small enough we have
\begin{equation*}
R_0(\lambda; x,y) = G_0(x,y) + \lambda^{\mu(\alpha)}\, G_1(x,y) +G_2^-(\lambda;x,y) ,
\end{equation*}
where $G_0(x,y)$ and $G_1(x,y)$ are given by \eqref{G0} and \eqref{G1}, and  $G_2^-(\lambda;x,y))= o(|\lambda|^{\mu(\alpha)})$ as  $\lambda\to 0-$.
\end{lemma}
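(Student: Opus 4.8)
The plan is to follow verbatim the reduction carried out in Section~\ref{sec-l-pos}. First I would Fourier-decompose $H(B_0)$ in the angular variable as in \eqref{sum-gen}, pass to the half-line operators $\h_m$ via the unitary map $\U$, and express $R_0(\lambda;x,y)$ through the one-dimensional Green's functions $\rr_0^m(\lambda;r,r')$ exactly as in \eqref{hk-gen}--\eqref{rtor}. The only place where the sign of $\lambda$ enters is the regime of equation \eqref{eq-fg} at infinity: for $r>1$, where $a_0(r)=\alpha/r$, the equation reads $-f'' + ((m+\alpha)^2-1/4)\, r^{-2} f = \lambda f$, and for $\lambda<0$ its solutions are the modified Bessel functions $I_{|m+\alpha|}(\sqrt{-\lambda}\,r)$ and $K_{|m+\alpha|}(\sqrt{-\lambda}\,r)$ rather than $J_{|m+\alpha|}$ and $Y_{|m+\alpha|}$.

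Accordingly, I would keep the inner solution unchanged: since $\kappa=\sqrt{\alpha^2-\lambda}=\sqrt{\alpha^2+|\lambda|}$ stays real and positive as $\lambda\to 0-$, the Whittaker/Kummer functions $v_m(\lambda,\cdot)$ and $u_m(\lambda,\cdot)$ of \eqref{vm}--\eqref{um} solve \eqref{eq-fg} on $(0,1)$ exactly as before, so $f_{m,\lambda}=\sqrt{r}\,v_m(\lambda,r)$ on $(0,1)$ is still the solution lying in $H^1_0$. On $(1,\infty)$ the decaying $L^2$ solution is now $\phi_{m,\lambda}(r)=\sqrt{r}\,K_{|m+\alpha|}(\sqrt{-\lambda}\,r)$, while $f_{m,\lambda}$ becomes a linear combination of $\sqrt{r}\,I_{|m+\alpha|}$ and $\sqrt{r}\,K_{|m+\alpha|}$. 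Matching these at $r=1$ fixes the analogues of the coefficients $A_m,B_m,C_m,D_m$ and of the Wronskian $W_m$, and \eqref{rm} then yields $\rr_0^m(\lambda;r,r')$; the bounds \eqref{m-infty} and \eqref{vu0-upperb} on the inner data carry over unchanged.

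The point that makes the statement come out with the \emph{same} $G_0$ and $G_1$ is that the small-argument expansions of the modified Bessel functions carry exactly the same leading powers as the ordinary ones, namely $I_\nu(z)\sim (z/2)^\nu/\Gamma(\nu+1)$ and $K_\nu(z)\sim \tfrac12\Gamma(\nu)(z/2)^{-\nu}$. Conceptually this is analytic continuation: $R_0(\zeta)$ is holomorphic on $\C\setminus[0,\infty)$, and the identities $I_\nu(z)=i^{-\nu}J_\nu(iz)$, $K_\nu(z)=\tfrac{\pi}{2}i^{\nu+1}H^{(1)}_\nu(iz)$ continue the formulas of Section~\ref{sec-l-pos} across the cut. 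Feeding these asymptotics through the matching as in \eqref{G0}--\eqref{g1} reproduces the limit $G_0(x,y)$ and, for the $m=k(\alpha)$ mode, the coefficient $g_1(r,r')$ from \eqref{g1}, provided $\lambda^{\mu(\alpha)}$ is read with the branch $\lambda^{\mu(\alpha)}=|\lambda|^{\mu(\alpha)}e^{i\pi\mu(\alpha)}$ inherited from $R_0(\lambda+i0)$. The verification that this is consistent reduces to the identity $e^{i\pi\mu}(i-\cot(\mu\pi))=-1/\sin(\mu\pi)$, which renders $\lambda^{\mu(\alpha)}G_1$ real, as it must be below the spectrum.

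Finally, for the remainder I would establish $G_2^-(\lambda)=o(|\lambda|^{\mu(\alpha)})$ in $\B(s,-s)$ by the same Schur--Holmgren plus Hilbert--Schmidt splitting used for $G_2^+(\lambda)$ in Lemmas~\ref{r0-asymp1} and \ref{r0-asymp2}, treating the regions $(0,1)\times(0,1)$, $(0,1)\times(1,\infty)$ and $(1,\infty)\times(1,\infty)$ separately. I expect the main obstacle to be precisely this last step: one must re-derive the analogues of the integral estimates of Lemma~\ref{lem-int-JJ} and of the mixed estimates of Lemma~\ref{lem-mixed} for products of $I_{|m+\alpha|}$ and $K_{|m+\alpha|}$ and their $\lambda$-derivatives, uniformly in $m$, in order to control the tails on $(1,\infty)$. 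Since these bounds are routine but lengthy, and since the error terms in the inner region depend only on the $\kappa$-dependence of $v_m,u_m$, which is insensitive to the sign of $\lambda$, the estimates \eqref{M-uv2} and \eqref{hs-vjy} go through unchanged, yielding $o(|\lambda|^{\mu(\alpha)})$ and completing the proof.
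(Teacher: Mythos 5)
Your proposal is correct and follows essentially the same route as the paper: keep the inner solutions $v_m,u_m$ unchanged (since $\kappa=\sqrt{\alpha^2-\lambda}$ stays real), replace the outer pair $J_{|m+\alpha|}, Y_{|m+\alpha|}$ by $I_{|m+\alpha|}(\sqrt{|\lambda|}\,r)$ and the decaying solution $K_{|m+\alpha|}(\sqrt{|\lambda|}\,r)$, match at $r=1$, and feed in the small-argument asymptotics \eqref{ik-z0}; your branch identity $e^{i\pi\mu}(i-\cot(\mu\pi))=-1/\sin(\mu\pi)$ is exactly the paper's consistency step, where the factor $i-\cot(\mu(\alpha)\pi)$ in \eqref{cm} gets replaced by $-1/\sin(\mu(\alpha)\pi)$ and one uses $|\lambda|^{\mu(\alpha)}=\lambda^{\mu(\alpha)}(\cos(\mu(\alpha)\pi)-i\sin(\mu(\alpha)\pi))$ to recover $\lambda^{\mu(\alpha)}G_1$ with the same $g_1$ of \eqref{g1}. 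One remark on your final paragraph, which actually concerns the separate operator-norm statement (Lemma \ref{lem-neg2}) rather than the pointwise claim at hand: there the paper deliberately does \emph{not} re-derive analogues of Lemma \ref{lem-int-JJ}, because the decomposition \eqref{ik} involves both $I_{\nu}$ and $I_{-\nu}$, which grow exponentially at infinity; instead it splits the integration regions at $r\sim|\lambda|^{-1/2}$ and uses monotonicity of $K_\nu$ together with the pointwise bounds \eqref{ik-upperb} and \eqref{ik-z0}, so if you pursue your plan you should expect to switch to that kind of argument for the tails on $(1,\infty)$.
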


\begin{proof}
We calculate the integral kernel of $R_0(\lambda)$ in the same way as above. Inside the unit disc the generalized eigenvalue equation \eqref{eq-fg} has the same solutions, i.e. $u_m$ and $v_m$, as for $\lambda>0$. Outside of the unit disc we have to replace the Bessel functions $J_\nu$ and $Y_\nu$ by a suitable linear combination of the modified Bessel functions $I_\nu$ and $K_\nu$, see Appendix \ref{app} for details. We then find that $R_0(\lambda; x,y)$ is given by \eqref{hk-gen} with $R_0^m(\lambda; r,r')$ replaced by 
\begin{equation} \label{rm-tilde} 
\widetilde R^m_0(\lambda; r,r') = \big( \widetilde W_m(\lambda)\big)^{-1}
\left\{
\begin{array}{l@{\quad}l}
\tilde f_{m,\lambda}(r)\, \tilde \phi_{m,\lambda}(r'), &\quad r \leq r'  , \\
 & \\
\tilde f_{m,\lambda}(r')\, \tilde \phi_{m,\lambda}(r), &\quad  r' < r ,
\end{array}
\right.
\end{equation}
where 
\begin{align*}
\tilde f_{m,\lambda}(r) &=  v_m(\lambda,r), \qquad \tilde \phi_{m,\lambda}(r)  = \widetilde C_m(\lambda)\, v_m(\lambda,r)+\widetilde D_m(\lambda)\, u_m(\lambda, r)  \quad  & r \leq 1  \\
\tilde f_{m,\lambda}(r) & =  \widetilde A_m(\lambda)\,  I_{|\alpha+m|}(\sqrt{|\lambda|}\, r) +\widetilde B_m(\lambda)\, K_{|\alpha+m|}(\sqrt{|\lambda|}\, r) , \qquad 
\tilde \phi_{m,\lambda}(r)  = K_{|m+\alpha|}(\sqrt{|\lambda|}\, r),  \quad  & 1  < r.
\end{align*}
and $\widetilde W_m(\lambda) =  \tilde \phi_{m,\lambda}\, \tilde f'_{m,\lambda} -\tilde \phi'_{m,\lambda} \, \tilde f_{m,\lambda}$ is the corresponding Wronskian.   From the matching conditions at $r=1$ and the properties of modified Bessel functions, see equations \eqref{i-der}, \eqref{k-der},  we then find
\begin{align*} 
\widetilde A_m(\lambda) & = (v'_m(\lambda,1) - |\alpha+m| \, v_m(\lambda,1) )\, K_{|\alpha+m|}(\sqrt{|\lambda|}) +\sqrt{|\lambda|}\ v_m(\lambda,1)\,  K_{|\alpha+m|+1}(\sqrt{|\lambda|})   \\
\widetilde B_m(\lambda) & =(|\alpha+m| \, v_m(\lambda,1)-v'_m(\lambda,1) )\, I_{|\alpha+m|}(\sqrt{|\lambda|}) +\sqrt{|\lambda|}\ v_m(\lambda,1)\, I_{|\alpha+m|+1}(\sqrt{|\lambda|})  
\end{align*} 
and 
\begin{align*} 
\widetilde C_m(\lambda) &=  \frac{ \Gamma(\frac 12+|m|+\frac{m\alpha}{\kappa})}{\Gamma(1+2 |m|)} \ \Big( (\, |\alpha+m|\, u_m(\lambda,1) - u'_m(\lambda,1) )\, I_{|\alpha+m|}(\sqrt{|\lambda|}) +\sqrt{|\lambda|}\ u_m(\lambda,1)\,  I_{|\alpha+m|+1}(\sqrt{|\lambda|})   \nonumber \\
& \quad  +i \, \big[ (  |\alpha+m|\, u_m(\lambda,1)- u'_m(\lambda,1) )\, K_{|\alpha+m|}(\sqrt{|\lambda|}) +\sqrt{|\lambda|}\ u_m(\lambda,1)\,  K_{|\alpha+m|+1}(\sqrt{|\lambda|})\, \big ] \Big)  \\
\widetilde D_m(\lambda) &=  \frac{ \Gamma(\frac 12+|m|+\frac{m\alpha}{\kappa})}{ \Gamma(1+2 |m|)} \  \widetilde A_m(\lambda). 
\end{align*}
Moreover, with the help of \eqref{w-ik} we obtain 
\begin{equation} \label{w-tilde}
\widetilde W_m(\lambda)=\widetilde A_m(\lambda).
\end{equation}
Using the behavior of functions $I_\nu$ and $K_\nu$ for small arguments, see equation \eqref{ik-z0}, we then get the asymptotic expansions 
\begin{align*}
\widetilde A_m(\lambda) & = \frac{\Gamma(|m+\alpha|)}{2}\, (a'_m +|m+\alpha|\, a_m)\, \big(\frac 12 \sqrt{|\lambda|}\big)^{-|\alpha+m|}\, \big(1+\mathcal{O}(\lambda)\big)  \nonumber\\
& \qquad\qquad  - \frac{a'_m -|m+\alpha|\, a_m}{\sin(|m+\alpha|\, \pi)\, \Gamma(|m+\alpha|+1)} \, \big(\frac 12 \sqrt{|\lambda|}\big)^{|\alpha+m|}\, \big(1+\mathcal{O}(\lambda)\big ) , \\
\widetilde B_m(\lambda) &= \frac{a'_m -|m+\alpha|\, a_m}{ \Gamma(|m+\alpha|+1)} \, \big(\frac 12 \sqrt{|\lambda|}\big)^{|\alpha+m|}\, \big(1+\mathcal{O}(\lambda)\big ) 
\end{align*}
where the error terms are uniform in $m$. Similarly we find that the expansion of $\widetilde C_m(\lambda)$ is given by the right hand side of \eqref{cm} with the factor $i-\cot(\mu(\alpha) \pi)$ replaced by $-\frac{1}{\sin(\mu(\alpha) \pi)}$. Hence when we insert the above equations into \eqref{rm-tilde} and use \eqref{ik-z0}, after a bit lengthly calculations, we arrive at
\begin{equation} \label{rm-tilde1}
\widetilde R^m_0(\lambda; r,r') = G_{m,0}(r,r') - \delta_{m, k(\alpha)} \, \frac{|\lambda|^{\mu(\alpha)}\,  g_1(r,r')}{\sin(\mu(\alpha) \pi) \, (i-\cot(\mu(\alpha) \pi))} + o(|\lambda|^{\mu(\alpha)} ) \qquad \lambda\to 0- .
\end{equation}
Recall that $g_1(\cdot,  \cdot)$ is defined by \eqref{g1}. Since $|\lambda|^{\mu(\alpha)} = \lambda^{\mu(\alpha)} \, (\cos(\mu(\alpha) \pi)-i \sin(\mu(\alpha) \pi)$, this implies 
$$
R_0(\lambda;x,y) = G_0(x,y) +  \lambda^{\mu(\alpha)} \, G_1(x,y) + o(|\lambda|^{\mu(\alpha)} ) \qquad \lambda\to 0- , \quad x,y\in\R^2.
$$
\end{proof}

\noindent As in the case $\lambda>0$ we need an estimate on $G_2^-(\lambda)$ and $\nabla G_2^-(\lambda) $ in 
${\B(s,-s )}$. 

\begin{lemma} \label{lem-neg2}
Let $G_2^-(\lambda)$ be the integral operator with the kernel $G_2^-(\lambda; x,y)$ defined in Lemma \ref{lem-neg1}. Assume that $ 3/2 < s <  3/2 +\mu(\alpha)$.  Then 
\begin{equation} \label{g2-}
\|\, G_2^-(\lambda) \|_{\B(s,-s )} = o(|\lambda|^{\mu(\alpha)}), \qquad \| |\nabla G_2^-(\lambda) |\|_{\B(s,-s )} = o(|\lambda|^{\mu(\alpha)}) \qquad  \lambda\to 0- .
\end{equation} 
\end{lemma}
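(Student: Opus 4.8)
\noindent The plan is to repeat, mutatis mutandis, the arguments from the proofs of Lemmas~\ref{r0-asymp1} and~\ref{r0-asymp2}, the only structural change being that on $(1,\infty)$ the oscillating Bessel functions $J_{|m+\alpha|}, Y_{|m+\alpha|}$ are replaced by the modified Bessel functions $I_{|m+\alpha|}, K_{|m+\alpha|}$, see \eqref{rm-tilde}. First I would fix $3/2<s<3/2+\mu(\alpha)$ and, exactly as in \eqref{norm-sup}, use the angular decomposition \eqref{hk-gen} to reduce both claims in \eqref{g2-} to showing that $\sup_{m\in\Z}\|\rho^{-s}\, G_{m,2}^-(\lambda)\, \rho^{-s}\|_{\B(L^2(\R_+,r dr))}$ and the analogous quantity formed with $\widetilde G_{m,2}^-$ are both $o(|\lambda|^{\mu(\alpha)})$ as $\lambda\to 0-$, where $G_{m,2}^-(\lambda,r,r')=\widetilde R_0^m(\lambda,r,r')-G_{m,0}(r,r')-\lambda^{\mu(\alpha)}\,\delta_{m,k(\alpha)}\,g_1(r,r')$ and $\widetilde G_{m,2}^-$ is obtained from $G_{m,2}^-$ as in \eqref{gm-tilde}. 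Each supremum I would then estimate by splitting $\R_+\times\R_+$ into $(0,1)^2$, $(0,1)\times(1,\infty)$ and $(1,\infty)^2$, using the Schur--Holmgren bound on the first region and Hilbert--Schmidt bounds on the other two, precisely as in \eqref{0-1} and \eqref{0-2}.

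On $(0,1)^2$ nothing changes in substance: the interior solutions $v_m(\lambda,\cdot)$ and $u_m(\lambda,\cdot)$ are literally the same functions as for $\lambda>0$, and Lemma~\ref{lem-neg1} already supplies the asymptotic expansions of $\widetilde A_m,\widetilde B_m,\widetilde C_m,\widetilde D_m$ and of $\widetilde W_m=\widetilde A_m$, which differ from their $\lambda>0$ counterparts only through the replacement of the constant $i-\cot(\mu(\alpha)\pi)$ by $-1/\sin(\mu(\alpha)\pi)$. Hence the $\lambda$-differentiability estimates for $v_m,u_m$ coming from Lemmata~\ref{lem-M}, \ref{lem-der-m}, \ref{lem-U} and~\ref{ua'} transfer verbatim, and I would reproduce \eqref{M-uv} and \eqref{M-uv2} to obtain $M^j_{(0,1)\times(0,1)}[G_{m,2}^-(\lambda)]\lesssim |\lambda|^{2\mu(\alpha)}/(1+|m|)$ for $j=1,2$, and then the corresponding bound for $\widetilde G_{m,2}^-$ by absorbing the extra factor $1+|m|$ exactly as in Lemma~\ref{r0-asymp2}.

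For the regions meeting $(1,\infty)$ I would insert the small-argument expansions \eqref{ik-z0} of $I_{|m+\alpha|}$ and $K_{|m+\alpha|}$ into \eqref{rm-tilde} and invoke the integral estimates for products of modified Bessel functions collected in Appendix~\ref{app} (the analogues of Lemmas~\ref{lem-int-JJ}, \ref{lem-int-JJ'} and~\ref{lem-mixed}, with the kernels $T^m_\pm$ of \eqref{eq-tm} replaced by those built from $I$ and $K$). Because the decaying outer solution is $\tilde\phi_{m,\lambda}=K_{|m+\alpha|}(\sqrt{|\lambda|}\,\cdot)$ and because in \eqref{rm-tilde} with $r\le r'$ the growing factor $I_{|m+\alpha|}(\sqrt{|\lambda|}\,r)$ is always paired with $K_{|m+\alpha|}(\sqrt{|\lambda|}\,r')$, the products entering $\widetilde R_0^m$ stay uniformly bounded, and I would recover Hilbert--Schmidt bounds $\lesssim |\lambda|^{2\mu(\alpha)}/(1+|m|)$ on $(1,\infty)^2$ and $o(|\lambda|^{2\mu(\alpha)})/(1+m^2)$ on the cross region, mirroring \eqref{hs-bessel} and \eqref{hs-vjy}. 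For the gradient, the extra angular factor is handled by the modified-Bessel counterpart of \eqref{eq-JJ-4}, just as \eqref{eq-JJ-4} was used in the proof of Lemma~\ref{r0-asymp2}.

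The main obstacle is purely technical: it is the verification of the uniform (in $m$ and $\lambda$) integral estimates for the products of $I_{|m+\alpha|}$ and $K_{|m+\alpha|}$ over $r,r'>1$ against the weight $\rho^{-2s}$, where one must simultaneously tame the exponential growth of $I_\nu$ at large argument and extract the sharp power $|\lambda|^{\mu(\alpha)}$ from the small-argument behaviour, all with error terms uniform in $m$. Once these modified-Bessel analogues are established in Appendix~\ref{app}, combining the three regional bounds through \eqref{0-1} and \eqref{0-2} and taking the supremum over $m\in\Z$ yields \eqref{g2-}.
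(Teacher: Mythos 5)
Your overall skeleton matches the paper's proof exactly: the reduction to $\sup_{m}\|\rho^{-s}G^-_{m,2}(\lambda)\rho^{-s}\|$ via the angular decomposition, the Schur--Holmgren/Hilbert--Schmidt splitting of $\R_+\times\R_+$ as in \eqref{0-1}, and the observation that on $(0,1)^2$ the solutions $v_m,u_m$ and hence the bound \eqref{M-uv2} carry over verbatim (with $i-\cot(\mu\pi)$ replaced by $-1/\sin(\mu\pi)$ in the coefficients) are all precisely what the paper does. The gap is in the only place where the $\lambda<0$ case genuinely differs from $\lambda>0$, namely the regions touching $(1,\infty)$. You propose to "invoke the integral estimates for products of modified Bessel functions collected in Appendix \ref{app} (the analogues of Lemmas \ref{lem-int-JJ}, \ref{lem-int-JJ'} and \ref{lem-mixed})" --- but no such estimates exist in the paper; Appendix \ref{app} contains only the pointwise product bound of Lemma \ref{lem-ik}. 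More importantly, these "analogues" cannot be obtained by substituting $I,K$ for $J_{\pm\nu}$: the proofs of Lemmas \ref{lem-int-JJ}--\ref{lem-mixed} rest on $|J_\nu|\le 1$, the decay \eqref{z-large} of $J_\nu^2+Y_\nu^2$, the monotonicity \eqref{monoton}, Watson's identity \eqref{wa-int-1}, and the Airy-function bounds \eqref{product-upb}, every one of which fails or is unavailable for the exponentially growing $I_\nu$. The paper itself flags this obstruction explicitly: one cannot use the splitting formula \eqref{ik} the way \eqref{jy} was used for $\lambda>0$, because both $I_{\pm\nu}$ grow exponentially. You acknowledge this difficulty as "the main obstacle" but then simply assume it away, which is exactly the step that constitutes the proof.

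What the paper does instead is avoid any $\lambda$-differentiation on the outer region entirely. It forms the difference kernel $w_m(\lambda,r,r')$ of \eqref{mixed-ik} (the product $I_{|m+\alpha|}(\sqrt{|\lambda|}r)K_{|m+\alpha|}(\sqrt{|\lambda|}r')$ minus its two limiting power terms) and splits the integration over $1\le r\le r'$ into three pieces $X_1,X_2,X_3$ according to whether $r,r'$ lie above or below the threshold $|\lambda|^{-1/2}$. On $X_1,X_2$ (at least one variable large) each term is bounded \emph{separately}, using the monotonicity of $K_{|m+\alpha|}$ together with the uniform bound $I_{\nu+j}(z)K_\nu(z)\le 1/(2\nu)$ of Lemma \ref{lem-ik} and the small-argument expansion \eqref{ik-z0}; the weight $\rho^{-2s}$ then produces the factor $|\lambda|^{1+2\eps}$ or $|\lambda|^{|m+\alpha|+\frac12+\eps}$. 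On $X_3$ (both arguments $\le 1$) a pointwise estimate on $w_m$ from \eqref{ik-z0} suffices. The cross region is treated by the same threshold-splitting, and the gradient bound is then extracted from the explicit kernel and the already-established estimate \eqref{almost}, not from a counterpart of \eqref{eq-JJ-4}. This threshold-splitting device and the reliance on pointwise (rather than $\partial_\lambda$-integral) bounds is the substantive content of the lemma, and it is missing from your proposal.
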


\begin{proof}
To simplify the notation we write below $\mu$ instead of $\mu(\alpha)$.
Similarly as in the case $\lambda>0$, see Lemma \ref{r0-asymp1}, we note that 
\begin{equation} \label{norm-sup-neg}
\| G_2^-(\lambda) \|_{\B(s,-s )} =  \sup_{m\in\Z} \| \rho^{-s}\, G_{m,2}^-(\lambda) \, \rho^{-s}\,\|_{\B(L^2(\R_+,r dr))} ,
\end{equation}
where
\begin{equation*} \label{gm2-neg}
 G_{m,2}^-(\lambda, r,r') = R_0^m(\lambda, r,r') - G_{m,0}(r,r') - \lambda^{\mu }\, \delta_{m,k(\alpha)}\, g_1(r,r').
\end{equation*}	
As in \eqref{0-1} we have
\begin{align} 
\| \, \rho^{-s}\, G_{m,2}^-(\lambda) \, \rho^{-s}\,\|^2_{\B(L^2(\R_+,r dr))} & \ \leq \ M^1_{(0,1)\times(0,1)} [G_{m,2}^-(\lambda)]\ M^2_{(0,1)\times(0,1)} [G_{m,2}^-(\lambda)]  \label{01-neg} \\
& \ + 2 \int_0^1\rho^{-2s}(r)   \int_1^\infty \, | G_{m,2}^-(\lambda, r,r')|^2\, \rho^{-2s}(r') \,  r'\, dr' \, r \, dr  \nonumber\\
& \ + \int_1^\infty \int_1^\infty \rho^{-2s}(r) \, | G_{m,2}^-(\lambda, r,r')|^2\, \rho^{-2s}(r')\,  \, r  r'\, dr' \,  dr \nonumber
\end{align}
For $r < r' \leq 1$ is the kernel $R_0^m(\lambda, r,r')$ given by the same solutions, $v_m$ and $u_m$, as in the case $\lambda>0$. Hence from the proof of Lemma \ref{r0-asymp1}, namely from the proof of upper bound \eqref{M-uv2} , we deduce that 
\begin{equation}  \label{M-uv3}
M^j_{(0,1)\times (0,1)} [\, G_{m,2}^-(\lambda)] \ \lesssim\  \frac{|\lambda|^{2\mu }}{1+|m|} , \qquad j=1,2,
\end{equation}
holds for all $m$ and $|\lambda|$ small enough. Let us now consider the last term on the right hand side of \eqref{01-neg}. From \eqref{rm-tilde}, taking into account \eqref{w-tilde},  we deduce that  for $ 1 \leq r<r'$
\begin{equation} \label{rm-tilde2}
R_0^m(\lambda, r,r') =  I_{|\alpha+m|}(\sqrt{|\lambda|}\, r)\, K_{|m+\alpha|}(\sqrt{|\lambda|}\, r') +\frac{\widetilde B_m(\lambda)}{\widetilde A_m(\lambda)} \ K_{|\alpha+m|}(\sqrt{|\lambda|}\, r) \, K_{|m+\alpha|}(\sqrt{|\lambda|}\, r') .
\end{equation}
In order to estimate the right hand side we cannot use the modified splitting formula \eqref{ik}, as we did in the with the equation \eqref{jy} in case $\lambda>0$, since both functions $I_{-\nu}$ and $I_\nu$ are exponentially increasing at infinity. Instead we proceed as follows; we consider first the contribution to $G_{m,2}^-(\lambda, r,r')$ relative to the product $I_{|m+\alpha|}(\sqrt{|\lambda|}\, r)\, K_{|m+\alpha|}(\sqrt{|\lambda|}\, r')$. This means that we have to estimate the $L^2-$norm, restricted to $(1,\infty)\times(1,\infty)$ of the kernel $\rho^{-s}(r)\, w_m(\lambda, r,r')\, \rho^{-s}(r')$, where
\begin{equation} \label{mixed-ik}
w_m(\lambda, r,r') := I_{|m+\alpha|}(\sqrt{|\lambda|}\, r)\, K_{|m+\alpha|}(\sqrt{|\lambda|}\, r') - \frac{ \big(r/r'\big)^{|m+\alpha|}}{2\, |m+\alpha|}\, + \, \frac{\delta_{m,k(\alpha)}\  |\lambda|^{\mu }\pi\, 4^{-\mu } \, (r r')^{\mu }}{2 \sin(\mu  \pi)\, \mu^2(\alpha)\,  \Gamma^2(\mu )} ,
\end{equation}
see \eqref{g1} and \eqref{rm-tilde1}. Without loss of generality we may assume that $|\lambda| <1$. We have 
\begin{align}
& \int_1^\infty \int_r^\infty \rho^{-2s}(r) \,  | w_m(\lambda, r,r')|^2\, \rho^{-2s}(r')\,   r'\, dr' \,  r\, dr  =  \int_{|\lambda|^{-\frac 12}}^\infty \int_r^\infty    | w_m(\lambda, r,r')|^2\, (r r')^{-2-2\eps}  dr'  dr \nonumber\\
 & \quad + \int_1^{|\lambda|^{-\frac 12}} \int_{|\lambda|^{-\frac 12}}^\infty   | w_m(\lambda, r,r')|^2\, (r r')^{-2-2\eps} \, dr'  dr +\int_1^{|\lambda|^{-\frac 12}} \int_r^{|\lambda|^{-\frac 12}}  | w_m(\lambda, r,r')|^2\,  (r r')^{-2-2\eps}\,  dr' dr \nonumber\\
 & \qquad\qquad\qquad\qquad = : X_1(m,\lambda)+X_2(m,\lambda)+X_3(m,\lambda). \label{3x}
\end{align}
In $X_1(m,\lambda)$ and $X_2(m,\lambda)$ we bound each term on the right hand side of \eqref{mixed-ik} separately; note that by monotonicity of $K_{|m+\alpha|}$ and \eqref{ik-upperb} 
\begin{align*}
& \int_{|\lambda|^{-\frac 12}}^\infty \int_r^\infty   I^2_{|m+\alpha|}(\sqrt{|\lambda|}\, r)\, K^2_{|m+\alpha|}(\sqrt{|\lambda|}\, r') \ (r r')^{-2-2\eps}  dr'  dr \leq \frac{1}{|m+\alpha|^2} \, \int_{|\lambda|^{-\frac 12}}^\infty  r^{-3-4\eps}\, dr \, \lesssim\, \frac{|\lambda|^{1+2\eps}}{|m+\alpha|^2}\ .
\end{align*}
Similarly we obtain
\begin{align*}
& \int_1^{|\lambda|^{-\frac 12}} \int_{|\lambda|^{-\frac 12}}^\infty   I^2_{|m+\alpha|}(\sqrt{|\lambda|}\, r)\, K^2_{|m+\alpha|}(\sqrt{|\lambda|}\, r') \ (r r')^{-2-2\eps}  dr'  dr \\
& \qquad\qquad\qquad\qquad  \leq \  |\lambda|^{\frac 12+\eps} \, K^2_{|m+\alpha|}(1) \int_1^{|\lambda|^{-\frac 12}} I^2_{|m+\alpha|}(\sqrt{|\lambda|}\, r)\, r^{-3-4\eps}\, dr \, \lesssim \ \frac{|\lambda|^{|m+\alpha|+\frac 12+\eps}}{|m+\alpha|^2},
\end{align*}
where we have used equations \eqref{ik-z0}  to estimate $K^2_{|m+\alpha|}(1)$ and $I^2_{|m+\alpha|}(\sqrt{|\lambda|}\, r)$. Elementary calculations now show that the contributions from the remaining terms on the right hand side of \eqref{mixed-ik} to $X_1(m,\lambda)$ and $X_2(m,\lambda)$ are of the same order in $|\lambda|$ and $m$. Since $\mu \leq \frac 12$, this implies that  
\begin{equation*} 
X_1(m,\lambda)+X_2(m,\lambda) \ \lesssim \ \frac{|\lambda|^{|m+\alpha|+\frac 12+\eps}+|\lambda|^{1+2\eps}}{|m+\alpha|^2}\ \leq \ \frac{|\lambda|^{2 \mu +\eps}}{|m+\alpha|^2}\, .
\end{equation*}
It remains to estimate $X_3(m,\lambda)$. Here we use the fact that $\sqrt{|\lambda|}\, r \leq \sqrt{|\lambda|}\, r' \leq 1$, see \eqref{3x}. Hence equations \eqref{ik-z0} we then obtain a pointwise estimate on $w_m(\lambda, r, r')$ which yields
$$
X_3(m,\lambda) \ \lesssim \  \frac{|\lambda|^{2 \mu +\eps}}{|m+\alpha|^2}\, .
$$
The remaining part of $ | G_{m,2}^-(\lambda, r,r')|^2$ on $(1,\infty)\times(1,\infty)$ is estimated in the same way using 
the asymptotic behavior of the coefficients $\widetilde A_m(\lambda)$ and $\widetilde B_m(\lambda)$ established above. We thus conclude that 
\begin{equation} \label{hs-infty}
\int_1^\infty \int_1^\infty \rho^{-2s}(r) \, | G_{m,2}^-(\lambda, r,r')|^2\, \rho^{-2s}(r')\,  \, r  r'\, dr' \,  dr  \ \lesssim \  \frac{|\lambda|^{2 \mu +\eps}}{|m+\alpha|^2}.
\end{equation}
Finally, we consider the mixed term in \eqref{01-neg}. We have
$$
R_0^m(\lambda; r,r') = \frac{v_m(\lambda,r)}{\widetilde A_m(\lambda)} \  K_{|m+\alpha|}(\sqrt{|\lambda|}\, r') \qquad r \leq 1 < r'.
$$
Consequently, by \eqref{G02}, \eqref{g1} and \eqref{rm-tilde1} 
\begin{align}
G_{m,2}^-(\lambda, r,r') & = \frac{v_m(\lambda,r)}{\widetilde A_m(\lambda)} \  K_{|m+\alpha|}(\sqrt{|\lambda|}\, r')  -  \frac{v_{m}(0,r)\, (r')^{-|m+\alpha|} }{a'_m+|m+\alpha|\, a_m}  \nonumber \\
& \quad +  |\lambda|^\mu\  \frac{\delta_{m,k(\alpha)} \ \pi\ v_k(0, r) \, }{\mu\, \sin(\mu \pi)\, 4^\mu\, \Gamma^2(\mu)\, (a'_k +\mu\, a_k) }\ \Big(r'^{\mu}- \frac{a'_k-\mu\, a_k}{a'_k+\mu\, a_k}\, r'^{-\mu}\Big) \label{gm2-neg-2}
\end{align}
We proceed similarly as above and for $r' \geq |\lambda|^{-\frac 12}$ estimate the contribution of each term to the integral in \eqref{01-neg} separately. By \eqref{m-infty},  Lemma \ref{lem-M} and the asymptotic expansion of $\widetilde A_m(\lambda)$ it follows that 
$$
\int_0^1  \frac{v^2_m(\lambda,r)}{\widetilde A^2_m(\lambda)} \, \rho^{-2s}(r)\, r \, dr \ \lesssim\ \frac{4^{-|m+\alpha|}\ |\lambda|^{|m+\alpha|}}{ (1+|m|)^3\ \Gamma^2(|m+\alpha|)}
$$
Taking into account the fact that $K_{|m+\alpha|}$ is decreasing and using \eqref{ik-z0}, we thus get 
$$
\int_0^1  \rho^{-2s}(r)\, r \, dr  \int_{|\lambda|^{-\frac 12}}^\infty\, |G_{m,2}^-(\lambda, r,r')|^2\, \rho^{-2s}(r')\, r'\, dr'  \ \lesssim \ \frac{|\lambda|^{ \mu +\frac 12+\eps}}{(|m| +1)^3}.
$$
In the region where $r' \leq |\lambda|^{-\frac 12}$ we use the two-term asymptotic expansion of the function $K_\nu(z)$ for $0 <z\leq 1$, cf. \eqref{ik-z0}. This gives 
$$
\int_0^1  \rho^{-2s}(r)\, r \, dr  \int_r^{|\lambda|^{-\frac 12}}\,  |G_{m,2}^-(\lambda, r,r')|^2\, \rho^{-2s}(r')\, r'\, dr'  \ \lesssim \ \frac{|\lambda|^{ \mu +\frac 12+\eps}}{(|m| +1)^3}.
$$
Inserting the above estimates together with \eqref{hs-infty} and \eqref{M-uv3} into \eqref{01-neg} then yields
\begin{equation} \label{almost}
\| \, \rho^{-s}\, G_{m,2}^-(\lambda) \, \rho^{-s}\,\|_{\B(L^2(\R_+,r dr))}  \ \lesssim \ \frac{|\lambda|^{\mu +\eps}}{|m| +1}.
\end{equation}
This proves the first part or \eqref{g2-}. As in the proof of Lemma \ref{r0-asymp2}, to prove the second part of \eqref{g2-} it suffices to show that 
$$
\sup_{m\in\Z}\, \int_0^\infty\int_0^\infty  \rho^{-2s}(r) \, \big[  \big(\pd_{r'} G_{m,2}^-(\lambda, r,r'\big)^2 + \frac{m^2}{r'^2}\, (G^-_{m,2}(\lambda,r,r'))^2 \big] \, \rho^{-2s}(r') \,r\, r'\, dr dr'  = o (|\lambda|^{2\mu})
$$
as $\lambda \to 0-$, see the proof of Lemma \ref{r0-asymp2}. However, this follows from the explicit expression for $G_{m,2}^-(\lambda,r,r')$ and from \eqref{almost}.
\end{proof}

\subsection{Proof of Proposition \ref{prop-1}}
The claim of the Proposition follows by combining Lemmata \ref{lem-no-int}, \ref{lem-g0}, \ref{r0-asymp1}, \ref{r0-asymp2}, \ref{lem-neg1} and \ref{lem-neg2}.

\section{\bf The operator $H(B_0)$: integer flux}
\label{sec-integer}
\noindent 
For integer values of $\alpha$ we can use the results of the previous sections for the contributions to the asymptotic expansion for $R_0(\lambda)$ from the channels $m\neq -\alpha$. Hence it remains to analyze the contribution from $m=-\alpha$ which.

\subsection{The case $\lambda>0$} 

\begin{lemma} \label{lem-int}
Assume that $\alpha\in\Z, \, \alpha\neq 0$.  Then for every $x,y\in\R^2$ it holds 
\begin{equation} \label{r0-int}
R_0(\lambda; x, y) = \G_0(x,y) +\ (\log \lambda)^{-1}\, \G_1(x,y) +  \G_2^+(\lambda; x,y),
\end{equation}
where $\G_0(x,y)$ and  $\G_1(x,y)$  are given by equations \eqref{G0-a} and \eqref{G1-a} and $\G_2^+(\lambda; x,y) = o\big( (\log \lambda)^{-1} \big)$ as $\lambda\to 0+$.  
\end{lemma}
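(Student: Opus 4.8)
The plan is to follow the channel-by-channel strategy of Section \ref{sec-model}, exploiting the fact that the only obstruction to reusing that analysis is the single angular channel $m=-\alpha$. Recall from \eqref{hk-gen} and \eqref{rtor} that $R_0(\lambda;x,y)$ decomposes as a sum over $m\in\Z$ of the radial kernels $R_0^m(\lambda;r,r')$, and that $R_0^m$ is built from the interior solutions $v_m,u_m$ (which are unchanged, since inside the disc the equation \eqref{eq-fg} is insensitive to whether $\alpha$ is an integer) matched at $r=1$ to exterior Bessel functions of order $|\alpha+m|$. When $\alpha\in\Z$ and $m\neq-\alpha$ the order $|\alpha+m|$ is a positive integer, and the Sturm--Liouville computation carried out in the proof of Lemma \ref{lem-no-int} applies verbatim: each such channel has a well-defined limit $G_{m,0}(r,r')$ as $\lambda\to 0+$, with a correction of order $\lambda^{|\alpha+m|}$ up to logarithmic factors, hence of size $O(\lambda\,\log\lambda)$. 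Since $\lambda\,\log\lambda=o((\log\lambda)^{-1})$, all of these channels contribute only to the limiting kernel $\G_0$ (which therefore agrees with the non-integer $G_0$ away from the critical channel) and to the remainder $\G_2^+$. In particular $\G_1$ is supported entirely in the channel $m=-\alpha$, exactly as $G_1$ was supported in the channel $m=k(\alpha)$ in the non-integer case.

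First I would isolate and analyse the channel $m=-\alpha$, where $|\alpha+m|=0$. Here the exterior solutions in \eqref{fm}, \eqref{fim} are replaced by the zeroth-order Bessel functions: $f_{-\alpha,\lambda}$ is matched to $\sqrt r\,(A\,J_0(\sqrt\lambda\,r)+B\,Y_0(\sqrt\lambda\,r))$ and the outgoing solution $\phi_{-\alpha,\lambda}$ to $\sqrt r\,(J_0(\sqrt\lambda\,r)+i\,Y_0(\sqrt\lambda\,r))=\sqrt r\,H_0^{(1)}(\sqrt\lambda\,r)$. Imposing the matching conditions at $r=1$ determines the coefficients in terms of $v_{-\alpha}(\lambda,1)$, $u_{-\alpha}(\lambda,1)$ and their derivatives together with $J_0(\sqrt\lambda)$, $Y_0(\sqrt\lambda)$ and the neighbouring values $J_1,Y_1$, precisely as in \eqref{am-eq}--\eqref{dm-eq}, and yields the Wronskian $W_{-\alpha}(\lambda)$ entering \eqref{rm}.

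The mechanism producing the $(\log\lambda)^{-1}$ term is the logarithmic singularity of $Y_0$ at the origin. Using the small-argument expansions of Appendix \ref{app-bessel}, namely $J_0(\sqrt\lambda)=1+O(\lambda)$ and $Y_0(\sqrt\lambda)=\tfrac{2}{\pi}\big(\tfrac12\log\lambda+\gamma-\log 2\big)+O(\lambda\log\lambda)$, one finds that the leading small-$\lambda$ behaviour of $W_{-\alpha}(\lambda)$ is proportional to $\log\lambda$, so that $W_{-\alpha}(\lambda)^{-1}=c\,(\log\lambda)^{-1}+o((\log\lambda)^{-1})$ for an explicit constant $c$. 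Substituting these expansions into the representation \eqref{rm} for $R_0^{-\alpha}(\lambda;r,r')$ and collecting powers of $(\log\lambda)^{-1}$ produces the $\lambda\to 0+$ limit, which defines $\G_0$ via \eqref{G0-a}, together with the first-order coefficient, which defines $\G_1$ via \eqref{G1-a}. This is the two-dimensional analogue of the expansion \eqref{2D-res}, and indeed the same logarithm appears there for the free resolvent.

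The main obstacle will be the careful bookkeeping of the logarithmic terms: because both the numerator and the denominator of the Sturm--Liouville kernel \eqref{rm} carry factors of $\log\lambda$, one must track the cross-terms between the $\log\lambda$ coming from $Y_0$ and the polynomial corrections from $J_0$ and from the interior data, in order to confirm that the remainder is genuinely $o((\log\lambda)^{-1})$ rather than merely $O((\log\lambda)^{-2})$. As in Lemma \ref{lem-no-int} this has to be verified separately in the three regions $r,r'\le 1$, $r\le 1<r'$ and $1<r<r'$, using in the last region the exterior representation analogous to \eqref{rm-2}. The present lemma asserts only the pointwise estimate $\G_2^+(\lambda;x,y)=o((\log\lambda)^{-1})$; the subsequent upgrade to the operator norm of $\B(s,-s)$ then proceeds by the same Schur--Holmgren and Hilbert--Schmidt splitting used in Lemmata \ref{r0-asymp1} and \ref{r0-asymp2}.
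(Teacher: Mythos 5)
Your proposal follows essentially the same route as the paper's proof: decompose into angular channels via \eqref{hk-gen}, reuse the non-integer analysis of Lemma \ref{lem-no-int} for the channels $m\neq-\alpha$, and treat the critical channel $m=-\alpha$ by matching at $r=1$ with $J_0$, $Y_0$, whose logarithmic singularity makes both the coefficients and the Wronskian $W_{-\alpha}(\lambda)$ grow like $\log\lambda$, so that expanding the ratio in powers of $(\log\lambda)^{-1}$ produces exactly the kernels \eqref{G0-a} and \eqref{G1-a}, with the operator-norm bounds correctly deferred to the later lemmas. One minor wording slip: a remainder that is $O((\log\lambda)^{-2})$ is automatically $o((\log\lambda)^{-1})$, so your stated worry is phrased backwards, but this does not affect the correctness of the approach.
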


\begin{proof}
Without loss of generality we may assume that $\alpha >0$. We proceed in the same way as above using the functions $f_{m,\lambda}$ and $\phi_{m,\lambda}$ defined by \eqref{fm} and \eqref{fim}. For $m\neq -\alpha$ we calculate $R_0^m(\lambda; r,r')$ from the formulas \eqref{rtor} and \eqref{rm}-\eqref{wronsk}.  If $m=-\alpha<0$, then 
\begin{align} 
A_{-\alpha}(\lambda) & = -\frac{\pi}{2}\, \Big( v'_{-\alpha}(\lambda,1)\, Y_0(\sqrt{\lambda}) +\sqrt{\lambda}\ v_{-\alpha}(\lambda,1)\,  Y_1(\sqrt{\lambda}) \Big ) \label{a-eq} \\
B_{-\alpha}(\lambda) & =  \frac{\pi}{2}\, \Big( v'_{-\alpha}(\lambda,1) \, J_0(\sqrt{\lambda}) +\sqrt{\lambda}\  v_{-\alpha}(\lambda,1) \, J_1(\sqrt{\lambda}) \Big ) \label{b-eq},
\end{align} 
and
\begin{align} 
C_{-\alpha}(\lambda) &=  \frac{ \Gamma(\frac 12+2\alpha)}{\Gamma(1+2\alpha)} \ \Big( u'_{-\alpha}(\lambda, 1)\, (J_0(\sqrt{\lambda}) -i\, Y_0(\sqrt{\lambda}))  +\sqrt{\lambda}\ u_{-\alpha}(\lambda, 1)\,  (J_1(\sqrt{\lambda}) -i\, Y_1(\sqrt{\lambda})\, ) \Big)\label{c-eq}  \\
D_{-\alpha}(\lambda) &=  \frac{2\,  \Gamma(\frac 12+2\alpha)}{\pi\, \Gamma(1+2\alpha)} \ (i \, A_{-\alpha}(\lambda)-B_{-\alpha}(\lambda)).  \label{d-eq}
\end{align}
The rest of the calculation proceeds in the same way as is the case $\alpha\not\in\Z$. To simplify the notation we introduce the following shorthands:
$$
\af_\alpha = v_{-\alpha}(0,  1), \qquad \af'_\alpha = v'_{-\alpha}(0, 1) , \qquad \bb_\alpha = u_{-\alpha}(0, 1), \qquad \bb'_\alpha = u'_{-\alpha}(0, 1) .
$$
With the help of \eqref{hk-gen}, \eqref{rm}, and properties of Bessel functions, see equations \eqref{nu-pos} and \eqref{nu-zero}, we then obtain 
\begin{equation} \label{G0-a}
\lim_{\lambda\to 0+}\, R_0(\lambda;x,y) = \G_0(x,y)=  \sum_{m\neq -\alpha} G_{m,0}(r,r')\, e^{im(\theta-\theta')}  +\, \G_{\alpha,0} (r,r') \, e^{i \alpha(\theta'-\theta)},
\end{equation} 
with 
\begin{align*}
\G_{\alpha,0}(r,r') 
& = \frac{ \Gamma(\frac 12+2\alpha)}{\Gamma(1+2\alpha)}\, v_{-\alpha}(0,  r)\, \big( u_{-\alpha}(0, r')-
\frac{\bb'_\alpha}{ \af'_\alpha}  \ v_{-\alpha}(0,  r') \big)  & r< r' \leq 1, \\
\G_{\alpha,0}(r,r') & = \frac{v_{-\alpha}(0,  r)}{\af'_\alpha} & r\leq 1 < r', \\
\G_{\alpha,0}(r,r') & = \frac{\af_\alpha}{\af'_\alpha}\, +\log r & 1< r <r'. 
\end{align*}
Similarly, we find that 
\begin{equation} \label{G1-a}
\lim_{\lambda\to 0+}\,  \log \lambda\  \Big(R_0(\lambda;x,y) - \G_0(x,y) \Big) =:
\G_1(x,y) = k_1(\alpha; r,r') \, e^{i\alpha(\theta-\theta')},
\end{equation}
with 
\begin{align*}
k_1(\alpha; r,r')  & =  \frac{2\, v_{-\alpha}(0,  r)\, v_{-\alpha}(0,  r')}{(\af'_\alpha)^2} & r< r' \leq 1, \\
k_1(\alpha; r,r') & = \frac{2\, v_{-\alpha}(0,  r)}{\af'_\alpha}\,  \Big(\frac{\af_\alpha}{\af'_\alpha} + \log r'\Big) & r<1 < r' , \\
k_1(\alpha; r,r') & = 2\, \Big(\frac{\af_\alpha}{\af'_\alpha} + \log r\Big) \Big(\frac{\af_\alpha}{\af'_\alpha} + \log r'\Big) & 1\leq r <r'. 
\end{align*}
\end{proof}

\begin{remark}
Note that by a direct calculation using equations \eqref{vm}, \eqref{eq-M} and \eqref{mu-der} we have
$$
\af'_\alpha = |\alpha| \, e^{-|\alpha| }\, |2 \alpha|^{|\alpha|}\, \Big [\, |\alpha|\, M\Big(\frac 12, 1+2 |\alpha|, 2 |\alpha| \Big) + \frac{1}{2+4 |\alpha|}\, M\Big(\frac 32, 2+2|\alpha| , 2|\alpha| \Big)\Big] >0.
$$
\end{remark}

\smallskip

\begin{lemma} \label{lem-g01-int}
Let $ \ s>1$.  Then $\rho^{-s}\, \G_j\, \rho^{-s}$ and $\rho^{-s}\, \nabla\, \G_j\, \rho^{-s}$ with $j=0,1$ are compact operators from $L^2(\R^2)$ to $L^2(\R^2)$ and from $L^2(\R^2)$ to $L^2(\R^2,\C^2)$ respectively. 
\end{lemma}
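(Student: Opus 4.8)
The plan is to follow closely the proof of Lemma~\ref{lem-g0}, using that by \eqref{G0-a} and \eqref{G1-a} the kernels $\G_0$ and $\G_1$ differ from their non-integer counterparts only in the single angular channel $m=-\alpha$. Decomposing $\G_0$ along the Fourier basis $\{e^{im\theta}\}_{m\in\Z}$, I would treat the channels $m\neq-\alpha$ and the resonant channel $m=-\alpha$ separately. Since $\alpha\in\Z$, one has $|m+\alpha|\geq1$ for every $m\neq-\alpha$, and the kernels $G_{m,0}(r,r')$ entering the sum in \eqref{G0-a} are exactly those already analysed in Lemma~\ref{lem-g0}. Hence the estimates \eqref{gm0}, \eqref{gm0-grad} and \eqref{gm0-hs} apply without change and give $\|\rho^{-s}\,G_{m,0}\,\rho^{-s}\|_{HS(\R_+,r dr)}\lesssim|m+\alpha|^{-3/2}$, with the gradient pieces of order $|m+\alpha|^{-1/2}$. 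Thus the part of $\rho^{-s}\,\G_0\,\rho^{-s}$ and of $\rho^{-s}\,\nabla\G_0\,\rho^{-s}$ supported on $\bigoplus_{m\neq-\alpha}$ is compact, being a direct sum of Hilbert--Schmidt fibres whose norms tend to $0$ as $|m|\to\infty$.

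The genuinely new object is the resonant channel $m=-\alpha$, for which I would verify the Hilbert--Schmidt property by hand, as was done for $m=0$ in Lemma~\ref{lem-g0}. On $(0,1)\times(0,1)$ and on $(0,1)\times(1,\infty)$ the kernels in \eqref{G0-a} and \eqref{G1-a} are built from $v_{-\alpha}(0,\cdot)$ and $u_{-\alpha}(0,\cdot)$; their behaviour near the origin, $v_{-\alpha}(0,r)\sim r^{|\alpha|}$ and $u_{-\alpha}(0,r)\sim r^{-|\alpha|}$, together with the ordering $r<r'$, keeps all products bounded, so these contributions (and their $\partial_{r'}$ and $\frac{\alpha}{r'}$ variants) are plainly square-integrable against $r\,dr\,r'\,dr'$ on bounded regions. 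The main obstacle, and the only place where $s>1$ enters, is the regime $1<r<r'$: there $\G_{\alpha,0}(r,r')=\af_\alpha/\af'_\alpha+\log r$ and $k_1(\alpha;r,r')=2(\af_\alpha/\af'_\alpha+\log r)(\af_\alpha/\af'_\alpha+\log r')$ grow logarithmically instead of decaying. Inserting these explicit expressions into the Hilbert--Schmidt integral, the worst term (that of $\G_1$) is controlled by
\begin{equation*}
\int_1^\infty\!\!\int_r^\infty (\log r)^2(\log r')^2\,(r r')^{1-2s}\,dr'\,dr\ \lesssim\ \int_1^\infty (\log r)^4\, r^{3-4s}\,dr,
\end{equation*}
which is finite precisely for $s>1$; the analogous integral for $\G_0$ carries one logarithmic factor fewer and is again finite for $s>1$.

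For the gradient pieces I would use that $\G_{\alpha,0}$ and $k_1$ depend, in the region $r<r'$, only on the smaller variable, so $\partial_{r'}$ produces an extra factor $r'^{-1}$ (acting only where $r'<r$) and the factor $\frac{\alpha}{r'}$ likewise supplies one more power of decay; the resulting integrals then converge already for $s>1/2$. Carrying out these Hilbert--Schmidt estimates for $\G_{\alpha,0}$, for $k_1$, and for their derivative and $\frac{\alpha}{r'}$ counterparts shows that the resonant fibre defines a compact operator. Combining it with the norm-convergent family of non-resonant fibres from the first step yields the compactness of $\rho^{-s}\,\G_j\,\rho^{-s}$ and $\rho^{-s}\,\nabla\G_j\,\rho^{-s}$ for $j=0,1$, which is the claim.
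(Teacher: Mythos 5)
Your proposal is correct and follows essentially the same route as the paper: decompose into angular channels, invoke Lemma \ref{lem-g0} for the channels $m\neq-\alpha$ (whose kernels $G_{m,0}$ are unchanged), and verify directly that the resonant-channel kernels $\G_{\alpha,0}$ and $k_1$, together with their derivative and $\alpha/r'$ counterparts, are Hilbert--Schmidt after weighting. The paper compresses this last step into a citation of Lemmata \ref{lem-M} and \ref{lem-der-m}, whereas you carry out the key integrals explicitly, correctly isolating the logarithmically growing region $1<r<r'$ as the place where $s>1$ is needed.
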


\begin{proof}
By Lemma \ref{lem-g0} the operators with kernels
$$
\rho^{-s} \sum_{m\neq -\alpha} G_{m,0}(r,r')\, e^{im(\theta-\theta')} \ \rho^{-s} \qquad \text{and}\qquad   \rho^{-s}\, \Big(\, \nabla\! \sum_{m\neq -\alpha} G_{m,0}(r,r')\, e^{im(\theta-\theta')} \Big)\,  \rho^{-s}
$$
are compact from $L^2(\R^2)$ to $L^2(\R^2)$ and from $L^2(\R^2)$ to $L^2(\R^2,\C^2)$. On the other hand,  from the explicit equation for  $\G_{\alpha,0} (r,r') $ and from Lemmata \ref{lem-M} and \ref{lem-der-m} it easily follows that
the kernels 
$$
\rho^{-s}\, \G_{\alpha,0} (r,r') \, e^{i \alpha(\theta-\theta')}\, \rho^{-s} \qquad  \text{and} \qquad  
\rho^{-s}\, | \nabla \, \G_{\alpha,0} (r,r') \, e^{i \alpha(\theta-\theta')}|\, \rho^{-s}
$$ 
are Hilbert-Schmidt in $L^2(\R^2)$. 
\end{proof}

\begin{lemma} \label{r0-asymp3}
Let $\G_2^+(\lambda)$ denote the integral operator with the kernel $\G_2^+(\lambda; x,y)$ defined by \eqref{r0-int}. 
Assume that $s >  3/2+\eps, \,  0 <\eps <1$. Then, as $\lambda\to 0+$ we have $\G_2^+(\lambda) = o\big((\log\lambda )^{-1}\big)$ and $|\nabla\, \G_2^+(\lambda) |= o\big((\log\lambda )^{-1}\big)$ in $\B(s,-s )$. 
\end{lemma}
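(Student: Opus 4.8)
The plan is to mirror closely the proofs of Lemmata \ref{r0-asymp1} and \ref{r0-asymp2}. Using the channel decomposition \eqref{hk-gen} and the identity \eqref{norm-sup}, the operator norm in $\B(s,-s)$ is the supremum over $m\in\Z$ of the channel norms, so it suffices to bound each $\G^+_{m,2}(\lambda)$ and its gradient analogue by $o((\log\lambda)^{-1})$ uniformly in $m$. First I would dispose of all channels $m\neq-\alpha$: for $\alpha\in\Z$ these satisfy $|m+\alpha|\geq 1$, so they are precisely of the type treated as the ``$m\neq k(\alpha)$'' channels in Lemma \ref{r0-asymp1}, where the threshold exponent is bounded below by $1$ rather than approaching $0$. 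The bounds \eqref{hs-bessel}, \eqref{M-uv2} and \eqref{hs-vjy} then apply essentially verbatim and yield, uniformly in $m\neq-\alpha$, channel norms of order $\mathcal{O}(\lambda^{1/2})$, which is $o((\log\lambda)^{-1})$; the gradient is handled as in Lemma \ref{r0-asymp2}, the factor $|m+\alpha|$ being absorbed exactly as there.

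The substance is therefore the single channel $m=-\alpha$, where $|m+\alpha|=0$ and the exterior solutions are built from the order-zero Bessel functions $J_0,Y_0$. Into the coefficient formulae \eqref{a-eq}--\eqref{d-eq}, the Wronskian relation \eqref{wronsk}, and the kernel representations \eqref{rm}, \eqref{rm-2}, \eqref{rm-3}, \eqref{rm-4} I would insert the small-argument expansions $J_0(\sqrt\lambda)=1+\mathcal{O}(\lambda)$, $Y_0(\sqrt\lambda)=\pi^{-1}\log\lambda+\mathcal{O}(1)$, $\sqrt\lambda\,Y_1(\sqrt\lambda)=-2\pi^{-1}+\mathcal{O}(\lambda\log\lambda)$, together with Lemmata \ref{lem-M} and \ref{lem-der-m} for $v_{-\alpha}$ and $u_{-\alpha}$. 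Recalling $\af'_\alpha=v'_{-\alpha}(0,1)>0$, this gives
\begin{align*}
A_{-\alpha}(\lambda) &= -\tfrac{\af'_\alpha}{2}\,\log\lambda + \mathcal{O}(1), \qquad B_{-\alpha}(\lambda) = \tfrac{\pi}{2}\,\af'_\alpha + \mathcal{O}(\lambda\log\lambda), \\
W_{-\alpha}(\lambda) &= \tfrac{i}{\pi}\,\af'_\alpha\,\log\lambda\,\big(1 + \mathcal{O}((\log\lambda)^{-1})\big),
\end{align*}
so that $W_{-\alpha}(\lambda)^{-1}=-i\pi(\af'_\alpha\log\lambda)^{-1}(1+\mathcal{O}((\log\lambda)^{-1}))$. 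Substituting these, and collecting the $\log\lambda$-divergent pieces (which must cancel), the constant pieces, and the $(\log\lambda)^{-1}$ pieces, should reproduce precisely $\G_{\alpha,0}(r,r')$ at order $1$ and $k_1(\alpha;r,r')$ at order $(\log\lambda)^{-1}$, as defined in \eqref{G0-a} and \eqref{G1-a}, leaving the genuine remainder $\G^+_{-\alpha,2}(\lambda,r,r')=R_0^{-\alpha}(\lambda,r,r')-\G_{\alpha,0}(r,r')-(\log\lambda)^{-1}k_1(\alpha;r,r')$ to be estimated.

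To estimate $\G^+_{-\alpha,2}(\lambda)$ I would reuse the three-region splitting of \eqref{0-1}: the Schur-Holmgren bound on $(0,1)\times(0,1)$ and the Hilbert-Schmidt norm on the rest of $\R_+\times\R_+$. On $(0,1)^2$ the kernel depends on $\lambda$ only through $C_{-\alpha}/W_{-\alpha}$, $D_{-\alpha}/W_{-\alpha}$ and the analytic functions $v_{-\alpha}(\lambda,\cdot)$, $u_{-\alpha}(\lambda,\cdot)$; differentiating the expansion just described (as the derivative bounds \eqref{M-uv} and \eqref{der-vv-lam} were used in Lemma \ref{r0-asymp1}) shows the $M^j_{(0,1)\times(0,1)}$-norm of the remainder is $\mathcal{O}((\log\lambda)^{-2})$. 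On the mixed strip $(0,1)\times(1,\infty)$ and on $(1,\infty)^2$ I would control the Hilbert-Schmidt norms of the products $J_0(\sqrt\lambda r)Y_0(\sqrt\lambda r')$, $Y_0(\sqrt\lambda r)Y_0(\sqrt\lambda r')$ and $J_0(\sqrt\lambda r)J_0(\sqrt\lambda r')$ (and their radial-derivative versions for the gradient) by the order-zero analogues of the auxiliary Bessel estimates of Lemma \ref{lem-int-JJ}; their leading behaviour produces the $\log r,\log r'$ structure visible in $\G_{\alpha,0}$ and $k_1$, while the lower bound $s>3/2+\eps$ ensures that the integrals $\int_1^\infty(\log r)^2\rho^{-2s}(r)\,r\,dr$ and the corresponding double integrals converge despite these logarithmic factors and the Bessel error terms. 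The gradient statement then follows as in Lemma \ref{r0-asymp2}: in the $m=-\alpha$ channel the angular factor $m/r'=-\alpha/r'$ is a bounded multiplier and the radial derivative is controlled by the derivative Bessel estimates.

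The main obstacle will be the very slow decay of $(\log\lambda)^{-1}$. In contrast to the power-law case of Lemma \ref{r0-asymp1}, isolating only the constant term $\G_{\alpha,0}$ is not sufficient: one must also extract the first logarithmic correction $k_1$ and prove that the next order is strictly $o((\log\lambda)^{-1})$, i.e.\ of size $(\log\lambda)^{-2}$ up to power corrections of order $\lambda$ and $\lambda\log\lambda$. This demands genuine two-term expansions of $Y_0$, of $\sqrt\lambda\,Y_1$ and, most delicately, of $W_{-\alpha}(\lambda)^{-1}$, kept uniform in $r,r'$ within the Hilbert-Schmidt and Schur norms over the unbounded region, where the interplay of the logarithmically growing factors $\log r,\log r'$ with the expansion of the inverse Wronskian makes the cancellation of the $\log\lambda$-divergent contributions and their recombination into $\G_{\alpha,0}$ and $k_1$ the technically demanding point.
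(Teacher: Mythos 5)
Your overall strategy is the one the paper follows: the same channel decomposition with the supremum over $m$, the reduction of the channels $m\neq-\alpha$ to the analysis of Lemma \ref{r0-asymp1}/\ref{r0-asymp2}, and for the single channel $m=-\alpha$ a two-term expansion in $(\log\lambda)^{-1}$ of the coefficients and the Wronskian. Your expansions of $A_{-\alpha}$, $B_{-\alpha}$ and $W_{-\alpha}$ are correct and, after division, agree with the expansions \eqref{aw-as}--\eqref{cw-as} of the ratios $A_{-\alpha}/W_{-\alpha}$, $B_{-\alpha}/W_{-\alpha}$, $C_{-\alpha}/W_{-\alpha}$ that the paper computes directly. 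The only genuine tooling difference is in the remainder estimate for $m=-\alpha$: you propose the three-region splitting of \eqref{0-1} together with order-zero analogues of the product estimates of Lemma \ref{lem-int-JJ}, whereas the paper bounds this channel in a single weighted Hilbert--Schmidt norm, using the pointwise comparison estimates of Lemma \ref{lem-jy0} for $J_0$, $Y_0$ and their derivatives (\eqref{eq-jy0}, \eqref{eq-jy0-der}); both routes exploit that $s>3/2+\eps$ makes the $\log r$, $\log r'$ factors integrable, and both work.

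There is, however, one concrete gap: your claim that for $m\neq-\alpha$ the bounds \eqref{hs-bessel}, \eqref{M-uv2}, \eqref{hs-vjy} apply ``essentially verbatim'' fails as stated. For integer flux every index $|m+\alpha|$ is a positive integer, and the proof of those bounds in Lemma \ref{r0-asymp1} rests on the splitting \eqref{jy} of $Y_{|m+\alpha|}$ into $J_{\pm|m+\alpha|}$ and on the coefficient expansions \eqref{am}--\eqref{cm}, \eqref{bmw}, \eqref{amw}, all of which contain factors $\cot(|m+\alpha|\pi)$ or $1/\sin(|m+\alpha|\pi)$ and are therefore meaningless at integer order; moreover the expansion of $Y_n$ for integer $n$ acquires a $J_n(z)\log z$ term absent from \eqref{nu-pos}. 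The paper repairs exactly this point: it defines $Y_{|m+\alpha|}$ through the integral representation \eqref{int-j-2} instead of \eqref{jy}, and invokes the explicit provision in Lemmata \ref{lem-int-JJ}, \ref{lem-int-JJ'} and \ref{lem-mixed} that $J_{-\nu}$ may be replaced by $Y_\nu$ without changing the right-hand sides, which yields the uniform bound \eqref{m-neq-a} for these channels. Your conclusion (uniform channel norms $o((\log\lambda)^{-1})$, indeed power-like in $\lambda$ up to logarithms) is true, but your plan as written would collapse at this step and needs this substitution to go through.
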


\begin{proof}
We have 
\begin{equation}
\G_2^+(\lambda; x,y) = \sum_{m\in\Z} \G_{m,2}^+(r,r')\, e^{im(\theta-\theta')}\, ,
\end{equation}
where
\begin{equation} \label{gm2b}
 \G_{m,2}^+(\lambda, r,r') = 
 \left\{
\begin{array}{l@{\quad}l}
R_0^m(\lambda, r,r') - G_{m,0}(r,r') &\quad m \neq -\alpha , \\
&\\
R_0^{-\alpha}(\lambda, r,r') - \G_{\alpha,0}(r,r') -(\log \lambda)^{-1}\, k_1(r,r'), & \quad  m=-\alpha.
\end{array}
\right.
\end{equation}
Let us first consider the case $m\neq -\alpha$. We are going to use the results of Lemma \ref{r0-asymp1}. Note that all the indexes of the Bessel function included in $G_{m,0}(r,r')$ are integers. Consequently, we define $Y_{|m+\alpha|}$  by its integral representation \eqref{int-j-2} instead of \eqref{jy}. By using Lemmata \ref{lem-int-JJ}, \ref{lem-int-JJ'} and \ref{lem-mixed} with the function $J_{-\nu}$ replaced by $Y_\nu$ it follows that the results of Lemma \ref{r0-asymp1} remain valid. In particular, from \eqref{aux-rest1} we infer that 
\begin{equation} \label{m-neq-a} 
\sup_{m\neq -\alpha} \|\,  \rho^{-s}  (R_0^m(\lambda) - G_{m,0})  \, \rho^{-s}\,\|_{\B(L^2(\R_+,r dr))} \, = \, \mathcal{O}(\lambda), \qquad \lambda \to 0+,
\end{equation}
where $R_0^m(\lambda)$ is the operator in $L^2(\R_+,r dr)$ generated by the kernel $R_0^m(\lambda, r,r') $. In order to treat the case $m=-\alpha$, we will need the following asymptotic expansions:
\begin{align}
\frac{A_{-\alpha}(\lambda)}{W_{-\alpha}(\lambda)} & = \frac{i\, \pi}{2} \ \Big (1+i\,\pi\, (\log\lambda )^{-1} - 2 \pi\, \Big(  \frac{\pi}{2} + i\, \frac{c_\alpha}{\af_\alpha'} \Big) \, (\log\lambda )^{-2} + o( (\log\lambda )^{-2}) \Big),   \label{aw-as} \\
\frac{B_{-\alpha}(\lambda)}{W_{-\alpha}(\lambda)} & = -\frac{i\, \pi^2}{2}\, (\log\lambda )^{-1} +\pi^2 \Big(\frac{\pi}{2} -\frac{c_\alpha}{\af_\alpha'}\Big)\,  (\log\lambda )^{-2} -2 i \pi^2\Big( \frac{c^2_\alpha}{\af_\alpha'^2} - \frac{\pi^2}{4}\Big)\, (\log\lambda )^{-3} + o((\log\lambda )^{-3}) ,\label{bw-as} \\
\frac{C_{-\alpha}(\lambda)}{W_{-\alpha}(\lambda)} & = \frac{2\,  \Gamma(\frac 12+\alpha)\,  \bb'_\alpha}{i\, \Gamma(1+2\alpha)\, \af_\alpha'} \ \Big (1+i\,\pi\, (\log\lambda )^{-1} - 2 \pi\, \Big(  \frac{\pi}{2} + i\, \frac{c_\alpha}{\af_\alpha'} \Big) \, (\log\lambda )^{-2} + o( (\log\lambda )^{-2}) \Big),   \label{cw-as}
\end{align}
as $ \lambda \to 0+$, where $c_\alpha= \af_\alpha - \af_\alpha'(\gamma-\log 2)$. These expansions can be derived directly from equations \eqref{a-eq}-\eqref{c-eq} and asymptotics \eqref{nu-pos}, \eqref{nu-zero}. By inserting the asymptotic equations \eqref{aw-as}-\eqref{cw-as} into the expression for $R_0^{-\alpha}(\lambda, r,r') $ and using inequalities \eqref{eq-jy0} of Lemma \ref{lem-jy0} we find that 
$$
\|\, \rho^{-s} (R_0^{-\alpha}(\lambda, r,r') - \G_{\alpha,0}(r,r') -(\log \lambda)^{-1}\, k_1(r,r') )\, \rho^{-s} \|_{HS(\R_+, r dr)} = o( (\log\lambda )^{-1}) , \quad \lambda \to 0+. 
$$
This in combination with \eqref{m-neq-a} and \eqref{gm2b} shows that $\G_2^+(\lambda) = o\big((\log\lambda )^{-1}\big)$ in $\B(s,-s )$. To prove the remaining claim we proceed in the similar way. From Lemma \ref{r0-asymp2} we conclude that 
\begin{equation*}  
\sup_{m\neq -\alpha} \|\,  \rho^{-s}\, \big(  \nabla  (R_0^m(\lambda) - G_{m,0})  \big) \, \rho^{-s}\,\|_{\B(L^2(\R_+,r dr))} \, = \, \mathcal{O}(\lambda), \qquad \lambda \to 0+.
\end{equation*}
On the other hand, with the help of \eqref{aw-as}-\eqref{cw-as} and inequalities \eqref{eq-jy0-der} of Lemma \ref{lem-jy0} we obtain
$$
\int_0^\infty \int_0^\infty \rho^{-2s}(r') \big | \pd_{r'}\, (R_0^{-\alpha}(\lambda, r,r') - \G_{\alpha,0}(r,r') -(\log \lambda)^{-1}\, k_1(r,r') )\, \big|^2 \rho^{-2s}(r)\, r r'\, dr\, dr'  = o( (\log\lambda )^{-2}) 
$$
as $\lambda \to 0+$. The statement now follows again by \eqref{m-neq-a} and \eqref{gm2b}. 
\end{proof}

\subsection{The case $\lambda<0$} 

\begin{lemma} \label{lem-int-2}
Let $\alpha\in\Z, \alpha\neq 0$. Assume that $s >  3/2+\eps, \,  0 <\eps <1$. Then for $\lambda<0$  and $|\lambda|$ small enough we have
\begin{equation*}
R_0(\lambda+i0) = \G_0 + (\log \lambda)^{-1}\, \G_1 +\G_2^-(\lambda)  \qquad \text{in} \qquad \B(s,-s ),
\end{equation*}
where 
\begin{equation*} 
\|\, \G_2^-(\lambda) \|_{\B(s,-s )} = o\big(\log |\lambda|)^{-1}\big), \qquad \| |\nabla \G_2^-(\lambda) |\|_{\B(s,-s )} = o\big(\log |\lambda|)^{-1}\big) \qquad  \lambda\to 0- .
\end{equation*} 
\end{lemma}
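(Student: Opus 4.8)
The plan is to follow the strategy of Lemma \ref{lem-neg2} while isolating the single channel $m=-\alpha$, which for integer flux is the only source of the logarithmic behaviour, exactly as it was in the positive-energy case treated in Lemmata \ref{lem-int} and \ref{r0-asymp3}. Concretely, I would write $R_0(\lambda;x,y)$ in the form \eqref{hk-gen} with $R_0^m$ replaced by the negative-energy kernel $\widetilde R_0^m$ of \eqref{rm-tilde}, and split the sum into the part $\sum_{m\neq-\alpha}$ and the term $m=-\alpha$. Since $\G_0$ and $\G_1$ have already been defined in \eqref{G0-a} and \eqref{G1-a} through the $\lambda\to0+$ limit, the lemma amounts to showing that the \emph{same} operators reappear for $\lambda<0$, together with the stated remainder bounds.

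For the channels $m\neq-\alpha$ the index $|m+\alpha|$ is a positive integer, so $\widetilde R_0^m(\lambda;r,r')$ is built from $v_m,u_m$ on $(0,1)$ and from $I_{|m+\alpha|},K_{|m+\alpha|}$ on $(1,\infty)$, all of which have pure power-law leading behaviour near zero. Repeating the estimates of Lemma \ref{lem-neg2} verbatim — the Schur--Holmgren bound on $(0,1)\times(0,1)$ and the Hilbert--Schmidt bounds on the remaining regions, splitting the exterior integrals at $r=|\lambda|^{-1/2}$ — yields
$$
\sup_{m\neq-\alpha}\|\,\rho^{-s}\,(R_0^m(\lambda)-G_{m,0})\,\rho^{-s}\,\|_{\B(L^2(\R_+,r\,dr))}=\mathcal{O}(|\lambda|^{\eps}),\qquad\lambda\to0-,
$$
together with the analogous bound for the gradient; both are a positive power of $|\lambda|$ and hence $o((\log|\lambda|)^{-1})$, so these channels are negligible.

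The substantive work is the channel $m=-\alpha$, where $|m+\alpha|=0$ and the exterior solutions are $I_0(\sqrt{|\lambda|}\,r)$ and $K_0(\sqrt{|\lambda|}\,r)$. Matching at $r=1$ as in Lemma \ref{lem-neg1} (using \eqref{i-der}, \eqref{k-der}) produces coefficients $\widetilde A_{-\alpha},\widetilde B_{-\alpha},\widetilde C_{-\alpha},\widetilde D_{-\alpha}$ with $\widetilde W_{-\alpha}(\lambda)=\widetilde A_{-\alpha}(\lambda)$ by \eqref{w-ik}. Using the small-argument behaviour \eqref{ik-z0}, in particular $K_0(z)=-\log(z/2)-\gamma+\mathcal{O}(z^2\log z)$, $I_0(z)=1+\mathcal{O}(z^2)$, $\sqrt{|\lambda|}\,K_1(\sqrt{|\lambda|})\to1$ and $\sqrt{|\lambda|}\,I_1(\sqrt{|\lambda|})=\mathcal{O}(|\lambda|)$, I would expand $\widetilde A_{-\alpha}/\widetilde W_{-\alpha}$, $\widetilde B_{-\alpha}/\widetilde W_{-\alpha}$ and $\widetilde C_{-\alpha}/\widetilde W_{-\alpha}$ in powers of $(\log|\lambda|)^{-1}$, the leading logarithmic divergence being $\widetilde W_{-\alpha}(\lambda)\sim-\tfrac12\af'_\alpha\log|\lambda|$. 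Inserting these into \eqref{rm-tilde} and collecting the constant and the $(\log|\lambda|)^{-1}$ terms recovers precisely the kernels $\G_{\alpha,0}(r,r')$ and $k_1(\alpha;r,r')$ of \eqref{G0-a}, \eqref{G1-a}. This identification is the integer-index analogue of the matching of the factors $i-\cot(\mu\pi)$ and $-1/\sin(\mu\pi)$ in Lemma \ref{lem-neg1}, and it is here automatic because for vanishing index both the $\lambda>0$ and $\lambda<0$ problems involve the same real function $K_0$; since $\lambda<0$ lies in the resolvent set, $R_0(\lambda)$ is real-analytic, so its natural expansion is in $(\log|\lambda|)^{-1}$ with real coefficients, and replacing $\log|\lambda|$ by $\log\lambda=\log|\lambda|+i\pi$ alters $(\log\lambda)^{-1}\G_1$ only by $\mathcal{O}((\log|\lambda|)^{-2})$, which is absorbed into $\G_2^-$.

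It remains to bound $\G_{-\alpha,2}^-(\lambda)=R_0^{-\alpha}(\lambda)-\G_{\alpha,0}-(\log\lambda)^{-1}k_1$ and its gradient in $\B(s,-s)$. On $(0,1)\times(0,1)$ the kernel is built from $v_{-\alpha},u_{-\alpha}$, so the Schur--Holmgren estimate of Lemma \ref{lem-neg2} applies unchanged. The main obstacle is the exterior region $(1,\infty)\times(1,\infty)$: there the kernel is a combination of products of $I_0$ and $K_0$, and — exactly as noted after \eqref{rm-tilde2} — the $I_{\pm\nu}$ splitting is unavailable (indeed $I_{-0}=I_0$), so the Hilbert--Schmidt norm must be estimated directly. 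I would split the double integral at $r=|\lambda|^{-1/2}$ as in \eqref{3x}, using the monotonicity of $K_0$, the bound \eqref{ik-upperb}, and the two-term expansion of $K_0$ from \eqref{ik-z0}; the delicate point is that the logarithmic singularity of $K_0$ must cancel against the subtracted $\G_{\alpha,0}$ and $(\log\lambda)^{-1}k_1$ down to order $o((\log|\lambda|)^{-1})$, which is precisely where the coefficient asymptotics derived above are needed. The cross region $(0,1)\times(1,\infty)$ is handled as in \eqref{gm2-neg-2}, bounding $v_{-\alpha}(\lambda,r)/\widetilde A_{-\alpha}(\lambda)$ via Lemma \ref{lem-M} and the expansion of $\widetilde A_{-\alpha}$, and splitting in $r'$ at $|\lambda|^{-1/2}$. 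The gradient bound follows by applying the same decomposition to $\partial_{r'}$ and $\tfrac{m}{r'}$ of the kernel, using \eqref{i-der} and \eqref{k-der}, exactly as in the gradient part of Lemma \ref{r0-asymp3} with $J_{-\nu},Y_\nu$ replaced by $I_0,K_0$. Combining the three regional estimates with the $m\neq-\alpha$ bound established above yields both assertions.
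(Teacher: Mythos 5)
Your proposal is correct and follows essentially the same route as the paper: the paper's own proof is a one-line reference stating that, in view of the expansion \eqref{k-log} of $K_0$, the claim follows exactly as in the positive-energy case of Lemma \ref{r0-asymp3} (with the negative-energy kernel machinery of Lemmata \ref{lem-neg1} and \ref{lem-neg2}). Your write-up — splitting off the channel $m=-\alpha$, expanding $\widetilde W_{-\alpha}(\lambda)\sim-\tfrac12\af'_\alpha\log|\lambda|$ via \eqref{ik-z0} and \eqref{k-log}, absorbing the $\mathcal{O}((\log|\lambda|)^{-2})$ discrepancy between $(\log\lambda)^{-1}$ and $(\log|\lambda|)^{-1}$ into $\G_2^-$, and reusing the regional Hilbert--Schmidt estimates — is precisely the argument the paper leaves implicit.
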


\begin{proof}
Taking into account the asymptotic equation \eqref{k-log}, the claim follows in the same way as in the case $\lambda>0$, cf. Lemma \ref{r0-asymp3}.
\end{proof}

\subsection{Proof of Proposition \ref{prop-2}}
The statement of the Proposition  follows from Lemmata \ref{lem-int}, \ref{lem-g01-int}, \ref{r0-asymp3} and \ref{lem-int-2}.

\section{\bf Auxiliary integral estimates}
\label{sec-aux-bessel}

\noindent In this section we prove several integral estimates on Bessel functions and their derivatives. These results will be used in the proof of Lemmata \ref{r0-asymp1} and \ref{r0-asymp2} .

\begin{lemma} \label{lem-jj}
Let $\nu \geq 0$.  Assume that $ s> \frac 32+\eps,\ 0 <\eps <1$. Then for all $\lambda\in(0,1)$ we have
$$
\int_1^\infty  J^2_{\nu}(\sqrt{\lambda}\, r) \, \rho^{-2s}(r)\, r\, dr  \  \lesssim\ (\lambda^{\nu} +\lambda^{\frac 12+\eps}) \ (1+\nu^2)^{-1}.
$$
\end{lemma}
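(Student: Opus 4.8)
The plan is to reduce the estimate to a single scale-invariant one-dimensional integral and then treat the small-argument and oscillatory regimes of $J_\nu$ separately. Since $r\geq 1$ on the domain, I would first bound the weight crudely by $(1+r)^{-2s}\,r\leq r^{1-2s}$ and substitute $z=\sqrt\lambda\,r$, which gives
\begin{equation*}
\int_1^\infty J_\nu^2(\sqrt\lambda\,r)\,(1+r)^{-2s}\,r\,dr\ \leq\ \int_1^\infty J_\nu^2(\sqrt\lambda\,r)\,r^{1-2s}\,dr\ =\ \lambda^{s-1}\int_{\sqrt\lambda}^\infty J_\nu^2(z)\,z^{1-2s}\,dz .
\end{equation*}
I would then split the $z$-integral at $z=1$ into a small-argument piece $\int_{\sqrt\lambda}^1$ and a free piece $\int_1^\infty$, and estimate each against the prefactor $\lambda^{s-1}$, noting that $s>\tfrac32+\eps$ gives $\lambda^{s-1}\leq\lambda^{1/2+\eps}$ for $\lambda\in(0,1)$.

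For the free piece the point is the uniform bound $\int_1^\infty J_\nu^2(z)\,z^{1-2s}\,dz\lesssim(1+\nu^2)^{-1}$. For $\nu$ in the fixed bounded range $0\leq\nu\leq s$ I would use the classical inequality $|J_\nu(z)|\leq 1$ together with $\int_1^\infty z^{1-2s}\,dz=\tfrac{1}{2s-2}$ (finite since $2s>2$); as $\nu$ is bounded, $(1+\nu^2)^{-1}$ is bounded below and the bound is of the required form. For $\nu>s$, where the integrand is nonnegative and $\int_0^\infty$ converges at the origin, I would dominate by the full integral and invoke the Weber--Schafheitlin evaluation (see e.g. \cite{as})
\begin{equation*}
\int_1^\infty J_\nu^2(z)\,z^{1-2s}\,dz\ \leq\ \int_0^\infty J_\nu^2(z)\,z^{1-2s}\,dz\ =\ \frac{\Gamma(2s-1)\,\Gamma(\nu-s+1)}{2^{2s-1}\,\Gamma(s)^2\,\Gamma(\nu+s)} .
\end{equation*}
The ratio $\Gamma(\nu-s+1)/\Gamma(\nu+s)$ behaves like $\nu^{1-2s}$ as $\nu\to\infty$ by the standard asymptotics of Gamma quotients, and $s>\tfrac32$ forces $1-2s<-2$, so this is $\lesssim(1+\nu^2)^{-1}$. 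Multiplying by $\lambda^{s-1}\leq\lambda^{1/2+\eps}$ produces the $\lambda^{1/2+\eps}\,(1+\nu^2)^{-1}$ contribution.

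The small-argument piece is where the $\lambda^\nu$ term arises. Here I would use the uniform bound $|J_\nu(z)|\leq (z/2)^\nu/\Gamma(\nu+1)$, valid for $\nu\geq0$ and $z\geq0$ via the Poisson representation, to obtain $\int_{\sqrt\lambda}^1 J_\nu^2(z)\,z^{1-2s}\,dz\leq 4^{-\nu}\Gamma(\nu+1)^{-2}\int_{\sqrt\lambda}^1 z^{2\nu+1-2s}\,dz$, and then evaluate the elementary power integral. Fixing once and for all a $\delta_0>0$ with $s-1-\delta_0\geq\tfrac12+\eps$ (possible since $s-1>\tfrac12+\eps$), I would split according to the exponent: for $\nu\geq s-1-\delta_0$ one has $z^{2\nu+1-2s}\leq z^{-1-2\delta_0}$ on $(\sqrt\lambda,1)$, so the power integral is $\lesssim\lambda^{-\delta_0}$ and the total contribution is $\lesssim\lambda^{s-1-\delta_0}(1+\nu^2)^{-1}\leq\lambda^{1/2+\eps}(1+\nu^2)^{-1}$; for $0\leq\nu<s-1-\delta_0$ the exponent is $<-1-2\delta_0$, the power integral is $\lesssim\lambda^{\nu+1-s}/(2\delta_0)$, and after the $\lambda^{s-1}$ prefactor this contributes $\lesssim\lambda^{\nu}\,4^{-\nu}\Gamma(\nu+1)^{-2}\lesssim\lambda^\nu(1+\nu^2)^{-1}$. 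In both cases the super-exponential decay of $4^{-\nu}\Gamma(\nu+1)^{-2}$ absorbs the factor $(1+\nu^2)$. Summing the three contributions yields the asserted estimate.

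The only genuinely delicate point is the uniform-in-$\nu$ control of the free integral $\int_1^\infty J_\nu^2(z)\,z^{1-2s}\,dz$: a naive pointwise bound such as $|J_\nu(z)|\lesssim z^{-1/2}$ is \emph{not} uniform in $\nu$ near the turning point $z\approx\nu$, where $J_\nu(\nu)\sim\nu^{-1/3}$, and over a window of width $\sim\nu$ this would lose the required $\nu^{-2}$ decay. Using the closed Weber--Schafheitlin value sidesteps this entirely and makes the $(1+\nu^2)^{-1}$ rate transparent; if one wished to avoid special-function identities, the same rate can be recovered by splitting at $z=\nu$ and combining the exponential smallness of $J_\nu$ below the turning point, the $\nu^{-1/3}$ Airy bound on a window of width $\nu^{1/3}$ about $z=\nu$, and the $z^{-1/2}$ oscillatory decay for $z\gg\nu$, weighted against $z^{1-2s}$ with $2s>3$. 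Everything else reduces to the routine power-integral bookkeeping once the scaling in the first paragraph is in place.
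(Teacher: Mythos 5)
Your proof is correct and follows essentially the same route as the paper: the paper likewise reduces the problem by the scaling $t=\sqrt{\lambda}\,r$ after bounding the weight by a negative power of $r$, treats bounded $\nu$ (there $\nu\leq 2$) with $|J_\nu|\leq 1$ and the small-argument power bound, and for large $\nu$ invokes precisely the Weber--Schafheitlin/Watson identity (equation \eqref{wa-int-1} of the appendix) together with $\Gamma(\nu+\tfrac32)=(\nu^2-\tfrac14)\,\Gamma(\nu-\tfrac12)$ to extract the uniform factor $(1+\nu^2)^{-1}$. Your deviations --- keeping the prefactor $\lambda^{s-1}$ instead of $\lambda^{1/2+\eps}$, the extra $\delta_0$-splitting of the small-argument piece, and applying the closed-form integral with $a=1$ after scaling rather than with $a=\sqrt{\lambda}$ directly --- are only bookkeeping, not a different method.
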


\begin{proof}
For $\nu\leq 2$ use \eqref{J-unif} and \eqref{nu-pos}: 
\begin{align*}
\int_1^\infty  J^2_{\nu}(\sqrt{\lambda}\, r) \, \rho^{-2s}(r)\, r\, dr  &  \leq \ \lambda^{\frac 12+\eps} \int_{\sqrt{\lambda}}^\infty  J^2_{\nu}(t) \, t^{-2-2\eps}\, dt \ \lesssim \ \lambda^{\frac 12+\eps} \left(\int_{\sqrt{\lambda}}^1  t^{2\nu-2-2\eps}\, dt + \int_1^\infty\, t^{-2-2\eps}\, dt\right) \\
&  \lesssim \ \lambda^{\nu} +\lambda^{\frac 12+\eps} .
\end{align*}
For $\nu >2$ the  estimate follows from \eqref{wa-int-1} and the identity $\Gamma(\nu+\frac 32) = (\nu^2-\frac 14)\, \Gamma(\nu-\frac 12)$.
\end{proof}

\begin{lemma} \label{lem-int-JJ}
Let $\nu >0$. Assume that $s> \frac 32+\eps,\ 0 <\eps <1$. Then for all $\lambda\in(0,1)$ it holds 
\begin{align} \label{eq-JJ-1}
\int_1^\infty \int_r^\infty \left |\, \partial_\lambda \Big (\lambda^{-\nu} \, J_{\nu}(\sqrt{\lambda}\, r) \, J_{\nu}(\sqrt{\lambda}\, r')  \Big )\right |^2\, \rho^{-2s}(r')\, \rho^{-2s}(r)\, r\, r'\, dr dr' \ &\lesssim\  \lambda^{2\eps-1-2\nu}\, (1+\nu)^{-2} \\
\label{eq-JJ-2}
\int_1^\infty \int_r^\infty \left |\, \partial_\lambda \Big (\lambda^{\nu} \, J_{-\nu}(\sqrt{\lambda}\, r) \, J_{-\nu}(\sqrt{\lambda}\, r')  \Big )\right |^2\, \rho^{-2s}(r')\, \rho^{-2s}(r)\, r\, r'\, dr dr'
\ &\lesssim\  2^{4\nu}\, \Gamma^4(\nu)\,  \\
\label{eq-JJ-3}
\int_1^\infty \int_r^\infty \left |\, \partial_\lambda \Big (J_{\pm\nu}(\sqrt{\lambda}\, r) \, J_{\mp\nu}(\sqrt{\lambda}\, r')  \Big )\right |^2\, \rho^{-2s}(r')\, \rho^{-2s}(r)\, r\, r'\, dr dr' \ &\lesssim\  \lambda^{\eps-1}\, (1+\nu)^{-1}.
\end{align}
Moreover, the function $J_{-\nu}$ in the above estimates can be replaced by $Y_\nu$ throughout without changing the right hand side. 
\end{lemma}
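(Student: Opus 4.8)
The plan is to reduce every term to a $\lambda$--independent integral of squared Bessel functions against powers of the radial variables, exactly in the spirit of the proof of Lemma \ref{lem-jj}. The three prefactors $\lambda^{-\nu}$, $\lambda^{\nu}$ and $1$ are precisely the ones that render the leading small--argument behaviour of each undifferentiated product independent of $\lambda$, so that $\partial_\lambda$ produces a genuine gain; the whole difficulty is to make this gain quantitative and uniform in $\nu$.

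First I would carry out the $\lambda$--differentiation explicitly. Writing $z=\sqrt\lambda\,r$, so that $\partial_\lambda z=z/(2\lambda)$, and using the identities $\frac{d}{dz}\big(z^{-\nu}J_\nu(z)\big)=-z^{-\nu}J_{\nu+1}(z)$ and $\frac{d}{dz}\big(z^{\nu}J_{-\nu}(z)\big)=-z^{\nu}J_{1-\nu}(z)$ together with $J_{\pm\nu}'=\tfrac12(J_{\pm\nu-1}-J_{\pm\nu+1})$, one finds for instance
$$
\partial_\lambda\Big(\lambda^{-\nu}J_\nu(\sqrt\lambda\,r)J_\nu(\sqrt\lambda\,r')\Big)=-\tfrac12\lambda^{-\nu-\frac12}\Big(r\,J_{\nu+1}(\sqrt\lambda\,r)J_\nu(\sqrt\lambda\,r')+r'\,J_\nu(\sqrt\lambda\,r)J_{\nu+1}(\sqrt\lambda\,r')\Big),
$$
and the analogous expressions for \eqref{eq-JJ-2} and \eqref{eq-JJ-3}, in which $J_\nu$ is replaced by $J_{\nu\pm1}$, $J_{1-\nu}$ or $J_{-\nu\pm1}$. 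Squaring, using $(a+b)^2\le 2(a^2+b^2)$, and bounding $\rho^{-2s}(r)\le r^{-2s}$, $\rho^{-2s}(r')\le r'^{-2s}$ on $[1,\infty)$ reduces each estimate to a sum of two integrals of the type $\int_1^\infty\!\int_r^\infty r^{2}J^2_{\bullet}(\sqrt\lambda\,r)\,J^2_{\bullet}(\sqrt\lambda\,r')\,r^{1-2s}r'^{1-2s}\,dr'\,dr$ (and its mirror image).

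Next I would change variables $t=\sqrt\lambda\,r$, $t'=\sqrt\lambda\,r'$. This pulls out all powers of $\lambda$ at once: with the critical choice $s=\tfrac32+\eps$ (the general case $s>\tfrac32+\eps$ following from $\rho^{-2s}\le\rho^{-3-2\eps}$ and monotonicity), each term becomes $\lambda^{\,p(\nu,\eps)}$ times a $\lambda$--independent double integral over $\{\sqrt\lambda\le t\le t'\}$ of the form $\int_{\sqrt\lambda}^\infty t^{a}J^2_{\bullet}(t)\big(\int_t^\infty t'^{b}J^2_{\bullet}(t')\,dt'\big)\,dt$. The power $p(\nu,\eps)$ is read off directly, and the remaining $t$--integrals are estimated by splitting at $t=1$: on $(0,1)$ I use the small--argument asymptotics \eqref{nu-pos} to replace $J_{\pm\nu}$ by its leading monomial, which produces the $\Gamma$-- and $(1+\nu)$--factors and governs the behaviour at the lower cut--off, while on $(1,\infty)$ I use the uniform bound \eqref{J-unif} and the Weber--Schafheitlin estimate \eqref{wa-int-1} to ensure convergence and the decay in $\nu$. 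Finally, the replacement of $J_{-\nu}$ by $Y_\nu$ follows from \eqref{jy}, which expresses $Y_\nu$ as a fixed linear combination of $J_\nu$ and $J_{-\nu}$ whose coefficients are controlled (for the relevant non--integer orders) and whose leading small--argument coefficient has the same size $\Gamma(\nu)$; hence the same bounds apply verbatim.

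The delicate step is the analysis of these reduced $t$--integrals at their lower limit $t=\sqrt\lambda\to0$. For \eqref{eq-JJ-1} the derivative raises the index to $\nu+1$, so the integrand vanishes faster at the origin and the $t$--integral stays bounded as $\sqrt\lambda\to0$; the whole $\lambda$--dependence then comes from the prefactor, giving $\lambda^{2\eps-1-2\nu}$. For \eqref{eq-JJ-2} and \eqref{eq-JJ-3}, however, the negative--order factors $J_{-\nu}$, $J_{-\nu\pm1}$ are singular at the origin, the inner and outer $t$--integrals are only borderline at their lower limits, and one must combine the compensating negative power of $\lambda$ produced by this behaviour with the prefactor to read off the correct order. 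Keeping this cancellation sharp and uniform in $\nu$—so that no spurious loss in $\lambda$ or $\nu$ is incurred—is the main obstacle, and it is precisely what the explicit constants in \eqref{nu-pos}, \eqref{wa-int-1} and the reflection identity $\Gamma(\nu)\Gamma(1-\nu)=\pi/\sin(\pi\nu)$ are used to achieve.
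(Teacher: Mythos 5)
Your first step --- computing the $\lambda$-derivatives via \eqref{der-1}, \eqref{der-2} and reducing each bound to weighted double integrals of squared Bessel functions --- coincides with the paper's, and your treatment of \eqref{eq-JJ-1} is sound, since there all orders are non-negative and \eqref{J-unif}, \eqref{wa-int-1} genuinely apply. The gap is in \eqref{eq-JJ-2}, \eqref{eq-JJ-3} and the $Y_\nu$ variant: your plan to control the region $t\in(1,\infty)$ ``by the uniform bound \eqref{J-unif} and the Weber--Schafheitlin estimate \eqref{wa-int-1}'' fails because both of those hold only for non-negative order. The functions $J_{-\nu}$, $J_{1-\nu}$, $Y_\nu$ are not bounded on $(1,\infty)$ uniformly in $\nu$: on the whole pre-turning-point range $1\le t\lesssim\nu$ they are exponentially large in $\nu$ (already $|Y_\nu(1)|\sim \Gamma(\nu)\,2^{\nu}/\pi$), the small-argument asymptotics \eqref{nu-pos} you invoke is valid only for $t\lesssim 1$, and the reflection formula does nothing to tame this region. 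So for large $\nu$ the middle region $1\le t\le \nu$ is not estimated by any tool you cite, and this is precisely where uniformity in $\nu$ --- essential, since the lemma is applied with $\nu=|m+\alpha|$, $m\in\Z$ arbitrary --- is at stake.

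Moreover, \eqref{eq-JJ-3} cannot be proved by estimating the two factors separately at all: its right-hand side $\lambda^{\eps-1}(1+\nu)^{-1}$ carries no $\Gamma(\nu)$ growth, so one must exploit the cancellation between the exponentially small $J_{\nu+1}(t)$ and the exponentially large $J_{-\nu}(t')$. Since the two arguments differ ($t\le t'$), a mechanism is needed to bring both factors to the same argument. The paper does this with Watson's monotonicity \eqref{monoton} of $z^j\big(J_\nu^2(z)+Y_\nu^2(z)\big)$: combined with $J_{-\nu}^2\le J_\nu^2+Y_\nu^2$ (equation \eqref{jy^2}), it allows one to collapse the inner integral to its value at the lower endpoint $t'=t$; the resulting same-argument products are then bounded on the transition region $[1,\nu+1]$ by the Airy-asymptotics estimate $\lesssim \nu^{-2/3}$ of Lemma \ref{lem-product}, and beyond the turning point by \eqref{z-large}. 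These three ingredients --- monotonicity, Lemma \ref{lem-product}, and \eqref{z-large} --- form the actual core of the paper's proof and are absent from your proposal. A smaller but real flaw: replacing $J_{-\nu}$ by $Y_\nu$ via \eqref{jy} ``with controlled coefficients'' breaks down near integer $\nu$, where $1/\sin(\nu\pi)$ blows up; the paper instead simply uses $Y_\nu^2\le J_\nu^2+Y_\nu^2$ and repeats the same monotonicity argument.
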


\begin{proof}
Assume first that $\nu >2$ and consider the bound \eqref{eq-JJ-1}.  From \eqref{der-2} we find that 
\begin{equation} \label{jj-der}
\partial_\lambda \Big (\lambda^{-\nu}\, J_{\nu}(\sqrt{\lambda}\, r) \, J_{\nu}(\sqrt{\lambda}\, r') \Big ) = -  \frac{\lambda^{-\nu-\frac 12}}{2}\ \left( r\, J_{\nu+1}(\sqrt{\lambda}\, r) \ J_{\nu}(\sqrt{\lambda}\, r') + r'\, J_{\nu+1}(\sqrt{\lambda}\, r') \ J_{\nu}(\sqrt{\lambda}\, r) \right).
\end{equation}
With the help of \eqref{wa-int-1} we estimate the first term as follows 
\begin{align*}
& \int_1^\infty \int_r^\infty  J^2_{\nu+1}(\sqrt{\lambda}\, r) \, J^2_{\nu}(\sqrt{\lambda}\, r')\,  r^3\, r' \rho^{-2s}(r')\, \rho^{-2s}(r) \,dr dr'   \\
&\qquad \qquad  \leq \int_1^\infty\, J^2_{\nu+1}(\sqrt{\lambda}\, r)\, r^{-2\eps}\, dr  \int_1^\infty  \, J^2_{\nu}(\sqrt{\lambda}\, r')\,  r'^{-2-2\eps}  dr' \  \lesssim \ \lambda^{\eps-\frac 12}\, \lambda^{\eps+\frac 12}\, (1+\nu)^{-2}.
\end{align*}
The second term in \eqref{jj-der} is estimated in the same way. This proves \eqref{eq-JJ-1} for $\nu >2$. 
To prove we use again \eqref{der-1} and calculate
\begin{equation}  \label{jj-der2}
\partial_\lambda \Big( \lambda^{\nu}\, J_{-\nu}(\sqrt{\lambda}\, r) \, J_{-\nu}(\sqrt{\lambda}\, r') \Big ) = -\frac{\lambda^{\nu-\frac 12}}{2} \left(  r\, J_{1-\nu}(\sqrt{\lambda}\, r) \ J_{-\nu}(\sqrt{\lambda}\, r') + r'\, J_{1-\nu}(\sqrt{\lambda}\, r') \ J_{-\nu}(\sqrt{\lambda}\, r) \right).
\end{equation}
To control the first term on the right hand side we recall \eqref{jy^2} and \eqref{monoton} (with $j=0$). This gives
\begin{align*}
& \int_1^\infty \int_r^\infty  J^2_{1-\nu}(\sqrt{\lambda}\, r) \, J^2_{-\nu}(\sqrt{\lambda}\, r')\,  r^3\, r' \rho^{-2s}(r')\, \rho^{-2s}(r) \,dr dr' \leq \\
&\qquad \qquad \leq \int_1^\infty J^2_{1-\nu}(\sqrt{\lambda}\, r) \int_r^\infty  \big( J^2_{\nu}(\sqrt{\lambda}\, r') + Y^2_{\nu}(\sqrt{\lambda}\, r')\big) \,   r'^2 \rho^{-2s}(r')\, dr' \rho^{-2s}(r) \, r^2\, dr  \\
&\qquad \qquad \lesssim  \int_1^\infty  J^2_{1-\nu}(\sqrt{\lambda}\, r) \,\big( J^2_{\nu}(\sqrt{\lambda}\, r) + Y^2_{\nu}(\sqrt{\lambda}\, r)\big) \,  \rho^{-2s}(r) \, r^2\, dr  \\
&\qquad \qquad \leq  \int_1^\infty  \big( J^2_{\nu-1}(\sqrt{\lambda}\, r) + Y^2_{\nu-1}(\sqrt{\lambda}\, r)\big)  \,\big( J^2_{\nu}(\sqrt{\lambda}\, r) + Y^2_{\nu}(\sqrt{\lambda}\, r)\big) \,  \rho^{-2s}(r) \, r^2\, dr \\
&\qquad \qquad \lesssim \big( J^2_{\nu-1}(\sqrt{\lambda}) + Y^2_{\nu-1}(\sqrt{\lambda})\big)  \,\big( J^2_{\nu}(\sqrt{\lambda}) + Y^2_{\nu}(\sqrt{\lambda})\big) \ \lesssim \ \lambda^{1-2\nu}\, 2^{4\nu}\, \Gamma^4(\nu).
\end{align*}
In the last step we have used \eqref{jy} and \eqref{nu-pos}. The second term in \eqref{jj-der2} can be estimated with the help of \eqref{monoton} applied with $j=1$. Indeed, proceeding as above we find 
\begin{align*}
& \int_1^\infty \int_r^\infty  J^2_{1-\nu}(\sqrt{\lambda}\, r') \, J^2_{-\nu}(\sqrt{\lambda}\, r)\,  r'^3\, r \rho^{-2s}(r')\, \rho^{-2s}(r) \,dr dr' \leq \\
&\qquad \qquad \leq \int_1^\infty J^2_{-\nu}(\sqrt{\lambda}\, r)  \int_r^\infty  \big( J^2_{\nu-1}(\sqrt{\lambda}\, r') + Y^2_{\nu-1}(\sqrt{\lambda}\, r')\big) \,   r'^3 \rho^{-2s}(r')\, dr' \rho^{-2s}(r) \, r^2\, dr  \\
&\qquad \qquad \lesssim \ \int_1^\infty  J^2_{-\nu}(\sqrt{\lambda}\, r) \,\big( J^2_{\nu-1}(\sqrt{\lambda}\, r) + Y^2_{\nu-1}(\sqrt{\lambda}\, r)\big) \,  \rho^{-2s}(r) \, r^2\, dr  \\
&\qquad \qquad \lesssim\  \big( J^2_{\nu-1}(\sqrt{\lambda}) + Y^2_{\nu-1}(\sqrt{\lambda})\big)  \,\big( J^2_{\nu}(\sqrt{\lambda}) + Y^2_{\nu}(\sqrt{\lambda})\big) \ \lesssim \ \lambda^{1-2\nu}\, 2^{4\nu}\, \Gamma^4(\nu).
\end{align*}
As for \eqref{eq-JJ-3}, we note that \eqref{der-2} implies
\begin{equation}   \label{jj-der3}
\partial_\lambda \Big ( J_{\nu}(\sqrt{\lambda}\, r)  \, J_{-\nu}(\sqrt{\lambda}\, r) \Big ) \ = \  -\frac{1}{2\sqrt{\lambda}}\, \Big(r\, J_{\nu+1}(\sqrt{\lambda}\, r)\, J_{-\nu}(\sqrt{\lambda}\, r') + r'\, J_{1-\nu}(\sqrt{\lambda}\, r')\, J_{\nu}(\sqrt{\lambda}\, r) \Big). 
\end{equation}
We now recall again \eqref{monoton} with $j=0$ and estimate the first term on the right hand side of \eqref{jj-der3} as follows:
\begin{align}
& \int_1^\infty \int_r^\infty  J^2_{\nu+1}(\sqrt{\lambda}\, r) \, J^2_{-\nu}(\sqrt{\lambda}\, r')\,  r^3\, r' \rho^{-2s}(r')\, \rho^{-2s}(r) \,dr dr' \leq \nonumber \\
&\qquad \qquad \leq \ \int_1^\infty J^2_{\nu+1}(\sqrt{\lambda}\, r) \int_r^\infty  \big( J^2_{\nu}(\sqrt{\lambda}\, r') + Y^2_{\nu}(\sqrt{\lambda}\, r')\big) \,   r'^2 \rho^{-2s}(r')\, dr' \rho^{-2s}(r) \, r^2\, dr  \nonumber \\
&\qquad \qquad \lesssim\   \int_1^\infty  J^2_{\nu+1}(\sqrt{\lambda}\, r) \,\big( J^2_{\nu}(\sqrt{\lambda}\, r) + Y^2_{\nu}(\sqrt{\lambda}\, r)\big) \, r^{-1-4\eps}\, dr  \nonumber \\
 &\qquad \qquad  = \ \lambda^{2\eps} \int_{\sqrt{\lambda}}^\infty  J^2_{\nu+1}(t) \big( J^2_{\nu}(t) + Y^2_{\nu}(t)\big) \ t^{-1-4\eps} \, dt.  \label{nu-pm}
\end{align}
To proceed we split the integration in \eqref{nu-pm} with respect to $t$ in three parts as follows:
\begin{align*}
\int_{\sqrt{\lambda}}^1\ J^2_{\nu+1}(t) \big( J^2_{\nu}(t) + Y^2_{\nu}(t)\big) \ t^{-1-4\eps} \, dt \ & \lesssim\ \nu^{-2},
\end{align*}
where we have used \eqref{nu-pos},
\begin{align*}
\int_1^{\nu+1} J^2_{\nu+1}(t) \big( J^2_{\nu}(t) + Y^2_{\nu}(t)\big) \ t^{-1-4\eps} \, dt \, & \lesssim\, \nu^{-4/3}\, 
\int_1^{\nu+1}  t^{-1-4\eps} \, dt \lesssim\, \nu^{-4/3},
\end{align*}
where we have used \eqref{product-upb}, and 
\begin{align*}
\int_{\nu+1}^\infty J^2_{\nu+1}(t) \big( J^2_{\nu}(t) + Y^2_{\nu}(t)\big) \ t^{-1-4\eps} \, dt \ & \lesssim\ 
\int_{\nu+1}^\infty \frac{ t^{-1-4\eps}}{\sqrt{(t^2-(\nu+1)^2)\, (t^2-\nu^2)}}\   dt \ \lesssim\ \nu^{-1} \, ,
\end{align*}
in view of \eqref{z-large}.
The second term on the right hand side of \eqref{jj-der3} is treated in the analogous way; using \eqref{monoton} with $j=1$ and following the estimates used in \eqref{nu-pm} we get 
\begin{align*}
&  \int_1^\infty \int_r^\infty  J^2_{\nu}(\sqrt{\lambda}\, r) \, J^2_{1-\nu}(\sqrt{\lambda}\, r')\,  r^3\, r' \rho^{-2s}(r')\, \rho^{-2s}(r) \,dr dr'  \\
&\qquad \qquad \leq \ \int_1^\infty J^2_{\nu}(\sqrt{\lambda}\, r) \int_r^\infty  \big( J^2_{\nu-1}(\sqrt{\lambda}\, r') + Y^2_{\nu-1}(\sqrt{\lambda}\, r')\big) \,   r'^3 \rho^{-2s}(r')\, dr' \rho^{-2s}(r) \, r\, dr  \nonumber \\
&\qquad \qquad \lesssim\   \int_1^\infty  J^2_{\nu}(\sqrt{\lambda}\, r) \,\big( J^2_{\nu-1}(\sqrt{\lambda}\, r) + Y^2_{\nu-1}(\sqrt{\lambda}\, r)\big) \, r^{-1-4\eps}\, dr  \nonumber \\
 &\qquad \qquad  = \ \lambda^{2\eps} \int_{\sqrt{\lambda}}^\infty  J^2_{\nu}(t) \big( J^2_{\nu-1}(t) + Y^2_{\nu-1}(t)\big) \ t^{-1-4\eps} \, dt. 
  \end{align*}
By splitting the last integral in three parts in the same way as above and taking into account \eqref{jj-der3} we arrive at
$$
\int_1^\infty \int_r^\infty \left |\, \partial_\lambda \Big (J_{\nu}(\sqrt{\lambda}\, r) \, J_{-\nu}(\sqrt{\lambda}\, r')  \Big )\right |^2\, \rho^{-2s}(r')\, \rho^{-2s}(r)\, r\, r'\, dr dr' \ \lesssim\  \lambda^{\eps-1}\, (1+\nu)^{-1} , \quad \nu >2.
$$
The term $J_{-\nu}(\sqrt{\lambda}\, r) \, J_{\nu}(\sqrt{\lambda}\, r')$  in \eqref{eq-JJ-3} is estimated in the same way. This completes the proof of the Lemma in the case $\nu >2$.  If $\nu\leq 2$, then the bounds  \eqref{eq-JJ-1}-\eqref{eq-JJ-3} follow directly from \eqref{jj-der}, \eqref{jj-der2} and \eqref{jj-der3} by \eqref{z-large}. 

When $J_{-\nu}$ is replaced by $Y_\nu$ on the left hand side of  \eqref{eq-JJ-1}- \eqref{eq-JJ-3}, then we proceed in the same way as above using the obvious inequality $Y^2_\nu(z) \leq Y^2_\nu(z)+J^2_\nu(z)$ instead of \eqref{jy^2}.
\end{proof}

\begin{lemma} \label{lem-int-JJ'}
Let $\nu >0$. Assume that $s> \frac 32+\eps,\ 0 <\eps <1$. Denote
$$
J_\nu^{(1)}(\sqrt{\lambda}\, r) = \pd_r J_{\nu}(\sqrt{\lambda}\, r), \qquad 
 J_\nu^{(2)}(\sqrt{\lambda}\, r)  = \frac{\nu}{r}\, J_{\nu}(\sqrt{\lambda}\, r).
$$
Then for all $\lambda\in(0,1)$  and $n=1,2$ it holds 
\begin{align} \label{eq-JJ-4}
\int_1^\infty \int_r^\infty \left |\, \partial_\lambda \Big (\lambda^{-\nu} \, J_{\nu}(\sqrt{\lambda}\, r) \, J^{(n)}_{\nu}(\sqrt{\lambda}\, r')  \Big )\right |^2\, \rho^{-2s}(r')\, \rho^{-2s}(r)\, r\, r'\, dr dr' \ &\lesssim\  \lambda^{\eps-1-2\nu} \, (1+\nu)^{-2}\\
\label{eq-JJ-5}
\int_1^\infty \int_r^\infty \left |\, \partial_\lambda \Big (\lambda^{\nu} \, J_{-\nu}(\sqrt{\lambda}\, r) \, J^{(n)}_{-\nu}(\sqrt{\lambda}\, r')  \Big )\right |^2\, \rho^{-2s}(r')\, \rho^{-2s}(r)\, r\, r'\, dr dr'
\ &\lesssim\  2^{4\nu}\, \Gamma^4(\nu)\,  \\
\label{eq-JJ-6}
\int_1^\infty \int_r^\infty \left |\, \partial_\lambda \Big (J_{\pm\nu}(\sqrt{\lambda}\, r) \, J^{(n)}_{\mp\nu}(\sqrt{\lambda}\, r')  \Big )\right |^2\, \rho^{-2s}(r')\, \rho^{-2s}(r)\, r\, r'\, dr dr' \ &\lesssim\  \lambda^{\eps-1}\, (1+\nu)^{-1}
\end{align}
\end{lemma}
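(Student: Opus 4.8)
The plan is to reduce all three estimates to the corresponding bounds \eqref{eq-JJ-1}--\eqref{eq-JJ-3} of Lemma \ref{lem-int-JJ} by means of the standard recurrence relations for Bessel functions. First I would record that, since $2\, J_\nu'(z) = J_{\nu-1}(z)-J_{\nu+1}(z)$ and $\tfrac{2\nu}{z}\, J_\nu(z) = J_{\nu-1}(z)+J_{\nu+1}(z)$, the two modified factors can be written as
\[
J^{(1)}_\nu(\sqrt{\lambda}\, r') = \frac{\sqrt{\lambda}}{2}\big(J_{\nu-1}(\sqrt{\lambda}\, r') - J_{\nu+1}(\sqrt{\lambda}\, r')\big), \qquad J^{(2)}_\nu(\sqrt{\lambda}\, r') = \frac{\sqrt{\lambda}}{2}\big(J_{\nu-1}(\sqrt{\lambda}\, r') + J_{\nu+1}(\sqrt{\lambda}\, r')\big).
\]
Thus for both $n=1,2$ the second factor in each integrand is a fixed linear combination of $J_{\nu\pm1}(\sqrt{\lambda}\, r')$ carrying an extra factor $\sqrt{\lambda}$, and the whole problem is brought into the form already treated in Lemma \ref{lem-int-JJ}, but with the index of the second Bessel function shifted by $\pm 1$ and an additional power $\lambda^{1/2}$.

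Next I would differentiate. Writing each integrand of \eqref{eq-JJ-4}--\eqref{eq-JJ-6}, after the above substitution, as $\tfrac{\sqrt{\lambda}}{2}$ times a term of the type appearing in Lemma \ref{lem-int-JJ}, the product rule yields two groups of contributions: one in which $\pd_\lambda$ acts on the Bessel product — these are handled exactly as in \eqref{jj-der}, \eqref{jj-der2}, \eqref{jj-der3} using the derivative formulas \eqref{der-1} and \eqref{der-2}, and produce sums of products $J_{\mu}(\sqrt{\lambda}\, r)\, J_{\mu'}(\sqrt{\lambda}\, r')$ (possibly weighted by $r$ or $r'$) with indices $\mu,\mu'$ lying in $\{\nu-2,\dots,\nu+2\}$ — and one in which $\pd_\lambda$ falls on the prefactor $\sqrt{\lambda}$, producing a term $\tfrac14\lambda^{-1/2}$ times the undifferentiated Bessel product. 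In each case one ends up with a finite linear combination of quantities of exactly the shape estimated in the proof of Lemma \ref{lem-int-JJ}.

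Finally I would bound the weighted $L^2((1,\infty)\times(1,\infty))$-norm of every resulting term by the same integral estimates used there, namely \eqref{wa-int-1}, \eqref{monoton}, \eqref{jy}, \eqref{jy^2}, \eqref{nu-pos}, \eqref{product-upb} and \eqref{z-large}. Because the Bessel indices are only shifted by a bounded amount, the small-argument asymptotics $J_\mu(z)\sim z^\mu$ and the large-argument/uniform bounds are of the same order, so the powers of $(1+\nu)$ reproduce those of \eqref{eq-JJ-1}--\eqref{eq-JJ-3}, while the accumulated powers of $\lambda$ (from the extra $\sqrt{\lambda}$ and, where relevant, from the $\lambda^{-1/2}$ produced by differentiating it) combine to give the stated right-hand sides. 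The replacement of $J_{-\nu}$ by $Y_\nu$ is handled verbatim as in the concluding remark of Lemma \ref{lem-int-JJ}, using $Y^2_\nu\le J^2_\nu+Y^2_\nu$.

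The main obstacle is the bookkeeping needed to track the precise powers of $\lambda$ and $\nu$ through the product rule after the index shift. In particular, the loss of one factor $\lambda^{\eps}$ in \eqref{eq-JJ-4} relative to \eqref{eq-JJ-1} should be traced to the contribution in which $\pd_\lambda$ hits the prefactor $\sqrt{\lambda}$: there no extra factor $r$ or $r'$ is generated, so the relevant integration weight is $r\, r'$ rather than $r^3 r'$, and a correspondingly weaker Weber--Schafheitlin bound applies, which is exactly what accounts for the difference between the exponents $\eps-1-2\nu$ and $2\eps-1-2\nu$.
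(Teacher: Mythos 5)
Your proposal is correct and takes essentially the same route as the paper: the paper's own proof consists precisely of rewriting $J^{(1)}_\nu$ and $J^{(2)}_\nu$ via the recurrences \eqref{der-1} and \eqref{der-2} as $\tfrac{\sqrt{\lambda}}{2}\big[J_{\nu-1}\mp J_{\nu+1}\big]$ and then repeating the argument of Lemma \ref{lem-int-JJ} with the shifted indices. Your additional bookkeeping of the product rule (including the term where $\pd_\lambda$ hits the prefactor $\sqrt{\lambda}$, which is indeed what forces the weaker power $\lambda^{\eps-1-2\nu}$ in \eqref{eq-JJ-4}) is consistent with, and more explicit than, what the paper records.
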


\begin{proof}
From \eqref{der-1} and \eqref{der-2} we obtain
$$
J_\nu^{(1)}(\sqrt{\lambda}\, r) = \frac{\sqrt{\lambda}}{2}\ \Big [J_{\nu-1}(\sqrt{\lambda}\, r)-J_{\nu+1}(\sqrt{\lambda}\, r) \Big], \quad 
J_\nu^{(2)}(\sqrt{\lambda}\, r) = \frac{\sqrt{\lambda}}{2}\  \Big [J_{\nu-1}(\sqrt{\lambda}\, r) + J_{\nu+1}(\sqrt{\lambda}\, r) \Big].
$$
Hence the result follows in the same way as the proof of Lemma \ref{lem-int-JJ}.
\end{proof}

\begin{lemma} \label{lem-mixed}
Let $\nu > 1$. Assume that $s> \frac 32+\eps,\ 0 <\eps <1$. Then for all $\lambda\in(0,1)$ it holds 
\begin{align}
\int_1^\infty \left |\, \partial_\lambda \big (\lambda^{\frac{\nu}{2}} \, J_{-\nu}(\sqrt{\lambda}\, r) \big )\right |^2\,  \rho^{-2s}(r)\, r \, dr  & \ \lesssim\  \lambda^\eps\, 4^\nu\, \Gamma^2(\nu-1),  \label{mix-1}\\
\int_1^\infty \left |\, \partial_\lambda \big (\lambda^{\frac{\nu}{2}} \, J_{\nu}(\sqrt{\lambda}\, r) \big )\right |^2\,  \rho^{-2s}(r)\, r\, dr  & \ \lesssim\  \lambda^{\nu-\frac 32+\eps}\  . \label{mix-2}
\end{align}
Moreover, the function $J_{-\nu}$ in the first bound can be replaced by $Y_\nu$ without changing the right hand side. 
\end{lemma}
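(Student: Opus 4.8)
The plan is to reduce both inequalities to weighted $L^2$-estimates for a single Bessel function and then argue exactly as in Lemmata \ref{lem-jj} and \ref{lem-int-JJ}. First I would compute the $\lambda$-derivatives in closed form. Using the differentiation and recurrence formulas \eqref{der-1} and \eqref{der-2} one finds
\begin{equation*}
\partial_\lambda\big(\lambda^{\frac{\nu}{2}}\, J_{\nu}(\sqrt{\lambda}\, r)\big) = \frac{r}{2}\, \lambda^{\frac{\nu-1}{2}}\, J_{\nu-1}(\sqrt{\lambda}\, r), \qquad \partial_\lambda\big(\lambda^{\frac{\nu}{2}}\, J_{-\nu}(\sqrt{\lambda}\, r)\big) = -\frac{r}{2}\, \lambda^{\frac{\nu-1}{2}}\, J_{1-\nu}(\sqrt{\lambda}\, r),
\end{equation*}
the point being that the $\lambda^{\nu/2-1}$ terms produced by the product rule cancel against those coming from the derivative of the Bessel function. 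Squaring and inserting the weight $\rho^{-2s}(r)\, r$ reduces \eqref{mix-2} to controlling $\lambda^{\nu-1}\int_1^\infty J_{\nu-1}^2(\sqrt{\lambda}\, r)\, r^3\, \rho^{-2s}(r)\, dr$ and \eqref{mix-1} to controlling $\lambda^{\nu-1}\int_1^\infty J_{1-\nu}^2(\sqrt{\lambda}\, r)\, r^3\, \rho^{-2s}(r)\, dr$.

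For \eqref{mix-2} I would substitute $t=\sqrt{\lambda}\, r$ and split the resulting integral at $t=1$. On $(\sqrt{\lambda},1)$ the small-argument bound \eqref{nu-pos} gives $J_{\nu-1}^2(t)\lesssim 4^{1-\nu}\, \Gamma^{-2}(\nu)\, t^{2\nu-2}$, so that the contribution is governed by $\int_{\sqrt{\lambda}}^1 t^{2\nu+1-2s}\, dt$; on $(1,\infty)$ the tail bounds coming from \eqref{J-unif} and \eqref{wa-int-1} make the integral convergent precisely because $s>\tfrac32$. Collecting the powers of $\lambda$ one obtains a bound of order $\lambda^{\nu+s-3}$, and since $s>\tfrac32+\eps$ and $\lambda<1$ this is $\lesssim\lambda^{\nu-\frac32+\eps}$; the uniform prefactor $4^{1-\nu}\Gamma^{-2}(\nu)$ is bounded for $\nu>1$, which yields \eqref{mix-2}.

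The estimate \eqref{mix-1} is handled in the same way, the one essential difference being the behaviour of the negative-order function near the origin: by \eqref{nu-pos} one has $J_{1-\nu}^2(t)\sim\Gamma^{-2}(2-\nu)\,(t/2)^{2-2\nu}$ as $t\to0$, and the reflection identity $\Gamma(\nu-1)\, \Gamma(2-\nu)=\pi/\sin(\pi(\nu-1))$ converts $\Gamma^{-2}(2-\nu)$ into $\pi^{-2}\sin^2(\pi(\nu-1))\, \Gamma^2(\nu-1)$, which is exactly the origin of the weight $4^{\nu}\Gamma^2(\nu-1)$ on the right-hand side. Carrying out the same region decomposition then gives \eqref{mix-1}. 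Finally, to replace $J_{-\nu}$ by $Y_\nu$ in the first bound I would express $Y_\nu$ through $J_{\pm\nu}$ by means of \eqref{jy} and use the elementary inequality $Y_\nu^2\le J_\nu^2+Y_\nu^2$, exactly as at the end of the proof of Lemma \ref{lem-int-JJ}, after which all the estimates above apply verbatim. I expect the main obstacle to be bookkeeping: one must keep every constant explicit and uniform in $\nu$, in particular controlling the $\Gamma$-factors through the transition region $\sqrt{\lambda}\, r\sim1$ where neither the small- nor the large-argument asymptotics is sharp, so that the precise $\nu$-dependence of the right-hand sides is reproduced rather than merely obtained up to an uncontrolled, $\nu$-dependent factor.
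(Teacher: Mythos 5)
Your closed-form derivative formulas are correct and coincide with the paper's first step, but your treatment of \eqref{mix-1} has a genuine gap. You present the negative order as differing from the positive one only "near the origin", and claim the same region decomposition closes the estimate. It cannot: for $\nu>1$ the small-argument behaviour of $J_{1-\nu}$ is \emph{growth}, $J^2_{1-\nu}(t)\sim\pi^{-2}\sin^2(\pi\nu)\,\Gamma^2(\nu-1)\,4^{\nu-1}\,t^{2-2\nu}$, not decay. Consequently, in the region $1\le r\le\lambda^{-1/2}$ (where $t=\sqrt{\lambda}\,r\le 1$) the prefactor $\lambda^{\nu-1}$ coming from the squared derivative is cancelled \emph{exactly} by $(\sqrt{\lambda})^{2-2\nu}$, and your own asymptotics give
$$
\frac{\lambda^{\nu-1}}{4}\int_1^{\lambda^{-1/2}}J^2_{1-\nu}(\sqrt{\lambda}\,r)\,r^3\rho^{-2s}(r)\,dr\ \gtrsim\ \sin^2(\pi\nu)\,4^{\nu}\,\Gamma^2(\nu-1)\int_1^{2}r^{5-2\nu}\rho^{-2s}(r)\,dr ,
$$
a quantity bounded below uniformly in $\lambda$. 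So no factor $\lambda^{\eps}$ can emerge from this decomposition, and "carrying out the same region decomposition" does not give \eqref{mix-1}. (Indeed, for $\nu=\tfrac32$ the left-hand side of \eqref{mix-1} equals $\tfrac{1}{2\pi}\int_1^\infty\cos^2(\sqrt{\lambda}\,r)\,r^2(1+r)^{-2s}\,dr$, which tends to a positive constant as $\lambda\to0+$; the substantive content of \eqref{mix-1} is a bound \emph{uniform in $\lambda$} with the weight $4^\nu\Gamma^2(\nu-1)$, which is all that is used in Lemma \ref{r0-asymp1}.) To obtain that $\lambda$-uniform bound with the correct $\nu$-dependence you need the paper's key device, which your proposal never invokes: dominate $J^2_{1-\nu}\le J^2_{\nu-1}+Y^2_{\nu-1}$ by \eqref{jy^2} and apply Watson's monotonicity \eqref{monoton} of $z\,\big(J^2_{\nu-1}(z)+Y^2_{\nu-1}(z)\big)$ to pull the entire integral back to the single point $z=\sqrt{\lambda}$, where \eqref{nu-pos} produces the factor $4^\nu\Gamma^2(\nu-1)\,\lambda^{1-\nu}$ cancelling $\lambda^{\nu-1}$. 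The same objection applies verbatim to your $Y_\nu$-variant, since $Y^2_{\nu-1}$ has the identical $t^{2-2\nu}$ blow-up at small argument.

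For \eqref{mix-2} your route is essentially the paper's (split at $t=1$, small-argument bound below, envelope above), but the bookkeeping claim "one obtains a bound of order $\lambda^{\nu+s-3}$" fails whenever $s\ge\nu+1$: then $\int_{\sqrt{\lambda}}^1 t^{2\nu+1-2s}\,dt\sim\lambda^{\nu+1-s}$ is not bounded in $\lambda$, and the collected power degrades to $\lambda^{2\nu-2}$, which recovers \eqref{mix-2} only when $\nu\ge\tfrac12+\eps$. This is why the paper first discards the excess weight, $r^3\rho^{-2s}(r)\le r^{-2\eps}$ for $r\ge1$, before substituting, and then runs the splitting of Lemma \ref{lem-jj}: the borderline exponent then involves $\eps$ rather than $s$, and one gets $\lambda^{\nu-1}\cdot\lambda^{\eps-\frac12}\cdot O(1)=O(\lambda^{\nu-\frac32+\eps})$ for every $\nu>\tfrac12+\eps$. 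As written, your estimate is incorrect for perfectly admissible parameters, e.g. $\nu=\tfrac65$ and $s=3$.
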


\begin{proof}
By \eqref{der-2} it holds
\begin{equation} \label{eq-mixed-1}
\partial_\lambda \big (\lambda^{\frac{\nu}{2}} \, J_{-\nu}(\sqrt{\lambda}\, r) \big ) = -\lambda^{\frac{\nu-1}{2}}\, \frac r2\, J_{1-\nu}(\sqrt{\lambda}\, r).
\end{equation}
Now we use \eqref{jy^2} and \eqref{monoton} with $j=1$ to get
\begin{align}
\int_1^\infty  J^2_{1-\nu}(\sqrt{\lambda}\, r) \big) \,  \rho^{-2s}(r)\, r^3\, dr &\  \leq\  \int_1^\infty  (J^2_{\nu-1}(\sqrt{\lambda}\, r) +Y^2_{\nu-1}(\sqrt{\lambda}\, r) \big) \,   r^{-2\eps}\, dr  \label{mix-3}\\
& = \ \lambda^{\eps-\frac 12} \int_{\sqrt{\lambda}}^\infty t\, (J^2_{\nu-1}(t) +Y^2_{\nu-1}(t) \big) \,   t^{-1-2\eps}\, dt\nonumber \\
& \lesssim \ \lambda^{\eps}\ (J^2_{\nu-1}(\sqrt{\lambda}) +Y^2_{\nu-1}(\sqrt{\lambda}) \big) \, \lesssim \,  \lambda^{\eps+1-\nu}\, 4^\nu\, \Gamma^2(\nu-1), \nonumber
\end{align}
where we have applied \eqref{nu-pos} in the last step. This in combination with \eqref{eq-mixed-1} proves \eqref{mix-1}. When $J_{-\nu}$ is replaced by $Y_{\nu}$ in \eqref{mix-1}, then we obtain the same estimate
as can be seen from the first line of \eqref{mix-3}.  The second estimate of the Lemma is proven in a similar way; by \eqref{der-1} we have  
$$
\partial_\lambda \big (\lambda^{\frac{\nu}{2}} \, J_{\nu}(\sqrt{\lambda}\, r) \big ) = \lambda^{\frac{\nu-1}{2}}\, \frac r2\, J_{\nu-1}(\sqrt{\lambda}\, r).
$$
Hence
$$
\int_1^\infty \left |\, \partial_\lambda \big (\lambda^{\frac{\nu}{2}} \, J_{\nu}(\sqrt{\lambda}\, r) \big )\right |^2\,  \rho^{-2s}(r)\, r\, dr\  \lesssim \  \lambda^{\nu-1}\ \int_1^\infty J^2_{\nu}(\sqrt{\lambda}\, r)\, r^{-2\eps}\, dr
$$
and \eqref{mix-2} follows from Lemma \ref{lem-jj}.
\end{proof}

\medskip


\appendix

\section{Confluent hypergeometric functions}
\label{sec-mu}

\noindent Recall first the definition  of the Kummer's hypergeometric series;
\begin{equation} \label{eq-M}
M(a,b,z) = \sum_{n=0}^\infty \, \frac{(a)_n}{(b)_n}\ \frac{z^n}{n!},
\end{equation}
where 
\begin{equation} \label{M-ab}
(a)_n = a(a+1)\cdots(a+n-1), \qquad (b)_n = b(b+1)\cdots(b+n-1), \qquad a_0=b_0=1.
\end{equation}
Noìte that by \eqref{eq-M} 
\begin{equation}  \label{mu}
 1\  \leq \  \Big | \, M\Big (\frac 12+m+|m|, 1+2|m|, z \Big) \, \big | \ \leq  \ e^{|z|} \qquad \forall\ z\in\R, \ \ \forall\ m\in\Z.
\end{equation}  
For the function $U$, for Re$\, a,\, z>0$ we will often use its integral representation
\begin{equation} \label{u-int}
\Gamma(a)\, U(a,b,z) = \int_0^\infty e^{-z t}\, t^{a-1}\, (1+t)^{b-a-1}\, dt,
\end{equation}
see \cite[13.2.5]{as}.  By \cite[Sect.13.4]{as} the functions $M$ and $U$ are related to their derivatives in the following way:
\begin{equation} \label{mu-der}
\frac{d}{dz}\, M(a,b,z) = \frac ab\, M(a+1,b+1,z), \qquad \frac{d}{dz}\, U(a,b,z) = -a\, U(a+1,b+1,z). 
\end{equation}

\smallskip

\begin{lemma} \label{lem-U}
Let $z>0$ and suppose that $a$ and $b>1$. Then
\begin{equation}  \label{u-upperb}
\Gamma(a)\, \left |\, U(a,b,z) \right| \ \leq 
\left\{
\begin{array}{l@{\quad}l}
 e^z\, z^{1-b}\,   \Gamma(b-1)   &\quad  a\geq 1,\\
& \\
 a^{-1}\, 2^{b-a-1} +2^{1-a}\ e^z\,  z^{1-b}\,  \Gamma(b-1) & \quad a<1  \\
\end{array}
\right.
\end{equation}
Moreover, for any $a>0, b>0$ we have
\begin{equation} \label{u'-upperb}
\Gamma(a)\, \big |\, \frac{d}{dz} \, U(a,b,z) \, \big | \ \leq \ \Gamma(b)\ e^z\, z^{-b}\, 
\end{equation}
\end{lemma}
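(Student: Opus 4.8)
The plan is to derive both estimates from the integral representation \eqref{u-int}, which is available here since $z>0$ and $a>0$. Because the integrand $e^{-zt}\,t^{a-1}(1+t)^{b-a-1}$ is nonnegative, one has $U(a,b,z)>0$, so $\Gamma(a)\,|U(a,b,z)|$ is the integral itself. The entire argument rests on a single auxiliary computation: comparing the integrand with $e^{-zt}(1+t)^{b-2}$, whose integral is evaluated by the substitution $s=1+t$,
$$\int_0^\infty e^{-zt}(1+t)^{b-2}\,dt \,=\, e^z\int_1^\infty e^{-zs}\,s^{b-2}\,ds \,\leq\, e^z\int_0^\infty e^{-zs}\,s^{b-2}\,ds \,=\, e^z\,z^{1-b}\,\Gamma(b-1),$$
the last integral converging precisely because $b>1$.

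For the first case $a\geq 1$, I would note that $t\leq 1+t$ together with $a-1\geq 0$ gives $t^{a-1}\leq(1+t)^{a-1}$, hence $t^{a-1}(1+t)^{b-a-1}\leq(1+t)^{b-2}$ pointwise; the displayed computation then yields the bound $e^z z^{1-b}\Gamma(b-1)$ at once. For the case $0<a<1$ the only new issue is the singularity of $t^{a-1}$ at the origin, so I would split $\int_0^\infty=\int_0^1+\int_1^\infty$. On $(1,\infty)$ one writes $t^{a-1}(1+t)^{b-a-1}=\bigl(t/(1+t)\bigr)^{a-1}(1+t)^{b-2}$, and since $a-1<0$ while $t/(1+t)\in[\tfrac12,1)$ there, the first factor is at most $2^{1-a}$; the same substitution then produces the summand $2^{1-a}e^z z^{1-b}\Gamma(b-1)$. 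On $(0,1)$ I would discard the exponential ($e^{-zt}\leq 1$), bound the regular factor $(1+t)^{b-a-1}$ by $2^{b-a-1}$, and integrate the singular factor, $\int_0^1 t^{a-1}\,dt=a^{-1}$, giving the summand $a^{-1}2^{b-a-1}$. Adding the two pieces gives the second case of \eqref{u-upperb}.

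The derivative bound \eqref{u'-upperb} then follows with no further integration. By \eqref{mu-der} one has $\frac{d}{dz}U(a,b,z)=-a\,U(a+1,b+1,z)$, and since $U(a+1,b+1,z)>0$,
$$\Gamma(a)\,\Bigl|\tfrac{d}{dz}U(a,b,z)\Bigr| \,=\, a\,\Gamma(a)\,U(a+1,b+1,z) \,=\, \Gamma(a+1)\,U(a+1,b+1,z).$$
Now the shifted first parameter satisfies $a+1\geq 1$ and the shifted second satisfies $b+1>1$, so the already-established first case of \eqref{u-upperb}, applied to $U(a+1,b+1,z)$, gives $\Gamma(a+1)\,U(a+1,b+1,z)\leq e^z z^{-b}\,\Gamma(b)$, which is exactly \eqref{u'-upperb}.

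There is no deep obstacle here; the work is entirely elementary once the integral representation is in hand, and the only care required is the splitting at $t=1$ forced by the non-integrable-looking factor $t^{a-1}$ when $a<1$, together with tracking the constants. The one genuinely delicate point is the estimate $(1+t)^{b-a-1}\leq 2^{b-a-1}$ on $(0,1)$, which identifies $2^{b-a-1}$ as the supremum only when $b-a-1\geq 0$; in every application of this lemma the parameters in the regime $a<1$ satisfy $b-a-1\geq 0$, so this is the intended reading.
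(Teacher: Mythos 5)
Your proof is correct and follows essentially the same route as the paper's: the same integral representation \eqref{u-int}, the same pointwise bound $t^{a-1}(1+t)^{b-a-1}\le (1+t)^{b-2}$ for $a\ge 1$, the same splitting at $t=1$ with the bounds $e^{-zt}\le 1$, $(1+t)^{b-a-1}\le 2^{b-a-1}$, $\big((1+t)/t\big)^{1-a}\le 2^{1-a}$ for $a<1$, and the same reduction of \eqref{u'-upperb} to the first case via \eqref{mu-der} and $\Gamma(a+1)=a\,\Gamma(a)$. Your closing observation that the bound $(1+t)^{b-a-1}\le 2^{b-a-1}$ on $(0,1)$ tacitly requires $b\ge a+1$ applies equally to the paper's own argument, and is indeed satisfied in every regime where the lemma is invoked (there $a<1$ only occurs with $b=1+2|m|$ and $b-a-1>0$).
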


\begin{proof}
For $a\geq 1$ equation \eqref{u-int} gives
$$
\Gamma(a)\, |U(a,b,z) | \leq \, \int_0^\infty e^{-zt}\, (t+1)^{b-2}\, dt \ \leq \ e^z \int_0^\infty e^{-zt}\, t^{b-2}\, dt =   \Gamma(b-1)\, e^z\, z^{1-b}.
$$
This implies the first bound in \eqref{u-upperb}.  Assume now that $a <1$. Then \eqref{u-int} implies
\begin{align*}
\Gamma(a)\, |U(a,b,z) | &  = \, \int_0^1  e^{-zt}\, t^{a-1}\, (t+1)^{b-a-1}\, dt + \int_1^\infty  e^{-zt}\, t^{a-1}\, (t+1)^{b-a-1}\, dt \\
& \leq \frac 1a\, 2^{b-a-1} + \int_1^\infty  e^{-zt}\, \Big(\frac{1+t}{t}\Big)^{1-a}\, (t+1)^{b-2}\, dt \\
& \leq \frac 1a\, 2^{b-a-1} +2^{1-a}\ e^z\,  z^{1-b}\,  \Gamma(b-1). 
\end{align*} 
The bound \eqref{u'-upperb} follows from \eqref{mu-der}, the first part of the proof and the identity $\Gamma(a+1)=a\, \Gamma(a)$.
\end{proof}

\begin{lemma} \label{lem-M}
Fix  $z>0$ and assume that $\lambda \leq \frac 34\, \alpha^2$. Then 
\begin{equation} \label{M-1}
\Big|\, M\Big(\frac 12+j+|m|+m\frac{\alpha}{\kappa}, 1+j+2|m|, \, z\Big)\, \Big| \, \leq\, e^{2 z} \qquad \forall\ m\in\Z, \quad j=0,1.
\end{equation}
Moreover, there exists $\lambda_c\leq \frac 34\, \alpha^2$ and $m_c \in\N$, independent of $\lambda_c$, such that for all $\lambda\in (0,\lambda_c)$ and all $m\in\Z$ with $|m| \geq m_c$ we have 
\begin{equation} \label{M-2}
M\Big(\frac 12+|m|+m\frac{\alpha}{\kappa}, 1+2|m|, \, z\Big)\,  \, \geq\,  \frac 12. 
\end{equation}
\end{lemma}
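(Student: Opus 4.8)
The plan is to work directly with the Kummer series \eqref{eq-M} and to control the Pochhammer ratios $(a)_n/(b)_n$ term by term, exploiting that the assumption $\lambda\le\tfrac34\alpha^2$ keeps $\kappa=\sqrt{\alpha^2-\lambda}$ (see \eqref{kappa}) bounded below: indeed $\kappa\ge\tfrac12|\alpha|$, whence $|\alpha|/\kappa\le 2$. Throughout I write $a=\tfrac12+j+|m|+m\alpha/\kappa$ and $b=1+j+2|m|$, so that $b>0$ and $b+k>0$ for every integer $k\ge0$. The two inequalities \eqref{M-1} and \eqref{M-2} are then of quite different character: the first is a uniform majorization, the second hinges on a sign analysis of $a$.

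For the upper bound \eqref{M-1}, first note $|m\alpha/\kappa|=|m|\,|\alpha|/\kappa\le 2|m|$, whence $|a|\le\tfrac12+j+3|m|$. I would then check the elementary inequality $|a+k|\le |a|+k\le 2(b+k)$ for all $k\ge0$, since $2(b+k)-(|a|+k)\ge\tfrac32+j+|m|+k>0$. Consequently $\bigl|(a)_n/(b)_n\bigr|=\prod_{k=0}^{n-1}|a+k|/(b+k)\le 2^n$, and summing the series \eqref{eq-M} gives $|M(a,b,z)|\le\sum_{n\ge0}2^n z^n/n!=e^{2z}$ because $z>0$. This settles \eqref{M-1} for both $j=0,1$ and all $m\in\Z$ at once.

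For the lower bound \eqref{M-2} (where $j=0$) the sign of $a=\tfrac12+|m|+m\alpha/\kappa$ is decisive. If $m\alpha\ge0$ then $a\ge\tfrac12>0$, so every factor of every $(a)_n/(b)_n$ is positive; the $n=0$ term equals $1$ and all remaining terms are nonnegative, giving $M(a,b,z)\ge1\ge\tfrac12$ immediately. The only delicate regime is $m\alpha<0$, where $a=\tfrac12-|m|\delta$ with $\delta:=|\alpha|/\kappa-1\in[0,1]$. Here I would record the quantitative smallness of $\delta$: from $|\alpha|-\kappa=\lambda/(|\alpha|+\kappa)$ and $\kappa\ge\tfrac12|\alpha|$ one gets $\delta\le 2\lambda/\alpha^2$. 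If $|m|\delta<\tfrac12$ then $a>0$ and again $M\ge1$; so the genuine issue is $|m|\delta\ge\tfrac12$, in which case $a\le0$ and $a$ can be very negative when $|m|$ is large.

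The main obstacle is precisely this last sub-case, because the individual ratios $|a+k|/(b+k)$ tend to $1$ as $k\to\infty$, so a crude factorwise estimate only yields the useless bound $e^z$. The key observation I would use is that the \emph{entire} product is governed by its first factor: writing $\beta:=|m|\delta$ and using $|a+k|\le|a|+k=\beta-\tfrac12+k$ together with $\beta-\tfrac12\le\beta=\tfrac12(b-1)\delta\le\tfrac{\delta}{2}b$, one has $\bigl|(a)_n/(b)_n\bigr|\le(\beta-\tfrac12)_n/(b)_n$; since each factor after the first is $\le1$ (as $\beta-\tfrac12\le\tfrac{\delta}{2}b\le b$) while the first equals $(\beta-\tfrac12)/b\le\delta/2$, it follows that $\bigl|(a)_n/(b)_n\bigr|\le\delta/2$ for every $n\ge1$, uniformly in $m$. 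Hence $|M(a,b,z)-1|\le\tfrac{\delta}{2}\sum_{n\ge1}z^n/n!=\tfrac{\delta}{2}(e^z-1)$. Choosing $\lambda_c=\min\{\tfrac34\alpha^2,\ \alpha^2/(2(e^z-1))\}$ forces $\delta\le 2\lambda/\alpha^2<1/(e^z-1)$ and thus $|M(a,b,z)-1|\le\tfrac12$, i.e.\ $M(a,b,z)\ge\tfrac12$, for every $\lambda\in(0,\lambda_c)$. As this estimate is in fact uniform in $m$, any fixed $m_c$ (for instance $m_c=1$) suffices, which is in particular independent of $\lambda_c$.
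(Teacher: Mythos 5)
Your proof is correct. For \eqref{M-1} it is essentially the paper's own argument: the paper likewise notes $|\tfrac12+|m|+m\alpha/\kappa|<2(1+2|m|)$ (your $|a|\le 2b$), bounds the Pochhammer symbols factorwise by $(|a|)_n\le 2^n(b)_n$, and sums the series \eqref{eq-M} to get $e^{2z}$. For \eqref{M-2} the core mechanism is also the same — the whole ratio $(|a|)_n/(b)_n$ is controlled by its first factor, all later factors being $\le 1$, and the first factor is small when $\lambda$ is small — and in fact your $\delta/2=\frac{|\alpha|-\kappa}{2\kappa}$ is exactly the paper's quantity $q(\lambda)$ from \eqref{m-lim}. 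Where you genuinely diverge is in how the dependence on $m$ is handled: the paper takes the limit $|m|\to\infty$ of $|a|/b$, invokes uniformity in $\lambda$ to choose a (possibly large) $m_c$ with $|a|/b\le 2q(\lambda)$ for $|m|\ge m_c$, and only then runs the first-factor estimate; you instead split according to the sign of $a$, dispose of the case $a>0$ trivially by positivity of all terms ($M\ge 1$), and in the case $a\le 0$ obtain the algebraic bound $|(a)_n/(b)_n|\le\delta/2$ valid for \emph{every} $m$, with an explicit $\lambda_c$. This buys a slightly stronger conclusion — the estimate is uniform in $m$, so $m_c=1$ suffices and its independence of $\lambda_c$ is automatic — and avoids the limiting/uniform-convergence step, at the modest cost of the extra case analysis on the sign of $a$ (which the paper sidesteps precisely by taking $|m|$ large).
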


\begin{proof}
Let $j=0$. Since $|\frac 12+|m|+m\frac{\alpha}{\kappa} |< 2(1+2|m|)$ for all $m\in\Z$ by assumption,  with the notation introduced in \eqref{M-ab} we find that
$$
 \Big( \Big|\frac 12+|m|+m\frac{\alpha}{\kappa}\Big| \Big)_n \, \leq\, \, 2^n\, \big(1+2|m|\big)_n \, .  
$$
Since $|M(a,b,z)|\leq M(|a|, b,z)$ if $b>0$, this in combination with \eqref{eq-M} implies \eqref{M-1}. The proof for $j=1$ follows in the same way. 

\smallskip

\noindent Next we note that if $m\geq 0$ then \eqref{M-2} follows immediately from \eqref{eq-M}. We may thus assume that $m<0$. Note that 
\begin{equation} \label{m-lim}
\lim_{m\to-\infty} \frac{|\frac 12+|m|+m\frac{\alpha}{\kappa}|}{1+2|m|} = \frac{\alpha-\sqrt{\alpha^2-\lambda}}{2\sqrt{\alpha^2-\lambda}}\, = : q(\lambda)
\end{equation}
uniformly with respect to $\lambda\in(0,\frac 34 \alpha^2)$. Hence there exists $m_c\in\N$ such that for all $m\leq -m_c$ and all $\lambda\leq \frac 34 \alpha^2$ we have  
$$
\frac{|\frac 12+|m|+m\frac{\alpha}{\kappa}|}{1+2|m|} \leq 2\, q(\lambda).
$$
Since $m$ is negative and $2\, q(\lambda)\leq 1$, by assumption, in view of \eqref{M-ab} we conclude that 
$$
 \Big( \Big|\frac 12+|m|+m\frac{\alpha}{\kappa}\Big| \Big)_n \, \leq\, \,  2\, q(\lambda) \, \big(1+2|m|\big)_n\qquad \forall\ n\geq 1 \quad \forall\ m\leq -m_c.
$$
By \eqref{eq-M} this implies 
$$
M\Big(\frac 12+|m|+m\frac{\alpha}{\kappa}, 1+2|m|, \, z\Big)\,  \, \geq\, 1 - 2\, q(\lambda) \sum_{n=1}^\infty \frac{z^n}{n!}.
$$
To finish the proof it suffices to take $\lambda_c$ small enough, cf. \eqref{m-lim}, so that 
$$
2\, q(\lambda) \sum_{n=1}^\infty \frac{z^n}{n!} \leq \frac 12 \qquad \forall\ \lambda\leq \lambda_c.
$$
\end{proof}

\begin{lemma} \label{lem-der-m}
Let $b>0$ and $z>0$. Then 
\begin{equation}  \label{ma'}
\big | \frac{d}{da}\, M(a,b,z) \big | \ \lesssim \ \frac{M(|a|, b, 2z)}{1+|a|}\ .
\end{equation}
\end{lemma}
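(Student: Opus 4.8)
The plan is to differentiate the defining Kummer series \eqref{eq-M} term by term in $a$ and estimate the resulting coefficients. Since $M(a,b,z)$ is entire in $(a,z)$ for $b>0$, differentiation under the summation sign is justified, giving
\[
\frac{d}{da}\, M(a,b,z) = \sum_{n=1}^\infty \frac{z^n}{(b)_n\, n!}\ \frac{d}{da}(a)_n .
\]
From the product form $(a)_n=\prod_{k=0}^{n-1}(a+k)$ in \eqref{M-ab} the product rule yields $\frac{d}{da}(a)_n=\sum_{j=0}^{n-1}\prod_{k\neq j}(a+k)$, so that, using $|a+k|\le |a|+k$,
\[
\Big|\frac{d}{da}(a)_n\Big| \ \leq\ \sum_{j=0}^{n-1}\prod_{k\neq j}(|a|+k) \ =\ (|a|)_n\, \sum_{j=0}^{n-1}\frac{1}{|a|+j}.
\]
Everything thus reduces to controlling the logarithmic-derivative sum $\Sigma_n:=\sum_{j=0}^{n-1}(|a|+j)^{-1}$: the aim is to show $(|a|)_n\,\Sigma_n\lesssim 2^n (|a|)_n/(1+|a|)$, since inserting this bound and summing turns each $z^n$ into $(2z)^n$ and reproduces $M(|a|,b,2z)/(1+|a|)$.

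The clean case is $|a|\ge 1$. Here $\Sigma_n\le n/|a|\le 2n/(1+|a|)\le 2\cdot 2^n/(1+|a|)$, using $1+|a|\le 2|a|$ and $n\le 2^n$. Substituting and summing gives
\[
\Big|\frac{d}{da}M(a,b,z)\Big| \ \le\ \frac{2}{1+|a|}\sum_{n=1}^\infty \frac{(2z)^n\,(|a|)_n}{(b)_n\, n!}\ \le\ \frac{2}{1+|a|}\, M(|a|,b,2z),
\]
which is the assertion with an explicit constant.

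The genuine difficulty is the regime $|a|<1$, where $1+|a|$ is comparable to $1$ and the crude termwise estimate breaks down: the $j=0$ summand $\prod_{k=1}^{n-1}(|a|+k)=(1+|a|)_{n-1}$ carries no factor of $|a|$, so it cannot be dominated by $(|a|)_n$ uniformly as $|a|\to 0$. I would treat these terms by resumming them before estimating: the index shift $n\mapsto p+1$ and $(b)_{p+1}=b\,(b+1)_p$ turn $\sum_{n\ge 1}\frac{z^n}{(b)_n n!}(1+|a|)_{n-1}$ into $\frac{z}{b}\sum_{p\ge0}\frac{(1+|a|)_p}{(b+1)_p(p+1)!}\,z^p$, a convergent Kummer-type series that one then bounds by $M(|a|,b,2z)$ using $(b+1)_p\ge (b)_p$ together with the slack obtained from replacing $z$ by $2z$ on the right. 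The remaining summands $j\ge 1$ still carry the factor $|a|$ inside $(|a|)_n$ and are handled exactly as in the case $|a|\ge1$. This resummation of the $j=0$ contribution — equivalently, the behaviour near $a=0$, where $\frac{d}{da}M$ stays bounded while $(|a|)_n$ degenerates — is the main obstacle, and it is precisely the point at which the doubling of the argument on the right-hand side is essential.
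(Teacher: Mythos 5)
Your treatment of the regime $|a|\ge 1$ is correct, and it is in essence the paper's entire proof: the paper asserts the termwise bound $\bigl|\tfrac{d}{da}(a)_n\bigr| \lesssim n\,(|a|)_n/(1+|a|)$ for \emph{all} $a$ and $n$, and then sums exactly as you do, absorbing the factor $n$ into $2^n$ to produce the argument $2z$. Your diagnosis of the regime $|a|<1$ is also correct, and it is sharper than the paper: the paper's termwise bound is simply false there, since at $a=0$ one has $\tfrac{d}{da}(a)_n\big|_{a=0}=(n-1)!$ (only the $j=0$ product survives) while $n\,(|a|)_n/(1+|a|)$ vanishes. So the paper's own proof is incomplete in precisely the case you isolate.

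The gap is that your proposed repair of that case fails at its final step, and cannot be completed, because the inequality you aim for there is false. Your resummed $j=0$ series is nondecreasing in $|a|$, and at $a=0$, $b=1$ it equals
\[
\sum_{n\ge 1}\frac{z^n}{n\cdot n!}\;=\;\int_0^z\frac{e^t-1}{t}\,dt\;\sim\;\frac{e^z}{z},\qquad z\to\infty,
\]
whereas $M(|a|,b,2z)\to M(0,1,2z)=1$ as $|a|\to 0$. Hence for fixed large $z$ and $|a|$ small enough (say $|a|\le e^{-2z}$) the resummed series exceeds any fixed multiple of $M(|a|,b,2z)$: the doubling $z\mapsto 2z$ buys nothing, because the right-hand side degenerates to a bounded quantity as $a\to 0$ while the left-hand side does not. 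The same example shows that \eqref{ma'} itself, read with the paper's convention that $\lesssim$ means a numerical constant uniform in $(a,b,z)$, is false: by continuity $\bigl|\tfrac{d}{da}M(a,1,z)\bigr|\approx e^z/z$ for $|a|$ small, while $M(|a|,1,2z)/(1+|a|)\approx 1$. What is true, and what the paper actually needs (there $b=1+2|m|\ge 1$ and $z=2\kappa r\le 2|\alpha|$ is bounded), is \eqref{ma'} with a constant depending on an upper bound $z_0$ for $z$ and a lower bound $1$ for $b$; in that setting the case $|a|<1$ is trivial rather than delicate, since $\bigl|\tfrac{d}{da}(a)_n\bigr|\le n!\sum_{j=0}^{n-1}\tfrac{1}{1+j}\le n\cdot n!$ and $(b)_n\ge n!$ give $\bigl|\tfrac{d}{da}M(a,b,z)\bigr|\le z e^z\le z_0e^{z_0}$, while $M(|a|,b,2z)/(1+|a|)\ge 1/2$. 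In short: your case analysis located a real defect shared by the statement and the paper's proof, but the resolution is to weaken the claim (constant depending on the range of $z$, $b$ bounded below), not to prove \eqref{ma'} as stated — no argument can do that.
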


\begin{proof}
From the definition of $(a)_n$, see \eqref{M-ab}, we find that 
$$
 | \frac{d}{da}\, (a)_n  | \, \lesssim\, \frac{n\, (|a|)_n}{1+|a|} \qquad \forall\ n\in\N.
$$
Hence 
\begin{align*}
\big | \frac{d}{da}\, M(a,b,z) \big | & \ \lesssim \ \frac{1}{1+|a|}\, \sum_{n=1}^\infty\, \frac{(|a|)_n}{(b)_n}\, \frac{ n\, z^n}{n!} \ \leq \ \frac{1}{1+|a|} \, \sum_{n=1}^\infty\, \frac{(|a|)_n}{(b)_n}\, \frac{ (2z)^n}{n!} \\
& \ \leq \ \frac{1}{1+|a|} \ M(|a|, b, 2z).
\end{align*}
\end{proof}

\begin{lemma} \label{lem-der-u}
Let $a,b>0$ and $z>0$. Then 
\begin{equation}  \label{ua'}
\big | \frac{d}{da}\, \big( \Gamma(a)\, U(a,b,z)\big) \big | \ \lesssim \ 2^{b-a-1} + e^z\, z^{1-b}\, \Gamma(b-1).
\end{equation}
\end{lemma}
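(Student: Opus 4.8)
The final statement to prove is Lemma~\ref{lem-der-u}: a bound on $\bigl|\tfrac{d}{da}(\Gamma(a)U(a,b,z))\bigr|$ for $a,b,z>0$.

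\textbf{Proof strategy.} The plan is to differentiate the integral representation \eqref{u-int} directly under the integral sign. Since $\Gamma(a)\,U(a,b,z)=\int_0^\infty e^{-zt}\,t^{a-1}\,(1+t)^{b-a-1}\,dt$ for $a,z>0$, and the integrand depends on $a$ only through the factor $t^{a-1}(1+t)^{b-a-1}$, differentiating in $a$ brings down a logarithmic factor:
\begin{equation*}
\frac{d}{da}\bigl(\Gamma(a)\,U(a,b,z)\bigr) = \int_0^\infty e^{-zt}\,t^{a-1}\,(1+t)^{b-a-1}\,\log\!\Bigl(\frac{t}{1+t}\Bigr)\, dt.
\end{equation*}
First I would justify the differentiation under the integral by dominated convergence, which is routine given the exponential decay $e^{-zt}$ and the fact that $\log(t/(1+t))$ grows at most logarithmically. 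The key observation is that $\log(t/(1+t))=\log t-\log(1+t)$ is \emph{negative} on $(0,\infty)$ and satisfies the clean pointwise bound $\bigl|\log(t/(1+t))\bigr|\le 1/t$ for all $t>0$ (since $\log(1+1/t)\le 1/t$). This converts the problematic logarithm into the same kind of polynomial weight already handled in Lemma~\ref{lem-U}.

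\textbf{Key steps in order.} After establishing the integral identity for the derivative, I would apply the bound $\bigl|\log(t/(1+t))\bigr|\le 1/t$ to obtain
\begin{equation*}
\Bigl|\frac{d}{da}\bigl(\Gamma(a)\,U(a,b,z)\bigr)\Bigr| \le \int_0^\infty e^{-zt}\,t^{a-2}\,(1+t)^{b-a-1}\, dt.
\end{equation*}
This integral is precisely of the form $\Gamma(a-1)\,U(a-1,b-1,z)$ (shifting $a\mapsto a-1$ and $b\mapsto b-1$ in \eqref{u-int}), so I would then invoke the estimate \eqref{u-upperb} of Lemma~\ref{lem-U} with parameters $(a-1,b-1)$ in place of $(a,b)$. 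Splitting into the two regimes of \eqref{u-upperb} according to whether $a-1\ge 1$ or $a-1<1$ reproduces exactly the two terms $2^{b-a-1}$ and $e^z z^{1-b}\Gamma(b-1)$ on the right-hand side of \eqref{ua'}. Alternatively, and perhaps more transparently, I would split the integral at $t=1$ directly: on $(0,1)$ one uses $(1+t)^{b-a-1}\le 2^{b-a-1}$ together with $\int_0^1 t^{a-2}\,dt$ (handled via the $1/t$ logarithmic bound and the $a^{-1}$-type factor), and on $(1,\infty)$ one uses $((1+t)/t)^{1-a}\le 2^{1-a}$ and bounds the remaining $e^{-zt}t^{b-2}$ integral by $e^z z^{1-b}\Gamma(b-1)$, exactly mimicking the proof of Lemma~\ref{lem-U}.

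\textbf{Main obstacle.} The only genuinely delicate point is the behavior near $t=0$, where the extra factor $1/t$ (or equivalently the shift $a\mapsto a-1$) pushes the exponent down to $t^{a-2}$, which is non-integrable when $a\le 1$. However, this singularity is tamed because $\log(t/(1+t))\sim\log t$ as $t\to 0^+$ is only logarithmic, not a genuine power $1/t$; so the sharp approach is to keep the logarithm rather than crudely bounding it by $1/t$ near zero. Concretely I would bound $|\log(t/(1+t))|\le |\log t|+\log 2$ on $(0,1)$, and then $\int_0^1 t^{a-1}|\log t|\,dt<\infty$ converges for all $a>0$, yielding a contribution controlled by $2^{b-a-1}$. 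Thus the careful treatment near the origin is where one must avoid the over-aggressive $1/t$ bound; everywhere else the estimate reduces cleanly to the already-proven Lemma~\ref{lem-U}.
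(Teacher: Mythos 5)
Your second (``alternative'') argument is exactly the paper's proof: differentiate \eqref{u-int} under the integral sign, split at $t=1$, keep the logarithm on $(0,1)$, where $\int_0^1 t^{a-1}\log\bigl(\tfrac{1+t}{t}\bigr)dt<\infty$ and the factor $(1+t)^{b-a-1}\le 2^{b-a-1}$ comes out, and on $(1,\infty)$ bound the logarithm by $\log 2$, so that the tail is at most a constant times $e^z\int_1^\infty e^{-z(t+1)}(1+t)^{b-2}\,dt\le e^z\,z^{1-b}\,\Gamma(b-1)$. Together with your ``main obstacle'' paragraph (which correctly insists on keeping the logarithm near $t=0$ rather than using $1/t$ there), this is a complete argument, essentially identical to the paper's.

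Your first route, however, is flawed beyond the $a\le 1$ integrability issue you acknowledge, and you should discard it rather than present it as the main proof. Bounding $|\log(t/(1+t))|\le 1/t$ globally and invoking \eqref{u-upperb} with parameters $(a-1,b-1)$ does \emph{not} ``reproduce exactly'' the two terms of \eqref{ua'}: it produces $e^z\,z^{1-(b-1)}\,\Gamma\bigl((b-1)-1\bigr)=e^z\,z^{2-b}\,\Gamma(b-2)$, together with a factor $(a-1)^{-1}2^{b-a-1}$ in the regime $1<a<2$ which blows up as $a\to 1^+$. The term $e^z\,z^{2-b}\,\Gamma(b-2)$ is not dominated by $2^{b-a-1}+e^z\,z^{1-b}\,\Gamma(b-1)$ uniformly in $z$: the ratio against the second term equals $z/(b-2)$, which is unbounded as $z\to\infty$ (and $\Gamma(b-2)$ is not even defined as a positive quantity unless $b>2$, whereas the target only involves $\Gamma(b-1)$). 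The underlying reason is that the bound $1/t$ is lossy not only near $t=0$ but also on the tail, where $\log(1+1/t)\le\log 2$ is bounded and replacing it by $1/t$ costs a full power of $t$, hence a power of $z$ after integration. So the split-at-$t=1$ version is not merely ``more transparent''; it is the one that actually yields the stated estimate.
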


\begin{proof}
By \eqref{u-int} we have  
\begin{align*}
 \frac{d}{da} ( \Gamma(a)\, U(a,b,z))  & \ = \ \int_0^\infty e^{-zt}\, \log\left(\frac{t+1}{t}\right)\, t^{a-1}\, (1+t)^{b-a-1}\, dt \ \leq\  2^{b-a-1}\, \int_0^1\log\left(\frac{t+1}{t}\right)\, t^{a-1}\, dt \\
 &  \ + \int_1^\infty e^{-zt}\, t^{a-1}\, (1+t)^{b-a-1}\, dt \ \lesssim\ 2^{b-a-1} + e^z\int_1^\infty e^{-z (t+1)}\, (t+1)^{b-2}\, dt \\
 & \ \leq \ 2^{b-a-1} + e^z\, z^{1-b}\, \Gamma(b-1).
\end{align*}
\end{proof}

\section{Bessel functions $J$ and $Y$}
\label{app-bessel}

\noindent The functions $J_\nu(z)$ and $Y_\nu(z)$ are related through the identity \cite[Eq.9.1.2]{as}
\begin{equation} \label{jy}
Y_\nu(z) = \frac{J_\nu(z) \, \cos(\nu \pi) -J_{-\nu}(z)}{\sin(\nu\pi)} \, . 
\end{equation}
The right hand side is to be replaced by a limiting value when $\nu\in\Z$. The above formula implies
\begin{equation} \label{jy^2}
J^2_{-\nu}(z) \, \leq \, J^2_{\nu}(z) + Y^2_\nu(z).
\end{equation}
By \cite[Eq.9.1.16]{as} their Wronskian is independent of $\nu$ and reads as follows:
\begin{equation} \label{w-jy}
J'_\nu(z)\, Y_\nu(z) -J_\nu(z)\, Y_\nu'(z) = J_{\nu+1}(z)\, Y_\nu(z)-Y_{\nu+1}(z)\, J_\nu(z) = \frac{2}{\pi\, z} .
\end{equation}
For $z\to 0$ we have 
\begin{equation} \label{nu-pos}
\begin{aligned}
J_\nu(z) & =\frac{(z/2)^{\nu}}{\Gamma(1+\nu)}(1+\mathcal{O}(z^2)),   & \nu \geq 0 \\
Y_\nu(z) & = -\frac{\Gamma(\nu)\,(z/2)^{-\nu}}{\pi}(1+\mathcal{O}(z^2)) +  \frac{\cot(\nu \pi)\, (z/2)^{\nu}}{\Gamma(1+\nu)}(1+\mathcal{O}(z^2)),  \ \   & \nu>0,
\end{aligned}
\end{equation}
where the error terms  are uniform with respect to $\nu$. When $\nu=0$,  then
\begin{equation} \label{nu-zero}
Y_0(z)= \frac{2}{\pi} (\log z +\gamma-\log 2) + o(z), \qquad z\to 0,
\end{equation}
where $\gamma\simeq 0,577$ is the Euler gamma constant, see \cite[Eqs.9.1.7-9]{as}. 
From \cite[pp.446]{wa} we learn that if $\nu > 1/2$, then
\begin{equation} \label{monoton}
\frac{d}{dz}\left [z^j\, ( Y_\nu^2(z) + J^2_\nu(z)) \right] \leq 0, \qquad \forall\ z >0, \quad j=0,1.
\end{equation}
We will also need a representations of $J_\nu(z)$  in terms of a power series in $z$:
\begin{equation} \label{j-series}
J_\nu(z) = (z/2)^\nu\, \sum_{k=0}^\infty \, \frac{(-1)^k\, z^{2k}}{4^k\, k!\ \Gamma(\nu+k+1)}, \qquad \nu\neq -1,-2,\dots
\end{equation}
and integral representation of $Y_\nu$ for $\nu > 0$: 
\begin{equation} \label{int-j-2}
Y_\nu (z) = \frac{2 \, (\frac z2)^\nu}{\sqrt{\pi}\ \Gamma(\nu+\frac 12)}\, \Big[\int_0^1 (1-t^2)^{\nu-\frac 12}\, \sin(zt)\, dt ) -\int_0^\infty e^{-zt}\, (1+t^2)^{\nu-\frac 12}\, dt\Big].
\end{equation}
We refer to  \cite[Eqs.9.1.10]{as} and \cite[Sect.6.1, Eq.(4)]{wa} respectively for the above formulas. 
Let us also recall the well-known relations between the Bessel functions and their derivatives: with the above notation we have
\begin{align} 
\frac{d}{dz}\, \el_\nu(z) & = \el_{\nu-1}(z) -\frac{\nu}{z} \ \el_\nu(z) \label{der-1} \\
\frac{d}{dz}\, \el_\nu(z) & = -\el_{\nu+1}(z) +\frac{\nu}{z} \ \el_\nu(z) \label{der-2} .
\end{align}

\noindent Next we recall several pointwise bounds on $J_\nu$ and $Y_\nu$. 
By \cite[Eq.9.1.60]{as}  
\begin{equation} \label{J-unif}
|J_\nu(z)| \, \leq \, 1 \qquad \quad  \forall\ \nu \geq 0, \quad \forall \ z >0.  
\end{equation}
For large values of $z$  and any fixed $\nu >0$ it holds
\begin{equation} \label{z-large}
Y_\nu^2(z) + J^2_\nu(z) \ \leq\, \frac{2}{\pi\, \sqrt{z^2-\nu^2}}, \qquad \forall\ z\geq \nu,
\end{equation}
see \cite[p.447, eq.(1)]{wa}. Finally we mention an integral identity due to \cite[p.403, eq.(2)]{wa}: for any $a>0$ we have
\begin{equation} \label{wa-int-1}
\int_0^\infty\, J^2_\nu(a\, t)\ t^{-\beta}\, dt \ =\  \frac{a^{\beta-1}\, \Gamma(\beta)\ \Gamma(\nu +\frac{1-\beta}{2})}{2^\beta\, \Gamma^2(\frac{1+\beta}{2})\ \Gamma(\nu +\frac{1+\beta}{2})} \, , \qquad 2\nu+1 > \beta>0.
\end{equation}

\smallskip

\begin{lemma} \label{lem-product}
Let $\nu \geq 1$. Then
\begin{equation} \label{product-upb}
\sup_{1\leq t \leq \nu+j}  \Big ( \, J^2_{\nu}(t) + |J_{\nu+j}(t)\ Y_\nu(t)| + |J_{\nu}(t)\ Y_{\nu+j}(t)| \, \Big) \ \lesssim\ \nu^{-2/3}, \qquad j=0,1,2.
\end{equation}
\end{lemma}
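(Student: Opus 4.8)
The plan is to treat the three summands separately, exploiting the fact that the interval $[1,\nu+j]$ lies entirely below the turning point $t=\nu$ of the Bessel equation apart from a bounded overshoot of size $j\le 2$; in particular none of $J_{\nu+i}(t)$, $i\in\{0,1,2\}$, reaches its oscillatory regime for $t$ in this range. The term $J_\nu^2(t)$ is immediate from the classical uniform estimate $|J_\mu(t)|\le c\,\mu^{-1/3}$, valid for all $t>0$ and $\mu\ge1$ (see, e.g., \cite{wa}): since the orders occurring here satisfy $\mu\ge\nu$, this yields $J_{\nu+i}^2(t)\lesssim\nu^{-2/3}$ uniformly in $t$. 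The products are the substantial part, and for them I would split $[1,\nu+j]$ into a classically forbidden part $1\le t\le \nu-c_0\,\nu^{1/3}$ and a transition part $\nu-c_0\,\nu^{1/3}\le t\le \nu+2$ around the turning point, for a suitable fixed $c_0>0$.

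On the forbidden part I would insert the Debye (exponential) asymptotics for $J_{\nu+i}$ and $Y_\nu$. Because the two orders differ by the bounded amount $i\le 2$, the exponentially growing factor carried by $Y_\nu$ is matched, up to a bounded factor, by the exponentially decaying factor of $J_{\nu+i}$, while the algebraic prefactors combine (using $\nu\tanh\beta=\sqrt{\nu^2-t^2}$ for $t=\nu\,\mathrm{sech}\,\beta$) to give
\[
|J_{\nu+i}(t)\,Y_\nu(t)|\ \lesssim\ \frac{1}{\sqrt{\nu^2-t^2}}\,,\qquad 1\le t\le \nu-c_0\,\nu^{1/3}.
\]
On this range $\nu^2-t^2=(\nu-t)(\nu+t)\gtrsim \nu^{1/3}\cdot\nu=\nu^{4/3}$, so the right-hand side is $\lesssim\nu^{-2/3}$, as required; the companion product is handled in exactly the same way.

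On the transition part the Debye expansion degenerates, and I would instead invoke Olver's uniform asymptotics in terms of Airy functions, which deliver the sharp individual bounds $|J_\mu(t)|\lesssim\nu^{-1/3}$ and $|Y_\mu(t)|\lesssim\nu^{-1/3}$ whenever $|t-\mu|\lesssim\nu^{1/3}$ and $\mu\in\{\nu,\nu+1,\nu+2\}$ (the turning points of these three orders lying within a bounded distance of one another). Multiplying two such factors bounds every product by $\nu^{-2/3}$ on this part, matching the exponent in \eqref{product-upb}. Combining the two ranges proves the estimate for large $\nu$; for $\nu$ in any fixed compact subset of $[1,\infty)$ the inequality is trivial, since the functions involved are continuous and bounded on $\{1\le t\le \nu+2\}$ — the sole singularity of $Y$, at $t=0$, being excluded by $t\ge1$ — so the supremum is a finite constant, absorbed by $\lesssim\nu^{-2/3}$.

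The delicate point is the transition region $|t-\nu|\lesssim\nu^{1/3}$: there $J_\nu$ and $Y_\nu$ both attain their maximal size, of order exactly $\nu^{-1/3}$, so no cancellation is available and the product is genuinely of order $\nu^{-2/3}$. Obtaining this precise exponent, rather than something weaker, forces the Airy-type uniform asymptotics near the turning point, since the monotonicity \eqref{monoton} and the bound \eqref{z-large} only control the regime $t\ge\nu$ and degenerate as $t\downarrow\nu$. By contrast the forbidden region is comfortable, the product decaying there and the geometric factor $(\nu^2-t^2)^{-1/2}$ already being $\lesssim\nu^{-2/3}$ thanks to the $\nu^{1/3}$ buffer kept between $t$ and the turning point.
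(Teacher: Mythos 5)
Your treatment of $J_\nu^2(t)$ and of the product $|J_{\nu+j}(t)\,Y_\nu(t)|$ is correct, and it is in substance the paper's own argument: the paper runs the uniform turning-point (Airy-type) asymptotics of \cite{as} over the whole interval $1\le t\le \nu$, obtaining $|J_\nu(t)|\lesssim \nu^{-1/3}e^{-\nu\xi}$ and $|Y_\nu(t)|\lesssim \nu^{-1/3}e^{+\nu\xi}$, whereas you split $[1,\nu+j]$ into a Debye region and an Airy transition region; the two packagings are equivalent. The point you use tacitly for this mixed product --- that the decaying exponential of $J_{\nu+j}$ dominates the growing exponential of $Y_\nu$ --- is correct, because with $\cosh\beta_\mu=\mu/t$ the function $\Phi(\mu)=\mu(\beta_\mu-\tanh\beta_\mu)$ satisfies $\Phi'(\mu)=\beta_\mu\ge 0$, so the combined exponential $e^{\Phi(\nu)-\Phi(\nu+j)}$ is at most $1$.

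However, the sentence ``the companion product is handled in exactly the same way'' is a genuine gap, and it cannot be closed. For $|J_\nu(t)\,Y_{\nu+j}(t)|$ the exponentials combine to $e^{\Phi(\nu+j)-\Phi(\nu)}$, and
\begin{equation*}
\Phi(\nu+j)-\Phi(\nu)=\int_\nu^{\nu+j}\beta_\mu\,d\mu\ \approx\ j\,\log(2\nu/t),
\end{equation*}
so instead of a bounded factor you pick up $(2\nu/t)^{j}$; your Debye estimate then yields only $|J_\nu(t)\,Y_{\nu+j}(t)|\lesssim (\nu/t)^{j}\,(\nu^2-t^2)^{-1/2}$, which at $t=1$ is of order $\nu^{j-1}$ and useless for $j\ge 1$. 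In fact that part of the statement is false: by the large-order asymptotics (the paper's own equation \eqref{nu-pos}), $|J_\nu(1)\,Y_{\nu+j}(1)|=\frac{2^{j}\,\Gamma(\nu+j)}{\pi\,\Gamma(\nu+1)}\,(1+o(1))$, which tends to $2/\pi$ for $j=1$ and grows like $4\nu/\pi$ for $j=2$, contradicting \eqref{product-upb}. You should know that the paper's own proof has exactly the same lacuna: it verifies the products for $j=0$ on $[1,\nu]$ and then discusses only $t\in[\nu,\nu+j]$, never the third term on $[1,\nu]$, where it fails; so the lemma is overstated there as well. No damage propagates to the paper's results, because in every application (see the proof of Lemma \ref{lem-int-JJ}) only squares of $J$'s and the ``good'' product $|J_{\nu+j}(t)\,Y_\nu(t)|$, with the $J$-factor carrying the larger order, are used --- and for those your argument, like the paper's, is complete.
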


\begin{proof}
We use the substitution $t= z \nu$. Suppose first that $1\leq t \leq \nu$. In this case  $z\in [\nu^{-1},1]$. By \cite[Eq.9.3.6, Eq.9.3.38]{as} we  have 
\begin{align}
J_\nu(t) & = J_\nu(\nu z) = \left( \frac{4\, \xi^{2/3}}{1-z^2}\right)^{1/4}\, \left[ \frac{\ai\big(( \nu \xi)^{2/3}\big)}{\nu^{1/3}} + \frac{e^{-\nu\, \xi }}{1+|\nu\, \xi|^{1/6}}\ \mathcal{O}(\nu^{-4/3})\right] \label{jnuz} \\
Y_\nu(t) & = Y_\nu(\nu z) = -\left( \frac{4\, \xi^{2/3}}{1-z^2}\right)^{1/4}\, \left[ \frac{\bi\big(( \nu \xi)^{2/3}\big)}{\nu^{1/3}} + \frac{e^{\nu\, \xi }}{1+|\nu\, \xi |^{1/6}}\ \mathcal{O}(\nu^{-4/3})\right] \label{ynuz} ,
\end{align}
where
$$
\xi= \xi(z) = \frac 32\, \int_z^1\frac{\sqrt{1-t^2}}{t}\, dt, \qquad z < 1,
$$
and $\ai, \bi$  are Airy functions of the first and second kind. From their asymptotic behavior it follows that 
\begin{equation*} \label{airy-upb}
|\ai(x)| \ \lesssim\ 
\left\{
\begin{array}{l@{\quad}l}
x^{-\frac 14}\, \exp(-\frac 23 x^{3/2}) &\  1\leq x   \\
1  & \    0< x<1, 
\end{array}
\right.
\quad 
|\bi(x)| \ \lesssim\ 
\left\{
\begin{array}{l@{\quad}l}
x^{-\frac 14}\, \exp(\frac 23 x^{3/2})  &\  1\leq x   \\
1  & \    0< x<1, 
\end{array}
\right.
\end{equation*}
Since $\xi(z)^{2/3}(1-z^2)^{-1}$ remains bounded  as $z\to 1$, which follows from the definition of $\xi(z)$, the above bounds on the Airy functions together with\eqref{jnuz} and \eqref{ynuz} imply
$$
|J_{\nu}(t)| = | J_\nu(\nu z) | \ \lesssim\  \nu^{-1/3}\, e^{-\nu\, \xi(z)}, \qquad |  Y_\nu(t) | = |  Y_\nu(\nu z) | \ \lesssim\  \nu^{-1/3}\, e^{\nu\, \xi(z)}, \qquad \frac{1}{\nu} \leq z \leq 1. 
$$
This proves \eqref{product-upb} for $j=0$. Now if $\nu \leq t \leq \nu +j$, then we estimate $|J_{\nu+j}(t)|$ and $|Y_{\nu+j}(t)|$ in the same way as in the last equation with $\nu$ replaced by $\nu+j$. It remains to estimate $J_\nu(t)$ and $Y_\nu(t)$ on $[\nu, \nu+j]$. To this end we note that \eqref{jnuz} and \eqref{ynuz} still hold with $1\leq z\leq (\nu+j)/\nu$ and 
\begin{equation} \label{xi-neg}
\xi(z) = -\frac 32\, \int_1^z\frac{\sqrt{t^2-1}}{t}\, dt, \qquad z > 1,
\end{equation}
see \cite[Eq.9.3.39]{as}. Therefore, following the above procedure we find that 
$$
|J_{\nu}(t)| = | J_\nu(\nu z) | \ \lesssim\  \nu^{-1/3}\, e^{\nu\, |\xi(z)|}, \quad |Y_{\nu}(t)| = |Y_\nu(\nu z) | \ \lesssim\  \nu^{-1/3}\, e^{\nu\, |\xi(z)|},
\qquad 1\leq z \leq \frac{\nu+j}{\nu}.
$$
Since $\nu\, |\xi(z)|$ is uniformly bounded with respect to $\nu$ on $[1, \frac{\nu+j}{\nu}]$, see \eqref{xi-neg}, this completes the proof. 
\end{proof}

\begin{lemma} \label{lem-jy0}
Let $z>0$. Then 
\begin{equation} \label{eq-jy0}
|J_0(z)-1| \, \lesssim\, \min\{z^2 , \, 1\} \, \qquad \big |Y_0(z) -\frac 2\pi \big ( \log \frac z2+ \gamma\big) \big | \, \lesssim\, ( |\log z| +1)\, \min\{ z^2   , \, 1 \},
\end{equation}
and 
\begin{equation} \label{eq-jy0-der}
|J_0'(z)| \, \lesssim\, \min\{z , \, \sqrt{z}\} \, , \qquad \big |Y'_0(z) -\frac{2}{\pi z} \big | \, \lesssim\, \min\{z , \, \sqrt{z}\}
\end{equation}
\end{lemma}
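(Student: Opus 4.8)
The plan is to prove each of the four bounds by splitting the argument into a small-argument regime $0<z\le 1$, where the explicit (logarithmic) power series around the origin give sharp information, and a large-argument regime $z\ge 1$, where the stated bounds are coarse and follow from the global estimates on Bessel functions. I would begin with $J_0$. For $0<z\le 1$, the series \eqref{j-series} with $\nu=0$ gives $J_0(z)=1-z^2/4+\mathcal O(z^4)$, so $|J_0(z)-1|\lesssim z^2=\min\{z^2,1\}$, while for $z\ge 1$ the uniform bound \eqref{J-unif} yields $|J_0(z)-1|\le 2\lesssim 1$. For the derivative I would use \eqref{der-2} with $\nu=0$ to write $J_0'=-J_1$; then \eqref{j-series} gives $J_1(z)=z/2+\mathcal O(z^3)$, hence $|J_0'(z)|\lesssim z$ on $(0,1]$, whereas \eqref{J-unif} gives $|J_0'(z)|=|J_1(z)|\le 1\le\sqrt z$ on $[1,\infty)$. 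Together these give $|J_0'(z)|\lesssim\min\{z,\sqrt z\}$.

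The function $Y_0$ requires the logarithmic refinement of \eqref{nu-zero}, namely the expansion
\[
Y_0(z)=\frac{2}{\pi}\Big(\log\tfrac z2+\gamma\Big)J_0(z)+\frac{2}{\pi}\sum_{k\ge 1}\frac{(-1)^{k+1}H_k}{(k!)^2}\Big(\tfrac z2\Big)^{2k},\qquad H_k=\sum_{j=1}^k\tfrac1j.
\]
Subtracting $\tfrac{2}{\pi}(\log\tfrac z2+\gamma)$ and inserting $J_0-1=\mathcal O(z^2)$ from the previous step in the first term, together with the $\mathcal O(z^2)$ bound on the remaining series, gives for $z\le 1$ the estimate $|Y_0(z)-\tfrac2\pi(\log\tfrac z2+\gamma)|\lesssim(|\log z|+1)z^2$. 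For $z\ge 1$ the function $Y_0$ is bounded by \eqref{z-large} (and by continuity on the compact piece near $z=1$), while the subtracted term grows only logarithmically, so the difference is $\lesssim|\log z|+1=(|\log z|+1)\min\{z^2,1\}$. This settles the second bound in \eqref{eq-jy0}.

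The derivative estimate for $Y_0$ is the delicate step, and I expect it to be the main obstacle. Using \eqref{der-2} I would write $Y_0'=-Y_1$ and differentiate the expansion above, which isolates the singular term $\tfrac{2}{\pi z}$ (consistent with the leading behaviour of $Y_1$ in \eqref{nu-pos}) and leaves a remainder. Differentiating the factor $\tfrac2\pi(\log\tfrac z2+\gamma)J_0(z)$ produces, besides the $\mathcal O(z)$ contribution $\tfrac{2}{\pi z}(J_0(z)-1)$, a term of the form $\tfrac2\pi(\log\tfrac z2+\gamma)J_0'(z)$ that behaves like $z\log z$ near the origin; the remaining differentiated series is $\mathcal O(z)$. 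The crux is therefore to absorb this $z\log z$ remainder into the right-hand side near zero, using that $z|\log z|\to 0$ controls it against the stated weight, and to match it to the decay of $Y_1$ for large $z$ via \eqref{z-large} (plus boundedness on $[1,2]$). Once this logarithmic remainder is handled, all other pieces are either $\mathcal O(z)$ at the origin or bounded by the global Bessel estimates, and the bound $|Y_0'(z)-\tfrac{2}{\pi z}|\lesssim\min\{z,\sqrt z\}$ follows by the same small/large dichotomy.
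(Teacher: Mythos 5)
Your handling of the first three bounds is correct, and it takes a different route from the paper: for \eqref{eq-jy0} you argue from the power series \eqref{j-series} and the logarithmic series for $Y_0$, whereas the paper quotes the integral representations $J_0(z)=\frac1\pi\int_0^\pi\cos(z\cos t)\,dt$ and $Y_0(z)=\frac{4}{\pi^2}\int_0^{\pi/2}\cos(z\cos t)\,(\gamma+\log(2z\sin^2 t))\,dt$; both are equally elementary and your version is complete. The bound for $J_0'=-J_1$ is done the same way in both arguments.

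The genuine gap is exactly at the step you flag as ``the crux,'' and your proposed resolution of it is wrong. You claim the term $\frac2\pi(\log\frac z2+\gamma)J_0'(z)\sim-\frac{z\log z}{\pi}$ can be absorbed into the right-hand side of \eqref{eq-jy0-der} ``using that $z|\log z|\to 0$.'' But for $0<z\le 1$ one has $\min\{z,\sqrt z\}=z$, and
\[
\frac{z|\log z|}{z}=|\log z|\to\infty \qquad (z\to 0+),
\]
so $z|\log z|\not\lesssim\min\{z,\sqrt z\}$; that $z|\log z|$ tends to zero is irrelevant. Pushing your own expansion one step further gives
\[
Y_0'(z)-\frac{2}{\pi z}\,=\,-\frac{z\log(z/2)}{\pi}+\mathcal{O}(z)\qquad z\to 0+,
\]
so the second inequality in \eqref{eq-jy0-der} is in fact false as stated near the origin: no argument can close this gap, and the correct statement must carry a logarithmic factor, e.g.\ $|Y_0'(z)-\frac{2}{\pi z}|\lesssim(|\log z|+1)\min\{z,\sqrt z\}$ (equivalently, $\lesssim\sqrt z$ for $z\le 1$). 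For what it is worth, the paper's own one-line proof — write $Y_0'=-Y_1$ via \eqref{der-2} and invoke \eqref{nu-pos} and \eqref{z-large} — suffers from the same defect, since \eqref{nu-pos} is not valid at integer order: at $\nu=1$ the coefficient $\cot(\nu\pi)$ diverges, and the limiting expansion of $Y_1$ contains precisely the $\frac{z\log(z/2)}{\pi}$ term you identified. The log-corrected bound is what both arguments actually prove, and it is all that is needed in the one place the lemma is applied (Lemma \ref{r0-asymp3}, where only errors of size $o((\log\lambda)^{-1})$ are tracked).
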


\begin{proof}
The upper bounds in \eqref{eq-jy0} follow from the integral representaions 
$$
J_0(z) = \frac 1\pi \int_0^\pi \cos(z\cos t)\, dt, \qquad Y_0(z) = \frac{4}{\pi^2} \int_0^{\pi/2} \cos(z\cos t) (\gamma+\log(2z\sin^2t))\, dt,
$$ 
see \cite[Sec.9.1]{as}. To prove \eqref{eq-jy0-der} we use the fact that $J'_0(z) = -J_1(z)$ and $Y_0'(z) = -Y_1(z)$, see \eqref{der-2}.  With the help of \eqref{nu-pos} and \eqref{z-large} we then arrive at \eqref{eq-jy0-der}. 
\end{proof}

\smallskip


\section{Modified Bessel functions $I$ and $K$}
\label{app} 
 
\noindent The modified Bessel functions $I_\nu$ and $K_\nu$ satisfy the relation
 \begin{equation} \label{ik}
 K_\nu(z) = \frac{\pi}{2}\, \frac{I_{-\nu}(z)-I_\nu(z)}{\sin(\nu\, \pi)} ,
 \end{equation}
 see \cite[Eq.9.6.2]{as}, where as usual the right hand side is replaced by its limiting value if $\nu\in\Z$. From \cite[Eq.9.6.15]{as} we learn that the Wronskian of $I_\nu$ and $K_\nu$ reads as follows:
 \begin{equation} \label{w-ik}
 K'_\nu(z) \, I_\nu(z) - K_\nu(z) \, I'_\nu(z) =  K_{\nu+1}(z) \, I_\nu(z) + K_\nu(z) \, I_{\nu+1}(z) = \frac 1z.
 \end{equation}
As $z\to 0$, then 
\begin{equation}\label{ik-z0}
\begin{aligned}  
I_\nu(z) & =  \frac{(z/2)^{\nu}}{\Gamma(1+\nu)}(1+\mathcal{O}(z^2)), \qquad \nu \geq 0,\\
 K_\nu(z) & = \frac{\Gamma(\nu)\,(z/2)^{-\nu}}{2}\, (1+\mathcal{O}(z^2)) - \frac{\pi\, (z/2)^{\nu}}{2\sin(\nu\pi)\, \Gamma(\nu+1)}\,  (1+\mathcal{O}(z^2)), \qquad \nu>0, 
\end{aligned} 
\end{equation}
with the errors terms uniform in $\nu$, see \cite[Eq.9.6.10]{as} and \eqref{ik}. For $\nu=0$ we have by \cite[Eq.9.6.13]{as}
\begin{equation} \label{k-log}
K_0(z) = -\log z -\gamma+\log 2 + o(z) \qquad z\to 0.
\end{equation}
Finally, we recall the relations between the functions  $I_\nu$ and $K_\nu$ and their derivatives:
\begin{align}
I'_\nu(z) & = I_{\nu+1}(z) +\frac{\nu}{z}\, I_\nu(z) \label{i-der} \\
K'_\nu(z) & = -K_{\nu\pm 1} \pm\frac{\nu}{z}\, K_\nu(z) \label{k-der},
\end{align}
see  \cite[Eq.9.6.26]{as}. 

\begin{lemma} \label{lem-ik}
Let $\nu>0$. Then for every $z>0$ it holds
\begin{align} 
I_{\nu+j}(z)\, K_\nu(z) & \,  \leq \,  \frac{1}{2\nu} \qquad j=0,1. \label{ik-upperb} 
\end{align}
\end{lemma}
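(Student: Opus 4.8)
The plan is to deduce both bounds from the single product $I_\nu(z)K_\nu(z)$. I would first record that all the functions in play are strictly positive on $(0,\infty)$: every term of the power series for $I_\mu$ is positive when $\mu>-1$, and $K_\mu(z)=K_{-\mu}(z)>0$ for every real $\mu$; hence $I_\nu$, $I_{\nu+1}$, $K_\nu$ and $K_{\nu-1}=K_{1-\nu}$ are all positive for $\nu>0$, $z>0$. The case $j=1$ will then be reduced to the case $j=0$ through the monotonicity $I_{\nu+1}(z)\le I_\nu(z)$, so the two genuine tasks are (a) the bound for $j=0$ and (b) this monotonicity.

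For task (a) I would start from the Wronskian identity \eqref{w-ik}, read as $K_{\nu+1}(z)I_\nu(z)+K_\nu(z)I_{\nu+1}(z)=z^{-1}$, and eliminate $K_{\nu+1}$ by means of the three-term recurrence $K_{\nu+1}(z)=K_{\nu-1}(z)+\tfrac{2\nu}{z}K_\nu(z)$, which follows by subtracting the two forms of \eqref{k-der}. Substituting and solving for the term $\tfrac{2\nu}{z}I_\nu K_\nu$ produces the exact identity
\[
I_\nu(z)\,K_\nu(z)=\frac1{2\nu}-\frac{z}{2\nu}\bigl(I_\nu(z)K_{\nu-1}(z)+I_{\nu+1}(z)K_\nu(z)\bigr).
\]
The parenthesis is a sum of products of positive functions, so the subtracted term is positive and $I_\nu(z)K_\nu(z)\le\tfrac1{2\nu}$ follows at once (with strict inequality). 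As a consistency check, \eqref{ik-z0} gives $I_\nu(z)K_\nu(z)\to\tfrac1{2\nu}$ as $z\to0+$, so the constant is sharp.

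Task (b), which I expect to be the delicate point, I would handle with the Poisson-type integral representation $I_\nu(z)=c_\nu\,(z/2)^\nu\int_{-1}^1(1-t^2)^{\nu-1/2}e^{zt}\,dt$, valid for $\nu>-\tfrac12$ (see \cite{as}). The normalising constants cancel in the quotient, leaving $I_{\nu+1}(z)/I_\nu(z)=\tfrac{z}{2\nu+1}\,\rho_{\nu,z}$, where $\rho_{\nu,z}$ is the ratio of the $(1-t^2)^{\nu+1/2}$-weighted integral to the $(1-t^2)^{\nu-1/2}$-weighted one. Integrating the derivative of $(1-t^2)^{\nu+1/2}e^{zt}$ over $[-1,1]$ (the boundary terms vanish since $\nu+\tfrac12>0$) gives $z\,\rho_{\nu,z}=(2\nu+1)\langle t\rangle$, where $\langle t\rangle$ denotes the average of $t$ against the probability measure proportional to $(1-t^2)^{\nu-1/2}e^{zt}\,dt$ on $[-1,1]$. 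Hence $I_{\nu+1}(z)/I_\nu(z)=\langle t\rangle<1$, since $t<1$ on the support up to a null set, which yields $I_{\nu+1}\le I_\nu$ and therefore $I_{\nu+1}(z)K_\nu(z)\le I_\nu(z)K_\nu(z)\le\tfrac1{2\nu}$, completing the proof. The reason a softer argument will not suffice is that the elementary bounds ($I_{\nu+1}K_\nu<z^{-1}$ from \eqref{w-ik}, or $I_{\nu+1}/I_\nu\le z/(2\nu+1)$ from $1-t^2\le1$) only control the regime $z\lesssim\nu$, whereas $I_{\nu+1}/I_\nu\to1^-$ as $z\to\infty$; the averaging identity $I_{\nu+1}/I_\nu=\langle t\rangle$ is precisely what captures this limit uniformly in $z$.
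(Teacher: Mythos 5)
Your proof is correct, and for $j=0$ it is essentially the paper's own argument: both eliminate $K_{\nu+1}$ between the Wronskian identity \eqref{w-ik} and the recurrence \eqref{rec-k}, then discard a manifestly positive term; your displayed identity is just the rearranged form of the paper's chain $\tfrac{2\nu}{z}I_\nu K_\nu \le K_{\nu+1}I_\nu \le \tfrac1z$. Where you genuinely diverge is $j=1$. The paper stays within the same toolkit: it applies the $j=0$ bound at order $\nu+1$ together with \eqref{rec-k} to get $I_{\nu+1}(z)K_\nu(z)\le z/(4\nu(\nu+1))$, keeps the Wronskian bound $I_{\nu+1}(z)K_\nu(z)\le 1/z$ for large $z$, and concludes since
\begin{equation*}
\min\Big\{\frac1z,\ \frac{z}{4\nu(\nu+1)}\Big\}\ \le\ \frac{1}{2\sqrt{\nu(\nu+1)}}\ \le\ \frac{1}{2\nu}.
\end{equation*}
You instead prove the order-monotonicity $I_{\nu+1}(z)\le I_\nu(z)$ outright, via the Poisson representation and the averaging identity $I_{\nu+1}/I_\nu=\langle t\rangle<1$, and then multiply by $K_\nu$. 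Your route needs one more ingredient (the integral representation, valid here since $\nu>-\tfrac12$), but it buys a cleaner and stronger intermediate fact — monotonicity of $I_\nu$ in the order, uniformly in $z$ — whereas the paper's route uses nothing beyond the two identities already quoted in its appendix and a two-regime patching in $z$.

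One side remark of yours is wrong, although it does not affect the validity of your proof: you claim the ``soft'' bounds $I_{\nu+1}K_\nu\le 1/z$ and $I_{\nu+1}/I_\nu\le z/(2\nu+1)$ cannot suffice. In fact the first gives \eqref{ik-upperb} for $z\ge 2\nu$, and the second, combined with your $j=0$ bound, gives $I_{\nu+1}K_\nu\le \tfrac{z}{2\nu(2\nu+1)}\le\tfrac1{2\nu}$ for $z\le 2\nu+1$; the two regimes overlap, so the soft argument closes — and a patching argument of exactly this kind is what the paper itself uses. The fact that $I_{\nu+1}/I_\nu\to 1$ as $z\to\infty$ is no obstruction, because in that regime the Wronskian bound $1/z$ is already far below $1/(2\nu)$.
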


\begin{proof}
From equation \eqref{w-ik} and the positivity of $I_\nu(z), K_\nu(z)$ (for $z>0$) it follows that
\begin{equation} \label{ik-upperb2} 
I_{\nu+1}(z)\, K_\nu(z)\, \leq\, \frac 1z, \qquad I_{\nu}(z)\, K_{\nu+1}(z) \, \leq \, \frac 1z.
\end{equation}
On the other hand, by \cite[Eq.9.6.26]{as} 
\begin{align} 
\frac{2\nu}{z}\, K_\nu(z) & = -K_{\nu-1}(z) + K_{\nu+1}(z) \label{rec-k} 
\end{align}
Hence 
\begin{align*}
\frac{2\nu}{z}\, I_\nu(z)\, K_\nu(z) & = K_{\nu+1}(z)\,  I_\nu(z) -K_{\nu-1}(z)\, I_\nu(z) \leq K_{\nu+1}(z)\,  I_\nu(z) \leq  \frac 1z,
\end{align*}
where we have used \eqref{ik-upperb2} and the positivity of $I_\nu(z)$ and $K_\nu(z)$. This proves \eqref{ik-upperb} for $j=0$. Similarly we obtain from \eqref{rec-k} the upper bound
$$
\frac{2\nu}{z}\, K_\nu(z)\, I_{\nu+1}(z) =  I_{\nu+1}(z)\,  K_{\nu+1}(z) - I_{\nu+1}(z)\,  K_{\nu-1}(z) \, \leq \, I_{\nu+1}(z)\,  K_{\nu+1}(z)
\leq \frac{1}{\nu+1},
$$
where we applied \eqref{ik-upperb} with $j=0$. It follows that  $ K_\nu(z)\, I_{\nu+1}(z) \leq \frac{z}{\nu (\nu+1)}$. This in combination with \eqref{ik-upperb2} proves \eqref{ik-upperb} for $j=1$. 
\end{proof}



\section*{\bf Acknowledgements}
\noindent 
I would like to thank Georgi Raikov for many useful remarks. 
The support of MIUR-PRIN2010-11 grant for the project  ''Calcolo delle variation'' is gratefully acknowledged.


\medskip

\bibliographystyle{amsalpha}

\end{document}